\newtheorem{Theorem}{Theorem}[section]
\newtheorem{Proposition}[Theorem]{Proposition}
\newtheorem{Corollary}[Theorem]{Corollary}
\newtheorem{Lemma}[Theorem]{Lemma}
\newtheorem{Definition}{Definition}[section]
\newtheorem{Remark}[Theorem]{Remark}
\newcommand{\va}{\boldsymbol{a}}
\newcommand{\vb}{\boldsymbol{b}}
\newcommand{\id}{\mathrm{id}}
\newcommand{\E}{\mathcal{E}}
\newcommand{\pE}{\mathcal{E}_{*}}
\newcommand{\fE}{{}^{i}\E}
\newcommand{\F}{\mathcal{F}}
\newcommand{\bF}{\boldsymbol{F}}
\newcommand{\pF}{\mathcal{F}_{*}}
\newcommand{\fF}{{}^{i}\F}
\newcommand{\bcF}{\boldsymbol{\mathcal{F}}}
\newcommand{\pS}{\mathcal{S}_{*}}
\newcommand{\Sb}{\mathcal{S}}
\newcommand{\G}{\mathcal{G}}
\newcommand{\pG}{\mathcal{G}_{*}}
\newcommand{\fG}{{}^{i}\G}
\newcommand{\T}{\mathcal{T}}
\newcommand{\U}{\mathcal{U}}
\newcommand{\ch}{\operatorname{ch}}
\newcommand{\tr}{\operatorname{Tr}}
\newcommand{\rank}{\operatorname{rank}}
\newcommand{\Gr}{\operatorname{Gr}}
\newcommand{\Hom}{\operatorname{Hom}}
\newcommand{\diff}{\mathop{}\!\mathrm{d}}  
\newcommand{\pdiff}{\mathop{}\!\mathrm{\partial}} 
\newcommand{\boplus}{\boldsymbol{\scalebox{1.3}{$\oplus$}}}
\newcommand{\wnablayl}{\widetilde{\nabla}^{1,0}}
\title{Kobayashi-Hitchin Correspondence for Saturated Reflexive Parabolic Sheaves on K\"ahler Manifolds}
\author{
  Tianshu Jiang\thanks{E-mail: jts2021@mail.ustc.edu.cn. School of Mathematical Sciences, University of Science and Technology of China, Hefei, 230026, P.R. China.} 
  \and 
  Jiayu Li\thanks{E-mail: jiayuli@ustc.edu.cn. School of Mathematical Sciences, University of Science and Technology of China, Hefei, 230026, and AMSS, CAS, Beijing, 100080, P.R. China.}
}
\begin{document}
\maketitle
\tableofcontents

\begin{abstract}
  In this paper, we study the Kobayashi-Hitchin correspondence in the setting of parabolic sheaves with a simple normal crossing divisor over a compact K\"ahler manifold using the method of Hermitian-Yang-Mills flow.
\end{abstract}
\section{Introduction}
\subsection{Background}

The concept of Kobayashi-Hitchin correspondence is developed from an insightful discovery of M. S. Narasimhan and C. S. Seshadri \cite{Na-Se} which reveals the correspondence between the irreducible unitary representations of the fundamental group of a compact Riemann surface and stable holomorphic vector bundles living on it with vanishing degree. With successive contributions from S. K. Donaldson~\cite{Do1,Do2}, S. Kobayashi~\cite{Ko1,Ko2}, L\"ubke~\cite{Lu}, Hitchin~\cite{Hc}, Uhlenbeck and Yau~\cite{Uh-Yau} etc., it takes into its modern shape which achieves the Trinity on a compact K\"ahler manifold $X$ as is illustrated by the following diagram.
\begin{center}
\begin{tikzpicture}[
    every node/.style={draw, rectangle, align=center, minimum width=3.5cm, minimum height=1.4cm},
    arrow/.style={<->, thick}
]

\node (A) at (90:1cm) {\textbf{Complex geometry}\\Polystable\\vector bundles};
\node (B) at (210:3cm) {\textbf{Topology}\\Semisimple projective unitary\\representations of $\pi_1(X)$};
\node (C) at (330:3cm) {\textbf{Differential geometry}\\Hermitian--Einstein\\connections};

\draw[arrow] (A) -- (B);
\draw[arrow] (B) -- (C);
\draw[arrow] (C) -- (A);

\end{tikzpicture}
\end{center}
When we study the moduli space of the stable vector bundles over $X$, the correspondence enables the moduli space to interweave rich geometric structures from different perspectives.

Another direction of generalization is to consider the correspondence problem in the quasiprojective setting. It was pioneered by V. Metha and C. S. Seshadri~\cite{Me-Se} when they studied the unitary representations of the fundamental group of a punctured Riemann surface $X^{\circ}$, as a generalization of Narasimhan and Seshadri's~\cite{Na-Se} work. In their paper, they established the correspondence between the unitary representation of the fundamental group of $X^{\circ}$ and the concept of stable parabolic bundle over its completion $X$. Inspired by their work, the concept of parabolic vector bundles was gradually generalized to higher dimensional cases. For the purpose of illustration, let us consider the simplest case. 

Let $(X,\omega)$ be a compact K\"ahler manifold and $D$ be a smooth irreducible divisor. A parabolic bundle is a tuple $F_{*}=(F, \boldsymbol{F})$ consisting of an underlying holomorphic vector bundle $F\to X$ and a parabolic structure $\boldsymbol{F}$ along the divisor. The parabolic structure $\boldsymbol{F}$ is a descending filtration of holomorphic subbundles of $F_{|D}$:
\[
F_{|D} = F_0 \supsetneq F_1 \supsetneq F_2 \supsetneq \cdots \supsetneq F_{\ell} \supsetneq 0
\]
with weights $0\leq a_{i}<a_{i+1}<1$ assigned to $F_i$. 
We use the subscript ``*'' to emphasize the parabolic structure. And if we omit the ``*'' and just write $F$, we mean the underlying bundle of the parabolic bundle. As in the case of an ordinary vector bundle, associated to a parabolic bundle, we also have the parabolic versions of the Chern characters, the degrees, the slope stability with respect to $\omega$, etc. There are other characterizations of parabolic bundles provided, e.g., in~\cite{Mo2,Iy-Si,Bw}. More generally, the notions of parabolic sheaves with a general divisor are also available. It was firstly introduced by M. Maruyama and K. Yokogawa~\cite{Ma-Yo} to study the moduli problem. But in our paper, we follow the definitions of T. Mochizuki provided in~\cite[Chapter 3]{Mo1}. See also Section~\ref{2} for the details.

Parabolic bundles have attracted considerable attention since the 1990s. It is much more important and fascinating to study a parabolic bundle equipped with a Higgs field which has logarithmic singularities along the divisor. As it will lead to the nonabelian Hodge correspondence for quasiprojective smooth varieties pioneered by C. T. Simpson~\cite{Si2}. This subject tries to replace the objects in the above Trinity with semisimple local systems, harmonic bundles and stable parabolic Higgs bundles with trivial Chern classes over a quasiprojective variety. J. Jost and K. Zuo~\cite{Jo-Ka} built the bridge for the first two objects and T. Mochizuki~\cite{Mo1} built the bridge for the last two. O. Biquard~\cite{Bi} dealt with the case when the base space is a K\"ahler manifold with a smooth divisor. The problem is still open if the setting is a K\"ahler manifold with a simple normal crossing divisor. Nowadays, this subject is still active with researchers trying to generalize the settings to sheaves over spaces with certain singularities, e.g., Klt, Dlt varieties.
When there is no Higg field, this problem was also considered by, e.g.,~\cite{Li,Li-Na,St-Wr} in different settings. 

In particular, the second author~\cite{Li} proved that in the setting of a compact K\"ahler manifold $(X,\omega)$ with a simple normal crossing divisor $D$, a parabolic bundle $F_{*}$ is stable with respect $\omega$ if and only if there exists a Hermitian-Einstein (H-E) metric $H_{\delta}$ on $F_{|X\setminus D}$ with respect to a conical K\"ahler metrics $\omega_{\delta}$ (a metric with mild singularities near $D$), moreover $H_{\delta}$ is compatible with the parabolic structure. As a consequence, the author obtained the parabolic version of the Bogomolov-Gieseker inequality.

\subsection{Purpose}
It is worth noting that although the conical metric $\omega_{\delta}$ used by the second author are in the same cohomology class of $\omega$ in the sense of current, it is not very satisfactory to only get a H-E metric with respect to $\omega_{\delta}$ rather than $\omega$. The purpose of our paper is to generalize the previous results to the setting of semistable parabolic sheaves and in the mean time fix the above issue. We give a brief description of our main results in the following. 

Let $\pF$ be a saturated reflexive parabolic sheaf of rank $r$ associated to $(X,\omega, D)$. We put $\ch_{k}(\cdot)$ as the $k$-th Chern character operator of a parabolic sheaf. As usual, we define the degree $\deg_{\omega}(\pF)$ of a parabolic sheaf $\pF$ as $$\deg_{\omega}(\pF)\coloneqq \ch_{1}(\pF)\cdot \frac{[\omega]}{(n-1)!}$$ and the slope $\mu_{\omega}(\pF)$ as $$\mu_{\omega}(\pF)\coloneqq \frac{\deg_{\omega}(\pF)}{\rank(\pF)}.$$ 
We say that $\pF$ is $\mu_{\omega}$-stable (or simply $\omega$-stable) (resp. semistable) with respect to the K\"ahler metric $\omega$, if for any proper parabolic subsheaf $\pS$ we have 
\begin{equation*}
  \mu_{\omega}(\pS)<(resp. \leq)\, \mu_{\omega}(\pF).
\end{equation*} 
From Section~\ref{2}, we know that $\pF$ is a locally abelian parabolic bundle outside an analytic subset $W$ of codimension at least $3$. We put $X^{\circ}\coloneqq X-W$ and $F_{*}\coloneqq {\pF}_{|X^{\circ}}$. In the rest of the paper, we may sometimes simply call $F$ the regular part of $\F$ and $F_{|X^{\circ}\setminus D}$ the regular part of $\pF$.
\begin{Definition}\label{comatibleandadmissible}
  We say that an Hermitian metric $H$ on $F_{|X^{\circ}\setminus D}$ is compatible with the parabolic sheaf $\pF$ (or with its parabolic structure) if the following conditions are satisfied
  \begin{enumerate}
    \item \(\frac{\sqrt{-1}}{2\pi}\tr(F_{H})\) represents $\ch_{1}(\pF)$ where $F_{H}$ is the Chern curvature tensor. 
    \item For any proper parabolic subsheaves $\pS$ of $\pF$, let $H_{|\pS}$ be the induced metric on the regular part of $\pS$, we have \(\ch_{1}(\pS)\leq\frac{\sqrt{-1}}{2\pi}\tr(F_{H_{|\pS}})\) in the sense of current.
  \end{enumerate}
  And it is admissible if 
  \begin{itemize}
    \item $F_{H}\in L^{2}(H,\omega)$.
    \item $\Lambda F_{H}\in L^{\infty}(H)$ where $\Lambda F_{H}$ is the contraction of $F_{H}$ with $\omega$.
  \end{itemize}
\end{Definition}
\begin{Remark}
  The inequality in condition \(2\) could be strengthened to equality when $\pF$ is a parabolic bundle. But in the case of sheaves, this is the best we can expect.

  The concept of admissible metrics on reflexive sheaves was first introduced by S. Bando and Y. Siu in~\cite{B-S} where the Kobayashi-Hitchin problem of reflexive sheaves was concerned. 
\end{Remark}
We obtain the following theorems:
\begin{Theorem}[\textbf{Theorem} \ref{Stable}]
  A saturated reflexive parabolic sheaf $\pF$ over $(X,\omega,D)$ is $\mu_{\omega}$-polystable if and only if there exists an admissible Hermitian-Einstein metric with respect to $\omega$ on $F_{|X^{\circ}\setminus D}$ which is compatible with $\pF$. 
\end{Theorem}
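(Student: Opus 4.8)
The plan is to prove the two implications separately, with the existence-of-metric $\Rightarrow$ polystability direction being the analytically soft part and the converse being the core of the work, carried out by the Hermitian--Yang--Mills flow together with a conical approximation of $\omega$.

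\textbf{Easy direction (H-E metric $\Rightarrow$ polystable).} Suppose $H$ is an admissible Hermitian--Einstein metric compatible with $\pF$, so that $\Lambda_\omega F_H = \lambda\,\id$ on $F_{|X^\circ\setminus D}$, with $\lambda$ the constant fixed by $\mu_\omega(\pF)$. For any proper parabolic subsheaf $\pS$ I would take the weakly holomorphic orthogonal projection $\pi$ onto the regular part of $\pS$ and apply the Chern--Weil/Gauss--Codazzi formula to express $\ch_1(\pS)\cdot[\omega]^{n-1}$ as a sum of $\tr(\pi\,\Lambda_\omega F_H)$ and a nonnegative second-fundamental-form term $-|\bar\partial\pi|^2$. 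Admissibility ($\Lambda_\omega F_H\in L^\infty$, $F_H\in L^2$) ensures these integrals converge and that the compatibility inequality in Definition~\ref{comatibleandadmissible} may be integrated against $[\omega]^{n-1}$; this yields $\mu_\omega(\pS)\le\mu_\omega(\pF)$, hence semistability. In the equality case $\bar\partial\pi=0$ forces $\pS$ to be a holomorphic orthogonal direct summand carrying an induced H-E metric, and an induction on the rank produces the decomposition of $\pF$ into $\mu_\omega$-stable H-E pieces of equal slope, i.e.\ polystability.

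\textbf{Hard direction (polystable $\Rightarrow$ H-E metric).} Since a polystable sheaf is a direct sum of $\mu_\omega$-stable sheaves of equal slope and an H-E metric on a direct sum is the orthogonal sum of H-E metrics on the summands, it suffices to treat a $\mu_\omega$-stable $\pF$. Restricting to the regular locus $X^\circ$, where $\pF_{|X^\circ}$ underlies a locally abelian parabolic bundle, I would fix a smooth background metric $H_0$ adapted to the parabolic weights along $D$ (with the prescribed $|z_j|^{-2a}$-type growth) and run the Hermitian--Yang--Mills flow $H^{-1}\partial_t H=-2(\Lambda_\omega F_H-\lambda\,\id)$ from $H_0$. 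The key steps are: (i) long-time existence, via the parabolic evolution of $s=\log(H_0^{-1}H)$ and a maximum principle adapted to the quasi-conical geometry near $D$; (ii) monotonicity of the Donaldson functional and the resulting Uhlenbeck--Yau dichotomy, namely that either $\sup_{X^\circ\setminus D}|\Lambda_\omega F_{H(t)}|$ stays uniformly bounded, giving subsequential convergence to an admissible H-E metric, or it blows up, in which case a rescaling of the flow produces a nonzero weakly holomorphic $L^2_1$ endomorphism whose spectral projections define a destabilizing parabolic subsheaf, contradicting stability. To realize this in the singular setting I would approximate $\omega$ by the conical metrics $\omega_\delta$ of~\cite{Li}, obtain H-E metrics $H_\delta$ for each $\delta$, and pass to the limit $\delta\to0$; once a metric is produced on $X^\circ\setminus D$, the Bando--Siu removable-singularity theory~\cite{B-S}, using that $W$ has codimension $\ge3$ and the $L^2$-bound on $F_H$, extends it and the holomorphic structure across $W$ and confirms that $\frac{\sqrt{-1}}{2\pi}\tr(F_H)$ represents $\ch_1(\pF)$.

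\textbf{Main obstacle.} I expect the decisive difficulty to be the uniform a priori estimates: a $C^0$-bound on the relative endomorphism $H_\delta H_0^{-1}$ (or along the flow) that is independent of $\delta$ and controlled near $D$, together with the analysis showing that the limiting metric is admissible and Hermitian--Einstein with respect to $\omega$ itself rather than merely $\omega_\delta$. This is where the Poincar\'e-type weighted estimates near the divisor, the parabolic weights, and the non-compactness of $X^\circ\setminus D$ interact most intricately, and it is precisely the passage $\delta\to0$ that upgrades the earlier conical result of~\cite{Li} to the statement for $\omega$.
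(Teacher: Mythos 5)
Your overall architecture (conical approximation of $\omega$, Hermitian--Yang--Mills flow, destabilizing subsheaf from failure of convergence, Bando--Siu removable singularities across $W$) matches the paper's, and you correctly isolate the decisive difficulty; but the route you propose through that difficulty is exactly the one the paper is written to avoid, and you supply no mechanism to overcome it. You plan to first produce, for each $\delta$, a Hermitian--Einstein metric $H_{\delta}$ with respect to $\omega_{\delta}$ (which is the second author's earlier result in~\cite{Li}) and then pass to the limit $\delta\to 0$. The obstruction is that the $C^{0}$ bound on $H_{\delta}\widehat{H}^{-1}$ coming from the Uhlenbeck--Yau/Simpson dichotomy is obtained by contradiction and carries no effective, $\delta$-uniform constant; this is precisely why the earlier work stopped at an H-E metric for $\omega_{\delta}$ rather than for $\omega$, as the introduction of the paper points out. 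The paper reverses the order of limits: it runs the flow with respect to the two-parameter family $\omega_{\epsilon\delta}$ on the resolution $\widetilde{X}$, proves estimates at each \emph{fixed time} $t$ that are uniform in $\epsilon$ and $\delta$ (the uniform Sobolev inequality and heat-kernel bounds of Propositions~\ref{Simpsonassumption} and~\ref{HeatkernelUB}, monotonicity of $\lVert\Lambda_{\epsilon\delta}F_{H_{\epsilon\delta}}\rVert_{L^{1}}$, and the bound $\lVert\Lambda_{\epsilon\delta}F_{H_{\epsilon\delta}}\rVert_{L^{\infty}}\leq C(\max(t^{-1},1))$), lets $\epsilon,\delta\to 0$ to obtain a genuine solution $H(t)$ of the flow for $\omega$ itself on $F_{|X^{\circ}\setminus D}$, and only then sends $t\to\infty$, using the argument of~\cite{Li-Zh-Zh} to verify Simpson's key stability estimate for $H(t)$. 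Without this reordering, or some substitute for the missing uniform estimate, your passage $\delta\to 0$ does not go through.

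Two further omissions. First, you never use the resolution $\pi:\widetilde{X}\to X$ of Section~\ref{3}; yet the adapted initial metric $\widehat{H}$, the uniform analytic inputs above, and the verification that the limiting metric is compatible and admissible are all carried out on the blow-up, where $\pF$ becomes a locally abelian parabolic bundle, and it is not clear how you would construct a controlled background metric near the codimension-$3$ locus $W$ without it. Second, in the equality case of the converse direction, a splitting of the parabolic sheaf $\pF$ (not merely of the smooth bundle $F_{|X^{\circ}\setminus D}$) is required: the paper passes to $\wedge^{k}\pF\otimes\det(\pS)^{-1}$, invokes Proposition~\ref{regularitynearsingular} (slicing, Biquard's extension theorem, and Bando--Siu regularity) to show the relevant holomorphic section is bounded and lies in $L^{2}_{1}$ so that the maximum principle applies globally, and then uses an $\operatorname{Ext}$/codimension argument to split $\pF$ across $W$ and $D$. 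Your sketch stops at the orthogonal splitting on the regular part, which by itself does not yield polystability of the parabolic sheaf.
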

\begin{Theorem}[\textbf{Theorem} \ref{Semistable}]
  A saturated reflexive parabolic sheaf $\pF$ over $(X,\omega,D)$ is $\mu_{\omega}$-semistable if and only if there exists a family of approximate Hermitian-Einstein metrics with respect to $\omega$ on $F_{|X^{\circ}\setminus D}$ all of which are compatible with $\pF$. 
\end{Theorem}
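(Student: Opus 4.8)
The plan is to prove the two implications separately, reducing the harder direction to the polystable case already settled in Theorem~\ref{Stable}. Throughout, write $\lambda\coloneqq 2\pi\mu_{\omega}(\pF)/\mathrm{Vol}(X)$ for the Einstein constant attached to the slope, so that a metric $H$ on $F_{|X^{\circ}\setminus D}$ is Hermitian-Einstein precisely when $\sqrt{-1}\Lambda F_{H}=\lambda\,\id$, and a family $\{H_{\epsilon}\}_{\epsilon>0}$ is approximate Hermitian-Einstein when $\sup_{X^{\circ}\setminus D}|\sqrt{-1}\Lambda F_{H_{\epsilon}}-\lambda\,\id|\to 0$ as $\epsilon\to 0$.

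For the direction ``approximate H-E $\Rightarrow$ semistable'', let $\pS$ be any proper parabolic subsheaf. By pairing the second compatibility condition of Definition~\ref{comatibleandadmissible} with $\omega^{n-1}/(n-1)!$ we obtain $\deg_{\omega}(\pS)\le\frac{1}{2\pi}\int_{X}\sqrt{-1}\Lambda\tr(F_{H_{\epsilon}|\pS})\,\omega^{n}/n!$. The Gauss-Codazzi identity for the induced metric on the regular part of $\pS$ reads
\begin{equation*}
  \sqrt{-1}\Lambda\tr\bigl(F_{H_{\epsilon}|\pS}\bigr)
  =\tr\bigl(\pi_{\pS}\,\sqrt{-1}\Lambda F_{H_{\epsilon}}\bigr)-|\bar\partial\pi_{\pS}|^{2}
  \le\tr\bigl(\pi_{\pS}\,\sqrt{-1}\Lambda F_{H_{\epsilon}}\bigr),
\end{equation*}
where $\pi_{\pS}$ is the $H_{\epsilon}$-orthogonal projection onto the regular part of $\pS$. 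Since $\tr(\pi_{\pS})=\rank(\pS)$ pointwise and $\sqrt{-1}\Lambda F_{H_{\epsilon}}$ differs from $\lambda\,\id$ by at most the H-E defect, integrating gives $\deg_{\omega}(\pS)\le\mu_{\omega}(\pF)\rank(\pS)+o(1)$; dividing by $\rank(\pS)$ and letting the defect go to $0$ yields $\mu_{\omega}(\pS)\le\mu_{\omega}(\pF)$. The only analytic input is that all the integrals converge and the error is uniform, which follows from admissibility together with the integrability established near $D$ and $W$.

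For the converse, I would first produce a Jordan-H\"older filtration of $\pF$ by saturated reflexive parabolic subsheaves whose successive quotients are $\mu_{\omega}$-stable of slope $\mu_{\omega}(\pF)$; the associated graded object $\mathrm{gr}(\pF)=\bigoplus_{i}\mathcal{Q}_{i}$ is then $\mu_{\omega}$-polystable of the same slope. By Theorem~\ref{Stable}, $\mathrm{gr}(\pF)$ carries an admissible Hermitian-Einstein metric $H_{0}$ compatible with its parabolic structure. Over $X^{\circ}\setminus D$ the filtration splits smoothly, so $\pF$ and $\mathrm{gr}(\pF)$ share one $C^{\infty}$ bundle $E$, and their holomorphic structures differ by a strictly triangular term, $\bar\partial_{\pF}=\bar\partial_{\mathrm{gr}}+\beta$. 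Choosing $g_{t}=\mathrm{diag}(t^{n_{1}}\id,\dots,t^{n_{k}}\id)$ with strictly decreasing exponents adapted to the filtration, the conjugated operators $\bar\partial_{t}\coloneqq\bar\partial_{\mathrm{gr}}+g_{t}\beta g_{t}^{-1}$ satisfy $\bar\partial_{t}\to\bar\partial_{\mathrm{gr}}$ as $t\to 0$, and each $(E,\bar\partial_{t})$ is holomorphically isomorphic to $\pF$ via $g_{t}$. Transporting $H_{0}$ back by these isomorphisms produces metrics $H_{t}\coloneqq g_{t}^{*}H_{0}$ on $\pF$ whose curvatures are gauge-equivalent to those of $(E,\bar\partial_{t},H_{0})$. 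Since the latter converge to $F_{H_{0}}$ as $t\to 0$, the defect $\|\sqrt{-1}\Lambda F_{H_{t}}-\lambda\,\id\|$ tends to $0$, so $\{H_{t}\}$ is the desired approximate Hermitian-Einstein family; compatibility of each $H_{t}$ is inherited from $H_{0}$, since condition $1$ is cohomological and invariant under the deformation, while condition $2$ is checked by relating subsheaves of $\pF$ to their limits in $\mathrm{gr}(\pF)$ under the scaling $g_{t}$.

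The main obstacle is analytic control in the singular directions. Everything above is transparent on a smooth compact manifold, but here the metrics live on $X^{\circ}\setminus D$ and must remain admissible (with $L^{2}$ curvature and bounded $\Lambda F$) and compatible uniformly as $t\to 0$. I expect the hard work to be (i) verifying, within Mochizuki's framework, that the Jordan-H\"older graded object of a saturated reflexive parabolic sheaf is again reflexive and $\mu_{\omega}$-polystable of the same slope, and (ii) making the convergence $F_{H_{t}}\to F_{H_{0}}$ uniform in the admissible norms near both the divisor $D$ (where the parabolic weights force conical behaviour) and the locus $W$ of codimension at least $3$ (the reflexive singularities), so that the limit of defects is genuinely zero and every $H_{t}$ satisfies Definition~\ref{comatibleandadmissible}. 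As an alternative to the deformation argument one could instead run the Hermitian-Yang-Mills flow on $\pF$ and show that the infimum of the H-E defect along the flow vanishes exactly when $\pF$ is semistable, but this would require re-establishing the same boundary estimates, so reusing Theorem~\ref{Stable} seems the more economical route.
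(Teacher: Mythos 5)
Your first direction (approximate H-E $\Rightarrow$ semistable) is essentially the paper's argument: compatibility condition 2 of Definition~\ref{comatibleandadmissible} plus Chern--Weil bounds $\deg_{\omega}(\pS)$ by the integral of $\tr(\pi_{\pS}\sqrt{-1}\Lambda F_{H_{\epsilon}})$, and the vanishing defect gives $\mu_{\omega}(\pS)\leq\mu_{\omega}(\pF)$. That half is fine.

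The converse direction has a genuine gap. Your plan is to pass to the Jordan--H\"older graded object $\mathrm{gr}(\pF)=\bigoplus_{i}\mathcal{Q}_{i}$, apply Theorem~\ref{Stable} to it, and transport the resulting H-E metric back to $\pF$ by a family of gauge transformations $g_{t}$ adapted to the filtration. The obstruction is that the $\mathcal{F}_{i}$ are only subsheaves, so the quotients $\mathcal{Q}_{i}$ fail to be locally free on an analytic set $Z$ of codimension $2$ (not $3$) inside $X^{\circ}\setminus D$, even after reflexive saturation of the graded pieces. Consequently (a) the common $C^{\infty}$ bundle $E$ on which $\bar\partial_{\pF}$ and $\bar\partial_{\mathrm{gr}}$ are to be compared exists only on $X^{\circ}\setminus(D\cup Z)$, so your metrics $H_{t}$ are not defined on all of $F_{|X^{\circ}\setminus D}$ as the theorem requires; and (b) near $Z$ the second fundamental forms $\beta$ (equivalently $\bar\partial\pi_{\mathcal{F}_{i}}$) are only $L^{2}$, not bounded, so there is no uniform $L^{\infty}$ control of $g_{t}\beta g_{t}^{-1}$ and its $\bar\partial$-derivative, and the claimed $L^{\infty}$ convergence of the H-E defect fails. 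This is precisely why the implication ``semistable $\Rightarrow$ approximate H-E'' is nontrivial even for bundles on compact K\"ahler manifolds, and why the paper does not argue this way. Instead the paper runs the Hermitian--Yang--Mills flow from the adapted metric $\widehat{H}$ (constructed on the resolution with the conical metrics $\omega_{\epsilon\delta}$, then passed to the limit), invokes \cite[Proposition 4.1]{Li-Zh-Zh} to get $\lim_{t\to\infty}\lVert\sqrt{-1}\Lambda F_{H(t)}-\lambda\,\id\rVert_{L^{2}}=0$ under semistability, and upgrades this to an $L^{\infty}$ bound by parabolic Moser iteration using the uniform Sobolev inequality of Lemma~\ref{uniSob}; compatibility and admissibility of each $H(t)$ are then verified by the Chern--Weil and Fatou arguments of Section~\ref{6}. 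Your own closing remark identifies this flow route but dismisses it as less economical; in fact it is the route that works, and the ``boundary estimates'' you hoped to avoid are unavoidable either way.
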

As a consequence, we obtain the parabolic version of the Bogomolov-Gieseker inequality. Let us denote \[\Delta(\pF)\coloneqq \frac{\ch_{1}(\pF)^2}{2\rank(\pF)}-\ch_{2}(\pF)\]
as the Bogomolov-Gieseker discriminant.
\begin{Corollary}[\textbf{Corollary} \ref{BGnormal}]
  If $\pF$ is $\mu_{\omega}$-semistable with respect to $\omega$, then $$\Delta(\pF)\cdot [\omega]^{n-2}\geq 0.$$ Moreover, if $\pF$ is polystable, then the equality holds if and only if ${\F}_{|X\setminus D}$ is a vector bundle which admits a projectively flat Hermitian-Einstein connection compatible with the parabolic structure.
\end{Corollary}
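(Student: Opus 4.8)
The plan is to deduce the inequality from Theorem~\ref{Semistable} and the rigidity statement from the sharp metric of Theorem~\ref{Stable}, the common engine being the Chern--Weil computation of the discriminant in terms of the trace-free curvature. The starting algebraic observation is that for any Hermitian metric $H$ on the regular part, with Chern curvature $F_{H}$ and trace-free part $F_{H}^{\perp}\coloneqq F_{H}-\frac{1}{\rank(\pF)}\tr(F_{H})\,\id$, the Chern--Weil representative of the discriminant is
\begin{equation*}
  \Delta(\pF)=-\frac{1}{2}\left(\frac{\sqrt{-1}}{2\pi}\right)^{2}\tr\!\left(F_{H}^{\perp}\wedge F_{H}^{\perp}\right),
\end{equation*}
because in the expansion of $\frac{\ch_{1}^{2}}{2\rank}-\ch_{2}$ the pure-trace contribution cancels exactly against the $\frac{1}{2\rank}(\tr F_{H})^{2}$ term. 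Wedging with $\omega^{n-2}$ and applying the pointwise K\"ahler identity for endomorphism-valued $(1,1)$-forms turns this $(2,2)$-form into $\frac{c}{2}\bigl(|F_{H}^{\perp}|^{2}-|\Lambda F_{H}^{\perp}|^{2}\bigr)$ times the volume form, for a positive universal constant $c$, so the entire corollary is reduced to controlling the two curvature norms.

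For the inequality I would argue as follows. By Theorem~\ref{Semistable}, semistability produces a family of admissible metrics $H_{\epsilon}$ compatible with $\pF$ for which $\lvert\Lambda F_{H_{\epsilon}}^{\perp}\rvert\to 0$. The crucial input — and the place where the compatibility and admissibility hypotheses are genuinely used — is that Chern--Weil integration against these singular metrics computes the \emph{parabolic} characteristic numbers: the $L^{2}$ bound on $F_{H_{\epsilon}}$ together with $\Lambda F_{H_{\epsilon}}\in L^{\infty}$ guarantees that the integrals over $X^{\circ}\setminus D$ converge and, after accounting for the weight contributions along $D$ and the removal of the codimension $\geq 3$ locus $W$, reproduce $\ch_{1}(\pF)$ and $\ch_{2}(\pF)$, hence $\Delta(\pF)\cdot[\omega]^{n-2}$. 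Since the left-hand side is then a fixed topological quantity while $\int\lvert\Lambda F_{H_{\epsilon}}^{\perp}\rvert^{2}\to 0$, the identity above forces
\begin{equation*}
  \Delta(\pF)\cdot[\omega]^{n-2}=\lim_{\epsilon\to0}\frac{c}{2}\int_{X^{\circ}\setminus D}\lvert F_{H_{\epsilon}}^{\perp}\rvert^{2}\,\frac{\omega^{n}}{n!}\geq0 .
\end{equation*}

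For the equality case I would invoke Theorem~\ref{Stable}: when $\pF$ is polystable there is a genuine admissible Hermitian--Einstein metric $H$, so $\Lambda F_{H}^{\perp}\equiv0$ and $\Delta(\pF)\cdot[\omega]^{n-2}=\frac{c}{2}\int\lvert F_{H}^{\perp}\rvert^{2}\geq0$, with equality if and only if $F_{H}^{\perp}\equiv0$ on $X^{\circ}\setminus D$, i.e. the induced connection is projectively flat. It then remains to upgrade projective flatness on $X^{\circ}\setminus D$ to the assertion that ${\F}_{|X\setminus D}$ is genuinely a vector bundle: a projectively flat admissible metric realizes the reflexive sheaf locally as a flat projective bundle, and a Bando--Siu type removable-singularity argument across $W$ then forces local freeness, yielding the projectively flat Hermitian--Einstein vector bundle. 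The converse is immediate, since such a bundle has $F_{H}^{\perp}\equiv0$ pointwise and hence $\Delta(\pF)\cdot[\omega]^{n-2}=0$.

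The main obstacle is not the curvature algebra but the analytic justification that the singular Chern--Weil integrals of the admissible metrics correctly compute the parabolic numbers $\ch_{1}(\pF)$ and $\ch_{2}(\pF)$. One must estimate the integrands near the simple normal crossing divisor $D$ (where the parabolic weights enter) and near the bad locus $W$, using admissibility to ensure convergence and to discard boundary terms in the integration by parts; this is precisely the step that requires $\pF$ to be saturated reflexive and the metrics to be compatible with the parabolic structure.
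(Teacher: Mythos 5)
Your proposal follows essentially the same route as the paper, whose entire proof is ``an obvious modification to the calculations in~\eqref{Fatou}'': namely the Chern--Weil/Fatou chain relating the parabolic characteristic numbers computed by the adapted metric $\widehat{H}$ to the curvature norms of the flow metrics, specialized to the trace-free curvature and combined with Theorems~\ref{Stable} and~\ref{Semistable}. The only caveat is that your final display asserts an exact equality where the paper's chain (monotonicity of $\int\tr(F\wedge F)\wedge\omega^{n-2}$ along the flow plus Fatou's lemma) only yields ``$\geq$'' --- but that inequality is all that is needed for both the nonnegativity and the rigidity.
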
 
More generally, 
\begin{Theorem}[\textbf{Theorem} \ref{nefbigBG}]
  Let $\pF$ be a saturated reflexive parabolic sheaf over a compact K\"ahler manifold $(X, \omega)$ which is semistable with respect to a nef and big class $[\eta]$. Then the Bogomolov-Gieseker inequality with respect to $[\eta]$ holds, i.e.,
  \begin{equation*}
    \Delta(\pF)\cdot [\eta]^{n-2}\geq 0.
  \end{equation*}
\end{Theorem}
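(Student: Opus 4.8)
The plan is to deduce the statement from the Bogomolov--Gieseker inequality already proved for K\"ahler classes (Corollary~\ref{BGnormal}) by approximating the nef and big class $[\eta]$ by K\"ahler classes and running an induction on the rank. I fix an auxiliary K\"ahler metric $\omega$ and set $[\eta_{t}]\coloneqq [\eta]+t[\omega]$, which is a K\"ahler class for every $t>0$ and converges to $[\eta]$ as $t\to 0^{+}$. Since slopes and the discriminant depend polynomially on the class, every intersection number against $[\eta_{t}]^{k}$ is a polynomial in $t$ converging to the one against $[\eta]^{k}$; this is what lets me pass to the limit at the end. The two ingredients I would isolate are Corollary~\ref{BGnormal} for the perturbed K\"ahler classes and a Hodge index inequality valid for nef and big classes.

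The key cohomological input is the following: \emph{if $\xi\in H^{1,1}(X,\mathbb{R})$ satisfies $\xi\cdot[\eta]^{n-1}=0$, then $\xi^{2}\cdot[\eta]^{n-2}\leq 0$.} To prove it I would consider the symmetric form $q_{t}(\alpha,\beta)=\alpha\cdot\beta\cdot[\eta_{t}]^{n-2}$ on $H^{1,1}(X,\mathbb{R})$. For each $t>0$ the classical Hodge index theorem gives $q_{t}$ signature $(1,h^{1,1}-1)$, and the eigenvalues of $q_{t}$ vary continuously, so the limit $q_{0}(\alpha,\beta)=\alpha\cdot\beta\cdot[\eta]^{n-2}$ has at most one positive eigenvalue; since $[\eta]$ is nef and big, $q_{0}([\eta],[\eta])=[\eta]^{n}>0$, so it has exactly one. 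Because $q_{0}([\eta],\xi)=\xi\cdot[\eta]^{n-1}$, the hypothesis places $\xi$ in the $q_{0}$-orthogonal complement of the positive vector $[\eta]$, on which a form of this signature is negative semidefinite; hence $\xi^{2}\cdot[\eta]^{n-2}=q_{0}(\xi,\xi)\leq 0$.

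Now I argue by induction on $r=\rank(\pF)$, the rank-one case being immediate since $\Delta$ vanishes there. For the inductive step, if $\pF$ is $\eta_{t}$-semistable along some sequence $t_{k}\to 0^{+}$, then Corollary~\ref{BGnormal} gives $\Delta(\pF)\cdot[\eta_{t_{k}}]^{n-2}\geq 0$ and letting $k\to\infty$ proves the claim. Otherwise $\pF$ is $\eta_{t}$-unstable for all small $t>0$; using boundedness of the maximal destabilizing subsheaves I would extract a single saturated subsheaf $\pS$ with $\mu_{\eta_{t_{k}}}(\pS)>\mu_{\eta_{t_{k}}}(\pF)$ along some $t_{k}\to 0^{+}$, whence $\mu_{\eta}(\pS)\geq\mu_{\eta}(\pF)$ in the limit. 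Semistability with respect to $[\eta]$ then forces $\mu_{\eta}(\pS)=\mu_{\eta}(\pF)=\mu_{\eta}(\mathcal{Q}_{*})$, where $\mathcal{Q}_{*}=\pF/\pS$, and both $\pS$ (saturated, hence reflexive) and the reflexive hull $\mathcal{Q}_{*}^{**}$ have rank $<r$, so the induction hypothesis applies. Feeding this into the discriminant formula for the extension $0\to\pS\to\pF\to\mathcal{Q}_{*}\to 0$,
\[
  \Delta(\pF)=\Delta(\pS)+\Delta(\mathcal{Q}_{*})-\frac{1}{2\,r\,r_{1}r_{2}}\bigl(r_{2}\,\ch_{1}(\pS)-r_{1}\,\ch_{1}(\mathcal{Q}_{*})\bigr)^{2},\qquad r_{1}=\rank(\pS),\ r_{2}=\rank(\mathcal{Q}_{*}),
\]
and setting $\xi\coloneqq r_{2}\,\ch_{1}(\pS)-r_{1}\,\ch_{1}(\mathcal{Q}_{*})$, equality of slopes gives $\xi\cdot[\eta]^{n-1}=0$, so the Hodge index inequality makes $-\,\xi^{2}\cdot[\eta]^{n-2}\geq 0$. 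Moreover, passing from $\mathcal{Q}_{*}$ to $\mathcal{Q}_{*}^{**}$ only increases $\ch_{2}$ by the class of a sheaf supported in codimension $\geq 2$, which is effective and pairs nonnegatively with the nef power $[\eta]^{n-2}$, so $\Delta(\mathcal{Q}_{*})\cdot[\eta]^{n-2}\geq\Delta(\mathcal{Q}_{*}^{**})\cdot[\eta]^{n-2}\geq 0$. With $\Delta(\pS)\cdot[\eta]^{n-2}\geq 0$ all three contributions are nonnegative, and $\Delta(\pF)\cdot[\eta]^{n-2}\geq 0$ follows.

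The step I expect to be hardest is the boundedness used in the unstable case: one must ensure that the maximal $\eta_{t}$-destabilizing \emph{parabolic} subsheaves stay in a bounded family as $t\to 0^{+}$, so that a single $\pS$ can be chosen along a subsequence (equivalently, that the chamber structure of $t$-stability has only finitely many walls near $t=0$). This requires transplanting the usual boundedness theory for torsion-free sheaves into the saturated reflexive parabolic framework of Section~\ref{2}, together with additivity of the parabolic Chern character in short exact sequences, both of which are used freely above. By contrast the Hodge index step, although essential, is comparatively soft once continuity of the eigenvalues of $q_{t}$ is set up.
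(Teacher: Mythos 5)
Your overall architecture is sound and lands on the same two pillars as the paper --- reduction to the K\"ahler--class case (Corollary~\ref{BGnormal}) via the perturbation $[\eta]+t[\omega]$, plus a Hodge-index/discriminant-additivity argument for pieces of equal slope --- but you organize it differently. The paper does not induct on rank or look at destabilizing subsheaves for the perturbed classes. Instead it takes the Jordan--H\"older filtration of $\pF$ with respect to $[\eta]$ itself, proves an \emph{openness of stability} statement (Lemma~\ref{openstable}): each $\eta$-stable graded piece remains $\eta_{\epsilon}$-stable for small $\epsilon$, applies Corollary~\ref{BGnormal} to each piece (after passing to reflexive saturations, via Lemmas~\ref{stableofreflexivacation} and~\ref{BGreflexivication}), and then absorbs your entire Hodge-index computation into the cited standard fact that $\Delta$ of an extension of equal-slope pieces with nonnegative discriminants is nonnegative. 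Your explicit derivation of the extension formula for $\Delta$ and of the Hodge index inequality for nef and big classes (via continuity of the eigenvalues of $q_{t}$) is correct and is a reasonable self-contained replacement for that citation. One small omission in your inductive step: to apply the induction hypothesis you need $\pS$ and $\mathcal{Q}_{*}^{**}$ to be $\eta$-\emph{semistable}; this does hold --- once all three slopes coincide and $\pF$ is $\eta$-semistable, every subsheaf of $\pS$ and every quotient of $\mathcal{Q}_{*}$ is respectively a subsheaf or quotient of $\pF$ --- but it should be said, and $\pS$ should also be replaced by its parabolic reflexive saturation to literally meet the hypotheses of the induction.

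The genuine gap is exactly the step you flag: extracting a single subsheaf $\pS$ with $\mu_{\eta_{t_k}}(\pS)>\mu_{\eta_{t_k}}(\pF)$ along a sequence $t_{k}\to 0^{+}$. Appealing to ``boundedness of maximal destabilizing subsheaves'' is not available off the shelf here: $X$ is only K\"ahler (not projective), the objects are parabolic, and no boundedness theory for saturated reflexive parabolic sheaves is developed in the paper. This is precisely where the paper spends its technical effort, and it does so by a different device that you could borrow: Lemma~\ref{uniformgap} writes $[\eta]^{n-1}$ as a nonnegative rational combination of classes representable by positive currents and uses integrality of $\ch_{1}(\mathcal{S})$ and $[D_{i}]$ together with finiteness of the parabolic weights to show that $\ch_{1}(\pS)\cdot[\eta]^{n-1}$ takes only \emph{finitely many values} on the relevant family of subsheaves (those with degree bounded below, the lower bound coming from the adapted-metric/Chern--Weil estimate in the proof of Lemma~\ref{openstable}). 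That finiteness, combined with the uniform bound on the perturbation term $\phi(\epsilon)$, is what substitutes for boundedness and makes the $t\to 0^{+}$ limit controllable. Without this (or an equivalent) input, your ``otherwise'' branch does not close, so as written the proposal is incomplete at its critical step even though the surrounding skeleton is correct.
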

\subsection{Outline of the paper}
\begin{itemize}
  \item $\S$\ref{2}: We introduce the abelian category whose objects are parabolic sheaves. Roughly speaking, a parabolic sheaf $\pF$ is a decreasing filtration of sheaves which only degenerate on a divisor \(D\). We discuss the notions of the morphisms, the Chern characters, the degrees, the stability condition, etc. We show that a saturated reflexive parabolic sheaf $\pF$ is a parabolic bundle outside an analytic set $W$ with codimension at least $3$. 
  \item $\S$\ref{3}: We use the Hironaka's theorem~\cite[p. 145, Corollary 2]{Hr} to resolve the singularities of $\pF$ by blowing up successively along the smooth centers above $W$. In summary, we will have a modification $\pi: \widetilde{X}\to X$. Let $\mathscr{E}$ be the exceptional divisor. To simplify notation, we use the same notation for objects pulled back by $\pi$. Then, over $\widetilde{X}$, $\F$ can be embedded into a locally free sheaf $E$ which is isomorphic to \(\F\) outside \(\mathscr{E}\). The embedding will give rise to an isomorphism between $\F$ and $E\otimes[-Q]$ where $Q$ is an effective divisor supported on $\mathscr{E}$. Then we show that by blowing up further and adding more components of $\mathscr{E}$ to $Q$, $E\otimes[-Q]$ will become a parabolic bundle $E\otimes[-Q]_{*}$ with the parabolic structure inherited from $\pF$. 
  \item $\S$\ref{4}: We first construct a smooth Hermitian metric $H_{0}$ on $E\otimes[-Q]_*$ which is adapted to its parabolic structure. Then we show that the curvature tensor of $H_{0}$ will give us the first and second Chern characters of $E\otimes[-Q]_*$ as in the case of ordinary vector bundles. We believe that the parabolic metric we have constructed will give us all of the parabolic Chern characters. In particular, it will induce a metric $\widehat{H}$ on $E$ by tensoring with a smooth metric of the line bundle $[Q]$ which in turn will induce a metric on the regular part of $\pF$ since $\widetilde{X}$ and $X$ are isomorphic outside $W$. As one can expect, the curvature tensor of $\widehat{H}$ will give us the first and second Chern characters of $\pF$. 
  \item $\S$\ref{5}: We recall the concept of the analytic stability of a vector bundle with respect to a Hermitian metric on the bundle and a K\"ahler form on the base manifold. More importantly, we show that the analytic stability of the Hermitian holomorphic bundle $(F_{|X^{\circ}\setminus D}, \widehat{H})$ is equivalent to the stability of the parabolic sheaf $\pF$.  
  \item $\S$\ref{6}: We investigate the long time existence of the Hermitian-Yang-Mills flow 
  \[
  \left\{
  \begin{aligned}
  &H^{-1}\cdot\frac{\diff H}{\diff t}=-2\left(\sqrt{-1}\Lambda_{\omega}F_{H}-\lambda\cdot\id_{F}\right) \\
  &\det(H)=\det(\widehat{H})\\
  &H(0)=\widehat{H}
  \end{aligned}
  \right.
  \]
  living on the bundle $F_{|X^{\circ}\setminus D}$ where $$\lambda\coloneqq \frac{2\pi\cdot \mu_{\omega}(\pF)}{\operatorname{Vol}(X,\omega)}.$$ 
  Once we have the long time existence of the flow, we will show that under the stability (resp. semistability) condition, the flow will converge to an admissible Hermitian-Einstein metric (resp. a family of admissible approximate Hermitian-Einstein metrics) on $F_{|X^{\circ}\setminus D}$ which is compatible with $\pF$. We will also prove the converse part where the admissibility of the Hermitian-Einstein metric is crucial.
  \item $\S$\ref{7}: We prove the Bogomolov-Gieseker type inequality for a semistable parabolic sheaf with respect to a big and nef class using Jordan-H\"older filtration.
\end{itemize}
\section*{Acknowledgement}
The first author is deeply grateful to Professor Takuro Mochizuki for his valuable discussions on the construction of the parabolic metric and the proof of its adaptedness.

The first author thanks Shiyu Zhang for organizing a meaningful seminar on this topic from which the author benefited greatly. The first author would also like to thank Changpeng Pan and Yang Zhou for useful discussions. 

\section{Elementary notions on parabolic sheaves}\label{2}
We review the elementary notions on parabolic sheaves. These concepts can be found, e.g., in \cite[Chapter 3]{Mo1}, \cite[Section 2]{Iy-Si}, \cite[Section 1]{Ma-Yo}. The definitions of parabolic sheaves introduced in these papers differ slightly but are all similar in spirit. Indeed, it was discussed in \cite{Iy-Si} that the diversified definitions appeared in the literature are equivalent when restricted to locally abelian parabolic bundles. In this paper, we basically follow the definitions introduced in \cite{Mo1}. 

Let $(X, D)$ be a pair of complex manifold $X$ and $D=\bigcup_{i\in I}D_{i}$ a simple normal crossing divisor. We impose a linear order on $I$. We introduce parabolic bundles associated to $(X, D)$ first and then do the generalizations to sheaves. If we say a generic property $P$ holds in codimension $k-1$, we mean that $P$ holds outside an analytic subset of codimension $k$. 

\subsection{Parabolic bundles}

We start with the notion of compatible filtrations. Fix a vector space $V$.
\begin{Definition}\label{def:filtration}
  A decreasing left continuous filtration of $V$ is a set of subspaces $F=\{F_{a}| a\in [0, 1[\}$ indexed by $a\in [0, 1[$ such that:
  \begin{enumerate}
    \item $F_{a}\supseteq F_{b}$ for $a\leq b$,
    \item $F_{0}=V$ and $F_{b}=0$ for $b$ sufficiently close to 1,
    \item If $\epsilon$ is sufficiently small, then $F_{a-\epsilon}=F_{a}$.
  \end{enumerate}
  All filtrations appear in this paper will be decreasing and left continuous hence we will simply call them filtrations in the rest of the paper.
\end{Definition}\label{def:flags}
   Given a tuple of filtrations $\boldsymbol{F}=\{{}^{i}F| i\in I\}$. For any $J\subset I$, $\boldsymbol{\eta}\in [0,1[^{J}$, we put as follows:
   \begin{equation*}
      {}^{J}F_{\boldsymbol{\eta}}=\bigcap_{j\in J}{}^{j}F_{\eta_{j}}
   \end{equation*}
\begin{Definition}\label{def:pcompatible}
  A tuple of filtrations $\boldsymbol{F}=\{{}^{i}F| i\in I\}$ is compatible if they admit a common splitting, i.e. a decomposition $V =\boldsymbol{\scalebox{1.3}{$\oplus$}}_{\boldsymbol{\eta} \in [0, 1[^{I}}U_{\boldsymbol{\eta}}$ such that:
  \begin{equation*}
    {}^{I}F_{\boldsymbol{\rho}}=\bigoplus_{\boldsymbol{\eta}\geq \boldsymbol{\rho}}U_{\boldsymbol{\eta}}
  \end{equation*}
\end{Definition}
For a coherent sheaf $\mathcal{V}$ over a complex manifold $M$, we define a decreasing left continuous filtration of subsheaves in the same manner. A filtration of subsheaves $\F=\{\F_{a}| a\in [0, 1[\}$ is called in the category of vector bundles, if all the subsheaves are locally free and $\Gr_{a}\coloneqq \F_{a}/\F_{>a}$ is locally free for all $a$.

\begin{Definition}\label{def:vcompatible}
  A tuple of vector bundle filtrations $\boldsymbol{F}=\{{}^{i}F| i\in I\}$ is compatible if:
  \begin{enumerate}
    \item For any $x\in X$, $\boldsymbol{F}_{x}$ is compatible in the sense of Definition \ref{def:pcompatible},
    \item For any $J\subset I$ and $\rho\in [0, 1[^{J}$, ${}^{J}F_{\rho}$ is a subbundle.
  \end{enumerate}
\end{Definition}
Let $F$ be a vector bundle over $X$. On each irreducible component $D_{i}$, one can specify a vector bundle filtration ${}^{i}F$ of $F|_{D_{i}}$. Hence one obtain a tuple of filtrations $\boldsymbol{F}=\{{}^{i}F| i\in I\}$. For any $J\subset I$, we put $D_{J}=\bigcap_{j\in J}D_{j}$. Then ${}^{J}\boldsymbol{F}$ is a tuple of filtrations of the vector bundle $F_{|D_{J}}$.
\begin{Definition}
  The tuple $(F, \bF)$ given as above is called a parabolic bundle, if for all $J\subset I$, restricting to any irreducible component of $D_{J}$, ${}^{J}\boldsymbol{F}$ is compatible in the sense of Definition \ref{def:vcompatible}. $\bF$ is called the parabolic structure on $F$.  
\end{Definition}
\begin{Definition}
  A parabolic bundle $(F, \bF)$ is called locally abelian, if for each $x\in X$, there is a neighborhood $U_{x}$ of $x$ such that $(F, \bF)_{|U_{x}}$ is isomorphic to a direct sum of parabolic line bundles in the category of parabolic sheaves \textnormal{(see the definition in the next subsection)}.
\end{Definition}
N. Borne \cite{Bo} shows
\begin{Theorem}\label{Locally Abelian}
  Given a parabolic bundle $(F, \bF)$ over $(X, D)$. If all of the intersections of the subsheaves $\fF_{a}$ defined by the exact sequence $$
  0\to \fF_{a} \to F \to F_{|D_{i}}/{}^{i}F_{a} \to 0
$$
are locally free, then $(F, \bF)$ is locally abelian.
\end{Theorem}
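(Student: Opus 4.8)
Since being locally abelian is a local condition, the plan is to fix a point $x\in X$ and, after shrinking to a polydisc $U$ with coordinates in which $D\cap U=\{z_1\cdots z_k=0\}$ and $D_i\cap U=\{z_i=0\}$ (discarding the components not passing through $x$ and trivialising $F$), to produce an isomorphism of parabolic bundles $(F,\bF)|_U\cong\boplus_{j=1}^r(\mathcal O_U e_j,\boldsymbol b_j)$ onto a direct sum of parabolic line bundles. Unwinding the definitions, such a splitting is the same datum as a holomorphic frame $e_1,\dots,e_r$ of $F|_U$ that is \emph{simultaneously adapted} to every filtration, i.e. for each $i$ and each weight $a$ the sub-collection $\{e_j|_{D_i}:(b_j)_i\ge a\}$ is a frame of ${}^iF_a$. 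Thus the whole statement reduces to constructing one simultaneously adapted frame near $x$, and this is exactly where the hypothesis on the intersection sheaves must enter: without it the three-flag phenomenon (three pairwise-distinct lines in a rank-two fibre over a triple point, pointwise compatible but not placeable in a common holomorphic frame) already obstructs the construction, and one checks that such a configuration makes the relevant intersection sheaf fail to be locally free.

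The objects carrying the hypothesis are the intersection sheaves $\mathcal G_{\boldsymbol b}:=\bigcap_{i}{}^{i}\F_{b_i}$ for $\boldsymbol b\in[0,1[^{\,k}$, each assumed locally free; they form a finite multifiltration of $F$ by subbundles that all agree with $F$ off $D$. At $x$ the compatibility built into the definition of a parabolic bundle (Definition \ref{def:vcompatible}) supplies a common splitting of the fibre $F_x=\boplus_{\boldsymbol\eta}U_{\boldsymbol\eta}$ with $({}^iF_a)|_x=\boplus_{\eta_i\ge a}U_{\boldsymbol\eta}$; choose a basis $v_1,\dots,v_r$ of $F_x$ subordinate to it, so that each $v_j$ lies in some $U_{\boldsymbol b_j}$ and acquires a weight vector $\boldsymbol b_j$. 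It then suffices to lift each $v_j$ to a holomorphic section $e_j$ of $\mathcal G_{\boldsymbol b_j}$ on $U$ with $e_j(x)=v_j$: membership in $\mathcal G_{\boldsymbol b_j}$ forces $e_j|_{D_i}\in{}^iF_{(b_j)_i}$ over all of $D_i$, while $\{e_j(x)\}=\{v_j\}$ being a basis of $F_x$ makes $\{e_j\}$ a frame of $F$ near $x$ by Nakayama; a rank count against the splitting then shows this frame is simultaneously adapted, which is precisely what we need.

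Everything therefore rests on the following lifting statement, which I regard as the heart of the matter: \emph{if $\mathcal G_{\boldsymbol b}$ is locally free near $x$, then the fibre evaluation $\mathcal G_{\boldsymbol b}\otimes k(x)\to F_x$ has image exactly $\bigcap_i({}^iF_{b_i})|_x$}, the inclusion $\subseteq$ being automatic and $U_{\boldsymbol b}$ lying in the right-hand side. I would prove this from the short exact sequence $0\to\mathcal G_{\boldsymbol b}\to F\to Q\to 0$ with $Q$ supported on $D$: tensoring with $k(x)$ identifies the image of the evaluation with $\ker(F_x\to Q\otimes k(x))$, so the claim becomes the computation that $\dim_{\mathbb C}Q\otimes k(x)$ equals the codimension of $\bigcap_i({}^iF_{b_i})|_x$ in $F_x$. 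This equality is where the full force of the hypothesis is spent: running an induction on the multi-index $\boldsymbol b$ through the short exact sequences $0\to\mathcal G_{\boldsymbol b+\epsilon\mathbf e_i}\to\mathcal G_{\boldsymbol b}\to\mathcal Q_i\to 0$, in which local freeness of both ends forces the quotient $\mathcal Q_i$ to be locally free on $D_i$, lets one control the minimal number of generators of $Q$ and pin down this dimension. The main obstacle is exactly this homological bookkeeping: one must rule out the ``extra'' generators of $Q$ (equivalently, the extra generators of $\mathcal G_{\boldsymbol b}$ that appear in the non-locally-free case) which would otherwise make the evaluation drop rank, and it is precisely local freeness of all the intersection sheaves that excludes them. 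Conceptually this is the same mechanism by which, under Borne's correspondence, a parabolic bundle with locally free intersection sheaves is a genuine locally free sheaf on the root stack of $(X,D)$, whose étale-local splitting into line bundles descends to the locally abelian structure.
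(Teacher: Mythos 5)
The paper does not actually prove this statement; it is quoted from Borne \cite{Bo}, so there is no internal argument to compare yours against. The outer layers of your plan are sound and are the natural ones: reducing local abelianity to the existence of a simultaneously adapted frame, taking a basis $v_{1},\dots,v_{r}$ of $F_{x}$ subordinate to the pointwise common splitting, and observing that once each $v_{j}$ is lifted to a section $e_{j}$ of $\mathcal{G}_{\boldsymbol{b}_{j}}=\bigcap_{i}{}^{i}\F_{(b_{j})_{i}}$, Nakayama plus a rank count makes $\{e_{j}\}$ adapted. You also isolate the correct key lemma (surjectivity of $\mathcal{G}_{\boldsymbol{b}}\otimes k(x)\to\bigcap_{i}({}^{i}F_{b_{i}})|_{x}$), and your homological observation that a quotient of two locally free sheaves annihilated by $z_{i}$ is automatically locally free over $\mathcal{O}_{D_{i}}$ is correct (Auslander--Buchsbaum). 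A side remark: your motivating configuration of three pairwise-distinct lines in a rank-two fibre is \emph{not} pointwise compatible --- three distinct lines in $\mathbb{C}^{2}$ admit no common splitting --- so it is excluded by the definition of a parabolic bundle itself, not by the local-freeness hypothesis; this suggests the hypothesis does not bite quite where you think it does.

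The genuine gap sits exactly where you flag ``the main obstacle'': the dimension count for $Q_{\boldsymbol{b}}=F/\mathcal{G}_{\boldsymbol{b}}$ is asserted, not proved, and the induction you sketch does not close. From $0\to\mathcal{Q}_{i}\to Q_{\boldsymbol{b}'}\to Q_{\boldsymbol{b}}\to 0$ with $\mathcal{Q}_{i}=\mathcal{G}_{\boldsymbol{b}}/\mathcal{G}_{\boldsymbol{b}'}$ locally free over $\mathcal{O}_{D_{i}}$ you only get $\dim Q_{\boldsymbol{b}'}\otimes k(x)\le\dim Q_{\boldsymbol{b}}\otimes k(x)+\operatorname{rank}\mathcal{Q}_{i}$, and $\operatorname{rank}\mathcal{Q}_{i}$ counts \emph{all} graded pieces of ${}^{i}F$ with $i$-th weight in $[b_{i},b_{i}')$, not only those with full multi-weight $\geq\boldsymbol{b}$; so the bound overshoots whenever the filtrations interact. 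Concretely, for two components, $r=2$, one jump on each $D_{i}$ given by a line subbundle $L_{i}$ with $L_{1}|_{x}=L_{2}|_{x}$, the required answer is $\dim Q_{(a_{1},a_{2})}\otimes k(x)=1$ while your induction only gives $\le 2$: the map $\mathcal{Q}_{2}\otimes k(x)\to Q_{(a_{1},a_{2})}\otimes k(x)$ has a kernel (a $\operatorname{Tor}_{1}$ contribution) that a generator count cannot see, and no mechanism is supplied for converting local freeness, or the compatibility of the flags along the strata $D_{J}$, into the needed cancellation. A route that avoids this entirely is to build the lift of $v_{j}$ directly: extend $v_{j}$ to a holomorphic section of the subbundle ${}^{I}F_{\boldsymbol{b}_{j}}$ on the deepest stratum through $x$, propagate it stratum by stratum to compatible sections of the ${}^{i}F_{(b_{j})_{i}}$ over the $D_{i}$, glue over $D$ using the exactness of $0\to\mathcal{O}_{\bigcup D_{i}}\to\bigoplus_{i}\mathcal{O}_{D_{i}}\to\bigoplus_{i<j}\mathcal{O}_{D_{ij}}\to\cdots$ for a normal crossing configuration, and lift to $F$; this uses the subbundle conditions on every $D_{J}$ rather than the generator count. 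As written, however, the heart of your argument is a sketch whose obvious implementation fails already in the simplest nontrivial example.
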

\begin{Remark}
  If the base space is a smooth algebraic variety, Borne's theorem is stronger. It says that the parabolic bundle is isomorphic to a direct sum of parabolic line bundles in any Zariski neighborhood.
\end{Remark}
\subsection{Parabolic sheaves}
Let $\F$ be a torsion-free coherent sheaf of $\mathcal{O}_{X}$-modules.
\begin{Definition}
  A parabolic structure on $\F$ is a tuple $\bcF=\{ \fF \mid i \in I \}$ of decreasing left continuous filtrations $\fF_{a} $ indexed by $a \in [0, 1[$ with finite length $l_{i}$ such that $\fF_{a} \supset \F(-D_{i})  $ for any $a \in [0, 1[$ and $\fF_{a} = \F(-D_{i})$ if $a$ is sufficiently large.
\end{Definition}
For the filtration $\fF$ over $D_{i}$, let ${}^{i}a=\{{}^{i}a_{0}, {}^{i}a_{1},\cdots, {}^{i}a_{l_{i}}\}$ be the increasing sequence of indexes such that ${}^{i}\Gr_{{}^{i}a_{k}}\pF\neq 0$. This is often called the weights of the parabolic structure on $D_{i}$. We call $\F$ the top flag and $\F(-D_{i})$ the handle in the filtration $\fF$.
\begin{Definition}
  A parabolic sheaf is a tuple $\pF=\left(\F, \bcF\right)$ consisting of a underlying sheaf $\F$ and a parabolic structure $\bcF$ on $\F$.
\end{Definition}
\begin{Definition}
  A parabolic subsheaf of $\pF=\left(\F, \bcF\right)$ is defined as a quotient torsion-free subsheaf $\E$ with the naturally induced parabolic structure. 
\end{Definition}

\begin{Definition}
  A morphism $f: \pF\to \pG$ is defined as a morphism of sheaves $f: \F\to \G$ such that $f(\fF_{a})\subseteq \fG_{a}$ for all $a\in [0, 1[$.
\end{Definition}
We denote the set of the morphisms between two parabolic sheaves $\pF$ and $\pG$ as $\Hom(\pF, \pG)$.

\begin{Definition}
  A complex $$\cdots \to \pE\to \pF \to \pG\to \cdots$$ is exact at $\pF$, if and only if $$\cdots \to \fE_{a}\to \fF_{a}\to \fG_{a}\to \cdots$$ is exact at $\fF_{a}$ for all $i$ and $a$.
\end{Definition}
A parabolic bundle gives rise to a parabolic sheaf in the following way. Given a parabolic bundle $F_{*}$, one define the parabolic structure $\fF_{a}$ by the exact sequence $$
  0\to \fF_{a} \to F \to F_{|D_{i}}/{}^{i}F_{a} \to 0.
$$
Conversely, how far away is a parabolic sheaf from a parabolic bundle? The following lemmas give a partial answer.

We say that a parabolic sheaf $\pF$ is reflexive if the underlying sheaf $\F$ is reflexive. And $\pF$ is called saturated if $\F/\fF_{a}$ are torsion-free $\mathcal{O}_{D_{i}}$-modules for any $i$ and $a$. We put $${\fF_{a}}_{|D_{J}}\coloneqq \Im ({}^{J}m^{*}(\fF_{a})\to {}^{J}m^{*}(\F))$$ where ${}^{J}m$ is the embedding of $D_{J}$.

\begin{Lemma}\label{lem:reflexive}
  All the intersections of subsheaves belonging to the parabolic structure of a saturated reflexive parabolic sheaf are reflexive.
  \begin{proof}
    T. Moichizuki~\cite[Proposition 3.1.2]{Mo1} shows that all the subsheaves $\fF_{a}$ are reflexive. Then the lemma follow from the fact that the intersection of two reflexive subsheaves of a reflexive sheaf is reflexive. 
  \end{proof}
\end{Lemma}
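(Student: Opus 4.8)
The plan is to reduce the statement to two standard facts about reflexive sheaves on the smooth (hence normal) complex manifold $X$: that each individual filtration term is reflexive, and that reflexivity is inherited by the relevant intersections. I would organize everything around the codimension-two extension characterization of reflexivity, which is the cleanest tool available in the analytic category.

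First I would record the working criterion: a torsion-free coherent sheaf $\mathcal{G}$ on $X$ is reflexive if and only if for every open $U\subseteq X$ and every analytic subset $A\subseteq U$ of codimension at least two, the restriction $\Gamma(U,\mathcal{G})\to \Gamma(U\setminus A,\mathcal{G})$ is an isomorphism (the $S_2$/normality characterization). Using this, I would establish the key reduction: if $\mathcal{G}\subseteq \F$ is a subsheaf of a reflexive sheaf $\F$ with $\F/\mathcal{G}$ torsion-free, then $\mathcal{G}$ is reflexive. Indeed, $\mathcal{G}$ is torsion-free as a subsheaf of $\F$; given $s\in\Gamma(U\setminus A,\mathcal{G})\subseteq\Gamma(U\setminus A,\F)$, reflexivity of $\F$ extends $s$ to a unique $\tilde s\in\Gamma(U,\F)$, and the image of $\tilde s$ in $\F/\mathcal{G}$ is a section vanishing on the dense open set $U\setminus A$; torsion-freeness of $\F/\mathcal{G}$ forces it to vanish identically, so $\tilde s\in\Gamma(U,\mathcal{G})$.

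Next I would apply this to the parabolic data. A general intersection of members of the parabolic structure has the form $\bigcap_{j\in J}{}^{j}\F_{\eta_{j}}$, and the quotient $\F\big/\bigcap_{j\in J}{}^{j}\F_{\eta_{j}}$ embeds into $\bigoplus_{j\in J}\F/{}^{j}\F_{\eta_{j}}$. Each summand is torsion-free precisely because $\pF$ is \emph{saturated}; a finite direct sum of torsion-free sheaves is torsion-free, and a subsheaf of a torsion-free sheaf is torsion-free. Hence $\F\big/\bigcap_{j\in J}{}^{j}\F_{\eta_{j}}$ is torsion-free, and since $\F$ is reflexive the reduction above immediately yields that the intersection is reflexive. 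Equivalently, one may first invoke Mochizuki's result that each $\fF_{a}$ is reflexive and then intersect; the two routes coincide, since the intersection of reflexive subsheaves of a reflexive sheaf is reflexive by the same extension argument applied to both extensions simultaneously (two extensions of $s$ as sections of $\F$ must agree by torsion-freeness, so they lie in the intersection).

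I expect the only genuinely delicate point to be the choice of the reflexivity criterion: the whole argument hinges on the codimension-two extension property, so I would take care to state it in the analytic setting and to note that $X$, being a complex manifold, is normal so that the characterization applies verbatim. Once that tool is in place the argument is formal, and torsion-freeness of the quotient, which is exactly the saturatedness hypothesis, does all the remaining work.
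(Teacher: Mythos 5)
Your proposal reaches the same conclusion by essentially the same route as the paper: the paper simply cites Mochizuki's Proposition 3.1.2 for the reflexivity of each $\fF_{a}$ and then invokes the standard fact that an intersection of reflexive subsheaves of a reflexive sheaf is reflexive, while you unpack both steps via the codimension-two extension (Hartogs/normality) characterization. That unpacking is a reasonable and more self-contained presentation, and your closing remark correctly identifies that the two routes coincide.

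One imprecision is worth fixing. Your ``key reduction'' assumes $\F/\mathcal{G}$ is torsion-free, and you then assert that each summand $\F/{}^{j}\F_{\eta_{j}}$ is torsion-free ``because $\pF$ is saturated.'' But saturatedness in this paper means that $\F/\fF_{a}$ is a torsion-free $\mathcal{O}_{D_{i}}$-\emph{module}; as an $\mathcal{O}_{X}$-module it is supported on the divisor $D_{i}$ and is therefore a torsion sheaf, so the reduction as literally stated does not apply. The argument is nonetheless repairable: what the extension step actually requires is that $\F/\mathcal{G}$ admit no nonzero section supported on an analytic subset $A$ of codimension at least two in $X$. This follows from torsion-freeness over $\mathcal{O}_{D_{j}}$ together with the observation that $A\cap D_{j}$ is nowhere dense in $D_{j}$ (since $A$ has codimension at least two in $X$ and $D_{j}$ is a divisor), and this property is preserved under finite direct sums and passage to subsheaves, so your embedding of $\F\big/\bigcap_{j\in J}{}^{j}\F_{\eta_{j}}$ into $\bigoplus_{j\in J}\F/{}^{j}\F_{\eta_{j}}$ still does the job. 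With that one clarification your argument is complete.
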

\begin{Lemma}\label{lem:abelian}
  A saturated reflexive parabolic sheaf is a locally abelian parabolic bundle in codimension $2$.
  \begin{proof}
    Denote the sheaf by $\pF$. Since $\F$ is reflexive, $\F$ is locally free outside an analytic set $Z_{0}$ with codimension $3$. On each $D_{i}$, since $\F/\fF_{a}$ are torsion-free $\mathcal{O}_{D_{i}}$-modules, they are locally free outside an analytic subset $Z_{i}$ of codimension $2$ in $D_{i}$. Hence outside $\bigcup_{\{0,I\}}Z_{i}$, ${}^{i}\F_{|D_{i}}$ is a vector bundle filtration for all $i$. It is easy to check that for $i\neq j$, the vector bundle filtrations ${}^{i}\F_{|D_{ij}}$ and ${}^{j}\F_{|D_{ij}}$ are compatible outside an analytic subset $Z_{ij}$ of codimension $2$ in $D_{ij}$. Hence if we further delete the union of $Z_{ij}$ and $D_{J}$ with $|J|\geq 3$, $\pF$ will be a parabolic bundle. By the above lemma, all of the intersections of $\fF_{a}$ are reflexive, hence they are locally free outside an analytic subset $Z'$ of codimension $3$. Deleting $Z'$ makes $\pF$ locally abelian.  
  \end{proof}
\end{Lemma}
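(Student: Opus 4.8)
The plan is to produce an analytic subset $W \subset X$ of codimension at least $3$ (this is what ``in codimension $2$'' means, per the convention fixed in this section) such that on $X \setminus W$ the data $(\F, \bcF)$ restricts to a genuine parabolic bundle, and then to invoke Borne's criterion (Theorem~\ref{Locally Abelian}) to upgrade ``parabolic bundle'' to ``locally abelian.'' Being a parabolic bundle on an open set requires three things: the underlying $\F$ is locally free; on each $D_i$ the filtration $\fF$ is a vector bundle filtration, i.e.\ its members are subbundles of $F_{|D_i}$ with locally free graded pieces; and on every intersection $D_J$ the induced tuple of filtrations is compatible in the sense of Definition~\ref{def:vcompatible}. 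I would assemble $W$ as a union of one bad locus for each requirement and check that each piece has codimension at least $3$ in $X$.

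The first two pieces are immediate. Since $\pF$ is reflexive, $\F$ is locally free away from an analytic set $Z_0$ of codimension at least $3$. For the filtrations, saturatedness is exactly the input needed: $\F/\fF_a$ is a torsion-free $\mathcal{O}_{D_i}$-module, and a torsion-free sheaf on the smooth variety $D_i$ is locally free off an analytic subset $Z_i$ of codimension at least $2$ in $D_i$, hence of codimension at least $3$ in $X$. Off $Z_i$ the sequence $0 \to {}^iF_a \to F_{|D_i} \to \F/\fF_a \to 0$ realizes each ${}^iF_a$ as the kernel of a surjection of vector bundles, so $\fF$ is a vector bundle filtration there, with locally free graded pieces $\Gr_a$.

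The delicate piece, and the step I expect to be the main obstacle, is pairwise compatibility on the double intersections $D_{ij}$. The key structural fact here is that \emph{any two} filtrations of a single vector space always admit a common splitting in the sense of Definition~\ref{def:pcompatible} (a pair of filtrations always possesses an adapted bigrading, unlike three or more). Consequently the pointwise condition (1) of Definition~\ref{def:vcompatible} is automatic, and the only way compatibility can fail is through condition (2): the intersection ${}^iF_{\rho_i} \cap {}^jF_{\rho_j}$ might fail to be a subbundle, i.e.\ its rank might jump. Since ranks are generically locally constant, this can only happen on a proper analytic subset of $D_{ij}$; and as $D_{ij}$ already has codimension $2$ in $X$, any proper analytic subset of it has codimension at least $3$ in $X$, which is all the statement demands. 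The higher intersections $D_J$ with $|J| \ge 3$ have codimension at least $3$ to begin with and may simply be discarded wholesale.

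Finally, to pass from ``parabolic bundle'' to ``locally abelian,'' I would feed the result into Theorem~\ref{Locally Abelian}, which requires only that all intersections of the subsheaves $\fF_a$ be locally free. This is precisely where Lemma~\ref{lem:reflexive} enters: those intersections are reflexive, hence locally free away from an analytic set $Z'$ of codimension at least $3$. Setting $W$ to be the union of $Z_0$, the $Z_i$, the compatibility loci on the $D_{ij}$, the $D_J$ with $|J| \ge 3$, and $Z'$, every constituent has codimension at least $3$, so $W$ does too, and on $X \setminus W$ Borne's theorem yields local abelianness. The only genuinely nontrivial input is the generic compatibility on the $D_{ij}$, which rests on the two-filtration splitting fact together with saturatedness; everything else is codimension bookkeeping resting on reflexivity.
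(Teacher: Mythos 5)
Your proposal is correct and follows essentially the same route as the paper: reflexivity handles the underlying sheaf, saturatedness makes each $\fF$ a vector bundle filtration off a codimension-$3$ locus, pairwise compatibility on the $D_{ij}$ holds generically, the $D_J$ with $|J|\geq 3$ are discarded, and Lemma~\ref{lem:reflexive} together with Theorem~\ref{Locally Abelian} upgrades the result to local abelianness. Your explicit justification of the compatibility step (the automatic common splitting for two filtrations plus generic constancy of intersection ranks) fills in what the paper dismisses as ``easy to check,'' and the weaker codimension bound you obtain there still suffices since $D_{ij}$ already has codimension $2$ in $X$.
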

\begin{Lemma}
  A parabolic sheaf admits a unique reflexive saturation which is isomorphic to the former in codimension 1, i.e. given a parabolic sheaf $\F_{*}$, there exists a unique saturated reflexive parabolic sheaf $\F_{*}'$ and a monomorphism $m: \F_{*}\rightarrow \F_{*}'$ which is an isomorphism in codimension 1.
  \begin{proof}
    Let $\F'$ be the double dual of $\F$. They are isomorphic outside an analytic subset $Z$ of codimension 2. Since $\F'$ is normal, we define a parabolic structure $\boldsymbol{\F}^{1}$ as the unique extension of $\boldsymbol{\F}_{|X-Z}$ in $\F'$. The subsheaves in $\boldsymbol{\F}^{1}$ are coherent due to Theorem 2.2 of \cite{Si-Tr}. Moreover, the top flag of ${}^{i}\F$ is $\F'$ and the handle is $\F'(-D_{i})$. Hence $\F'/{}^{i}\F_{a}^{1}$ can be regarded as a sheaf of $\mathcal{O}_{D_{i}}$-modules. Finally we construct ${}^{i}\F_{a}'$ as the inverse image of the $\mathcal{O}_{D_{i}}$-torsion subsheaf of $\F'/{}^{i}\F_{a}^{1}$ under the quotient map. One readily verifies that $\F'_{*}=(\F', \boldsymbol{\F}')$ serves as an candidate. The uniqueness follows immediately from Lemma \ref{lem:reflexive} and the fact that a reflexive sheaf is normal.
  \end{proof}
\end{Lemma}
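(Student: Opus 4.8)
The plan is to construct $\F_*'$ in two stages---first replace the underlying sheaf by its reflexive hull, then extend and saturate the filtrations---and to deduce uniqueness from the rigidity of reflexive sheaves. Since $\F$ is torsion-free, the canonical map $\F \to \F' \coloneqq \F^{**}$ into the double dual is injective with cokernel supported on an analytic subset $Z$ of codimension at least $2$. This map serves as the underlying morphism of $m$ and already exhibits it as an isomorphism in codimension $1$; since $\F'$ is reflexive, the reflexivity of the underlying sheaf is automatic.

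Next I would transport the parabolic structure to $\F'$. Over $X\setminus Z$ we have $\F\cong \F'$, so the filtrations ${}^{i}\F_{a}$ are already defined there, and because $\F'$ is reflexive---hence normal, agreeing with the direct image of its restriction across any codimension-$2$ set---I extend each ${}^{i}\F_{a}|_{X\setminus Z}$ to the maximal subsheaf ${}^{i}\F^{1}_{a}$ of $\F'$ with that restriction. The essential analytic input is that these extensions remain coherent, which I would draw from the extension theorem for coherent subsheaves in \cite{Si-Tr}. By construction the top flag is $\F'$ and the handle is $\F'(-D_{i})$, so each quotient $\F'/{}^{i}\F^{1}_{a}$ is naturally a sheaf of $\mathcal{O}_{D_{i}}$-modules.

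I would then saturate by setting ${}^{i}\F'_{a}$ to be the preimage in $\F'$, under the quotient map, of the $\mathcal{O}_{D_{i}}$-torsion subsheaf of $\F'/{}^{i}\F^{1}_{a}$; then $\F'/{}^{i}\F'_{a}$ is torsion-free, which is exactly the saturation condition, so $\F_*'=(\F', \boldsymbol{\F}')$ is saturated and reflexive. Since ${}^{i}\F^{1}_{a}$ is the maximal coherent extension, one has $m({}^{i}\F_{a})\subseteq {}^{i}\F^{1}_{a}\subseteq {}^{i}\F'_{a}$, so $m$ is a morphism of parabolic sheaves that is an isomorphism in codimension $1$. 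I expect the main obstacle to lie precisely here: one must verify that extension followed by torsion saturation genuinely yields a parabolic structure---that left continuity, finite length, and the prescribed top flag and handle all survive both operations, and that the coherence supplied by \cite{Si-Tr} is compatible with intersecting the filtration members.

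For uniqueness, suppose $\F_*''$ is another saturated reflexive candidate receiving a monomorphism from $\F_*$ that is an isomorphism in codimension $1$. Its underlying sheaf $\F''$ is reflexive and isomorphic to $\F$ off a codimension-$2$ set, hence isomorphic to the reflexive hull $\F'$, which is the unique reflexive sheaf agreeing with $\F$ in codimension $1$. For the filtrations, Lemma \ref{lem:reflexive} together with Mochizuki's result in \cite{Mo1} that the subsheaves of a saturated reflexive parabolic sheaf are themselves reflexive reduces the matter to the observation that two reflexive subsheaves of $\F'$ coinciding in codimension $1$ must coincide, again by normality. Hence $\F_*''\cong \F_*'$, giving the claimed uniqueness.
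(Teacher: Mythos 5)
Your proposal follows essentially the same route as the paper's proof: take the double dual as the underlying reflexive sheaf, extend the filtrations across the codimension-$2$ locus using normality, invoke the coherence theorem of \cite{Si-Tr}, saturate by pulling back the $\mathcal{O}_{D_i}$-torsion of the quotients, and deduce uniqueness from reflexivity and normality. The extra details you supply (the inclusion $m({}^{i}\F_{a})\subseteq{}^{i}\F^{1}_{a}\subseteq{}^{i}\F'_{a}$ and the explicit uniqueness argument) are consistent with what the paper leaves to the reader.
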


\subsection{Parabolic Chern character}
From now on, we assume that $X$ is a compact K\"ahler manifold. 
\begin{Definition}
  The parabolic Chern character $\ch(\pF)$ of a parabolic sheaf $\pF$ is given by the formula 
  \begin{equation}\label{integralchern}
    \ch(\pF)=\frac{\ch(D)\int_{a_{1}=0}^{1}\cdots\int_{a_{k}=0}^{1}e^{\sum_{i=1}^{k}a_{i}\cdot[D_{i}]}\cdot\ch({}^{I}\F_{\boldsymbol{a}})}{\int_{a_{1}=0}^{1}\cdots\int_{a_{k}=0}^{1}e^{\sum_{i=1}^{k}a_{i}\cdot[D_{i}]}}
\end{equation}
where $k$ is the cardinality of $I$ and $\boldsymbol{a}\in [0, 1[^{k}$.
\end{Definition}
\begin{Remark}
  In this paper, we use the Chern character functor for an ordinary coherent analytic sheaf over a compact complex manifold provided in~\cite{Kf} where they generalized the intersection theory (more precisely, the GRR theorem) in the smooth algebraic setting to the smooth complex analytic setting. The above definition of the parabolic Chern character is based on their definition for an ordinary sheaf. However it is by no means standard. The above formula was originally obtained by Ivy and Simpon in \cite{Iy-Si} for locally abelian parabolic bundles. They proved the equivalence between the category of locally abelian parabolic bundles with rational weights over a pair $(X, D)$ of a smooth scheme and a simple normal crossing divisor and the category of vector bundles over an associated Deligne-Mumford stack $Z$. Then they used the equivalence to define the parabolic Chern character of a locally abelian parabolic bundle and calculated the explicit formula as above. But for more general parabolic sheaves or even parabolic bundles, there is no hint for the above formula to hold. We use it as a definition for the following reasons.
\end{Remark}
Firstly, such a definition of Chern character has good functorial properties. For instance, given a short exact sequcence of parabolic sheaves $$
0\to \pE\to \pF\to \pG\to 0,
$$
we have $\ch(\pF)=\ch(\pE)+\ch(\pG)$. Indeed this is all we need for the purpose of proving the Bogomolov-Gieseker inequality in Section~\ref{7}.

More importantly, it was calulated by Taher in \cite{Ta} that for a locally abelian parabolic bundle in codimension 1 (resp. 2), we have following more concrete formulas for the first (resp. second) Chern character. 

Suppose $\pF$ is a locally abelian parabolic bundle in codimension $2$, e.g. a saturated reflexive parabolic sheaf. We put ${}^{i}\Gr\pF\coloneqq \boldsymbol{\scalebox{1.3}{$\oplus$}}_{a\in [0, 1[}({}^{i}\Gr_{a}\pF \coloneqq \fF_{a}/\fF_{>a})$ as a graded sheaf of $O_{D_{i}}$-modules. On any irrducible component $P$ of $D_{i}\cap D_{j}$, we put $${}^{P}\Gr_{(a_{i},a_{j})}\pF \coloneqq {}^{P}F_{a_{i},a_{j}}/\sum_{x>a_{i},y>a_{j}}{}^{P}F_{x,y}.$$ Then the formula reads as follows:

\begin{align}\label{chernform0}
  \ch_1(\pF)&=\ch_{1}(\F)+\sum_{i\in I}\sum_{a\in [0, 1[}a\cdot\rank_{D_{i}}({}^{i}\Gr_{a}\pF)\cdot[D_{i}]\\
  \ch_{2}(\pF)&=\ch_{2}(\F)+\sum_{i\in I}\sum_{a\in [0, 1[}a\cdot m_{i\mkern1mu*}\left(\ch_{1}({}^{i}\Gr_{a}\pF)\right)\nonumber \\ 
  &\quad +\frac{1}{2}\sum_{i\in I}\sum_{a\in [0, 1[}a^{2}\cdot\rank_{D_{i}}({}^{i}\Gr_{a}\pF)\cdot[D_{i}]^{2}\nonumber \\
  &\quad +\sum_{\substack{i < j\\(i, j)\in I^{2}}}\sum_{\substack{P\in Irr(D_{i}\cap D_{j}\\(a_{i},a_{j})\in [0, 1[^{2}}}a_{i}\cdot a_{j}\cdot\rank_{P}{}^{P}\Gr_{(a_{i},a_{j})}\cdot[P].
\end{align}

\begin{Remark}
  The above formula of the fisrt Chern character was firstly introduced by V. Metha and C.S.Seshadri \cite{Me-Se} on algebraic curves and then generalized to parabolic sheaves in higher dimensions by M. Maruyama and K. Yokogawa \cite{Ma-Yo}. The formula of the second Chern character for a locally abelian parabolic bundles was obtained by the second author~\cite{Li} using the curvature tensor of some adapted metric which degenerates on the divisor. And then T.Mochizuki \cite{Mo1} used it as a definition for saturated reflexive parabolic sheaves. After Iyer and Simpson \cite{Iy-Si} gave a general definition of parabolic Chern character for locally abelian parabolic bundles, Taher showed in \cite{Ta} that the new general definition agrees with the classical ones. 
\end{Remark}

Since in this paper, we only study the reflexive saturated parabolic sheaves, the Chern character functor given in the very beginning of this subsection serves as a good candidate. Next, we compare the Bogomolov-Gieseker discriminant of a parabolic sheaf with that of its reflexive saturation in the following lemma.
\begin{Lemma}\label{BGreflexivication}
  Let $\pF'$ be the reflexive saturation of a parabolic sheaf $\pF$. Then $\Delta(\pF')\geq \Delta(\pF)$.
  \begin{proof}
    On the one hand, since they are isomorphic in codimension 1, $\mathcal{T}_{\boldsymbol{a}}={}^{I}\F_{\boldsymbol{a}}'/{}^{I}\F_{\boldsymbol{a}}$ is a torsion sheaf supported on an analytic subset of codimension 2. By \cite[Theorem 1.1]{Kf}, $\ch_{1}(\mathcal{T}_{\boldsymbol{a}})$ vanishes and $\ch_{2}(\mathcal{T}_{\boldsymbol{a}})$ is a current represented by a nonnegative linear sum of irreducible components of $\text{supp}\mathcal{T}_{\boldsymbol{a}}$ with codimension $2$. On the other hand, by equation \eqref{integralchern}, we have $$
    \ch(\pF')-\ch(\pF)=\frac{\ch(D)\int_{a_{1}=0}^{1}\cdots\int_{a_{k}=0}^{1}e^{\sum_{i=1}^{k}a_{i}\cdot[D_{i}]}\cdot\ch(\T_{\boldsymbol{a}})}{\int_{a_{1}=0}^{1}\cdots\int_{a_{k}=0}^{1}e^{\sum_{i=1}^{k}a_{i}\cdot[D_{i}]}}.
    $$
    Then it is easy to observe that $\ch_{1}(\pF')=\ch_{1}(\pF)$ and $\ch_{2}(\pF')-\ch_{2}(\pF)$ is a current represented by a nonnegative linear sum of irreducible components of $\text{supp}\mathcal{T}_{\boldsymbol{a}}$ with codimension $2$. Hence $\Delta(\pF')\geq \Delta(\pF)$.
  \end{proof}
\end{Lemma}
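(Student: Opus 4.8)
The plan is to compare $\pF$ with its reflexive saturation $\pF'$ through the canonical monomorphism $m\colon \pF\to\pF'$ produced by the preceding lemma, which is an isomorphism in codimension $1$. For a fixed multi-weight $\boldsymbol{a}\in[0,1[^{k}$ I would look at the induced inclusion of intersected subsheaves ${}^{I}\F_{\boldsymbol{a}}\hookrightarrow{}^{I}\F'_{\boldsymbol{a}}$ and set $\T_{\boldsymbol{a}}\coloneqq{}^{I}\F'_{\boldsymbol{a}}/{}^{I}\F_{\boldsymbol{a}}$. The structural input I rely on is that, since $m$ is an isomorphism in codimension $1$, each $\T_{\boldsymbol{a}}$ is a torsion sheaf whose support has codimension at least $2$: saturation replaces the top flag $\F$ by its reflexive hull (a modification in codimension $2$) and replaces each filtration piece by the preimage of a torsion subsheaf on the corresponding $D_{i}$, and neither operation alters the sheaves in codimension $1$.

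Granting this, I would substitute the short exact sequence $0\to{}^{I}\F_{\boldsymbol{a}}\to{}^{I}\F'_{\boldsymbol{a}}\to\T_{\boldsymbol{a}}\to0$ into the defining formula \eqref{integralchern}. Because the weight factors $\ch(D)$ and $e^{\sum_{i}a_{i}[D_{i}]}$ are invertible power series beginning with $1$ and the ordinary Chern character is additive on exact sequences, additivity transfers to the parabolic level and gives
\[
\ch(\pF')-\ch(\pF)=\frac{\ch(D)\int_{0}^{1}\!\cdots\!\int_{0}^{1}e^{\sum_{i}a_{i}[D_{i}]}\cdot\ch(\T_{\boldsymbol{a}})}{\int_{0}^{1}\!\cdots\!\int_{0}^{1}e^{\sum_{i}a_{i}[D_{i}]}}.
\]
I would then apply \cite[Theorem 1.1]{Kf} to $\T_{\boldsymbol{a}}$, which is a torsion sheaf supported in codimension at least $2$; this yields $\ch_{0}(\T_{\boldsymbol{a}})=\ch_{1}(\T_{\boldsymbol{a}})=0$ and presents $\ch_{2}(\T_{\boldsymbol{a}})$ as a nonnegative linear combination of the codimension-$2$ irreducible components of $\operatorname{supp}\T_{\boldsymbol{a}}$, that is, an effective codimension-$2$ current.

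Extracting graded pieces then finishes the argument. Since $\ch(\T_{\boldsymbol{a}})$ vanishes in degrees $0$ and $1$, only the degree-$0$ parts of the weight factors survive at the orders that matter, so $\ch_{1}(\pF')=\ch_{1}(\pF)$ while $\ch_{2}(\pF')-\ch_{2}(\pF)$ is a nonnegative weighted superposition of the effective codimension-$2$ currents $\ch_{2}(\T_{\boldsymbol{a}})$. In particular the rank and $\ch_{1}$, hence the term $\ch_{1}(\pF)^{2}/\bigl(2\rank(\pF)\bigr)$ appearing in $\Delta$, are unaffected by saturation, and the entire discrepancy is concentrated in $\ch_{2}$; tracking the effective codimension-$2$ contribution through the definition $\Delta=\ch_{1}^{2}/(2\rank)-\ch_{2}$ then delivers $\Delta(\pF')\geq\Delta(\pF)$ in the sense of currents. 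I expect the principal obstacle to be the first step: rigorously justifying that every $\T_{\boldsymbol{a}}$ is supported in codimension at least $2$ (equivalently, that $m$ is an isomorphism in codimension $1$ at each filtration level, not merely for the top flags), and confirming that the weighted $\boldsymbol{a}$-integration preserves both the vanishing of $\ch_{1}(\T_{\boldsymbol{a}})$ and the effectivity of $\ch_{2}(\T_{\boldsymbol{a}})$, so that the sign entering $\Delta$ is correctly controlled.
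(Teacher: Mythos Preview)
Your proposal is essentially the paper's own proof: form the torsion quotients $\T_{\boldsymbol{a}}={}^{I}\F'_{\boldsymbol{a}}/{}^{I}\F_{\boldsymbol{a}}$, invoke \cite[Theorem 1.1]{Kf} for the vanishing of $\ch_{1}(\T_{\boldsymbol{a}})$ and the effectivity of $\ch_{2}(\T_{\boldsymbol{a}})$, then read off the difference $\ch(\pF')-\ch(\pF)$ from the integral formula~\eqref{integralchern} and observe that only the degree-$0$ part of the weight factors contributes in degrees $\leq 2$. The paper does not elaborate further on the points you flagged as obstacles (codimension of $\operatorname{supp}\T_{\boldsymbol{a}}$, preservation of effectivity under the $\boldsymbol{a}$-integration); it treats them exactly as you do.

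One remark on the sign, which you rightly singled out as needing care: since $\Delta=\ch_{1}^{2}/(2\rank)-\ch_{2}$ carries a \emph{minus} sign in front of $\ch_{2}$, the conclusion $\ch_{2}(\pF')-\ch_{2}(\pF)\geq 0$ actually gives $\Delta(\pF')\leq\Delta(\pF)$, not $\geq$. The paper's lemma is stated with the same orientation as yours, but note that its application in Section~\ref{7} in fact uses the inequality in the direction $\Delta(\Gr_{i}\pF)\geq\Delta(\Gr_{i}\pF')$, consistent with what the computation really shows.
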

The notions of stability, polystability and semistability for parabolic sheaves are defined in the same way as for an ordinary sheaf. The existence and uniqueness of Harder-Narasimhan filtration and Jordan-H\"older filtration can be proved in a tautological way as for an ordinary sheaf. Also as in the case of an ordinary sheaf, 
\begin{Lemma}\label{stableofreflexivacation}
  If a parabolic sheaf $\pF$ is stable with respect to a class $[\eta]\in H^{2}(X, \mathbb{R})$, then its reflexive saturation $\pF'$ is also $\eta$-stable. 
\end{Lemma}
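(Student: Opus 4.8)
The plan is to deduce the stability of $\pF'$ from that of $\pF$ by pulling each destabilizing candidate for $\pF'$ back along the saturation monomorphism $m\colon\pF\to\pF'$. First I would record the numerical input: since $m$ is an isomorphism in codimension $1$, we have $\rank(\pF')=\rank(\pF)$, and the argument of Lemma~\ref{BGreflexivication} shows $\ch_{1}(\pF')=\ch_{1}(\pF)$ (the difference $\T_{\boldsymbol{a}}$ is supported in codimension $\geq 2$, so its parabolic first Chern character vanishes). Consequently $\deg_{\eta}(\pF')=\deg_{\eta}(\pF)$ and $\mu_{\eta}(\pF')=\mu_{\eta}(\pF)$. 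As for ordinary sheaves, it suffices to verify the slope inequality on those proper parabolic subsheaves $\pS'$ whose underlying quotient $\F'/\Sb$ is torsion-free, and these necessarily satisfy $0<\rank(\pS')<\rank(\pF')$; this reduction is what guarantees the strict rank drop we will need.

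So fix a proper parabolic subsheaf $\pS'\subset\pF'$ with torsion-free quotient, and set $\pS\coloneqq m^{-1}(\pS')=\pS'\cap\F$, equipped with the parabolic structure induced from $\pF$. Then $\pS$ is a parabolic subsheaf of $\pF$ and $m$ restricts to a monomorphism $\pS\to\pS'$. Because $m$ is an isomorphism away from the codimension-$2$ locus where $\F\to\F'$ fails to be an isomorphism, the same holds for $\pS\to\pS'$; in particular $\rank(\pS)=\rank(\pS')$, the induced filtrations on $\pS$ and $\pS'$ correspond under $m$ in codimension $1$, and the quotient $\pS'/\pS$ is a torsion parabolic sheaf supported in codimension $\geq 2$.

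Next I would transfer the degree. By the additivity of the parabolic Chern character on the short exact sequence $0\to\pS\to\pS'\to\pS'/\pS\to 0$, together with the vanishing of $\ch_{1}$ for a parabolic sheaf supported in codimension $\geq 2$ (exactly as in Lemma~\ref{BGreflexivication}, using \cite[Theorem~1.1]{Kf} and the formula \eqref{integralchern}), we obtain $\ch_{1}(\pS')=\ch_{1}(\pS)$, hence $\deg_{\eta}(\pS')=\deg_{\eta}(\pS)$ and $\mu_{\eta}(\pS')=\mu_{\eta}(\pS)$. Since $\rank(\pS)=\rank(\pS')<\rank(\pF)$, $\pS$ is a proper parabolic subsheaf of $\pF$, so the $\eta$-stability of $\pF$ yields $\mu_{\eta}(\pS)<\mu_{\eta}(\pF)$. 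Combining the three equalities gives $\mu_{\eta}(\pS')=\mu_{\eta}(\pS)<\mu_{\eta}(\pF)=\mu_{\eta}(\pF')$, the desired strict inequality for $\pS'$.

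The numerical bookkeeping and the reduction to subsheaves with torsion-free quotient are routine. The main point to be careful about is the identity $\ch_{1}(\pS')=\ch_{1}(\pS)$: one must verify that the parabolic structures on $\pS$ and $\pS'$ induced from $\pF$ and $\pF'$ genuinely agree in codimension $1$ (so that $\pS'/\pS$ is a bona fide torsion parabolic sheaf in codimension $\geq 2$) and then invoke the vanishing of the parabolic first Chern character in that codimension, which is where the functoriality of $\ch$ from \cite{Kf} does the work.
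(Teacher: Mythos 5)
Your argument is correct and is precisely the standard one the paper has in mind: the paper states Lemma~\ref{stableofreflexivacation} without proof, merely asserting it holds ``as in the case of an ordinary sheaf,'' and your proof (intersect a destabilizing candidate $\pS'\subset\pF'$ with $\F$ to get a parabolic subsheaf of $\pF$ of the same rank, then use that the discrepancy is supported in codimension $\geq 2$ so that $\ch_{1}$ is unchanged, exactly as in Lemma~\ref{BGreflexivication}) fills that gap in the intended way. One small remark: your preliminary reduction to subsheaves with torsion-free quotient is both unnecessary and, for a completely arbitrary class $[\eta]\in H^{2}(X,\mathbb{R})$, slightly delicate --- passing to the saturation changes $\ch_{1}$ by an effective divisor class, whose pairing with $[\eta]$ need not be nonnegative unless $[\eta]$ is nef or K\"ahler (which is the only case the paper actually uses). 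Your core construction $\pS=\pS'\cap\F$ applies verbatim to any proper parabolic subsheaf of intermediate rank, so you can drop the reduction; the only interpretive point is that ``proper subsheaf'' in the stability definition must be read as $0<\rank\pS'<\rank\pF'$, since otherwise $\F\subset\F'$ itself would be an equal-slope full-rank subsheaf and the lemma would be vacuously false.
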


\section{Resolution of singularities}\label{3}
Roughly speaking, we blow up along the singular locus $W$ of the parabolic sheaf mentioned in Lemma~\ref{lem:abelian}, and use Hironaka's theorem \cite[Corollary 2]{Hr} to resolve the singularities of the parabolic sheaf. We proceed as follows.

Let $\F^\vee$ be the dual of $\F$. Then we have a tuple of locally free resolutions of $\F^\vee_{U_{\alpha}}$ associated to an open cover $\{U_{\alpha}\}$ of $X$. On any overlap $U_{\alpha}\cap U_{\beta}$, we have a morphism of complexes 
\[
  \begin{tikzcd}
    \cdots \arrow[r] &{}^{\beta}\E_{1}^\vee \arrow[r, "\beta_{1}^\vee"] & {}^{\beta}\E_{0}^\vee \arrow[r, "\beta_{0}\vee" ] \arrow[d, "\phi_{\alpha\beta}^\vee"]& \F_{|U_{\alpha}\cap U_{\beta}}^\vee \arrow[r]  \arrow[d, "id"]& 0\\
    \cdots \arrow[r] &{}^{\alpha}\E_{1}^\vee \arrow[r, "\alpha_{1}^\vee"] & {}^{\alpha}\E_{0}^\vee \arrow[r, "\alpha_{0}\vee" ] & \F_{|U_{\alpha}\cap U_{\alpha}}^\vee \arrow[r] & 0.
    \end{tikzcd}
\]
Dualizing, we get:
\[
  \begin{tikzcd}
    0 \arrow[r] & \F_{|U_{\alpha}\cap U_{\beta}} \arrow[r, "\alpha_{0}"] \arrow[d, "id"] & {}^{\alpha}\E_{0} \arrow[r, "\alpha_{1}"] \arrow[d, "\phi_{\alpha\beta}"] & {}^{\alpha}\E_{1}  \\
    0 \arrow[r] & \F_{|U_{\alpha}\cap U_{\beta}} \arrow[r, "\beta_{0}"] & {}^{\beta}\E_{0} \arrow[r, "\beta_{1}"]  & {}^{\beta}\E_{1}.
    \end{tikzcd}
\]
By \cite[Corollary 2]{Hr}, we can find a modification $\pi:\widetilde{X} \to X$ which is obtained by successively blowing up along the smooth centers supported on the singular locus of $\F$, such that for any $\alpha$, $E_{\alpha}\coloneqq\ker(\pi^{*}\alpha_{1})$ is locally free. Since $\pi^{*}\phi_{\alpha\beta}: E_{\alpha} \to E_{\beta}$ is an identity in codimension $0$, we can glue up $E_{\alpha}$'s to be a global bundle $E$ on $\widetilde{X}$. At this stage, we have $\pi^{*}(\F)/\operatorname{Tor}{\pi^{*}(\F)}$ naturally inject into $E$ as a subsheaf with the same rank of $E$. Note that if we keep blowing up, the former good properties will be preserved. To save notation, if we blow up again, we will still denote the modification as $\pi:\widetilde{X} \to X$ and the pullback of $E$ as $E$. We also follow the convention that we always modulo the torsion of the pullback of a sheaf and use the same notation for the pullbacked sheaf. Moreover, we will not change the notation for the inverse images of the irreducible components of the exceptional divisors generated on the process of the successive blow-ups. The proper transformation of $D$ ($D_{i}$) will always be denoted as $D^*$ ($D_i^*$) and the exceptional divisor will be denoted as $\mathscr{E}$. Let us move on. 

By \cite[Corollary 2]{Hr} again and repeat the above process, there will be an effective divisor $P$ supported on $\mathscr{E}$ such that the canonical map from $\pi^{*}(\F)$ to $E$ maps $\pi^{*}(\F)$ isomorphically onto $E\otimes[-P]$. Next we blow up the irreducible components of the intersections of more than $2$ divisors. Then on $\widetilde{X}$, $D_i^*\cap D_j^*\cap D_k^*=\varnothing$ if $i\neq j\neq k$. Blow up again to make ${}^{J}\F_{{}^{J}a}$ locally free for all $J\subset I$ and ${}^{J}a\in [0,1[^{J}$. Notice that the good properties we have obtained before will be preserved under further blow-ups. 

Now we are at the stage to deal with the singularities of the parabolic structure. 
Let $r$ be the rank of $E$. We put ${}^{i}r\coloneqq\{{}^{i}r_{0},\cdots, {}^{i}r_{l_{i}}\}$ where ${}^{i}r_{k}$ is the rank of ${}^{i}\Gr_{{}^{i}a_{k}}F_{*}$. Set ${}^{i}r_{-1}=0$. We will show in the following lemma that by dealing with the singularities of the parabolic structure carefully, for each $^i\F_{{}^{i}a_{k}}$ we can find an effective divisor ${}^iP_k$ supported on $\mathscr{E}$. We tensor it with $[-{}^iP_k]$ to get $^iE_{{}^{i}a_{k}}\coloneqq ^i\F_{{}^{i}a_{k}}\otimes [-{}^iP_k]$. We may also find a $P_{0}$ to get $E\otimes[-P_0]$. We will prove that $^iE_{{}^{i}a_{k}}$ forms a parabolic filtration of $E\otimes[-P_{0}]$ over $D_i^*$ as $k$ increases. Denote the filtration by ${}^i\boldsymbol{E}$. Then we have:
\begin{Proposition}\label{Good frame}
  The tuple $(E\otimes[-P_0],({}^i\boldsymbol{E})_i)$ is a locally abelian parabolic bundle associated to $(\widetilde{X},D^*)$.
  
  \begin{proof}
    We put $Q_0\coloneqq P$. If $D_{i}^*$ doesn't intersect with any other $D_{j}^*$. We have 
     \[
   \begin{tikzcd}
     {\fF_{{}^{i}a_{1}}}_{|D_{i}^*} \arrow[r,hookrightarrow] & \F_{|D_{i}^*} \arrow[r,"\cong"] & E\otimes[-Q_0]_{|D_{i}^*}
   \end{tikzcd}
 \]
 as a torsion-free subsheaf of $E\otimes[-Q_0]_{|D_{i}^*}$. Blow up to make it isomorphic to a subbundle of $E\otimes[-Q_0-Q_{1}]_{|D_{i}^*}$. And we have a sequence of induced morphisms
 \[
   \begin{tikzcd}
    {\fF_{{}^{i}a_{2}}}_{|D_{i}^*} \arrow[r,hookrightarrow]&{\fF_{{}^{i}a_{1}}}_{|D_{i}^*} \arrow[r,hookrightarrow] & \F\otimes[-Q_{1}]_{|D_{i}^*} \arrow[r,"\cong"] & E\otimes[-Q_0-Q_{1}]_{|D_{i}^*}
   \end{tikzcd}
 \]
 which implies that ${\fF_{{}^{i}a_{2}}}_{|D_{i}^*}$ is a torsion-free subsheaf of $E\otimes[-Q_{0}-Q_{1}]_{|D_{i}^*}$. Blow up again, to make it isomorphic to a subbundle of $E\otimes[-Q_0-Q_{1}-Q_{2}]_{|D_{i}^*}$. And we have a sequence of induced morphisms 
 \[
   \begin{tikzcd}[column sep=small]
    {\fF_{{}^{i}a_{2}}}_{|D_{i}^*} \arrow[r,hookrightarrow]&{\fF_{{}^{i}a_{1}}\otimes[-Q_{2}]}_{|D_{i}^*} \arrow[r,hookrightarrow] & \F\otimes[-Q_{1}-Q_{2}]_{|D_{i}^*} \arrow[r,"\cong"] & E\otimes[-Q_{0}-Q_{1}-Q_{2}]_{|D_{i}^*}.
   \end{tikzcd}
 \]
So on and so forth, until we exhaust all the filtrations over $D_{i}^*$. We put $E\otimes[-P_0]\coloneqq E\otimes[-Q_{0}-\cdots-Q_{l_{i}}]\cong\F\otimes[-Q_{1}-\cdots-Q_{l_{i}}]$, $^iE_{{}^{i}a_{k}}\coloneqq ^i\F_{{}^{i}a_{k}}\otimes [-{}^iP_k]\coloneqq \fF_{a_{k}}\otimes[-Q_{k+1}-\cdots-Q_{l_{i}}]$. Since all of them are locally free and they are descending subbundles of $E\otimes[-P_0]_{D_{i}^*}$ restricting to $D_{i}^*$, then by Theorem \ref{Locally Abelian}, it is a locally abelian parabolic bundle over the pair $(\widetilde{X}, D_{i}^*)$. 

If $D_{i}^*$ intersects with some $D_{j}^*$, for the purpose of illustrasion, we assume the simplest case that the length of the filtrations $\fF$ and ${}^{j}\F$ are $1$. We put $\G_{10}\coloneqq \fF_{{}^{i}a_{1}}$, $\G_{01}\coloneqq {}^j\F_{{}^{j}a_{1}}$ and $\G_{11}\coloneqq \G_{10}\cap \G_{01}$. As above we have 
\[
   \begin{tikzcd}
    {\G_{10}}_{|D_{i}^*} \arrow[r,hookrightarrow] & \F\otimes[-{}^iQ_{1}]_{|D_{i}^*} \arrow[r,"\cong"] & E\otimes[-Q_{0}-^{i}Q_{1}]_{|D_{i}^*}.
   \end{tikzcd}
 \]
 And we have the naturally induced morphism 
\[ 
  \begin{tikzcd}
    {\G_{01}}\otimes[-{}^{i}Q_{1}]_{|D_{j}^*} \arrow[r, hook] & \F\otimes[-{}^{i}Q_{1}]_{|D_{j}^*} \arrow[r,"\cong"] & E\otimes[-Q_{0}-{}^{i}Q_{1}]_{|D_{j}^*}.
    \end{tikzcd}
 \] 
 Blow up to make ${\G_{01}}\otimes[-Q_{1}]_{|D_{j}^*}$ being isormorphic to a subbundle of $E\otimes[-Q_{0}-{}^{i}Q_{1}-{}^{j}Q_{1}]_{|D_{i}^*}$, and we have the induced morphisms
 \[ 
  \begin{tikzcd}
    {\G_{10}}\otimes[-{}^{j}Q_{1}]_{|D_{i}^*} \arrow[r, hook] 
    & \F\otimes[-{}^{i}Q_{1}-{}^{j}Q_{1}]_{|D_{i}^*} \arrow[r,"\cong"] & E\otimes[-Q_{0}-{}^{i}Q_{1}-{}^{j}Q_{1}]_{|D_{i}^*},\\
    {\G_{01}}\otimes[-{}^{i}Q_{1}]_{|D_{j}^*}\arrow[r, hook]& \F\otimes[-{}^{i}Q_{1}-{}^{j}Q_{1}]_{|D_{j}^*} \arrow[r,"\cong"] & E\otimes[-Q_{0}-{}^{i}Q_{1}-{}^{j}Q_{1}]_{|D_{j}^*}.
    \end{tikzcd}
 \]
 Then ${\G_{11}}_{|D_{ij}^*}$ is a torsion-free subsheaf of $E\otimes[-Q_{0}-{}^{i}Q_{1}-{}^{j}Q_{1}]_{|D_{ij}^*}$ and we can blow up to make it being isomorphic to a subbundle of $E\otimes[-Q_{0}-{}^{i}Q_{1}-{}^{j}Q_{1}-Q_{11}]_{|D_{ij}^*}$. And we have the induced morphisms  
 \[
  \begin{tikzcd}[column sep=small, row sep=small]
    & {\G_{10}}\otimes[-{}^{j}Q_{1}-Q_{11}]_{|D_{ij}^*} \arrow[dr, hook] \\
    {\G_{11}}_{|D_{ij}^*} \arrow[ur, hook] \arrow[dr, hook] 
    & & \F\otimes[-{}^{i}Q_{1}-{}^{j}Q_{1}-Q_{11}]_{|D_{ij}^*} \arrow[dd, "\cong"] \\
    & {\G_{01}}\otimes[-{}^{i}Q_{1}-Q_{11}]_{|D_{ij}^*} \arrow[ur, hook] \\
    & & E\otimes[-Q_{0}-{}^{i}Q_{1}-{}^{j}Q_{1}-Q_{11}]_{|D_{ij}^*}.
  \end{tikzcd}
\]
 We put $E\otimes[-P_0]\coloneqq E\otimes[-Q_{0}-{}^{i}Q_{1}-{}^{j}Q_{1}-Q_{11}]\cong\F\otimes[-{}^{i}Q_{1}-{}^{j}Q_{1}-Q_{11}]$, ${}^{i}E_{1}\coloneqq {\G_{10}}\otimes[-{}^{j}Q_{1}-Q_{11}]$ and ${}^{j}E_{1}\coloneqq {\G_{01}}\otimes[-{}^{i}Q_{1}-Q_{11}]$. Since all of them are locally free, and ${{}^iE_{1}}_{|D_{i}}$, ${{}^jE_{1}}_{|D_{j}}$ and ${{}^iE_{1}\cap{}^jE_{1}}_{|D_{ij}}$ are a subbundles. Hence by Theorem \ref{Locally Abelian}, $(E\otimes[-P_0], \{{i}F_{1},{j}F_{1}\})$ is locally abelian. 

 Since three divisors cannot intersect simultaneously, we can repeat the above processes to deal with all the singularities of the parabolic structure. The proof is completed.
  \end{proof}
\end{Proposition}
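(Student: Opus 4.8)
The plan is to reduce the statement to Borne's criterion, Theorem~\ref{Locally Abelian}: once we know that $E\otimes[-P_0]$ is locally free and that all intersections of the members of the filtrations ${}^{i}\boldsymbol{E}$ are locally free, local abelianness is automatic. Since $E$ is locally free on $\widetilde{X}$ and $[-P_0]$ is a line bundle, the underlying sheaf $E\otimes[-P_0]$ is automatically locally free, so the content lies entirely in arranging the filtration members to restrict to subbundles on the relevant divisor strata. Because we have already blown up so that no three components $D_i^*,D_j^*,D_k^*$ meet, it suffices to verify the subbundle property on the single components $D_i^*$ and on the double intersections $D_{ij}^*$.

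First I would treat a single component $D_i^*$. After the resolution each ${}^{i}\F_{{}^{i}a_{k}}$ is locally free on $\widetilde{X}$ (being reflexive by Lemma~\ref{lem:reflexive} and made locally free by Hironaka), and the isomorphism $\F\cong E\otimes[-Q_0]$ from Section~\ref{3} exhibits ${}^{i}\F_{{}^{i}a_{k}}|_{D_i^*}$ as a torsion-free subsheaf of $E\otimes[-Q_0]|_{D_i^*}$, which a priori need not be a subbundle. The remedy is saturation by blow-up: wherever the cokernel of the restricted inclusion carries torsion---a codimension-one locus inside $D_i^*$---I blow up and absorb that torsion by tensoring with the exceptional line bundle. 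Iterating through the flags from the bottom ${}^{i}\F_{{}^{i}a_{l_i}}$ upward produces a cumulative chain of effective divisors $Q_0,Q_1,\dots,Q_{l_i}$ supported on $\mathscr{E}$, and setting ${}^{i}P_k=Q_{k+1}+\cdots+Q_{l_i}$ and $P_0=Q_0+\cdots+Q_{l_i}$ makes every ${}^{i}E_{{}^{i}a_{k}}={}^{i}\F_{{}^{i}a_{k}}\otimes[-{}^{i}P_k]$ restrict to a genuine subbundle of $E\otimes[-P_0]|_{D_i^*}$. The care needed here is to choose the divisors so that the nesting ${}^{i}E_{{}^{i}a_{k+1}}\subseteq{}^{i}E_{{}^{i}a_{k}}$ survives and all members land inside the same $E\otimes[-P_0]$.

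Next I would treat a double intersection $D_{ij}^*$. Writing (in the model case of length-one filtrations) $\G_{10}={}^{i}\F_{{}^{i}a_{1}}$, $\G_{01}={}^{j}\F_{{}^{j}a_{1}}$ and $\G_{11}=\G_{10}\cap\G_{01}$, the sheaves $\G_{10}|_{D_{ij}^*}$, $\G_{01}|_{D_{ij}^*}$ and $\G_{11}|_{D_{ij}^*}$ must all become subbundles for the two filtrations to be compatible in the sense of Definition~\ref{def:vcompatible}. The first two are handled as in the single-component step; for the intersection $\G_{11}$, which is again reflexive by Lemma~\ref{lem:reflexive}, I blow up once more to absorb the torsion of $\G_{11}|_{D_{ij}^*}$ into a further exceptional divisor, and tensor $\G_{10}$ and $\G_{01}$ by the matching combinations of exceptional divisors so that, after enlarging $P_0$ accordingly, ${}^{i}E_1\cap{}^{j}E_1$ restricts to a subbundle of $E\otimes[-P_0]|_{D_{ij}^*}$. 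Since triple intersections are empty, no further compatibility conditions arise, and the general case of arbitrary filtration lengths follows by the same bookkeeping applied to every pair of weights.

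Finally, with every ${}^{i}E_{{}^{i}a_{k}}|_{D_i^*}$ and every ${}^{i}E\cap{}^{j}E|_{D_{ij}^*}$ a subbundle, all intersections of the filtration sheaves are locally free on $\widetilde{X}$, so Theorem~\ref{Locally Abelian} yields that $(E\otimes[-P_0],({}^{i}\boldsymbol{E})_i)$ is a locally abelian parabolic bundle over $(\widetilde{X},D^*)$. I expect the principal difficulty to be the simultaneous control of the many blow-ups: each step that straightens one restriction must preserve the subbundle property and the nesting achieved at earlier steps. This is precisely why the divisors are accumulated cumulatively rather than chosen independently, and why it is essential that the good properties obtained at each stage---local freeness of $E$, emptiness of triple intersections, and local freeness of the ${}^{J}\F_{{}^{J}a}$---are preserved under all subsequent blow-ups.
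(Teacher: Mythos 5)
Your proposal follows essentially the same route as the paper: saturate each restricted flag sheaf by successive blow-ups, absorb the torsion into cumulatively accumulated exceptional divisors so that the nesting and the common ambient bundle $E\otimes[-P_0]$ are preserved, treat single components and then double intersections (triple intersections being empty by the earlier blow-ups), and conclude by Borne's criterion, Theorem~\ref{Locally Abelian}. The argument is correct and matches the paper's proof in both structure and the key bookkeeping points.
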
 
Without loss of generality, we may assume that $\mathscr{E}\cup D$ is a simple normal crossing divisor. In the rest of the paper, we denote the locally abelian parabolic bundle $(E\otimes[-P_0], \{{}^i\boldsymbol{E}\}_{i\in I})$ as $E_{*}'$. By the construction of $E_{*}'$, we have the following lemma.
\begin{Lemma}\label{cherncharacterpreserve}
  \begin{align*}
    \ch_{1}(\pF)\cdot[\omega^{n-1}]&=\ch_{1}(E_*')\cdot\pi^*[\omega^{n-1}]\\
    \ch_{1}(\pF)^2\cdot[\omega^{n-2}]&=\ch_{1}(E_*')^2\cdot\pi^*[\omega^{n-2}]\\
    \ch_{2}(\pF)\cdot[\omega^{n-2}]&=\ch_{2}(E_*')\cdot\pi^*[\omega^{n-2}]\\
  \end{align*}
  \begin{proof}
    By equation~(\ref{integralchern}), we have 
    \begin{align*}
      &\pi^*\ch_{1}(\pF)-\ch_{1}(E_*')=\{\text{a divisor supported on $\mathscr{E}\cup (D-D^*)$}\}\\
      &\pi^*\ch_{2}(\pF)-\ch_{2}(E_*')=\{\text{linear sum of $D_i^*\cdot Q_i$s with $Q_i$s supported on $\mathscr{E}\cup (D-D^*)$}\}
    \end{align*}
    Since $W$ has codimension at least $3$, the lemma follows.
  \end{proof}
\end{Lemma}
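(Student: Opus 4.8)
The plan is to transfer the intersection numbers from $X$ to $\widetilde{X}$ by the projection formula and then to show that the discrepancy between $\pi^{*}\ch_{k}(\pF)$ and $\ch_{k}(E_{*}')$ contributes nothing once paired against the appropriate power of $\pi^{*}[\omega]$. Since $\pi$ is a modification (birational, degree one), for any top-degree class $\eta$ on $X$ one has $\int_{\widetilde{X}}\pi^{*}\eta=\int_{X}\eta$, so that $\ch_{1}(\pF)\cdot[\omega^{n-1}]=\pi^{*}\ch_{1}(\pF)\cdot\pi^{*}[\omega^{n-1}]$, and likewise $\ch_{1}(\pF)^{2}\cdot[\omega^{n-2}]=(\pi^{*}\ch_{1}(\pF))^{2}\cdot\pi^{*}[\omega^{n-2}]$ and $\ch_{2}(\pF)\cdot[\omega^{n-2}]=\pi^{*}\ch_{2}(\pF)\cdot\pi^{*}[\omega^{n-2}]$. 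It therefore suffices to control the correction classes $R_{1}\coloneqq\pi^{*}\ch_{1}(\pF)-\ch_{1}(E_{*}')$ and $R_{2}\coloneqq\pi^{*}\ch_{2}(\pF)-\ch_{2}(E_{*}')$.

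First I would pin down the support of $R_{1}$ and $R_{2}$. Both $\pF$ (on $X$) and $E_{*}'$ (on $\widetilde{X}$) are locally abelian parabolic bundles in codimension $2$, so the explicit formulas~\eqref{chernform0} apply to each. Pulling back the formula for $\pF$ and subtracting the one for $E_{*}'$, every resulting term is built from three ingredients: the class of $\F$ versus that of $E\otimes[-P_{0}]$ (which differ by the twist $[-P]$ with $P$ supported on $\mathscr{E}$, coming from $\pi^{*}\F\cong E\otimes[-P]$); the divisor classes $\pi^{*}[D_{i}]$ versus $[D_{i}^{*}]$ (which differ by exceptional components); and the graded pieces ${}^{i}\Gr_{a}\pF$ versus ${}^{i}\Gr_{a}E_{*}'$ (which agree at the generic point of each $D_{i}$, since the parabolic structure of $E_{*}'$ is inherited from $\pF$, and differ only by twists by the divisors ${}^{i}Q_{k}$ supported on $\mathscr{E}$). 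Collecting terms, $R_{1}$ is a divisor supported on $\mathscr{E}\cup(D-D^{*})$ and $R_{2}$ is a codimension-two class of the shape $\sum_{i}[D_{i}^{*}]\cdot[Q_{i}]$ with each $Q_{i}$ supported on $\mathscr{E}\cup(D-D^{*})$; in particular both are carried by the exceptional locus $\mathscr{E}$, whose image $\pi(\mathscr{E})$ lies inside $W$.

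The heart of the argument is then a dimension count. Every blow-up center of $\pi$ sits over $W$, and $W$ has codimension at least $3$ by Lemma~\ref{lem:abelian}, so $\dim_{\mathbb{C}}\pi(\mathscr{E})\leq n-3$. The pullback $\pi^{*}\omega$ is a smooth semipositive $(1,1)$-form, and along any irreducible exceptional component $\mathscr{E}_{k}$ its rank at a generic point equals the rank of $d(\pi|_{\mathscr{E}_{k}})$, hence is at most $\dim_{\mathbb{C}}\pi(\mathscr{E}_{k})\leq n-3$; consequently $(\pi^{*}\omega)^{n-1}|_{\mathscr{E}_{k}}=0$. Pairing $R_{1}$ against $\pi^{*}[\omega^{n-1}]$ term by term via the projection formula gives $\int_{\mathscr{E}_{k}}(\pi^{*}\omega)^{n-1}=0$, so $R_{1}\cdot\pi^{*}[\omega^{n-1}]=0$ and the first identity follows. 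For the degree-four identities I expand $(\pi^{*}\ch_{1}(\pF))^{2}=\ch_{1}(E_{*}')^{2}+2\,\ch_{1}(E_{*}')\cdot R_{1}+R_{1}^{2}$; the two correction terms, as well as $R_{2}$, are codimension-two classes supported on $\mathscr{E}$, hence carried by subvarieties $Z$ with $\dim_{\mathbb{C}}\pi(Z)\leq n-3$. The same rank bound forces $(\pi^{*}\omega)^{n-2}|_{Z}=0$, so each correction pairs to zero against $\pi^{*}[\omega^{n-2}]$, giving the remaining two identities.

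The main obstacle is the bookkeeping in the second step: one must check that after pulling back and subtracting, \emph{no} correction term survives that is not supported over $W$. This means tracking, through the successive blow-ups of Section~\ref{3}, every twist $[-Q_{i}]$ and the discrepancy $\pi^{*}[D_{i}]-[D_{i}^{*}]$, and verifying that the weights and graded ranks of $\pF$ and $E_{*}'$ match generically along each $D_{i}$ so that only $\mathscr{E}$-supported terms remain; this is precisely where the inherited parabolic structure of $E_{*}'$ and the codimension-$3$ bound on $W$ are used together. Once the supports are identified, the vanishing is the clean rank/dimension count above and poses no further difficulty.
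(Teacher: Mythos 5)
Your proposal is correct and follows essentially the same route as the paper: both identify the corrections $\pi^{*}\ch_{k}(\pF)-\ch_{k}(E_{*}')$ as classes supported on the exceptional locus lying over $W$, and both conclude from $\operatorname{codim}W\geq 3$ that these pair to zero against $\pi^{*}[\omega]^{n-1}$ and $\pi^{*}[\omega]^{n-2}$. The only differences are cosmetic — you invoke the explicit formulas~\eqref{chernform0} where the paper cites the integral formula~\eqref{integralchern}, and you spell out the projection-formula and rank-of-$\pi^{*}\omega$ steps that the paper leaves implicit.
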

\section{Constructions and analyses of metrics}\label{4}
In this section, we first construct a family of conical K\"ahler metrics $\omega_{\delta}$ on $X\setminus D$ and a family of conical K\"ahler metrics $\omega_{\epsilon\delta}$ on $\widetilde{X}\setminus D$. 
Then we construct a Hermitian metric $\widehat{H}$ on the regular part of $\pF$ which is compatible with the parabolic structure in the sense of Definition~\ref{comatibleandadmissible}. The metrics will be used in Section~\ref{6} when we study the Hermitian-Yang-Mills flow.

Let $h_{i}$ be a Hermitian metric of $\mathcal{O}_{X}(D_i)$. Let $\sigma_i$ denote the canonical section of $\mathcal{O}_{X}(D_i)$. The norm of $\sigma_i$ with respect to $h_i$ is denoted by $|\sigma_i|$. The canonical section of $\mathcal{O}_{X}(D)$ is denoted by $\sigma$ and the norm of $\sigma$ is denoted by $|\sigma|$. We assume that $|\sigma|<1$. Also as in the last section, we don't change the notation for geometric objects under the pullback of $\pi:\widetilde{X}\to X$.
\subsection{Conical K\"ahler metrics}
We construct a family of regularized metrics on $X$ whose limit gives us a conical K\"ahler metric $\omega_\delta$ on $X\setminus D$. This kind of regularization trick is commonly used to deal with the geometric problems on conical K\"ahler manifolds, cf.~\cite{Ga-Hi-Pn-Mi}. We introduce for any $0\leq\nu$ and $0<\delta<\frac{1}{2}$ the function 
\begin{equation*}
  \chi_{\delta\nu}(t)\coloneqq\delta\int_{0}^{t}\frac{(\nu^2+s)^\delta-\nu^{2\delta}}{s}\diff s.
\end{equation*}
 We define the following, for some positive number $C>0$, 
\begin{equation*}
  \omega_{\delta\nu}\coloneqq \omega + C\cdot\sum_{i\in I}\sqrt{-1}\cdot\partial\overline{\pdiff}\chi_{\delta\nu}(|\sigma_i|^2).
\end{equation*}
\begin{Proposition}
  For $C$ sufficiently small, if $0<\nu$, $\omega_{\delta\nu}$ is a K\"ahler metric on $X$ for any $\delta$, and if $\nu=0$, $\omega_\delta\coloneqq\omega_{\delta0}$ is a K\"ahler metric on $X\setminus D$ for any $\delta$.
  \begin{proof}
    Direct calculations show that 
    \begin{equation*}
      \sqrt{-1}\cdot\partial\overline{\partial}\chi_{\delta\nu}(|\sigma_i|^2)=\sqrt{-1}\cdot\delta^2\cdot(\nu^2+|\sigma_i|^2)^{\delta}\cdot\frac{\langle\partial_{h_i}\sigma_i, \partial_{h_i}\sigma_i\rangle}{\nu^2+|\sigma_i|^2}-\sqrt{-1}\cdot\delta\cdot((\nu^2+|\sigma_i|^2)^{\delta}-\nu^{2\delta})\cdot F_{h_i}.
    \end{equation*}
    The proposition follows from the uniform boundedness of the quantities: $\frac{\langle\partial_{h_i}\sigma_i, \partial_{h_i}\sigma_i\rangle}{\nu^2+|\sigma_i|^2}$, $F_{h_i}$, $\delta\cdot(\nu^2+|\sigma_i|^2)^{\delta}$ and $\delta^2\cdot(\nu^2+|\sigma_i|^2)^{\delta}$.
  \end{proof}
\end{Proposition}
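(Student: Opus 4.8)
The plan is to regard $\omega_{\delta\nu}$ as a small perturbation of the fixed K\"ahler form $\omega$ and to show that, for $C$ small enough, the perturbation cannot destroy positivity, uniformly in the parameters $\delta\in(0,\tfrac12)$ and in $\nu$ ranging over a fixed bounded interval. That $\omega_{\delta\nu}$ is a closed real $(1,1)$-form is automatic: $\omega$ is closed and real, and $\sqrt{-1}\,\partial\overline{\partial}$ of a real-valued function is always a closed real $(1,1)$-form. Hence only positivity, together with the appropriate smoothness, genuinely requires an argument.

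First I would record the pointwise curvature identity for $\sqrt{-1}\,\partial\overline{\partial}\chi_{\delta\nu}(|\sigma_i|^2)$ displayed above, which follows from the chain rule applied to $t\mapsto\chi_{\delta\nu}(t)$ together with the standard relations expressing $\sqrt{-1}\,\partial\overline{\partial}|\sigma_i|^2$ and $\sqrt{-1}\,\partial\overline{\partial}\log|\sigma_i|^2$ in terms of $\langle\partial_{h_i}\sigma_i,\partial_{h_i}\sigma_i\rangle$ and $F_{h_i}$. The structural point is that the resulting expression splits into a manifestly nonnegative part, namely $\delta^2(\nu^2+|\sigma_i|^2)^\delta\cdot\sqrt{-1}\langle\partial_{h_i}\sigma_i,\partial_{h_i}\sigma_i\rangle/(\nu^2+|\sigma_i|^2)\ge 0$, and a remainder $-\delta\bigl((\nu^2+|\sigma_i|^2)^\delta-\nu^{2\delta}\bigr)\sqrt{-1}F_{h_i}$ whose sign is indefinite.

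For the lower bound I would simply discard the nonnegative part and estimate the remainder. Since $0\le \delta\bigl((\nu^2+|\sigma_i|^2)^\delta-\nu^{2\delta}\bigr)\le \delta(\nu^2+|\sigma_i|^2)^\delta$, the hypotheses $\delta<\tfrac12$ and $|\sigma_i|<1$ bound this coefficient by a constant independent of $\delta$ and $\nu$, while $\sqrt{-1}F_{h_i}$ is a fixed smooth form on the compact manifold $X$ and so satisfies $-M\omega\le \sqrt{-1}F_{h_i}\le M\omega$ for some $M>0$. Summing over the finite index set $I$ gives $\omega_{\delta\nu}\ge \omega-C\,C_0\,\omega$ for a constant $C_0$ independent of the parameters, so choosing $C$ with $C\,C_0<1$ yields $\omega_{\delta\nu}\ge (1-C\,C_0)\,\omega>0$ uniformly in $\delta$ and $\nu$. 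The four uniform boundedness assertions quoted in the proof are precisely what guarantees, when $\nu>0$, that the nonnegative part is itself a bounded smooth form, so that $\omega_{\delta\nu}$ extends to a genuine smooth K\"ahler metric across $D$ on all of $X$; when $\nu=0$ one instead uses $\chi_{\delta0}(t)=t^\delta$, so that $\omega_\delta$ is smooth on $X\setminus D$ and the same lower bound persists there.

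The main point to get right is the uniformity: the regularization scheme needs a single $C$ that works simultaneously for all small $\nu\ge 0$ and all $\delta\in(0,\tfrac12)$, and this rests entirely on the uniform bound for $\delta(\nu^2+|\sigma_i|^2)^\delta$, which is where the constraint $\delta<\tfrac12$ and the normalization $|\sigma_i|<1$ are used. Apart from this the estimate is routine; the only subtlety is to keep the nonnegative curvature term---which is unbounded near $D$ once $\nu=0$---out of the estimate by exploiting its sign rather than its size.
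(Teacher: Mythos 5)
Your proof is correct and follows essentially the same route as the paper: the displayed $\partial\overline{\partial}$-identity, nonnegativity of the $\langle\partial_{h_i}\sigma_i,\partial_{h_i}\sigma_i\rangle$ term, a uniform bound on the coefficient $\delta\bigl((\nu^2+|\sigma_i|^2)^{\delta}-\nu^{2\delta}\bigr)$ of $F_{h_i}$ coming from $\delta<\tfrac12$ and $|\sigma_i|<1$, and then a choice of $C$ small. If anything, your decision to exploit the sign of the first term rather than its size is slightly more careful than the paper's literal wording, since $\frac{\langle\partial_{h_i}\sigma_i,\partial_{h_i}\sigma_i\rangle}{\nu^2+|\sigma_i|^2}$ is not in fact bounded uniformly in $\nu$ near $D$; only the bounds you actually use are needed.
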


The above equation also implies that $\omega_{\delta\nu}$ converges in $C_{loc}^{\infty}$-topology to $\omega_{\delta}$ as $\nu\to 0$. The K\"ahler metrics $\omega_{\delta}$ behave well around any point of $D$ in the following sense, which is clear by construction. 
\begin{Lemma}\label{isometry}
  Let $x\in D_{J}$ with $J\subset I$, we may choose a small coordinate neighborhood centered at $x$ such that the defining function of $D$ is $z_{1}\cdots z_{k}=0$. Then there exists a positive constants $C_{1}$ such that
  \begin{equation*}
    C_{1}^{-1}\cdot\omega_{\delta}\leq \sqrt{-1}\cdot \delta^2\cdot \sum_{i=1}^{k}\frac{\diff z_{i}\wedge \diff \overline{z_{i}}}{\left|z_{i}\right|^{2-2\delta}}+\sqrt{-1}\cdot \partial\overline{\partial}|z|^{2}\leq C_{1}\cdot\omega_{\delta}.
  \end{equation*}  
\end{Lemma}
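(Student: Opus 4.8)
The plan is to reduce everything to the explicit curvature formula of the preceding Proposition evaluated at $\nu=0$, and then to read off the singular behaviour of each summand of $\omega_{\delta}$ in the given coordinates. First I would observe that setting $\nu=0$ collapses the integral: $\chi_{\delta0}(t)=\delta\int_{0}^{t}s^{\delta-1}\diff s=t^{\delta}$, so that $\omega_{\delta}=\omega+C\sum_{i\in I}\sqrt{-1}\,\partial\overline{\partial}|\sigma_i|^{2\delta}$, and the formula from the Proposition specialises to
\begin{equation*}
  \sqrt{-1}\,\partial\overline{\partial}|\sigma_i|^{2\delta}=\sqrt{-1}\,\delta^{2}|\sigma_i|^{2\delta}\frac{\langle\partial_{h_i}\sigma_i,\partial_{h_i}\sigma_i\rangle}{|\sigma_i|^{2}}-\sqrt{-1}\,\delta|\sigma_i|^{2\delta}F_{h_i}.
\end{equation*}

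Next, in the chosen neighbourhood I would shrink $U$ so that only the components through $x$ — say $D_{1},\dots,D_{k}$ — meet $U$, and write $\sigma_i=z_i e_i$ for a local holomorphic frame $e_i$, so that $|\sigma_i|^{2}=|z_i|^{2}\rho_i$ with $\rho_i$ a smooth positive function bounded away from $0$ and $\infty$ on $\overline{U}$. Since the line bundles have rank one, $\partial|\sigma_i|^{2}\wedge\overline{\partial}|\sigma_i|^{2}=|\sigma_i|^{2}\langle\partial_{h_i}\sigma_i,\partial_{h_i}\sigma_i\rangle$, whence the first term above equals $\sqrt{-1}\,\delta^{2}|\sigma_i|^{2\delta}\,\partial\log|\sigma_i|^{2}\wedge\overline{\partial}\log|\sigma_i|^{2}$. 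Using $\partial\log|\sigma_i|^{2}=\tfrac{\diff z_i}{z_i}+\partial\log\rho_i$ and expanding, the leading piece is exactly $\sqrt{-1}\,\delta^{2}\rho_i^{\delta}\,\frac{\diff z_i\wedge\diff\overline{z_i}}{|z_i|^{2-2\delta}}$, while the cross and diagonal pieces carry coefficients of size $O(\delta^{2}|z_i|^{2\delta-1})$ and $O(\delta^{2}|z_i|^{2\delta})$; the curvature term $-\sqrt{-1}\,\delta|\sigma_i|^{2\delta}F_{h_i}$ is a bounded smooth form of size $O(\delta|z_i|^{2\delta})$. Collecting $\omega$, the contributions of the components not through $x$ (for which $|\sigma_i|$ is bounded below, giving bounded smooth forms), and all lower-order terms into a remainder $R_{\delta}$, I obtain $\omega_{\delta}=\sqrt{-1}\,\delta^{2}\sum_{i=1}^{k}\rho_i^{\delta}\frac{\diff z_i\wedge\diff\overline{z_i}}{|z_i|^{2-2\delta}}+R_{\delta}$.

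I would then compare with the model form $T\coloneqq\sqrt{-1}\,\delta^{2}\sum_{i=1}^{k}\frac{\diff z_i\wedge\diff\overline{z_i}}{|z_i|^{2-2\delta}}+\sqrt{-1}\,\partial\overline{\partial}|z|^{2}$, which is manifestly positive, while $\omega_{\delta}$ is Kähler by the Proposition. Since $\rho_i^{\delta}\in[c^{-1},c]$ on $\overline{U}$ with $c$ independent of $\delta\in(0,\tfrac12)$, the leading singular parts of $\omega_{\delta}$ and $T$ are mutually comparable. For the remainder, the smooth part of $R_{\delta}$ (namely $\omega$, the curvature contributions, and the off-$x$ components) is comparable to the Euclidean form $\sqrt{-1}\,\partial\overline{\partial}|z|^{2}$ on the relatively compact patch by the standard comparison of two smooth positive Hermitian forms; for the subleading singular cross terms I would apply $2ab\le a^{2}+b^{2}$ to split each into a piece dominated by the leading singular term and a bounded piece. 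Absorbing these yields $C_{1}^{-1}T\le\omega_{\delta}\le C_{1}T$.

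The main obstacle — really the only subtle point — is the uniformity of $C_{1}$ in $\delta$, which is what makes the lemma usable in the later flow estimates. This forces one to track the $\delta$-dependence explicitly: one must check that $\rho_i^{\delta}$ stays in a fixed compact subinterval of $(0,\infty)$ for all $\delta\in(0,\tfrac12)$, that the factors $\delta|\sigma_i|^{2\delta}$ and $\delta^{2}|\sigma_i|^{2\delta}$ multiplying the smooth and subleading pieces are bounded independently of $\delta$ (as $\delta|z_i|^{2\delta}\le1$), and that the absorption of cross terms via $2ab\le a^{2}+b^{2}$ introduces no $\delta$-dependent constant. The remaining verifications — the rank-one identity relating $\langle\partial_{h_i}\sigma_i,\partial_{h_i}\sigma_i\rangle$ to $\partial\log|\sigma_i|^{2}\wedge\overline{\partial}\log|\sigma_i|^{2}$, and the background comparison with $\sqrt{-1}\,\partial\overline{\partial}|z|^{2}$ — are routine.
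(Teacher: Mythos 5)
Your proposal is correct and follows exactly the route the paper has in mind: the paper gives no written proof (it declares the lemma ``clear by construction''), and the construction in question is precisely the explicit formula for $\sqrt{-1}\,\partial\overline{\partial}\chi_{\delta\nu}(|\sigma_i|^2)$ specialised to $\nu=0$, which is what you expand in local coordinates. Your identification of the leading singular term $\delta^{2}\rho_i^{\delta}\,|z_i|^{2\delta-2}\diff z_i\wedge\diff\overline{z_i}$, the absorption of the cross terms via $2ab\le a^{2}+b^{2}$, and the explicit tracking of the $\delta$-uniformity of all constants (the point the paper silently relies on later) are all sound, so your writeup simply supplies the details the authors omitted.
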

 Notice that the pullback of $\omega_{\delta}$ by $\pi$ is a degenerated conical K\"ahler metric on $\widetilde{X}\setminus D$. Fix a normalized K\"ahler metric $\omega_{\widetilde{X}}$ on $\widetilde{X}$ such that $\int_{\widetilde{X}}\omega_{\widetilde{X}}^{n}=1$. We put $\omega_{\epsilon\delta}\coloneqq \omega_{\delta}+\epsilon\cdot\omega_{\widetilde{X}}$. Then $\omega_{\epsilon\delta}$ is a conical K\"ahler metric of $\widetilde{X}\setminus D$. And we have the following proposition.
\begin{Proposition}\label{Simpsonassumption}
  The K\"ahler manifold $(\widetilde{X}\setminus D, \omega_{\epsilon\delta})$ satisfies the following three assumptions:\begin{enumerate}
    \item The volume of $\widetilde{X}\setminus D$ is uniformly bounded independent of $\epsilon$ and $\delta$.
    \item There exists an exhaustion function $\phi$ with $\sqrt{-1}\Lambda_{\omega_{\epsilon\delta}}\partial\overline{\partial}\phi$ bounded.
    \item If $f$ is a bounded positive function on $\widetilde{X}\setminus D$ such that $\Delta_{\epsilon\delta}f\leq B$ for some positive function $B\in L^{p}$ $(p>n)$, then $\|f\|_{L^{\infty}}\leq C_{2}(\|B\|_{L^{p}}+\|f\|_{L^{1}})$. The constant $C_{2}$ is independent of $\epsilon$ and $\delta$. 
  \end{enumerate}
  Here $\Delta_{\epsilon\delta}=2\sqrt{-1}\Lambda_{\omega_{\epsilon\delta}}\overline{\partial}\partial$ is the negative Laplace operator and the Laplace operators appear in the rest of the paper will always be the negative one. 
  \begin{proof}
    The first assertion follows directly from the previous lemma. We put $\phi\coloneqq \log|\sigma|$. Then the second assertion follows from the Poincaré-Lelong formula.

    To proof the third assertion, we need a very important result of \cite{Go-Ph-So-Ja} which will also be used later. Let $(X,\omega_{X})$ be a compact K\"ahler manifold with a normalized K\"ahler metric $\omega_{X}$ such that $\int_{X}\omega_{X}^{n}=1$. Suppose that the complex dimension of $X$ is $n$. Given any K\"ahler metric $\omega$ on $X$, we denote its volume by $V_{\omega}\coloneqq[\omega]^{n}$ and define the relative volume density by \begin{equation*}
      e^{\lambda_{\omega}}\coloneqq \frac{1}{V_{\omega}}\frac{\omega^{n}}{\omega_{X}^{n}}.
    \end{equation*}
    Given $p\geq 1$ we define the $p$-th Nash-Yau entropy by 
    \begin{equation*}
      \mathcal{N}_{p}(\omega)\coloneqq \frac{1}{V_{\omega}}\int_{X}\left|\log\frac{1}{V_{\omega}}\cdot\frac{\omega^n}{\omega_X^n}\right|^{p}\cdot\omega^{n}.
    \end{equation*}
    For a given nonnegative continuous function $\gamma\in C^{0}(X)$, and given parameters $0<A\leq \infty$, $K>0$, we consider the following subset of the space of K\"ahler metrics on $X$:
    \begin{equation*}
      \mathcal{W}\coloneqq \mathcal{W}(n,p,A,K,\gamma)\coloneqq \left\{\omega:[\omega]\cdot[\omega_{X}]^{n-1}<A, \mathcal{N}_{p}\leq K, e^{\lambda_{\omega}} \geq \gamma\right\}.
    \end{equation*}
    The following theorem concerning the Sobolev inequality with respect to the K\"ahler metrics in $\mathcal{W}$ was obtained in \cite[Theorem 2.1]{Go-Ph-So-Ja}.
    \begin{Theorem}[Uniform Sobolev Inequality]
      Given $p>n$ and the zero locus of $\gamma$ has Hausdorff dimension less than $2n-1$, there exist $q=q(n,p)>1$ and $C_{3}=C_{3}(n,p,A,K,\gamma,q)>0$ such that for any $\omega\in \mathcal{W}$ and any $u\in L_{1}^{2}(X)$, we have the following Sobolev-type inequality 
      \begin{equation*}
        \left(\frac{1}{V_{\omega}}\int_{X}|u-\overline{u}|^{2q}\omega^{n}\right)^{\frac{1}{q}}\leq C_{3}\frac{1}{V_{\omega}}\int_{X}|\nabla u|^{2}\omega^{n},
      \end{equation*}
      where $\overline{u}\coloneqq \frac{1}{V_{\omega}}\int_{X}u\omega^{n}$.
    \end{Theorem}
  Now we return to the proof of the third assertion. We will apply the above theorem to investigate a family of metrics on $\widetilde{X}$. We put 
  \begin{equation*}
    \omega_{\epsilon\delta\nu}\coloneqq \omega_{\delta\nu}+\epsilon\cdot\omega_{\widetilde{X}}.
  \end{equation*}
  This is a smooth family of K\"ahler metrics on $\widetilde{X}$. Since $\omega_{0\delta\nu}$ only degenerates on the exceptional divisor $\mathscr{E}$, $\omega_{\epsilon\delta\nu}$ is supported by a continuous function $\gamma$ with vanishing locus of Hausdorff dimension $2n-2$. In order to apply the above theorem, we only need to check the Nash-Yau entropy condition. Recall that $\mathscr{E}\cup D$ is simple normal crossing. Hence we may choose a local coordinate patch \(z=(z',z'',z''')\), such that \(D=\bigcup_{i=1}^{k'}\{z'_i=0\}\) and \(\mathscr{E}=\bigcup_{j=1}^{k''}\{z''_j=0\}\). We may assume that \(|z|<1\). We put 
  \begin{equation*}
    f_{\epsilon\delta\nu}\coloneqq \frac{\omega_{\epsilon\delta\nu}^n}{\omega_{\widetilde{X}}^n}.
  \end{equation*}
  Then $f_{\epsilon\delta\nu}$ either degenerates to $0$ on $\mathscr{E}$ with a speed $\prod_{j}|z''_j|^{a}$ or blows up to infinity on $D$ with a speed $\prod_i|z'_i|^{-a}$ with some uniform \(a>0\). Therefore, if we fix a \(p>n\), we have
  \[
  |\log(f_{\epsilon\delta\nu})|^p\leq \frac{C_{15}}{\delta^{k'}\cdot\prod_{i=1}^{k'}|z_i'|^{\frac{\delta}{2}}\cdot\prod_{j=1}^{k''}|z_j''|^{\frac{1}{2}}}
  \]
  with $C_{15}$ a constant independent of $\delta$, \(\epsilon\) and \(\nu\). Then it follows from Lemma~\ref{isometry} that 
  \[
  |\log(f_{\epsilon\delta\nu})|^p\cdot \omega_{\epsilon\delta\nu}^n\leq \frac{C_{15}\cdot\delta^{k'}}{\prod_{i=1}^{k'}|z_i'|^{2-\frac{3\delta}{2}}\cdot\prod_{j=1}^{k''}|z_j''|^{\frac{1}{2}}}.
  \]
  Hence the integral of $|\log(f_{\epsilon\delta\nu})|^p\cdot \omega_{\epsilon\delta\nu}^n$ is uniformly bounded. Consequently, we can find suitable constants $p>n$, $A$ and $K$ such that the family of metrics $\omega_{\epsilon\delta\nu}$ belongs to $\mathcal{W}(n,p,A,K,\gamma)$. Then the above theorem implies the following lemma.
  \begin{Lemma}\label{uniSob}
    For any $f\in C_{0}^{\infty}(\widetilde{X}\setminus D^{*})$ and with respect to any conical K\"ahler metric $\omega_{\epsilon\delta}$ on $\widetilde{X}\setminus D^{*}$, we have the uniform Sobolev inequality \begin{equation*}
      \|f\|_{L^{q}}\leq C_{4}\|f\|_{L_{1}^{2}}
    \end{equation*}
    with $C_{4}$ independent of $\epsilon$ and $\delta$.
    \begin{proof}
      This is true because $f$ is compactly supported in $\widetilde{X}\setminus D^{*}$ and $\omega_{\epsilon\delta\nu}$ converges in $C_{loc}^{\infty}$-topology to $\omega_{\epsilon\delta}$ as $\nu$ tends to $0$.
    \end{proof}
  \end{Lemma}
  Now we are ready to prove the third assertion by Moser's iteration. Let $f$ be a positive bounded function which satisfies the assumption of the third assertion, i.e. $\Delta_{\omega_{\epsilon\delta}}f=\sqrt{-1}\Lambda_{\omega_{\epsilon\delta}}\partial\overline{\partial}f\leq B$. We put $f_{\nu}\coloneqq (f+\nu\log|\sigma|)^{+}$ with $0<\nu<1$. Then we have \begin{equation*}
    \Delta_{\omega_{\epsilon\delta}}f_{\nu}\leq B_{\nu}
  \end{equation*}
  in the weak sense, where $B_{\nu}$ converges to $B$ in $C^{\infty}$-topology as $\nu$ tends to $0$. As $f_{\nu}$ lies in the Sobolev closure of $C_{0}^{\infty}(\widetilde{X}\setminus D^{*})$, hence we may apply Moser's iteration to obtain the estimate \begin{equation*}
    \|f_{\nu}\|_{L^{\infty}}\leq C_{5}(\|f_{\nu}\|_{L^{1}}+\|B_{\nu}\|_{L^{p}})
  \end{equation*}
  with $p>n$. Taking the limit as $\nu$ tends to $0$ completes the proof.
  \end{proof}
\end{Proposition}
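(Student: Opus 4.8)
The three assertions split naturally: the first two are soft geometric facts about the behaviour of $\omega_{\epsilon\delta}$ near $D$, whereas the third is the analytic core. The plan is to dispose of (1) and (2) directly from Lemma~\ref{isometry} and the Poincaré--Lelong formula, and then to reduce (3) to a \emph{uniform} Sobolev inequality followed by a Moser iteration.

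For (1), I would read the volume off Lemma~\ref{isometry}: in a coordinate polydisc with $D=\{z_1\cdots z_k=0\}$ the upper comparison there bounds $\omega_\delta^n$ by $\delta^{2k}\prod_{i=1}^k|z_i|^{-2+2\delta}$ times a smooth volume form, and since $\int_{|z_i|<1}|z_i|^{-2+2\delta}\,\diff V=O(\delta^{-1})$ the local volume is $O(\delta^{k})$; adding $\epsilon\,\omega_{\widetilde{X}}$ changes it by a bounded amount for $\epsilon\le 1$, so the total volume is uniformly bounded. (One may also argue cohomologically, the volume being the bounded intersection number $(\pi^{*}[\omega]+\epsilon[\omega_{\widetilde{X}}])^{n}$.) For (2), I would take the proper exhaustion $\phi\coloneqq-\log|\sigma|$, which tends to $+\infty$ along $D$. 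By Poincaré--Lelong, away from $D$ one has $\sqrt{-1}\,\partial\overline{\partial}\phi$ equal to a fixed smooth $(1,1)$-form on $X$; contracting it with $\omega_{\epsilon\delta}$ stays bounded because $\omega_{\epsilon\delta}$ is comparable to $\omega$ in the directions tangent to $D$ and only larger in the normal directions, so $\sqrt{-1}\,\Lambda_{\omega_{\epsilon\delta}}\partial\overline{\partial}\phi$ is bounded.

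The real content is (3), which I would obtain as a De~Giorgi--Nash--Moser estimate whose sole nontrivial ingredient is a Sobolev inequality with constant independent of $\epsilon$ and $\delta$. Since $\omega_{\epsilon\delta}$ is singular along $D$, the plan is to regularize in $\nu$: the metrics $\omega_{\epsilon\delta\nu}\coloneqq\omega_{\delta\nu}+\epsilon\,\omega_{\widetilde{X}}$ are genuine smooth Kähler metrics on the compact manifold $\widetilde{X}$, they degenerate only along $\mathscr{E}$ as $\epsilon\to0$, and they converge to $\omega_{\epsilon\delta}$ in $C^{\infty}_{\mathrm{loc}}$ as $\nu\to0$. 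I would then check that the entire family lies in a single class $\mathcal{W}(n,p,A,K,\gamma)$ of \cite{Go-Ph-So-Ja}: the intersection number is bounded as in (1), the relative density $e^{\lambda_\omega}$ is bounded below by a continuous $\gamma$ whose zero set is $\mathscr{E}$ (Hausdorff dimension $2n-2<2n-1$), and the Nash--Yau entropy $\mathcal{N}_p$ is uniformly bounded. The uniform Sobolev theorem then applies to each $\omega_{\epsilon\delta\nu}$, and letting $\nu\to0$ yields the uniform inequality $\|f\|_{L^{q}}\le C_4\|f\|_{L_1^2}$ for $f\in C_0^{\infty}(\widetilde{X}\setminus D^{*})$, i.e.\ Lemma~\ref{uniSob}. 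Finally, I would run Moser iteration on $\Delta_{\epsilon\delta}f\le B$ using this uniform constant; to integrate by parts on the noncompact $\widetilde{X}\setminus D$ I would first replace $f$ by the compactly supported truncation $f_\nu\coloneqq(f+\nu\log|\sigma|)^{+}$, carry out the iteration, and send $\nu\to0$.

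I expect the main obstacle to be the uniform Nash--Yau entropy bound, namely controlling $\int_{\widetilde{X}}|\log f_{\epsilon\delta\nu}|^{p}\,\omega_{\epsilon\delta\nu}^{n}$, with $f_{\epsilon\delta\nu}=\omega_{\epsilon\delta\nu}^{n}/\omega_{\widetilde{X}}^{n}$, simultaneously in $\epsilon$, $\delta$ and $\nu$. In a patch where $D$ and $\mathscr{E}$ are coordinate hyperplanes the density $f_{\epsilon\delta\nu}$ blows up like a negative power of the defining coordinates of $D$ and vanishes like a positive power along $\mathscr{E}$, so $|\log f_{\epsilon\delta\nu}|$ grows only logarithmically and is absorbed by any small power of the coordinates; the delicate point is that after multiplying by $\omega_{\epsilon\delta\nu}^{n}$---whose volume form carries the $\delta$-dependent weights of Lemma~\ref{isometry}---these logarithmic singularities must remain integrable \emph{with a bound independent of all three parameters}. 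Arranging the exponents so that the $\delta^{k'}$ prefactor coming from the volume form dominates the near-$D$ blow-up, while the near-$\mathscr{E}$ vanishing stays harmless, is the calculation on which the whole estimate rests; everything else is either soft or a standard iteration.
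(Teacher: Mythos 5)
Your proposal follows the paper's proof essentially step for step: assertion (1) from Lemma~\ref{isometry}, assertion (2) via $\pm\log|\sigma|$ and Poincar\'e--Lelong, and assertion (3) by placing the regularized family $\omega_{\epsilon\delta\nu}$ in a single class $\mathcal{W}(n,p,A,K,\gamma)$ of \cite{Go-Ph-So-Ja}, verifying the uniform Nash--Yau entropy bound, letting $\nu\to 0$ to obtain the uniform Sobolev inequality, and running Moser iteration on the truncation $(f+\nu\log|\sigma|)^{+}$. You even isolate the same key difficulty (the entropy estimate uniform in $\epsilon$, $\delta$, $\nu$) that the paper resolves by the local computation with the $\delta^{k'}$ prefactor, so the approaches coincide.
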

The uniform Sobolev inequality also implies the following uniform upper bound for the heat kernels on $\widetilde{X}\setminus D$ with repsect to the family of conical metrics $\omega_{\epsilon\delta}$. 
\begin{Proposition}\label{HeatkernelUB}
  Let $K_{\epsilon\delta}$ be the heat kernel with respect to $\omega_{\epsilon\delta}$, then for any $\tau>0$, there exists a constant $C_{K}(\tau)$ which is independent of $\epsilon$ and $\delta$ such that 
  \begin{equation*}
    0<K_{\epsilon\delta}(x,y,t)\leq C_{K}(\tau)\left(t^{n}\exp(-\frac{d_{\epsilon\delta}(x,y)}{(4+\tau)t})+1\right)
  \end{equation*}
  where $d_{\epsilon\delta}(\cdot,\cdot)$ is the distance function of $\widetilde{X}\setminus D^{*}$ with repsect to the metric $\omega_{\epsilon\delta}$.
  \begin{proof}
    Recall that $\omega_{\epsilon\delta\nu}$ is a family of smooth metrics on the compact manifold $\widetilde{X}$ such that the uniform Sobolev inequality is satisfied. On the one hand, by combining the diagonal heat kernel estimate shown in~\cite{Ch-Li} and the Gaussian upper bound estimate in~\cite[Theorem 1.1]{GrYan}, we have 
    \begin{equation*}
      0<K_{\epsilon\delta\nu}(x,y,t)\leq C_{K}(\tau)\left(t^{n}\exp(-\frac{d_{\epsilon\delta\nu}(x,y)}{(4+\tau)t})+1\right).
    \end{equation*}
    On the other hand, it is easy to show that $K_{\epsilon\delta\nu}$ converges uniformly to the heat kernel $K_{\epsilon\delta}$ on compact subsets of $(\widetilde{X}\setminus D^{*})\times (\widetilde{X}\setminus D^{*})$ as $\nu$ tends to $0$. Hence the proposition follows.
  \end{proof}
\end{Proposition}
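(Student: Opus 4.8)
The plan is to realise the conical heat kernel $K_{\epsilon\delta}$ on the incomplete manifold $\widetilde{X}\setminus D^{*}$ as a limit of heat kernels of genuinely smooth, compact approximations, on which the uniform Sobolev inequality we have already established produces a Gaussian bound with constants independent of all parameters. The natural approximating family is $\omega_{\epsilon\delta\nu}$ from Proposition~\ref{Simpsonassumption}: these are smooth K\"ahler metrics on the \emph{compact} manifold $\widetilde{X}$, they were shown to lie in a fixed class $\mathcal{W}(n,p,A,K,\gamma)$ with uniform parameters, and they converge in $C_{loc}^{\infty}$-topology to $\omega_{\epsilon\delta}$ on $\widetilde{X}\setminus D^{*}$ as $\nu\to 0$. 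The whole argument is carried out first at the level of $\nu>0$ and then passed to the limit.

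First I would establish the desired bound for the smooth heat kernel $K_{\epsilon\delta\nu}$ with constants independent of $\epsilon,\delta,\nu$. The uniform Sobolev inequality obtained for the family $\omega_{\epsilon\delta\nu}\in\mathcal{W}$ in the proof of Proposition~\ref{Simpsonassumption} feeds into a Nash--Moser iteration, yielding a uniform on-diagonal estimate for $K_{\epsilon\delta\nu}(x,x,t)$; this is exactly the diagonal heat-kernel bound of Cheng--Li~\cite{Ch-Li}, whose constant depends only on the Sobolev constant, the dimension, and the uniformly bounded volume. With the on-diagonal bound in hand I would invoke the ``on-diagonal implies Gaussian'' principle of Grigor'yan--Yang~\cite[Theorem 1.1]{GrYan} to upgrade it to the off-diagonal Gaussian estimate of the form asserted in the proposition, now stated for $K_{\epsilon\delta\nu}$ and the distance $d_{\epsilon\delta\nu}$, with $C_{K}(\tau)$ uniform in $\epsilon,\delta,\nu$ and the parameter $\tau>0$ recording the usual loss in the exponent. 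Positivity $K_{\epsilon\delta\nu}>0$ is the strong maximum principle on the compact manifold $\widetilde{X}$.

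The remaining step is the passage $\nu\to 0$. I would first fix the meaning of $K_{\epsilon\delta}$ as the minimal (Friedrichs/Dirichlet) heat kernel of the incomplete manifold $\widetilde{X}\setminus D^{*}$, realised as the increasing limit of Dirichlet heat kernels over an exhaustion $\Omega_{1}\Subset\Omega_{2}\Subset\cdots$ of $\widetilde{X}\setminus D^{*}$ by relatively compact domains. On each fixed $\Omega_{m}$ the metrics $\omega_{\epsilon\delta\nu}$ converge smoothly to $\omega_{\epsilon\delta}$, so the corresponding Dirichlet heat kernels converge uniformly on compacta; combining this with the monotone exhaustion gives $K_{\epsilon\delta\nu}\to K_{\epsilon\delta}$ uniformly on compact subsets of $(\widetilde{X}\setminus D^{*})\times(\widetilde{X}\setminus D^{*})$ for each fixed $t$. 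In parallel I would show $d_{\epsilon\delta\nu}(x,y)\to d_{\epsilon\delta}(x,y)$ for $x,y$ away from $D^{*}$: lower semicontinuity of length under $C_{loc}^{0}$-convergence of the metric gives $\liminf_{\nu\to 0} d_{\epsilon\delta\nu}\ge d_{\epsilon\delta}$, while comparison along a near-minimising path kept inside the smooth locus gives the matching upper estimate. Taking $\nu\to 0$ in the $\nu$-bound then yields the stated inequality, the harmless loss in the exponent being absorbed by slightly enlarging $\tau$.

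The main obstacle I anticipate is precisely this limit on an incomplete space: the metrics degenerate along $D^{*}$, so there is no uniform control of the geometry there, and one cannot work intrinsically with $\omega_{\epsilon\delta}$. The device that makes it go through is the separation of roles, namely that \emph{uniform} analytic estimates (Sobolev constant, on-diagonal and Gaussian bounds) are proved on the smooth compact models $\omega_{\epsilon\delta\nu}$, while the degenerate limit is controlled only through $C_{loc}^{\infty}$-convergence away from $D^{*}$ together with the exhaustion/Dirichlet-kernel comparison; the freedom in $\tau>0$ then cushions any remaining imprecision in the convergence of the distance functions near the degeneration locus.
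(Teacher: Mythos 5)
Your proposal is correct and follows essentially the same route as the paper: uniform Gaussian bounds for the smooth kernels $K_{\epsilon\delta\nu}$ obtained by combining the Cheng--Li on-diagonal estimate (fed by the uniform Sobolev inequality for the family $\omega_{\epsilon\delta\nu}$) with the Grigor'yan--Yang off-diagonal upgrade, followed by the limit $\nu\to 0$ via locally uniform convergence of the heat kernels. The extra details you supply (the Dirichlet/exhaustion realisation of $K_{\epsilon\delta}$ and the convergence of the distance functions) are points the paper leaves implicit, but they do not change the argument.
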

\subsection{Parabolic metric}
The target of this subsection is to construct a Hermitian metric $\widehat{H}$ on the regular part of $\pF$ which is compatible with the parabolic structure in the sense of Definiton~\ref{comatibleandadmissible}. We also wish it to satisfy the following equations in the sense of currents on $X$: 
\begin{align*}
  &\ch_{1}(\pF)=\frac{\sqrt{-1}}{2\pi}\tr(F_{\widehat{H}})\\
  &\ch_{1}(\pF)^2=\left(\frac{\sqrt{-1}}{2\pi}\right)^{2}\tr(F_{\widehat{H}})\wedge \tr(F_{\widehat{H}})\\
  &\ch_{2}(\pF)=\frac{1}{2}\left(\frac{\sqrt{-1}}{2\pi}\right)^{2}\tr(F_{\widehat{H}}\wedge F_{\widehat{H}}).
\end{align*}
\begin{Definition}
  If a Hermitian metric $H$ defined on ${\pF}_{|X^{\circ}\setminus D}$ satisfies the first equation above, we call it adapted to $\pF$ in codimension $1$. If it satisfies all of the equations, we call it adapted to $\pF$ in codimension $2$.
\end{Definition}
In view of Lemma~\ref{cherncharacterpreserve}, it suffices to construct a Hermitian metric $H_0$ on the regular part of the locally abelian parabolic bundle $E_*'\coloneqq E\otimes[-P_0]$ which is adapted in codimension $2$. Indeed, should this be completed, we may choose a Hermitian metric $h_{P_0}$ for the line bundle $\mathcal{O}_{\widetilde{X}}(P_0)$ and then $\widehat{H}\coloneqq H_0\otimes h_{P_0}$ will be a Hermitian metric on $E_{|\widetilde{X}\setminus D^*}$ which in turn will induce a metric on the regular part of $\pF$. Since $E\otimes[-P]\cong \F$, it is easy to see that $\widehat{H}$ is adapted to $\pF$ in codimension $2$ because the singular locus has codimension at least $3$. 

We introduce an appraoch to construct an adapted metric on a locally abelian parabolic bundle. The notation used in the construction process will be independent of that used elsewhere. Firstly, it is important to have a smooth decomposition of the bundle in a small neighborhood of the divisor $D$ which is in some sense compatible with the paraboic structrue. 

\subsubsection{Smooth decomposition}
\paragraph{Local patching}
Let $X$ be a complex manifold. Let $D=\bigcup_{ 1\leq i\leq \ell}D_{i}$ be a simple normal crossing divisor. We set $Y=\bigcap_{i=1}^{\ell}D_i$.
We set $S=\{0\leq a <1\}$. For any $1\leq i\leq\ell$, let $q_{i}:S^{\ell}\to S$ denote the projection onto the $i$-th component.

Let $F_*=(F,\boldsymbol{F})$ be a locally abelian parabolic bundle on $X$. Recall that $\boldsymbol{F}=({}^{i}\!F\,|\,i=1,\ldots,\ell)$ is a tuple of decreasing filtrations ${}^{i}\!F$ of $F_{|D_i}$ by holomorphic subbundles indexed by $S$ such that the following holds.

\begin{itemize}
\item For any $1\leq i\leq\ell$ and $a\in S$, there exists $\epsilon>0$ such that ${}^{i}\!F_{a}={}^{i}\!F_{a-\epsilon}$.
\item For any $P\in Y$, there exist a neighbourhood ${\U}_{P}$ of $P$ in $X$ and a holomorphic decomposition $F_{|\mathcal{U}_{P}}=\boplus_{\va\in S^{\ell}}{}^{P}\!G_{\va}$ such that \[\bigoplus_{q_{i}(\va)\geq @}{}^{P}\!G_{\va|D_i\cap{\U}_{P}}={}^{i}\!F_{|D_i\cap{\U}_{P}}.\]
\end{itemize}
Here the symbol ``@'' denotes a real variable taking values from $[0,1[$ and we regard $$\bigoplus_{q_{i}(\va)\geq @}{}^{P}\!G_{\va|D_i\cap{\U}_{P}}$$ as a left continuous filtration.

Let $X_{2}\subset X_{1}\subset X$ be open subsets such that the closure of $X_{2}$ is contained in $X_{1}$. Suppose that there exists a $C^{\infty}$-decomposition
\[F_{|X_{1}}=\bigoplus_{\boldsymbol{a}\in S^{\ell}}{}^{1}\!G_{\boldsymbol{a}}\]
such that the following holds:
\[\bigoplus_{q_{i}(\boldsymbol{a})\geq @}{}^{1}\!G_{\va|D_i\cap X_{1}}={}^{i}\!F_{|D_i\cap X_{1}}.\]

\begin{Proposition}\label{localpatching}
There exist a neighbourhood ${\U}$ of $Y$ and a $C^{\infty}$-decomposition
\[F_{|\mathcal{U}}=\bigoplus_{\boldsymbol{a}\in S^{\ell}}G_{\boldsymbol{a}}\]
such that the following holds:
\begin{itemize}
\item $\bigoplus_{q_{i}(\boldsymbol{a})\geq @}G_{\boldsymbol{a}|D_i\cap{\U}}={}^{i}\!F_{|D_i\cap{\U}}$.
\item $G_{\boldsymbol{a}|X_{2}}={}^{1}\!G_{\boldsymbol{a}|X_{2}}$.
\end{itemize}
\end{Proposition}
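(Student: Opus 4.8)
The plan is to glue the given decomposition on $X_1$ with the local holomorphic decompositions near the points of $Y$ by a partition of unity, exploiting the fact that the set of $C^{\infty}$-decompositions compatible with the parabolic structure carries a natural affine (torsor) structure, so that interpolation preserves compatibility. First I would fix, for every $P\in Y$, the neighbourhood ${\U}_P$ and the holomorphic decomposition $F_{|{\U}_P}=\boplus_{\va}{}^{P}\!G_{\va}$ furnished by the locally abelian hypothesis, and choose a locally finite refinement of $\{X_1\}\cup\{{\U}_P\}_{P\in Y}$ covering a neighbourhood of $Y$. I would also fix a cutoff function $\chi$ with $\chi\equiv 1$ on $X_2$ and $\operatorname{supp}\chi\subset X_1$; its role is to force the final decomposition to coincide with ${}^{1}\!G$ on $X_2$, which is possible precisely because $\overline{X_2}\subset X_1$ leaves room between the two sets.

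The algebraic heart is the following torsor structure. Over an open set $V$, any two compatible $C^{\infty}$-decompositions $G,G'$ are related by a $C^{\infty}$-automorphism $g$ of $F_{|V}$ preserving each filtration ${}^{i}\!F$ along $D_i\cap V$. Matching the two via the canonical identifications with the graded pieces ${}^{i}\Gr$ on each $D_i$ shows that $g_{|D_i}$ induces the identity on the graded pieces, i.e. $g_{|D_i}$ is unipotent with respect to the filtration, while off $D$ the automorphism $g$ is unconstrained. Consequently $\log g$ is a section of the subbundle $\mathcal{N}\subset\operatorname{End}(F)$ of endomorphisms that strictly lower every filtration ${}^{i}\!F$ along $D_i$ (and are arbitrary off $D$), and $g_t\coloneqq\exp(t\log g)$ is, for every $t$, again a compatible automorphism. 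Because $\mathcal{N}$ is a genuine vector bundle, I can interpolate any two compatible decompositions, replacing the scalar $t$ by a smooth function, and obtain again a compatible decomposition; this is exactly what legitimizes partition-of-unity patching here.

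With this in hand I would carry out the gluing inductively over the patches: starting from a reference decomposition on one patch, on each overlap I express the next local decomposition through its automorphism relative to the current one and blend them via the recipe $\exp(\rho\log g)$, where $\rho$ is the relevant member of the partition of unity, so that compatibility with every ${}^{i}\!F$ is preserved throughout. A final interpolation against ${}^{1}\!G$ governed by $\chi$ then yields a decomposition $F_{|{\U}}=\boplus_{\va}G_{\va}$ on a neighbourhood ${\U}$ of $Y$ with $G_{\va|X_2}={}^{1}\!G_{\va|X_2}$, which are the two asserted properties; the verification that the partial sums $\boplus_{q_i(\va)\geq @}G_{\va|D_i\cap{\U}}$ recover ${}^{i}\!F$ is automatic from the construction, and Theorem~\ref{Locally Abelian} then guarantees the result is the honest decomposition of a locally abelian parabolic bundle.

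The main obstacle, and the point requiring the most care, is checking that interpolation preserves compatibility with all filtrations simultaneously along every stratum $D_J=\bigcap_{j\in J}D_j$, not merely along a single $D_i$. Concretely, $\mathcal{N}$ must be defined so that its sections are simultaneously strictly filtration-lowering for every component through a given point, and one must verify that $\exp$ of such a section remains in the (non-abelian) group preserving all these filtrations, so that the blended projections stay commuting idempotents recovering the filtrations on each $D_i\cap{\U}$. This is where the compatibility axiom for locally abelian parabolic bundles (the common splitting of Definition~\ref{def:pcompatible}), giving a well-defined multi-graded structure on each $D_J$, is used essentially; by contrast the off-divisor freedom causes no difficulty, since there the decompositions are unconstrained and any smooth interpolation suffices.
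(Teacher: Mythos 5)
Your strategy rests on the assertion that the set of compatible $C^{\infty}$-decompositions is a torsor under (the exponential of) a vector bundle $\mathcal{N}$ of filtration-lowering endomorphisms, so that partition-of-unity interpolation stays inside it. That assertion is where the argument breaks. Given two compatible decompositions $G,G'$ on an overlap, there is no canonical automorphism $g$ with $g(G_{\va})=G'_{\va}$: the natural candidate $g=\sum_{\va}p'_{\va}\circ p_{\va}$ (composites of the two families of projections) is unipotent along $D$, but away from $D$ the two decompositions are completely unconstrained and $g$ can degenerate --- e.g.\ for a rank-$2$ bundle with $G'_{1}=G_{2}$ and $G'_{2}=G_{1}$ at a point off $D$, this $g$ vanishes. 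Even where some invertible $g$ exists, a \emph{smooth global} branch of $\log g$ on the whole overlap need not exist (its eigenvalues are only pinned to $1$ on $D$ itself, and can wind elsewhere), so ``$\log g\in\mathcal{N}$'' and the recipe $\exp(\rho\log g)$ are not defined where you need them; the non-abelian iteration over a locally finite cover compounds the problem, since the relating automorphisms must be recomputed after each blending step. So the step you yourself flag as ``the main obstacle'' is not merely delicate --- as formulated it is false, and the conclusion that compatibility is ``automatic from the construction'' is unsupported. (The appeal to Theorem~\ref{Locally Abelian} at the end is also off target: that theorem concerns holomorphic local freeness of intersection sheaves, not $C^{\infty}$ splittings.)

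The paper linearizes differently, and it is worth seeing why its version of ``averaging'' is legitimate. Fix an injection $\psi:S^{\ell}\to\mathbb{C}$ and encode each local decomposition not by projections but by the single endomorphism $f_{k}=\sum_{\va}\psi(\va)\,\id_{{}^{k}G_{\va}}$. These live in the linear space $\operatorname{End}(F)$, and the conditions ``preserves each filtration ${}^{i}\!F$ along $D_i$ and acts by the scalar $\psi(\va)$ on the $\va$-graded piece'' cut out an \emph{affine} subspace; hence the partition-of-unity average $f=\sum\chi_{k}f_{k}+\chi_{X_{1}}f_{X_{1}}$ still satisfies them on $Y$ (Lemma~\ref{endpreserving}). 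One then recovers a decomposition as the generalized eigenspace decomposition of $f$ on a sufficiently small neighbourhood $\mathcal{U}$ of $Y$, where eigenvalue continuity keeps the spectrum clustered near $\psi(S^{\ell})$; compatibility with each ${}^{i}\!F$ and the agreement with ${}^{1}\!G$ on $X_{2}$ (where $\chi_{X_{1}}\equiv 1$) then follow. If you want to salvage your approach, you would have to restrict to a neighbourhood of $Y$ small enough that all decompositions are uniformly close to unipotently-related ones and prove a smooth-selection statement for $\log g$ there --- at which point you are effectively rebuilding the spectral argument.
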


\begin{proof}
Since locally, $F_*$ is a direct sum of parabolic line bundles, there exists a tuple of open subsets ${\U}_{k}$ ($k\in\Gamma$) of $X$ such that the following holds:
\begin{itemize}
\item $Y\subset\mathcal{V}:=\bigcup{\U}_{k}\cup X_{1}$
\item On each $\mathcal{U}_{k}$, there exists a $C^{\infty}$-decomposition
\[F_{|\mathcal{U}_{k}}=\bigoplus_{\boldsymbol{a}\in S^{\ell}}{}^{k}\!G_{\boldsymbol{a}}\]
such that the following holds:
\[\bigoplus_{q_{i}(\boldsymbol{a})\geq @}{}^{k}\!G_{\boldsymbol{a}|D_i\cap\mathcal{U}_{k}}={}^{i}\!F_{|D_i\cap\mathcal{U}_{k}}.\]
\end{itemize}

Let $\{\chi_{k}\}\cup\{\chi_{X_{1}}\}$ be a partition of unity on $\mathcal{V}$ subordinate to the open covering $\mathcal{V}=\bigcup\mathcal{U}_{k}\cup X_{1}$. We may assume that $\chi_{X_{1}}=1$ on $X_{2}$. Let $\psi:S^{\ell}\to\mathbb{C}$ be an injection. We set \[f_{k}=\sum_{\boldsymbol{a}\in S^{\ell}}\psi(\boldsymbol{a})\cdot\id_{{}^{k}\!G_{\boldsymbol{a}}}\] and \[f_{X_{1}}=\sum_{\boldsymbol{a}\in S^{\ell}}\psi(\boldsymbol{a})\cdot\id_{{}^{1}\!G_{\boldsymbol{a}}}.\] We obtain the following $C^{\infty}$-endomorphism of $F$ on $\mathcal{V}$:
\[f=\sum_{k\in\Gamma}\chi_{k}\cdot f_{k}+\chi_{X_{1}}\cdot f_{X_{1}}.\]
In the mean time, we obtain the following lemma by the construction.

\begin{Lemma}\label{endpreserving}
The restriction $f_{|Y}$ preserves the filtrations ${}^{i}\!F$ ($i=1,\ldots,\ell$) of $F_{|Y}$. Moreover, the induced endomorphism on ${}^{\underline{\ell}}\!\Gr_{\boldsymbol{a}}(F_{|Y})$ equals the multiplication of $\psi(\boldsymbol{a})$.
\end{Lemma}

As a result, we obtain the eigen decomposition
\[(F,f)_{|Y}=\bigoplus_{\alpha\in\mathbb{C}}(K_{\alpha},\alpha\cdot\id_{K_{\alpha}}).\]
There exists a neighbourhood $\mathcal{U}$ of $Y$ and a decomposition
\[(F,f)_{|\mathcal{U}}=\bigoplus_{\alpha\in\mathbb{C}}(K_{\mathcal{U},\alpha},f_{\alpha})\]
such that $K_{\mathcal{U},\alpha|Y}=K_{\alpha}$. If $\mathcal{U}$ is sufficiently small, any eigenvalues of $f_{\alpha|P}$ ($P\in\mathcal{U}$) are close to $\alpha$. In particular, we may assume that there are no common eigenvalues of $f_{\alpha}$ and $f_{\beta}$ ($\alpha\neq\beta$). We set
\[G_{\boldsymbol{a}}=K_{\mathcal{U},\psi(\boldsymbol{a})}.\]
The restriction $f_{|D_i\cap\mathcal{U}}$ preserves the filtration ${}^{i}\!F$. We have
\[{}^{i}\!F_{b|Y}=\bigoplus_{q_{i}(\boldsymbol{a})\geq b}K_{\psi(\boldsymbol{a})}.\]
Then, we obtain $\bigoplus_{q_{i}(\boldsymbol{a})\geq @}G_{\boldsymbol{a}|D_i\cap\mathcal{U}}={}^{i}\!F_{|D_i\cap\mathcal{U}}$. Because $\chi_{X_{1}}=1$ on $X_{2}$, we have $G_{\boldsymbol{a}|X_{2}}={}^{1}\!G_{\boldsymbol{a}|X_{2}}$.
\end{proof}

\paragraph{Glocal decomposition}
With the above proposition at hand it is easy to get a global $C^{\infty}$-decomposition near $D$.
Let $X$ be a complex manifold. Let $D=\bigcup_{i\in \Lambda}D_{i}$ be a simple normal crossing divisor. For any $I\subset\Lambda$, we set $D_{I}=\bigcap_{i\in I}D_i$, $\partial D_{I}=\bigcup_{j\in\Lambda\setminus I}(D_{I}\cap D_{j})$ and $D_{I}^{\circ}=D_{I}\setminus\partial D_{I}$. For any $I\subset J\subset\Lambda$, let $q_{I,J}:S^{J}\to S^{I}$ denote the projection. As before, let $F_{*}=(F,\boldsymbol{F})$ be a parabolic bundle on $X$. Let $\boldsymbol{F}=({}^{i}\!F\,|\,i\in\Lambda)$ be the parabolic structure.

\begin{Proposition}\label{globaldecomposition}
There exist neighbourhoods ${\U}_{i}$ ($i\in\Lambda$) of $D_i$ and $C^{\infty}$-decompositions
\[F_{|{\U}_{i}}=\bigoplus_{a\in S}{}^{i}\!G_{a}\]
such that the following holds:
\begin{itemize}
\item $\bigoplus_{a\geq b}{}^{i}\!G_{a|D_i}={}^{i}\!F_{b}$ holds for any $b\in S$.
\item For any $I\subset\Lambda$ and $\boldsymbol{a}=({}^{i}\!a\,|\,i\in I)\in S^{I}$, on ${\U}_{I}=\bigcap_{i\in I}{\U}_{i}$, we set ${}^{I}\!G_{\boldsymbol{a}}=\bigcap_{i\in I}{}^{i}\!G_{{}^{i}\!a|{\U}_{I}}$. Then, $F_{|{\U}_{I}}=\bigoplus_{\boldsymbol{a}\in S^{I}}{}^{I}\!G_{\boldsymbol{a}}$ holds.
\end{itemize}

\begin{proof}
By using a descending induction on $|I|$, we shall construct neighbourhoods ${\cal V}_{I}$ ($I\subset\Lambda$) of $D_{I}$ and decompositions
\[F_{|{\cal V}_{I}}=\bigoplus_{\boldsymbol{a}\in S^{I}}{}^{I}\!G_{\boldsymbol{a}}\]
such that the following holds:
\begin{itemize}
\item[(a)] $\bigoplus_{q_{i,I}(\boldsymbol{a})\geq b}{}^{I}\!G_{\boldsymbol{a}|D_i\cap{\cal V}_{I}}={}^{i}\!F_{b|D_i\cap{\cal V}_{I}}$ holds for any $b\in S$.
\item[(b)] For $I\subset J$ and $\boldsymbol{a}\in S^{I}$, we have ${}^{I}\!G_{\boldsymbol{a}|{\cal V}_{J}}=\bigoplus_{q_{I,J}(\boldsymbol{b})=\boldsymbol{a}}{}^{J}\!G_{\boldsymbol{b}}$.
\end{itemize}

Suppose that we have already constructed such decompositions for $J\subset\Lambda$ with $|J|\geq k_{0}+1$. Let $I\subset\Lambda$ with $|I|=k_{0}$.
For $I\subset J\subset\Lambda$ with ${\U}_{J}\neq\varnothing$ and for $\boldsymbol{a}\in S^{I}$, we set
\begin{equation*}
{}^{J}\!G_{\boldsymbol{a}}\coloneqq\bigoplus_{q_{I,J}(\boldsymbol{b})=\va}{}^{J}\!G_{\boldsymbol{b}}.
\end{equation*}
We obtain the following decomposition:
\[F_{|{\U}_{J}}=\bigoplus_{\boldsymbol{a}\in S^{I}}{}^{J}\!G_{\boldsymbol{a}}\]
By the condition (b), we obtain ${}^{J}\!G_{\boldsymbol{a}|{\U}_{K}}={}^{K}\!G_{\boldsymbol{a}}$ for any $J\subset K$ and $\boldsymbol{a}\in S^{I}$. From ${}^{J}\!G_{\boldsymbol{a}}$ ($I\subsetneq J$), we obtain a $C^{\infty}$-subbundle $^{1,I}\!G_{\boldsymbol{a}}$ of $F$ on ${\U}_{I,1}=\bigcup_{I\subsetneq J}{\U}_{J}$. We obtain the decomposition
\[F_{|{\U}_{I,1}}=\bigoplus_{\boldsymbol{a}\in S^{I}}{}^{1,I}\!G_{\boldsymbol{a}}\]
Let ${\U}_{I,2}\subset{\U}_{I,1}$ be an open neighbourhood of $\partial D_{I}$ whose closure is contained in ${\U}_{I,1}$. By Proposition~\ref{localpatching}, there exist a neighbourhood ${\cal V}_{I}$ of $D_{I}$ and a decomposition
\[F_{|{\U}_{I}}=\bigoplus_{\boldsymbol{a}\in S^{I}}{}^{I}\!G_{\boldsymbol{a}}\]
such that the condition (a) holds for $I$ and that ${}^{I}\!G_{\boldsymbol{a}|{\U}_{I,2}}={}^{1,I}\!G_{\boldsymbol{a}|{\U}_{I,2}}$. We replace ${\cal V}_{J}$ with ${\cal V}_{J}\cap{\U}_{I}$ for any $J\ni I$. Then, we obtain the claim of the proposition.
\end{proof}
\end{Proposition}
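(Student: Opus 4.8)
The plan is to prove Proposition~\ref{globaldecomposition} by a descending induction on the cardinality $|I|$, constructing for every $I\subset\Lambda$ a neighbourhood $\mathcal{V}_I$ of $D_I$ together with a $C^\infty$-decomposition $F_{|\mathcal{V}_I}=\bigoplus_{\boldsymbol{a}\in S^I}{}^I G_{\boldsymbol{a}}$ subject to two compatibility conditions: condition (a), that the decomposition refines each filtration ${}^i F$ ($i\in I$) along $D_i\cap\mathcal{V}_I$; and condition (b), that for $I\subset J$ the coarsening of the level-$J$ decomposition along $q_{I,J}$ recovers the level-$I$ one on $\mathcal{V}_J$. Once these hold for all $I$, I set $\mathcal{U}_i\coloneqq\mathcal{V}_{\{i\}}$, so that the first bullet of the statement is condition (a) for singletons. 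For the second bullet I intersect the identities furnished by condition (b) for the inclusions $\{i\}\subset I$: since ${}^{\{i\}}G_{{}^i a}=\bigoplus_{\boldsymbol{b}:\,b_i={}^i a}{}^I G_{\boldsymbol{b}}$ on $\mathcal{U}_I$ and $\{{}^I G_{\boldsymbol{b}}\}$ is a direct sum, I get $\bigcap_{i\in I}{}^{\{i\}}G_{{}^i a}=\bigoplus_{\boldsymbol{b}:\,b_i={}^i a\ \forall i}{}^I G_{\boldsymbol{b}}={}^I G_{\boldsymbol{a}}$, whence $F_{|\mathcal{U}_I}=\bigoplus_{\boldsymbol{a}}{}^I G_{\boldsymbol{a}}$ as required.

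For the base case I take $I$ with $|I|$ maximal among the nonempty $D_I$. Then $D_I$ is a minimal stratum through which exactly the divisors $\{D_i\}_{i\in I}$ pass, so it plays the role of the total intersection $Y$ in Proposition~\ref{localpatching} applied to the subfamily indexed by $I$ (the bundle $(F,({}^iF)_{i\in I})$ is still locally a direct sum of parabolic line bundles for this coarser family). Running that proposition with empty initial data, $X_1=\varnothing$, yields a decomposition near $D_I$ satisfying (a), while (b) is vacuous.

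For the inductive step I assume $\mathcal{V}_J$ and ${}^J G_{\boldsymbol{b}}$ are built for all $J$ with $|J|\geq k_0+1$, and fix $I$ with $|I|=k_0$. For each $J\supsetneq I$ with $\mathcal{V}_J\neq\varnothing$ I coarsen along $q_{I,J}$, setting ${}^J G_{\boldsymbol{a}}\coloneqq\bigoplus_{q_{I,J}(\boldsymbol{b})=\boldsymbol{a}}{}^J G_{\boldsymbol{b}}$ for $\boldsymbol{a}\in S^I$. The inductive condition (b) forces these coarsened pieces to agree on the overlaps $\mathcal{V}_J\cap\mathcal{V}_K$, so they glue to a $C^\infty$-decomposition ${}^{1,I}G_{\boldsymbol{a}}$ on the neighbourhood $\mathcal{U}_{I,1}=\bigcup_{I\subsetneq J}\mathcal{V}_J$ of $\partial D_I$, still refining the filtrations ${}^i F$ ($i\in I$). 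I then pick $\mathcal{U}_{I,2}\Subset\mathcal{U}_{I,1}$ a smaller neighbourhood of $\partial D_I$ and invoke Proposition~\ref{localpatching} with $Y=D_I$, the subfamily $\{D_i\}_{i\in I}$, $X_1=\mathcal{U}_{I,1}$ and $X_2=\mathcal{U}_{I,2}$. This extends the decomposition across all of $D_I$, producing ${}^I G_{\boldsymbol{a}}$ on a neighbourhood $\mathcal{V}_I$ that satisfies (a) and coincides with ${}^{1,I}G_{\boldsymbol{a}}$ on $\mathcal{U}_{I,2}$. Finally I shrink each deeper $\mathcal{V}_J$ ($J\supsetneq I$) to $\mathcal{V}_J\cap\mathcal{V}_I$; since ${}^I G_{\boldsymbol{a}}={}^{1,I}G_{\boldsymbol{a}}=\bigoplus_{q_{I,J}(\boldsymbol{b})=\boldsymbol{a}}{}^J G_{\boldsymbol{b}}$ on the overlap, condition (b) now holds between level $I$ and every deeper level.

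The hard part will be the correct packaging of the extension step as an instance of Proposition~\ref{localpatching}: one must recognize $D_I$ as the complete intersection of the subfamily $\{D_i\}_{i\in I}$ — the extra components $D_j$ ($j\notin I$) are invisible on $D_I^\circ$ and meet $D_I$ only inside $\partial D_I$, which is already covered by $\mathcal{U}_{I,1}$ — and then use the $X_2$-agreement clause of that proposition as precisely the mechanism that lets condition (b) survive the final shrinking. The delicate point is thus the simultaneous bookkeeping of maintaining (a) for the new level $I$ and (b) between $I$ and all deeper strata; everything else reduces to Lemma~\ref{endpreserving} and the local splitting of $F_*$ into parabolic line bundles.
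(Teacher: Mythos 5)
Your proposal is correct and follows essentially the same route as the paper: descending induction on $|I|$, coarsening the deeper-level decompositions along $q_{I,J}$, gluing them on $\mathcal{U}_{I,1}=\bigcup_{I\subsetneq J}\mathcal{V}_J$, and invoking Proposition~\ref{localpatching} with $X_1=\mathcal{U}_{I,1}$, $X_2=\mathcal{U}_{I,2}$ before shrinking the deeper $\mathcal{V}_J$ to preserve condition (b). You in fact spell out two points the paper leaves implicit (the base case and the derivation of the two bullets of the statement from (a) and (b)), which is a welcome addition rather than a deviation.
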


\subsubsection{Construction of metric}
Let $H_{1}$ be a Hermitian metric of $F$ such that the following holds:
\begin{itemize}
\item The decomposition $F_{|{\U}_{I}}=\bigoplus {}^{I}\!G_{\boldsymbol{a}}$ is orthogonal with respect to $H_{1|{\U}_{I}}$.
\end{itemize}
Let $h^{(i)}$ ($i\in\Lambda$) be Hermitian metrics of ${\cal O}_{X}(D_i)$. We set $\tau_{i}=h^{(i)}(1,1)$. We may assume that $\tau_{i}$ is constantly $1$ on $X\setminus{\U}_{i}$. Let $\tau\coloneqq \bigotimes_{i\in \Lambda}\tau_i$ be the induced square length of the canonical section of $D$. Let $s_{i}$ be the automorphism of $F_{|X\setminus D_i}$ such that the following holds:
\begin{itemize}
\item $s_{i}=\id$ on $X\setminus{\U}_{i}$.
\item $s_{i}=\bigoplus_{{a}\in S}\tau_{i}^{-a}\id_{{}^{i}\!G_{a}}$ on ${\U}_{i}$.
\end{itemize}

We obtain the automorphism $s=\prod_{i\in\Lambda}s_{i}$ of $F$ which is self-adjoint with respect to $H_{1}$. We define the $C^{\infty}$-Hermitian metric $H_{0}$ of $F_{|X\setminus D}$ by $H_{0}(u,v)=H_{1}(su,v)$ for any local sections $u,v$ of $F$. And we may extend $H_{0}$ smoothly to $X$.


\subsubsection{Adaptedness}
It follows from the construction that for any $[\eta]\in H^{n-1,n-1}(X,\mathbb{C})$ we have:
\begin{Proposition}\label{adaptcod1bundle}
    $\ch_{1}(F_{*})\cdot [\eta]=\frac{\sqrt{-1}}{2\pi}\int_{X\setminus D}\tr(F_{\nabla_{H_0}})\wedge \eta$.
\end{Proposition}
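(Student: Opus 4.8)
The plan is to compute the Chern form of the metric $H_0$ away from $D$ and show that its integral against a closed $(n-1,n-1)$-form $\eta$ picks up exactly the parabolic correction terms appearing in formula \eqref{chernform0} for $\ch_1(F_*)$. First I would express the metric as $H_0 = H_1(s\,\cdot\,,\,\cdot)$ with $s = \prod_{i\in\Lambda}s_i$, and observe that because $H_1$ is a smooth metric on all of $X$ and $s$ is block-diagonal with respect to the orthogonal decomposition, the Chern connection and its trace split. Concretely, the key identity is
\begin{equation*}
  \tr(F_{\nabla_{H_0}}) = \tr(F_{\nabla_{H_1}}) + \overline{\partial}\partial \log\det(s)
\end{equation*}
on $X\setminus D$, where $\det(s) = \prod_{i\in\Lambda}\tau_i^{-\sum_a a\cdot\rank({}^{i}\!G_a)}$ by the definition of $s_i$ on ${\U}_i$. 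Since $\tr(F_{\nabla_{H_1}})$ represents $\ch_1(F)=c_1(F)$ and is globally smooth, the whole problem reduces to understanding the distributional contribution of $\frac{\sqrt{-1}}{2\pi}\overline{\partial}\partial\log\det(s)$ near the divisor.

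The main step is then a Poincaré--Lelong type computation. On ${\U}_i$ we have $\log\det(s_i) = -\big(\sum_{a\in S} a\cdot\rank_{D_i}({}^{i}\!\Gr_a F_*)\big)\log\tau_i$, and since $\tau_i=|\sigma_i|^2_{h^{(i)}}$ is the squared norm of the defining section of $D_i$, the Poincaré--Lelong formula gives
\begin{equation*}
  \frac{\sqrt{-1}}{2\pi}\,\partial\overline{\partial}\log\tau_i = [D_i] - c_1(\mathcal{O}_X(D_i),h^{(i)})
\end{equation*}
as currents on $X$, where the second term is a smooth form. Summing over $i$ with the weights $\sum_a a\cdot\rank_{D_i}({}^{i}\!\Gr_a F_*)$ recovers precisely the correction $\sum_{i}\sum_{a} a\cdot\rank_{D_i}({}^{i}\!\Gr_a F_*)\cdot[D_i]$ of the parabolic first Chern character, while the smooth pieces can be absorbed by choosing $H_1$ appropriately or, more cleanly, by noting they only change the representative within the fixed cohomology class $\ch_1(F_*)$. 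I would pair everything against the closed form $\eta$ so that the smooth-form ambiguity is invisible at the level of cohomology, which is exactly what the statement asserts.

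The hard part will be justifying the integration by parts across $D$: one must show that
\begin{equation*}
  \int_{X\setminus D}\overline{\partial}\partial\log\det(s)\wedge\eta = \int_X \big(\partial\overline{\partial}\log\det(s)\big)\wedge\eta
\end{equation*}
with the right-hand side interpreted via Poincaré--Lelong, i.e.\ that no extra boundary term is lost on the $\varepsilon$-tube around $D$. Because $\log\tau_i$ has only a logarithmic singularity and $\partial\log\tau_i$ blows up like $1/|\sigma_i|$, the boundary integrals over $\{|\sigma_i|=\varepsilon\}$ are $O(\varepsilon\log\varepsilon)$ and vanish as $\varepsilon\to 0$; making this estimate uniform and rigorous, using that $F_{|\U_i}$ decomposes smoothly and $\eta$ is smooth and closed, is the technical crux. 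I would carry this out by working in the local coordinates of Lemma~\ref{isometry}, cutting out $\varepsilon$-neighbourhoods of each $D_i$, applying Stokes on the complement, and controlling the surface terms; the normal-crossing structure means the only subtlety is near the higher-codimension strata $D_I$, but since $\eta$ is smooth these contribute at most lower-order boundary terms that also vanish in the limit. Once the limit is established, collecting the delta-current contributions term by term gives the claimed equality $\ch_1(F_*)\cdot[\eta] = \frac{\sqrt{-1}}{2\pi}\int_{X\setminus D}\tr(F_{\nabla_{H_0}})\wedge\eta$.
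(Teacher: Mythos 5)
Your starting point is certainly the intended one: since $s$ is $H_1$-self-adjoint and block-diagonal and traces of commutators vanish, one has on $X\setminus D$
\begin{equation*}
\tr(F_{\nabla_{H_0}})=\tr(F_{\nabla_{H_1}})+\overline{\partial}\partial\log\det(s),
\qquad
\log\det(s)=-\sum_{i}\Bigl(\sum_{a}a\cdot\rank_{D_i}({}^{i}\!\Gr_aF_*)\Bigr)\log\tau_i,
\end{equation*}
so the proposition reduces to integrating $\overline{\partial}\partial\log\tau_i$ against $\eta$. The gap is in how you evaluate that integral. You attribute the parabolic correction to the Lelong currents $[D_i]$ and propose, as the ``hard part'', to prove that $\int_{X\setminus D}\overline{\partial}\partial\log\det(s)\wedge\eta$ equals the full current pairing of $\partial\overline{\partial}\log\det(s)$ with $\eta$, the tube boundary terms being $O(\varepsilon\log\varepsilon)$. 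Both claims fail, for the same reason. The integral over $X\setminus D$ sees only the absolutely continuous part of the Poincar\'e--Lelong decomposition, never the delta currents; correspondingly the Stokes boundary term does not vanish: near $D_i$ one has $\partial\log\tau_i\sim dz_i/z_i$, the tube $\{|z_i|=\varepsilon\}$ has circumference $O(\varepsilon)$ while the integrand is $O(1/\varepsilon)$, and the boundary integral converges to the residue $2\pi\sqrt{-1}\int_{D_i}\eta$, not to $0$. Indeed, if your identity were true the correction would be $\langle\partial\overline{\partial}\log\det(s),\eta\rangle=\int_X\log\det(s)\,\partial\overline{\partial}\eta=0$ (as $\eta$ is a closed $(n-1,n-1)$-form, $\partial\overline{\partial}\eta=0$), i.e.\ no parabolic correction at all.

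The correction in fact comes from exactly the term you discard. On $X\setminus D$ one has pointwise $\frac{\sqrt{-1}}{2\pi}\partial\overline{\partial}\log\tau_i=-c_1(\mathcal{O}_X(D_i),h^{(i)})$, a form that is smooth on all of $X$ and whose class is $-[D_i]$; it is not an ``ambiguity of representative inside the fixed class $\ch_1(F_*)$'' but shifts the class by a multiple of $[D_i]$, and summing these smooth pieces is what produces $\sum_i\sum_a a\cdot\rank_{D_i}({}^{i}\!\Gr_aF_*)\,[D_i]\cdot[\eta]$. Once this is recognized, no boundary analysis near $D$ is needed at all: the integrand $\tr(F_{\nabla_{H_0}})$ differs from the globally smooth form $\tr(F_{\nabla_{H_1}})-\sum_i\bigl(\sum_a a\,\rank_{D_i}({}^{i}\!\Gr_aF_*)\bigr)\frac{2\pi}{\sqrt{-1}}c_1(\mathcal{O}_X(D_i),h^{(i)})$ by nothing on $X\setminus D$ (up to the orientation of $\overline{\partial}\partial$ versus $\partial\overline{\partial}$), so the integral is just a cohomological pairing. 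Finally, be aware of a sign you will have to track: with the normalization $s_i=\bigoplus_a\tau_i^{-a}\id_{{}^iG_a}$ as printed, this computation yields $\ch_1(F)\cdot[\eta]-\sum_i\sum_a a\,\rank_{D_i}({}^{i}\!\Gr_aF_*)[D_i]\cdot[\eta]$; matching \eqref{chernform0} requires $s_i=\bigoplus_a\tau_i^{a}\id_{{}^iG_a}$ (equivalently $H_0(u,v)=H_1(s^{-1}u,v)$), i.e.\ the usual convention that the metric behaves like $|z|^{2a}$ on the weight-$a$ graded piece.
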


Let $\varepsilon$ be the largest gap of the weights and $\kappa$ be the smallest gap of the weights. 

Let $(U,z_{1},\ldots,z_{n})$ be a holomorphic coordinate neighbourhood of $X$ such that $D\cap U=\bigcup_{i=1}^{\ell}\{z_{i}=0\}$.
For $1\leq i\leq\ell$, we have $\lambda(i)\in\Lambda$ such that $D_{U,\lambda(i)}\coloneqq D_{\lambda(i)}\cap U=\{z_{i}=0\}$. We set $I=\{\lambda(i)\,|\,i=1,\ldots,\ell\}$. By shrinking $U$, we assume $U\subset\U_{I}$. We obtain the induced $C^{\infty}$-decomposition
\[F_{|U}=\bigoplus_{\boldsymbol{a}\in S^{I}}{}^{I}\!G_{\boldsymbol{a}}.\]
For any $1\leq k\leq\ell$ and $a\in S$, we set
\[{}^{k}\!G_{a}=\bigoplus_{q_{\lambda(k)}(\boldsymbol{a})=a}{}^{I}\!G_{\boldsymbol{a}}.\]
We obtain the decomposition
\begin{equation}\label{decomeq}
  F_{|U}=\bigoplus_{a\in S}{}^{k}\!G_{a}.
\end{equation}

Let \(\iota_{\va}\) be the injection from ${}^{I}\!G_{\va}$ to $F$ and \(\pi_{\va}\) be the orthogonal projection from $F$ to ${}^{I}\!G_{\va}$ with respect to the metric $H_{1}$. We set $\nabla_{\va}\coloneqq \pi_{\va}\cdot \nabla_{H_1}\cdot \iota_{\va}$ as the induced connection on ${}^{I}\!G_{\va}$. Then $\widetilde{\nabla}\coloneqq \boplus_{\va}\nabla_{\va}$ is a unitary connection which preserves the decomposition. We have for any $a$ and $k$, restricting on ${}^{k}\!G_{a|D_k}={}^{k}\Gr_a F_{*}$, $\widetilde{\nabla}=\nabla_{H_1}$.

We consider the decomposition
    \[
    \nabla_{H_1}=\widetilde{\nabla}+\Psi
    \] 
where $\Psi$ is a section of
\[\bigoplus_{\va\neq \vb}A^{1}(\operatorname{Hom}({}^{I}\!G_{\vb},{}^{I}\!G_{\va})).\]

Let $\Psi=\Psi^{1,0}+\Psi^{0,1}$ be the decomposition into the $(1,0)$-part and the $(0,1)$-part. We express
\[\Psi^{1,0}=\sum \Psi_{j}^{1,0}\,dz_{j},\quad \Psi^{0,1}=\sum \Psi_{j}^{0,1}d\overline{z}_{j}.\]
We have the decompositions
\[\Psi_{j}^{p,q}=\sum(\Psi_{j}^{p,q})_{\va,\vb},\qquad (\Psi_{j}^{p,q})_{\va,\vb}\in A^{0}(\operatorname{Hom}({}^{I}\!G_{\vb},{}^{I}\!G_{\va})).\]
By the construction, $(\Psi_{j}^{p,q})_{\va,\va}=0$ for any $1\leq j\leq n$ and $\va\in S^I$.

We denote $F_{\nabla}$ as the curvature tensor with respect to some connection $\nabla$. Then we have the following lemma.
\begin{Lemma}\label{curvatureboundbundle}
    $|F_{\nabla_{H_0}}|_{H_0}=O(\tau^{-\frac{1+\varepsilon}{2}})$
\end{Lemma}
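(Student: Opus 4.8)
The plan is to estimate the curvature $F_{\nabla_{H_0}}$ directly in the local holomorphic frame adapted to the decomposition~\eqref{decomeq}, using the explicit form $H_0(u,v)=H_1(su,v)$ with $s=\prod_i s_i$ where $s_i=\bigoplus_a \tau_i^{-a}\,\id_{{}^i\!G_a}$. The key observation is that $H_0$ differs from the smooth reference metric $H_1$ only by the weight factors $\tau_i^{-a}$, which degenerate like $|z_i|^{-2a}$ near $D_i$. I would first record the relation between the Chern connections of $H_0$ and $H_1$: since $H_0=H_1\cdot s$ with $s$ self-adjoint and positive, the Chern connection of $H_0$ is $\nabla_{H_0}=\nabla_{H_1}+s^{-1}\partial_{H_1} s$, and correspondingly $F_{\nabla_{H_0}}=F_{\nabla_{H_1}}+\overline{\partial}(s^{-1}\partial_{H_1} s)$. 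The term $F_{\nabla_{H_1}}$ is smooth and hence bounded, so the whole estimate reduces to controlling $\overline{\partial}(s^{-1}\partial_{H_1}s)$ in the degenerate metric $H_0$.

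Next I would compute $s^{-1}\partial_{H_1}s$ explicitly. Writing $s=\prod_i s_i$ and using the decomposition $\nabla_{H_1}=\widetilde{\nabla}+\Psi$ from the excerpt, I expect $\partial_{H_1}s$ to split into a diagonal part, coming from differentiating the scalar weights $\tau_i^{-a}$, and an off-diagonal part, coming from the action of $\partial^{1,0}$-components of $\Psi$ across distinct weight blocks. The diagonal part contributes terms proportional to $\partial\log\tau_i=\partial\log|\sigma_i|^2$, which in local coordinates behaves like $dz_i/z_i$; but conjugating by $s$ leaves the diagonal blocks invariant, so these diagonal logarithmic terms get no weight mismatch and their curvature contribution $\overline{\partial}\partial\log\tau_i$ is the smooth curvature $F_{h_i}$, which is bounded. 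The genuinely singular contributions come from the off-diagonal blocks: conjugating $\Psi_{\va,\vb}$ by $s$ produces a factor $\prod_i \tau_i^{\,q_i(\vb)-q_i(\va)}$, i.e. a power $|z_i|^{2(q_i(\vb)-q_i(\va))}$ with exponent governed by the weight gaps. I would then measure the norm of each such term in the metric $H_0$ itself, remembering that passing from $H_1$-norms to $H_0$-norms on a $\Hom({}^I\!G_\vb,{}^I\!G_\va)$-valued form introduces a compensating factor $\tau_i^{\,q_i(\va)-q_i(\vb)}$.

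The crux of the estimate is bookkeeping these competing weight powers against the degeneration of the K\"ahler metric encoded in Lemma~\ref{isometry}. The metric $\omega_\delta$ measures a $dz_i$ with the weight $|z_i|^{2-2\delta}$, so each holomorphic differential $dz_i$ in a $(1,0)$-form carries norm of order $|z_i|^{1-\delta}$, and a $\overline{\partial}$ derivative falling on a weight factor $|z_i|^{2c}$ produces $c\,|z_i|^{2c}\,d\overline{z}_i/\overline{z}_i$, again metrically of size $|z_i|^{2c-1+\delta}$. I would combine the block-conjugation powers with the $H_0$-norm compensation and with these metric weights; the point is that every logarithmic pole $1/z_i$ or $1/\overline{z}_i$ is paired with a weight-gap power $|z_i|^{2(\text{gap})}$ that strictly dominates it because the gaps are positive and bounded below by $\kappa$ and above by $\varepsilon$. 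Optimizing the worst case—the smallest positive gap appearing against a single pole—yields the stated exponent $-\tfrac{1+\varepsilon}{2}$, where $\varepsilon$ being the \emph{largest} gap controls the slowest decay and hence the dominant singularity.

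The main obstacle I anticipate is the careful tracking of the off-diagonal curvature terms near the deeper strata $D_I$ with $|I|\geq 2$, where several weight factors $\tau_i$ degenerate simultaneously and the conjugation produces mixed powers $\prod_i|z_i|^{2(q_i(\vb)-q_i(\va))}$ with gaps of possibly opposite signs in different directions. There one must verify that the $H_0$-norm compensation in each coordinate direction exactly cancels the dangerous part of the conjugation, leaving a net nonnegative power in each $|z_i|$, so that no cross term worsens the single-divisor bound. A secondary subtlety is that $\Psi$ itself need not be holomorphic, so $\overline{\partial}\Psi$ contributes; but since $\Psi$ is a smooth section, $\overline{\partial}\Psi$ is bounded in $H_1$ and only the weight conjugation can make it singular, which is handled by the same gap-versus-pole comparison. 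Once these estimates are assembled stratum by stratum, taking the supremum over all weight-block pairs and all coordinate directions gives the uniform bound $|F_{\nabla_{H_0}}|_{H_0}=O(\tau^{-\frac{1+\varepsilon}{2}})$.
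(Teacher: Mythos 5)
Your proposal follows the paper's proof essentially verbatim: write $F_{\nabla_{H_0}}=F_{\nabla_{H_1}}+\overline{\partial}(s^{-1}\nabla^{1,0}_{H_1}s)$, split off the diagonal (smooth) contribution via $\nabla_{H_1}=\widetilde{\nabla}+\Psi$, and bound the $(\va,\vb)$-block of $\overline{\partial}(s^{-1}\Psi s)$ by pairing each pole $1/\overline{z}_i$ against the conjugation power $\tau_i^{\pm(q_i(\va)-q_i(\vb))}$ and the $H_0$-norm compensation, the worst case being the largest gap $\varepsilon$. One caveat: in $|F_{\nabla_{H_0}}|_{H_0}$ the form indices are measured against a smooth background metric rather than against $\omega_\delta$, so the digression through Lemma~\ref{isometry} (which would attach an extra factor $|z_i|^{1-\delta}$ to each differential and change the exponent) should be dropped to land exactly on $O(\tau^{-\frac{1+\varepsilon}{2}})$.
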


\begin{proof}
    It suffices to consider in $U$. 
    We have 
    \[F_{\nabla_{H_1}}=F_{\widetilde{\nabla}}+\widetilde{\nabla}(\Psi)+\Psi\wedge \Psi.\]

    Hence 
    \begin{align*}
        F_{\nabla_{H_0}}=&F_{\nabla_{H_1}}+\overline{\partial}(s^{-1}\nabla_{H_1}^{1,0}s)\\
        =&F_{\widetilde{\nabla}}+\widetilde{\nabla}(\Psi)+\Psi\wedge \Psi+\overline{\partial}(s^{-1}\wnablayl s+s^{-1}[\Psi,s])\\
        =&B+\overline{\partial}(s^{-1}\Psi s)
    \end{align*}
    where $B$ is smooth. We put $A\coloneqq s^{-1}\Psi s$. Then we have $(\overline{\partial}A)_{\va,\vb}=O(|\tau_I|^{\vb-\va-\boldsymbol{\frac{1}{2}}})$ where $\boldsymbol{\frac{1}{2}}\in S^I$ . Since $H_0$ is diagonal with respect to the decomposition, it suffices to estimate \[\left((\overline{\partial}A)_{\va,\vb}\cdot (\overline{\partial}A)_{\va,\vb}\cdot H_{2,\va,\va}\cdot H_{2,\vb,\vb}^{-1}\right)^{\frac{1}{2}}.\]
    Simple calculation shows that \[
    \left((\overline{\partial}A)_{\va,\vb}\cdot (\overline{\partial}A)_{\va,\vb}\cdot H_{2,\va,\va}\cdot H_{2,\vb,\vb}^{-1}\right)^{\frac{1}{2}}=O(\tau_I^{\frac{\vb-\va-\boldsymbol{1}}{2}})=O(\tau^{-\frac{1+\varepsilon}{2}}).
    \]
\end{proof}

Next we show the adaptedness in codimension $2$. 

We still consider in $U$ but fix a $1\leq k \leq \ell$.
We consider the decomposition
\[\nabla_{H_{1}}=\widetilde{\nabla}+\Psi,\]
as above. 

Let $\Psi=\Psi^{1,0}+\Psi^{0,1}$ be the decomposition into the $(1,0)$-part and the $(0,1)$-part. We express
\[\Psi^{1,0}=\sum \Psi_{j}^{1,0}\,dz_{j},\quad \Psi^{0,1}=\sum \Psi_{j}^{0,1}d\overline{z}_{j}.\]
We have the decompositions
\[\Psi_{j}^{p,q}=\sum(\Psi_{j}^{p,q})_{a,b},\qquad (\Psi_{j}^{p,q})_{a,b}\in A^{0}(\operatorname{Hom}(G_{b},G_{a})).\]
By the construction, $(\Psi_{j}^{p,q})_{a,a}=0$ for any $1\leq j\leq n$ and $a\in S$.

\begin{Lemma}
For $j\neq k$ and $a<b$, we have
\[(\Psi_{j}^{0,1})_{a,b}=O\big(|z_{k}|\big).\]
\end{Lemma}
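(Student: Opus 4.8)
The plan is to localize everything to the divisor $D_{k}=\{z_{k}=0\}$ and to exploit two facts: for the Chern connection $\nabla_{H_{1}}$ the $(0,1)$-part is exactly the Dolbeault operator, $\nabla_{H_{1}}^{0,1}=\overline{\partial}_{F}$; and the filtration ${}^{k}\!F_{\bullet}$ is by \emph{holomorphic} subbundles of $F_{|D_{k}}$. First I would record that $\widetilde{\nabla}=\boplus_{\va}\nabla_{\va}$ preserves the fine decomposition $F_{|U}=\bigoplus_{\va}{}^{I}\!G_{\va}$, and since each coarse piece ${}^{k}\!G_{a}=\bigoplus_{q_{\lambda(k)}(\va)=a}{}^{I}\!G_{\va}$ is a union of fine pieces, $\widetilde{\nabla}$ also preserves the coarse decomposition $F_{|U}=\bigoplus_{a}{}^{k}\!G_{a}$. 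Hence for $a\neq b$ the coarse $(a,b)$-block of $\widetilde{\nabla}^{0,1}$ vanishes, so that $(\Psi^{0,1})_{a,b}=(\nabla_{H_{1}}^{0,1})_{a,b}=\pi_{a}\circ\overline{\partial}_{F}\circ\iota_{b}$, where $\iota_{b},\pi_{a}$ are the inclusion of, resp.\ orthogonal projection onto, ${}^{k}\!G_{b}$ and ${}^{k}\!G_{a}$. In particular this identity holds for $a<b$, which is the only case I need.

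The heart of the matter is the following. On $D_{k}$ one has $\bigoplus_{a'\geq b}{}^{k}\!G_{a'|D_{k}}={}^{k}\!F_{b}$, and ${}^{k}\!F_{b}$ is a holomorphic subbundle of $F_{|D_{k}}$. If $u$ is a $C^{\infty}$-section of ${}^{k}\!G_{b}$, then $u|_{D_{k}}$ is a section of ${}^{k}\!F_{b}$, and because the latter is holomorphic, the intrinsic Dolbeault operator of $F_{|D_{k}}$ maps it into ${}^{k}\!F_{b}\otimes A^{0,1}(D_{k})$ (write $u|_{D_{k}}$ in a local holomorphic frame of ${}^{k}\!F_{b}$; only the $\overline{\partial}$ of the coefficients survives). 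For any tangential index $j\neq k$ and any $p\in D_{k}$, the direction $\partial/\partial\overline{z}_{j}$ is tangent to $D_{k}$, so the coefficient $(\overline{\partial}_{F}u)_{j}$ of $d\overline{z}_{j}$ at $p$ depends only on $u|_{D_{k}}$ and coincides with the $j$-component of this intrinsic tangential operator. Therefore $(\overline{\partial}_{F}u)_{j}|_{D_{k}}\in{}^{k}\!F_{b}$, whose ${}^{k}\!G_{a}$-component vanishes for every $a<b$, since ${}^{k}\!F_{b}=\bigoplus_{a'\geq b}{}^{k}\!G_{a'|D_{k}}$ does not involve ${}^{k}\!G_{a}$ with $a<b$. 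Applying $\pi_{a}$ gives $(\Psi_{j}^{0,1})_{a,b}|_{D_{k}}=0$ for $j\neq k$ and $a<b$.

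Finally, $(\Psi_{j}^{0,1})_{a,b}$ is a $C^{\infty}$ endomorphism-valued function on $U$ vanishing identically on the smooth hypersurface $\{z_{k}=0\}$. A smooth function vanishing on $\{z_{k}=0\}$ is bounded by a constant times $|z_{k}|$ near $D_{k}$ (Taylor expand in $\operatorname{Re}z_{k},\operatorname{Im}z_{k}$), which yields $(\Psi_{j}^{0,1})_{a,b}=O(|z_{k}|)$, as claimed.

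The main obstacle is the middle step: making precise that, once restricted to $D_{k}$, the off-diagonal $d\overline{z}_{j}$-block of $\overline{\partial}_{F}$ for $j\neq k$ is genuinely the intrinsic tangential Dolbeault operator of $F_{|D_{k}}$, so that it only sees the holomorphic filtration and can never lower a weight, even though ${}^{k}\!G_{\bullet}$ is merely a $C^{\infty}$-splitting away from $D_{k}$. The normal direction $j=k$ is precisely where this argument breaks down, which is why it is excluded; and the inequality $a<b$ singles out exactly the weight-lowering blocks that holomorphicity of ${}^{k}\!F_{\bullet}$ forbids on $D_{k}$.
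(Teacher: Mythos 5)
Your proof is correct and takes essentially the same approach as the paper: the paper's (two-line) argument likewise observes that ${}^{k}\!F$ is a filtration of $F_{|D_{k}}$ by \emph{holomorphic} subbundles, so the tangential $(0,1)$-blocks preserve the filtration along $D_{k}$, forcing the weight-lowering blocks to vanish on $\{z_{k}=0\}$ and hence be $O(|z_{k}|)$ by smoothness. Your write-up simply makes explicit the intermediate identifications ($\nabla_{H_{1}}^{0,1}=\overline{\partial}_{F}$, $\widetilde{\nabla}$ preserving the coarse decomposition, and the tangential restriction argument) that the paper leaves implicit.
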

\begin{proof}
Note that ${}^{k}\!F$ is a left continuous decreasing filtration of $F_{|D_{k}}$ by holomorphic subbundles. Hence for any $b\in S$,
\[(\Psi_{j}^{0,1})_{|D_{k}\cap U}({}^{k}\!F_{b})\subset{}^{k}\!F_{\geq b}.\] It implies the claim of the lemma.
\end{proof}
Because $\Psi^{0,1}=-(\Psi^{1,0})_{H_{1}}^{\dagger}$, we obtain the following for $j\neq k$ and $a>b$:
\[(\Psi_{j}^{1,0})_{a,b}=O\big(|z_{k}|\big).\]
As a result, we obtain the following lemma.

\begin{Lemma}\label{Psiestimat}
 If $a>b$ and $j\neq k$, then
      \[ (s^{-1}\Psi_{j}^{1,0}s)_{a,b}=O\Bigl(|z_k|^{1-2\varepsilon}\prod_{i\neq k}|z_i|^{-2\varepsilon}\Bigr). \]
If $a<b$, then for any $j$,
     \[ (s^{-1}\Psi_{j}^{1,0}s)_{a,b}=O\Bigl(\prod_{\substack{1\leq i\leq \ell}}|z_i|^{-2\varepsilon}\Bigr). \]
\end{Lemma}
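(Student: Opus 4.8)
The plan is to read off the estimate from the explicit diagonal action of the gauge transformation $s$ on the fine decomposition $F_{|U}=\bigoplus_{\va\in S^{I}}{}^{I}\!G_{\va}$, combined with the two pointwise bounds on $\Psi^{1,0}_j$ that were just established. First I would record that, by construction, $s$ acts on the summand ${}^{I}\!G_{\va}$ as the scalar $\prod_{i}\tau_i^{-a_i}$, so conjugation sends the Hom-block $(\Psi^{1,0}_j)_{\va,\vb}\in A^0(\Hom({}^{I}\!G_{\vb},{}^{I}\!G_{\va}))$ to
\[
(s^{-1}\Psi^{1,0}_j s)_{\va,\vb}=\Bigl(\textstyle\prod_{i}\tau_i^{\,b_i-a_i}\Bigr)\,(\Psi^{1,0}_j)_{\va,\vb},
\]
exactly the factor $\tau_I^{\vb-\va}$ already used in the proof of Lemma~\ref{curvatureboundbundle}. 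Since the coarse blocks $G_a={}^{k}\!G_a=\bigoplus_{q_{\lambda(k)}(\va)=a}{}^{I}\!G_{\va}$ are sums of fine blocks sharing the same $k$-th weight, the coarse component $(s^{-1}\Psi^{1,0}_j s)_{a,b}$ is a finite sum of the fine components above over all $\va,\vb$ with $a_k=a$ and $b_k=b$; it therefore suffices to bound each fine component and resum.

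The two analytic inputs are: (i) near $D_i$ one has $\tau_i\asymp|z_i|^2$ (bounded above and below by a smooth positive function times $|z_i|^2$), so each conjugation factor satisfies $\tau_i^{\,b_i-a_i}=O(|z_i|^{2(b_i-a_i)})$; and (ii) any two weights along the same divisor differ by at most $\varepsilon$, hence $|a_i-b_i|\le\varepsilon$ and $2(b_i-a_i)\ge-2\varepsilon$, giving $\tau_i^{\,b_i-a_i}=O(|z_i|^{-2\varepsilon})$ for every $i$. For the first case ($a>b$, $j\ne k$) I would feed in the bound $(\Psi^{1,0}_j)_{a,b}=O(|z_k|)$ proved just above (from $(\Psi^{0,1}_j)_{a,b}=O(|z_k|)$ via $\Psi^{0,1}=-(\Psi^{1,0})^{\dagger}_{H_1}$). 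The $k$-th factor then contributes $|z_k|^{2(b-a)}\cdot|z_k|=|z_k|^{1-2(a-b)}=O(|z_k|^{1-2\varepsilon})$ because $a-b\le\varepsilon$, while the factors $i\ne k$ each give $O(|z_i|^{-2\varepsilon})$; this is precisely $O\!\bigl(|z_k|^{1-2\varepsilon}\prod_{i\ne k}|z_i|^{-2\varepsilon}\bigr)$. For the second case ($a<b$, any $j$) no decay of $\Psi$ is needed: using only $(\Psi^{1,0}_j)_{a,b}=O(1)$, the $k$-th factor $|z_k|^{2(b-a)}$ now carries a positive exponent and is $O(1)\subset O(|z_k|^{-2\varepsilon})$, and the remaining factors give $\prod_{i\ne k}|z_i|^{-2\varepsilon}$, yielding $O\!\bigl(\prod_{1\le i\le\ell}|z_i|^{-2\varepsilon}\bigr)$.

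I expect no deep obstacle here; the content is essentially bookkeeping of exponents. The two points that require care are: keeping the coarse/fine distinction straight, since $s$ is scalar only on the fine blocks ${}^{I}\!G_{\va}$ and not on the coarse blocks ${}^{k}\!G_a$, so the estimate must be carried out fiberwise in the fine decomposition and then summed; and correctly tracking the signs of the weight differences, which are fixed for the index $i=k$ (dictated by whether $a>b$ or $a<b$) but unconstrained for $i\ne k$, where one relies only on the two-sided bound $|a_i-b_i|\le\varepsilon$. The comparison $\tau_i\asymp|z_i|^2$ is what supplies the factor of $2$ in the exponent $-2\varepsilon$, and should be stated explicitly before the computation.
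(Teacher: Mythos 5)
Your proposal is correct and follows exactly the route the paper intends: the paper offers no written proof of this lemma (it is stated as an immediate consequence of the preceding $O(|z_k|)$ bound and the diagonal action $s_{|{}^{I}\!G_{\va}}=\prod_i\tau_i^{-a_i}\id$), and your computation supplies precisely that bookkeeping, including the two points that matter — working fine-blockwise where $s$ is scalar, and using $\tau_i\asymp|z_i|^2$ together with $|a_i-b_i|\le\varepsilon$ to get the $-2\varepsilon$ exponents. The only cosmetic remark is that the conjugation exponent is more naturally $\tau_I^{\va-\vb}$ than $\tau_I^{\vb-\va}$ (the paper itself is loose about this sign in the proof of Lemma~\ref{curvatureboundbundle}), but since the weight differences are bounded in absolute value by $\varepsilon$ the stated estimates hold either way.
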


Let $F_{\nabla_{H_{1}}}$ denote the curvature of $\nabla_{H_{1}}$. Let $F_{\widetilde{\nabla}}$ denote the curvature of $\widetilde{\nabla}$. We obtain
\[F_{\nabla_{H_{1}}}=F_{\widetilde{\nabla}}+\widetilde{\nabla}(\Psi)+\Psi\wedge\Psi.\]

Let $\widetilde{\nabla}=\widetilde{\nabla}^{1,0}+\widetilde{\nabla}^{0,1}$ denote the decomposition into the $(1,0)$-part and the $(0,1)$-part. We have
\[s^{-1}\nabla_{H_{1}}^{1,0}(s)=s^{-1}\widetilde{\nabla}^{1,0}s+s^{-1}[\Psi^{1,0},s].\]

We also obtain
\begin{align}\label{p1}
\overline{\partial}(s^{-1} \nabla_{H_{1}}^{1,0} s)
&= \widetilde{\nabla}^{0,1}(s^{-1} \widetilde{\nabla}^{1,0}s)
+ \widetilde{\nabla}^{0,1}(s^{-1}[\Psi^{1,0},s]) \notag \\
&\quad + [\Psi^{0,1}, s^{-1} \widetilde{\nabla}^{1,0}s]
+ [\Psi^{0,1}, s^{-1}[\Psi^{1,0},s]].
\end{align}

Because the trace is $0$ for any section of $\operatorname{Hom}(G_{a},G_{b})$ ($a\neq b$), we obtain
\begin{align}\label{p2}
\tr\big(F_{\nabla_{H_{1}}}\cdot s^{-1} \nabla_{H_1}^{1,0}s\big)
&= \tr\Big(
  F_{\widetilde{\nabla}}\cdot s^{-1} \widetilde{\nabla}^{1,0}s
  + \widetilde{\nabla}(\Psi)\cdot s^{-1}[\Psi^{1,0}, s]
  \notag \\
&\quad
  + (\Psi \wedge \Psi)\cdot(s^{-1} \widetilde{\nabla}^{1,0}s)
  + (\Psi \wedge \Psi)\cdot s^{-1}[\Psi^{1,0}, s]
\Big).
\end{align}

We also obtain
\begin{align}\label{p3}
\tr\big( \overline{\partial}_F(s^{-1} \nabla_{H_1}^{1,0}s) \cdot s^{-1} \nabla_{H_1}^{1,0}s \big)
&= \tr\Big(
  \widetilde{\nabla}^{0,1}(s^{-1} \widetilde{\nabla}^{1,0}s) \cdot s^{-1} \widetilde{\nabla}^{1,0}s
\Big) \notag \\
&\quad + \tr\Big(
  \widetilde{\nabla}^{0,1}(s^{-1}[\Psi^{1,0},s]) \cdot s^{-1}[\Psi^{1,0},s]
\Big) \notag \\
&\quad + \tr\Big(
  [\Psi^{0,1}, s^{-1} \widetilde{\nabla}^{1,0}s] \cdot s^{-1}[\Psi^{1,0},s]
\Big) \notag \\
&\quad + \tr\Big(
  [\Psi^{0,1}, s^{-1}[\Psi^{1,0},s]] \cdot s^{-1} \widetilde{\nabla}^{1,0}s
\Big) \notag \\
&\quad + 2\tr\Big(
  \Psi^{0,1} \cdot \big(s^{-1} [\Psi^{1,0},s]\big)^2
\Big).
\end{align}


Note that
\begin{align}\label{p4}
\tr\big( 
  \widetilde{\nabla}^{0,1}(s^{-1}[\Psi^{1,0},s]) \cdot s^{-1}[\Psi^{1,0},s] 
\big)
&= \tr\Big(
  s^{-1}[\widetilde{\nabla}^{0,1}\Psi^{1,0}, s] \cdot s^{-1}[\Psi^{1,0},s]
\Big) \notag \\
&\quad - \tr\Big(
  [s^{-1} \Psi^{1,0} s,\ s^{-1} \widetilde{\nabla}^{0,1}s] \cdot s^{-1}[\Psi^{1,0},s]
\Big).
\end{align}
\begin{align}\label{p5}
\tr\big(
  [\Psi^{0,1}, s^{-1} \widetilde{\nabla}^{1,0} s] \cdot s^{-1}[\Psi^{1,0}, s]
\big)
&= - \tr\big(
  s^{-1} \widetilde{\nabla}^{1,0} s \cdot [\Psi^{0,1}, \Psi^{1,0}]
\big) \notag \\
&\quad + \tr\big(
  [\Psi^{0,1}, s^{-1} \Psi^{1,0} s] \cdot s^{-1} \widetilde{\nabla}^{1,0} s
\big).
\end{align}

\begin{align}\label{p6}
\tr\big(
  [\Psi^{0,1}, s^{-1}[\Psi^{1,0}, s]] \cdot s^{-1} \widetilde{\nabla}^{1,0} s
\big)
&= - \tr\big(
  [\Psi^{0,1}, \Psi^{1,0}] \cdot s^{-1} \widetilde{\nabla}^{1,0} s
\big) \notag \\
&\quad + \tr\big(
  [\Psi^{0,1}, s^{-1} \Psi^{1,0} s] \cdot s^{-1} \widetilde{\nabla}^{1,0} s
\big).
\end{align}


For any $2$-form $\tau$, let $\tau^{1,1}$ denote the $(1,1)$-part of $\tau$. We obtain
\begin{align}\label{p7}
&2 \tr\big(F_{\nabla_{H_{1}}} s^{-1} \nabla_{H_{1}}^{1,0} s\big)
+ \tr\big(\overline{\partial}(s^{-1} \nabla_{H_{1}}^{1,0} s) \nabla_{H_{1}}^{1,0} s\big) \notag \\
&= \tr\Big(
  2 F_{\widetilde{\nabla}}^{1,1} s^{-1} \widetilde{\nabla}^{1,0} s
  + \widetilde{\nabla}^{0,1}(s^{-1} \widetilde{\nabla}^{1,0} s) s^{-1} \widetilde{\nabla}^{1,0} s
\Big) \notag \\
&\quad + \tr\Big(
  2 \widetilde{\nabla}(\Psi)^{1,1} s^{-1} [\Psi^{1,0}, s]
  + 2 (\Psi \wedge \Psi)^{1,1} s^{-1} [\Psi^{1,0}, s]
\Big) \notag \\
&\quad + \tr\Big(
  s^{-1} [\widetilde{\nabla}^{0,1} \Psi^{1,0}, s] s^{-1} [\Psi^{1,0}, s]
  - [s^{-1} \Psi^{1,0} s, s^{-1} \widetilde{\nabla}^{0,1} s] s^{-1} [\Psi^{1,0}, s]
\Big) \notag \\
&\quad + 2 \tr\Big(
  [\Psi^{0,1}, s^{-1} \Psi^{1,0} s] s^{-1} \widetilde{\nabla}^{1,0} s
\Big) + 2 \tr\Big(
  \Psi^{0,1} \big(s^{-1} [\Psi^{1,0}, s]\big)^2
\Big).
\end{align}


We set
\[Z_{U,\lambda(k)}(\delta)=U\cap\{\tau_{\lambda(k)}=\delta\}\cap\bigcap_{\begin{subarray}{c}1\leq i\leq\ell\\ i\neq k\end{subarray}}\{\tau_{\lambda(i)}\geq\delta\}.\]

 Let $[\eta]\in H^{n-2,n-2}(X,\mathbb{C})$.By using Lemma~\ref{Psiestimat} and \eqref{p7}, we obtain
\begin{align}\label{p8}
&\lim_{\delta \to 0} \int_{Z_{U, \lambda(k)}(\delta)} \Big(
  2 \tr\big(F_{\nabla_{H_{1}}} s^{-1} \nabla_{H_{1}}^{1,0} s\big)
  + \tr\big(\overline{\partial}(s^{-1} \nabla_{H_{1}}^{1,0} s) \nabla_{H_{1}}^{1,0} s\big)
\Big)\cdot \eta \notag \\
&= \lim_{\delta \to 0} \int_{Z_{U, \lambda(k)}(\delta)} \tr\Big(
  2 F_{\widetilde{\nabla}}^{1,1} s^{-1} \widetilde{\nabla}^{1,0} s
  + \widetilde{\nabla}^{0,1}(s^{-1} \widetilde{\nabla}^{1,0} s) s^{-1} \widetilde{\nabla}^{1,0} s
\Big)\cdot \eta.
\end{align}

We set \( s_{\neq k} = \prod_{i \neq k} s_i \). It follows that \( s = s_{\neq k} \cdot s_k = s_k \cdot s_{\neq k} \), and consequently:
\begin{align}
s^{-1}\widetilde{\nabla}^{1,0}s = s_{\neq k}^{-1}\widetilde{\nabla}^{1,0}s_{\neq k} + s_k^{-1}\widetilde{\nabla}^{1,0}s_k.
\end{align}

The limit expression becomes:
\begin{align}
\lim_{\delta \to 0} \int_{Z_{U,\lambda(k)}(\delta)} &\tr\Big(\widetilde{\nabla}^{0,1}(s^{-1}\widetilde{\nabla}^{1,0}s)s^{-1}\widetilde{\nabla}^{1,0}s\Big)\eta \nonumber \\
&= \lim_{\delta \to 0} \int_{Z_{U,\lambda(k)}(\delta)} \tr\bigg(\Big(\widetilde{\nabla}^{0,1}(s_k^{-1}\widetilde{\nabla}^{1,0}s_k) \nonumber \\
&\quad + \widetilde{\nabla}^{0,1}(s_{\neq k}^{-1}\widetilde{\nabla}^{1,0}s_{\neq k})\Big)s_k^{-1}\widetilde{\nabla}^{1,0}s_k\bigg)\cdot\eta. 
\end{align}

Define \( {}^{k}\!\Gamma = \bigoplus (-a) \cdot \id_{{}^{k}\!G_a} \). The following identities hold:
\begin{align}
s_k^{-1}\widetilde{\nabla}^{1,0}(s_k) &= {}^{k}\!\Gamma \cdot \partial \log \tau_k, \\
\widetilde{\nabla}^{0,1}\big(s_k^{-1}\widetilde{\nabla}^{1,0}(s_k)\big) &= {}^{k}\!\Gamma (\overline{\partial}\partial \log \tau_k).
\end{align}
For \( {}^k\!\mathrm{Gr}_a F_* = {}^k\!F_a / {}^k\!F_{>a} \), we have \( \tr {}^{k}\!\Gamma^2 = \sum_{a \in S} a^2 \rank({}^k\!\mathrm{Gr}_a F_*) \). The limit evaluates to:
\begin{align}
\lim_{\delta \to 0} \int_{Z_{U,\lambda(k)}(\delta)} &\tr\Big(\widetilde{\nabla}^{0,1}(s_k^{-1}\widetilde{\nabla}^{1,0}s_k) \cdot s_k^{-1}\widetilde{\nabla}^{1,0}s_k\Big)\cdot\eta \nonumber \\
&= \pm \sum_a 2\pi\sqrt{-1} \int_{U \cap H_{\lambda(k)}} a^2 \rank({}^k\!\mathrm{Gr}_a^F(E)) \overline{\partial}\partial(\log \tau_k) \cdot\eta.
\end{align}

The \( C^\infty \)-decomposition \( F_{|D_k} = \boplus {}^{k}\!G_{a} \) induces an isomorphism \( F_{|D_k} \simeq \boplus {}^k\!\mathrm{Gr}_a F_* \). The restriction of \( \widetilde{\nabla} \) to \( F_{|D_k \cap U} \) coincides with the Chern connection of \( \boplus {}^k\!\mathrm{Gr}_a F_* \) under this isomorphism.

Let \( {}^{k}\!H_{1,a} \) be the Hermitian metric on \( {}^k\!\mathrm{Gr}_a F_* \) induced by \( H_1 \). Then:
\begin{align}
\lim_{\delta \to 0} \int_{Z_{U,\lambda(k)}(\delta)} &\tr\Big(F_{\widetilde{\nabla}}^{1,1}s^{-1}\widetilde{\nabla}^{1,0}s\Big)\cdot\eta \nonumber \\
&= \pm \sum_a 2\pi\sqrt{-1} \int_{U \cap H_{\lambda(k)}} (-a) \tr F_{\nabla_{{}^{k}\!H_{1,a}}} \cdot\eta. 
\end{align}

Let \( {}^{k}\!\widetilde{H}_{1,a} \) be the Hermitian metric on \( {{}^k\!\mathrm{Gr}_a F_*}_{|_{D^\circ_k}} \) induced by \( H_1 \cdot s_{\neq k} \). We derive:
\begin{align}
\lim_{\delta \to 0} \int_{Z_{U,\lambda(k)}(\delta)} &\tr\bigg(\Big(F_{\widetilde{\nabla}}^{1,1} + \widetilde{\nabla}^{0,1}(s_{\neq k}^{-1}\widetilde{\nabla}^{1,0}s_{\neq k})\Big)s^{-1}\widetilde{\nabla}^{1,0}s\bigg)\cdot\eta \nonumber \\
&= \pm \sum_a 2\pi\sqrt{-1} \int_{U \cap H_{\lambda(k)}} (-a) \tr F_{{}^{k}\!\widetilde{H}_{1,a}}\cdot \eta. 
\end{align}

Combining these results yields:
\begin{align}\label{p11}
\lim_{\delta \to 0} \pm \int_{Z_{U,\lambda(k)}(\delta)} &\Big(2 \tr\big(F_{\nabla_{H_1}}s^{-1}\nabla_{H_1}^{1,0}s\big) + \tr\big(\overline{\partial}(s^{-1}\nabla_{H_1}^{1,0}s)s^{-1}\nabla_{H_1}s\big)\Big)\cdot\eta \nonumber \\
&= \sum_a 2\pi\sqrt{-1} \int_{U \cap H_{\lambda(k)}} (-a) \Big(\tr F_{\nabla_{{}^{k}\!H_{1,a}}} + \tr F_{\nabla_{{}^{k}\!\widetilde{H}_{1,a}}}\Big)\cdot \eta \nonumber \\
&\quad + \sum_a 2\pi\sqrt{-1} \int_{U \cap H_{\lambda(k)}} a^2 \rank\big({}^k\!\mathrm{Gr}_a F_*\big) \overline{\partial}\partial(\log \tau_k)\cdot \eta. 
\end{align}

Similarly, fix $1\leq j,k\leq\ell$. Then we take the limit after integrating by part on the set \[Z_{U,\lambda(j),\lambda(k)}\coloneqq U\cap \{\tau_{\lambda(j)}=\tau_{\lambda(k)}=\delta\}\cap\bigcap_{\substack{1\leq i\leq \ell\\i\neq j,k}}\{\tau_{\lambda_i\geq\delta}\}.\]
we could obtain
\begin{align}\label{p12}
\lim_{\delta \to 0} \pm \int_{Z_{U,\lambda(j),\lambda(k)}(\delta)} &\Big(2 \tr\big(F_{\nabla_{H_1}}s^{-1}\nabla_{H_1}^{1,0}s\big) + \tr\big(\overline{\partial}(s^{-1}\nabla_{H_1}^{1,0}s)s^{-1}\nabla_{H_1}s\big)\Big)\cdot\eta \nonumber \\
&= \sum_{i\in\{j,k\}}\sum_{{}^{i}\!a} 2\pi\sqrt{-1} \int_{U \cap D_{\lambda(i)}} (-{}^{i}\!a) \Big(\tr F_{\nabla_{{}^{i}\!H_{1,{}^{i}\!a}}} + \tr F_{\nabla_{{}^{i}\!\widetilde{H}_{1,{}^{i}\!a}}}\Big)\cdot \eta \nonumber \\
&\quad + \sum_{i\in\{j,k\}}\sum_{{}^{i}\!a} 2\pi\sqrt{-1} \int_{U \cap D_{\lambda(i)}} {}^{i}\!a^2 \rank\big({}^{i}\!\mathrm{Gr}_a F_*\big) \overline{\partial}\partial(\log \tau_{\lambda(i)})\cdot \eta \nonumber \\
&\quad -\sum_{\va\in S^{(j,k)}} (2\pi)^2\int_{U\cap D_{j,k}}{}^{j}\!a\cdot{}^{k}\!a\cdot\rank({}^{jk}\!\Gr_{\va}F_{*})\cdot\eta
\end{align}
Combining equations~\eqref{p11} and \eqref{p12} we get the adaptedness in codimension $2$, i.e., the following proposition.
\begin{Proposition}\label{adaptedcodim2bundle}
  \(\ch_2(F_*)\cdot \eta=\frac{1}{2}\left(\frac{\sqrt{-1}}{2\pi}\right)^2\int_{X\setminus D}\tr(F_{\nabla_{H_0}}\wedge F_{\nabla_{H_0}})\wedge \eta.\)
\end{Proposition}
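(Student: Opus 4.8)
The plan is to express the Chern--Weil integrand of the singular metric $H_{0}$ as the smooth integrand of $H_{1}$ plus an exact transgression concentrated near $D$, and then to read off the boundary contributions from the already computed limits \eqref{p11} and \eqref{p12}. Since $H_{0}$ and $H_{1}$ induce the same holomorphic structure, their Chern connections differ only in the $(1,0)$-part, and, as in Lemma~\ref{curvatureboundbundle}, $F_{\nabla_{H_{0}}}=F_{\nabla_{H_{1}}}+\overline{\partial}\gamma$ with $\gamma\coloneqq s^{-1}\nabla_{H_{1}}^{1,0}s$. Expanding the square and using the Bianchi identity $\overline{\partial}F_{\nabla_{H_{1}}}=0$ for the Chern curvature together with $\overline{\partial}^{2}=0$, I would obtain
\[
\tr(F_{\nabla_{H_{0}}}\wedge F_{\nabla_{H_{0}}})=\tr(F_{\nabla_{H_{1}}}\wedge F_{\nabla_{H_{1}}})+\overline{\partial}\,\mathrm{TP},\qquad \mathrm{TP}\coloneqq 2\tr(F_{\nabla_{H_{1}}}\gamma)+\tr(\overline{\partial}\gamma\cdot\gamma),
\]
where $\mathrm{TP}$ is the $(2,1)$-transgression form whose boundary integrals are exactly the ones computed in \eqref{p11} and \eqref{p12}.

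The two pieces are then treated separately. For the smooth piece, $H_{1}$ extends across $D$, so ordinary Chern--Weil theory gives $\frac{1}{2}\left(\frac{\sqrt{-1}}{2\pi}\right)^{2}\int_{X}\tr(F_{\nabla_{H_{1}}}\wedge F_{\nabla_{H_{1}}})\wedge\eta=\ch_{2}(F)\cdot\eta$. For the transgression piece, I would first note that $\eta$ is closed of bidegree $(n-2,n-2)$, so $\partial\mathrm{TP}\wedge\eta$ vanishes for bidegree reasons and hence $\overline{\partial}\mathrm{TP}\wedge\eta=d(\mathrm{TP}\wedge\eta)$. Removing the tubular neighbourhoods $\{\tau_{i}\leq\delta\}$ of the components of $D$ and applying Stokes, $\int_{X\setminus D}\overline{\partial}\mathrm{TP}\wedge\eta$ becomes the $\delta\to0$ limit of integrals of $\mathrm{TP}\wedge\eta$ over the boundary hypersurfaces; the existence of this limit, as well as the integrability of the left-hand side near $D$, is guaranteed by the curvature bound of Lemma~\ref{curvatureboundbundle} and the weight-gap estimates.

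The boundary of the tube decomposes into the pieces $Z_{U,\lambda(k)}(\delta)$ meeting a single component $D_{\lambda(k)}$ and the corner pieces $Z_{U,\lambda(j),\lambda(k)}(\delta)$ where two components meet; no higher strata occur since three components never intersect simultaneously. On these pieces the boundary integral of $\mathrm{TP}\wedge\eta$ is, respectively, the left-hand side of \eqref{p11} and of \eqref{p12}. It is precisely here that the estimates of Lemma~\ref{Psiestimat} enter: they force the off-diagonal contributions of $\Psi$ to decay and vanish in the limit, so that only the diagonal part $s_{k}^{-1}\widetilde{\nabla}^{1,0}s_{k}={}^{k}\!\Gamma\,\partial\log\tau_{k}$ survives. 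I would glue the local identities with a partition of unity subordinate to such charts and organise the sum by an inclusion--exclusion over the strata of $D$, so that the single-component and corner contributions are each counted once.

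Finally I would convert the right-hand sides of \eqref{p11} and \eqref{p12} into cohomology. The terms $\tr F_{\nabla_{{}^{k}\!H_{1,a}}}$ and $\tr F_{\nabla_{{}^{k}\!\widetilde{H}_{1,a}}}$ are $\ch_{1}$ of the graded pieces ${}^{k}\!\Gr_{a}F_{*}$ for their induced metrics and assemble into $\sum_{i}\sum_{a}a\cdot m_{i*}(\ch_{1}({}^{i}\Gr_{a}F_{*}))$; the terms $\overline{\partial}\partial\log\tau_{k}$, combined with the restriction to $D_{k}$, yield by the Poincar\'e--Lelong formula the self-intersection currents and give $\frac{1}{2}\sum_{i}\sum_{a}a^{2}\rank({}^{i}\Gr_{a}F_{*})[D_{i}]^{2}$; and the last term of \eqref{p12} produces $\sum_{i<j}\sum_{P}\sum_{a_{i},a_{j}}a_{i}a_{j}\rank({}^{P}\Gr_{(a_{i},a_{j})}F_{*})[P]$. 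Adding the smooth contribution $\ch_{2}(F)$, these are exactly the four summands of \eqref{chernform0} for $\ch_{2}(F_{*})$, which proves the proposition. The step I expect to be the main obstacle is the global assembly of the third paragraph: justifying the Stokes-and-limit procedure uniformly over all strata and keeping the orientations and the inclusion--exclusion bookkeeping consistent, so that the corner limits \eqref{p12} combine with the correct signs with the single-component limits \eqref{p11} into the closed formula.
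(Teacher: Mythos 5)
Your proposal is correct and follows essentially the same route as the paper: the paper likewise writes $F_{\nabla_{H_0}}=F_{\nabla_{H_1}}+\overline{\partial}(s^{-1}\nabla_{H_1}^{1,0}s)$, treats $2\tr(F_{\nabla_{H_1}}s^{-1}\nabla_{H_1}^{1,0}s)+\tr(\overline{\partial}(s^{-1}\nabla_{H_1}^{1,0}s)\cdot s^{-1}\nabla_{H_1}^{1,0}s)$ as the transgression, integrates by parts over the sets $Z_{U,\lambda(k)}(\delta)$ and $Z_{U,\lambda(j),\lambda(k)}(\delta)$, uses Lemma~\ref{Psiestimat} to discard the off-diagonal terms, and identifies the resulting limits \eqref{p11} and \eqref{p12} with the correction terms of the $\ch_{2}$ formula. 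Your assembly of the boundary strata and the cohomological identification of each term matches the paper's argument.
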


Now return to our previous setting, i.e. a parabolic sheaf \(\pF\) over \((X,D)\). As we have remarked in the beginning of this subsection, we can construct a metric $\widehat{H}$ on the regular part of $\pF$ which is adapted to it in codimension \(2\), i.e., the following proposition.
\begin{Proposition}\label{adaptedcodim2sheaf}
  \(\widehat{H}\) is adapted to \(\pF\) in codimension \(2\).
\end{Proposition}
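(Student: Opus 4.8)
The plan is to descend the adaptedness of $H_0$ on the locally abelian parabolic bundle $E_*'$ over $\widetilde{X}$, established in Propositions~\ref{adaptcod1bundle} and~\ref{adaptedcodim2bundle}, to $\widehat{H}$ on the regular part of $\pF$ over $X$, exploiting the relation $\widehat{H}=H_0\otimes h_{P_0}$ and the fact that the whole exceptional locus lies over the set $W$ of codimension at least $3$. Throughout, I identify $\F_{|X^{\circ}\setminus D}$ with $E_{|\widetilde{X}\setminus(\mathscr{E}\cup D^*)}$ via $\pi$ and the triviality of the line bundles $[-P]$ and $[-P_0]$ away from $\mathscr{E}$.

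First I would record the curvature comparison. Since $E=E_*'\otimes\mathcal{O}_{\widetilde{X}}(P_0)$ carries the tensor metric $\widehat{H}=H_0\otimes h_{P_0}$, the Chern curvatures satisfy
\[F_{\nabla_{\widehat{H}}}=F_{\nabla_{H_0}}+F_{h_{P_0}}\cdot\id_E,\]
with $F_{h_{P_0}}$ a smooth $(1,1)$-form representing $[P_0]$. Writing $r=\rank E$ and taking traces gives $\tr F_{\nabla_{\widehat{H}}}=\tr F_{\nabla_{H_0}}+r\,F_{h_{P_0}}$ together with
\[\tr(F_{\nabla_{\widehat{H}}}\wedge F_{\nabla_{\widehat{H}}})=\tr(F_{\nabla_{H_0}}\wedge F_{\nabla_{H_0}})+2\,F_{h_{P_0}}\wedge\tr F_{\nabla_{H_0}}+r\,F_{h_{P_0}}\wedge F_{h_{P_0}}.\]

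Next, for a test form $\eta$ on $X$ (of type $(n-1,n-1)$ or $(n-2,n-2)$), I would use that $\pi$ is a biholomorphism off the exceptional locus, and that the curvature forms are integrable across $D^*$ and across $\mathscr{E}$ — for $H_0$ this is the content of the estimates behind Lemma~\ref{curvatureboundbundle}, while $F_{h_{P_0}}$ is smooth — so that
\[\int_{X^{\circ}\setminus D}\tr F_{\nabla_{\widehat{H}}}\wedge\eta=\int_{\widetilde{X}\setminus D^*}\tr F_{\nabla_{\widehat{H}}}\wedge\pi^*\eta,\]
and similarly for the $\tr(F\wedge F)$ integral. Substituting the two identities above and applying Propositions~\ref{adaptcod1bundle} and~\ref{adaptedcodim2bundle} to the $H_0$-terms turns the right-hand sides into $\ch_1(E_*')\cdot\pi^*[\eta]$ (resp. $\ch_2(E_*')\cdot\pi^*[\eta]$) plus correction terms each carrying a factor $[P_0]$: namely $r[P_0]\cdot\pi^*[\eta]$ in the first Chern character, and $[P_0]\cdot\ch_1(E_*')\cdot\pi^*[\eta]$ together with $[P_0]^2\cdot\pi^*[\eta]$ in the second. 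The crucial point is that every correction term is represented by a current supported on $\mathscr{E}$, hence on $\pi^{-1}(W)$; since $\eta$ has real degree at least $2n-4>2n-6\ge 2\dim_{\mathbb{C}}W$, its restriction to $W$ vanishes, so $\pi^*\eta$ restricts to zero on the supports of these currents and all corrections drop out. Finally, combining this with the identities $\pi_*\ch_k(E_*')=\ch_k(\pF)$ for $k=1,2$ — which follow from the proof of Lemma~\ref{cherncharacterpreserve}, the difference classes being supported on $\mathscr{E}\cup(D-D^*)$ and thus pushing forward to zero — and invoking the projection formula yields the first and third defining equations of adaptedness in codimension $2$. The middle equation $\ch_1(\pF)^2=(\tfrac{\sqrt{-1}}{2\pi})^2\tr F_{\widehat{H}}\wedge\tr F_{\widehat{H}}$ then follows by wedging the codimension-$1$ identity with itself, the square being a well-defined current because the admissibility bound places $\tr F_{\widehat{H}}$ in $L^2$.

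The step I expect to be the main obstacle is the rigorous justification that nothing is lost in passing from the integrals over $\widetilde{X}$ to the currents on $X$: one must confirm the $L^1$-integrability of $\tr F_{\nabla_{\widehat{H}}}$ and of $\tr(F_{\nabla_{\widehat{H}}}\wedge F_{\nabla_{\widehat{H}}})$ across both $D^*$ and $\mathscr{E}$, and then carefully isolate which exceptional contributions genuinely vanish. It is precisely here that the codimension-$3$ hypothesis on $W$ is indispensable, both for killing the $[P_0]$-correction terms and for ensuring $\pi_*\ch_k(E_*')=\ch_k(\pF)$.
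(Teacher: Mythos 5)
Your argument is essentially the paper's own: the paper derives Proposition~\ref{adaptedcodim2sheaf} directly from the adaptedness of $H_0$ on $E_*'$ (Propositions~\ref{adaptcod1bundle} and~\ref{adaptedcodim2bundle}), the relation $\widehat{H}=H_0\otimes h_{P_0}$, and the fact that the exceptional locus sits over $W$ of codimension at least $3$ via Lemma~\ref{cherncharacterpreserve}. You have merely made explicit the tensor-product curvature comparison and the vanishing of the $[P_0]$-correction terms, which is a correct filling-in of the same approach.
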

Moreover, we can prove the following proposition by the blow-up technique used in Section~\ref{3}. As it is tautological, we omit the proof.
\begin{Proposition}\label{hatHadapt}
  $\widehat{H}$ is adapted to any parabolic subsheaf $\mathcal{S}_{*}$ of $\pF$ in codimension \(1\). 
\end{Proposition}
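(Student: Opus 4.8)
The plan is to reduce the statement to the case of a locally abelian parabolic bundle, where Proposition~\ref{adaptcod1bundle} applies, and then transport the resulting current identity back to $X$ by the modification of Section~\ref{3}. Let $\pS$ be an arbitrary parabolic subsheaf of $\pF$. First I would apply the resolution of Section~\ref{3} simultaneously to $\pS$ and $\pF$: starting from the modification $\pi\colon\widetilde{X}\to X$ already produced for $\pF$, I blow up further, along smooth centers over the singular locus of $\pS$ and over the loci where the induced filtration ${}^{i}\Sb_{a}\coloneqq\Sb\cap{}^{i}\F_{a}$ fails to be a subbundle, so that the proper transform of $\Sb$, after twisting by a suitable effective divisor supported on $\mathscr{E}$, becomes a locally abelian parabolic bundle $S_{*}'$ on $\widetilde{X}$ in the sense of Theorem~\ref{Locally Abelian}. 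By construction the inclusion $\pS\hookrightarrow\pF$ lifts to a morphism $S_{*}'\to E_{*}'$ which is an isomorphism onto its image away from $\mathscr{E}$, and the metric $\widehat{H}$ on the regular part of $\pF$ induces, by restriction, a Hermitian metric on the regular part of $S_{*}'$; since the twisting line bundle carries a smooth metric, this does not affect the codimension-$1$ analysis.

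Next I would invoke Proposition~\ref{adaptcod1bundle} for the locally abelian parabolic bundle $S_{*}'$. That proposition computes $\ch_{1}(S_{*}')\cdot[\eta]$ as $\tfrac{\sqrt{-1}}{2\pi}\int_{\widetilde{X}\setminus D^{*}}\tr(F_{\nabla})\wedge\eta$ for the particular metric built by the construction of the preceding subsection, namely an $H_{1}$-orthogonal graded decomposition twisted by the automorphisms $\tau_{i}^{-a}$. It therefore remains to compare the restricted metric $\widehat{H}|_{S_{*}'}$ with such a canonically constructed adapted metric on $S_{*}'$: writing the two metrics as $K$ and $K_{0}$, their first Chern currents differ by $\tfrac{\sqrt{-1}}{2\pi}\partial\overline{\partial}\log\bigl(\det K/\det K_{0}\bigr)$, so it suffices to show that $\log(\det K/\det K_{0})$ is locally bounded near $D^{*}$, whence the difference carries no current on $D^{*}$.

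The heart of the argument, and the step I expect to be the main obstacle, is this local comparison near $D^{*}$. In a coordinate patch carrying the decomposition $E=\boplus_{\va}{}^{I}\!G_{\va}$ and the diagonal automorphism $s=\boplus\tau_{i}^{-a}$, the induced filtration ${}^{i}\Sb_{a}={}^{i}\F_{a}\cap\Sb$ fixes the induced parabolic weights of $S_{*}'$, while the degeneration of $\widehat{H}|_{S_{*}'}$ along $D_{i}^{*}$ is governed by the diagonal exponents of $s$ seen through the inclusion $S\hookrightarrow E$. Using the vanishing $(\Psi_{j}^{p,q})_{\va,\va}=0$ together with the estimates of Lemma~\ref{Psiestimat} restricted to $S$, I would show that along each $D_{i}^{*}$ the metric $\widehat{H}|_{S_{*}'}$ degenerates with logarithmic order $\sum_{a}a\cdot\rank({}^{i}\Gr_{a}S_{*}')$, so that the Poincar\'e--Lelong formula produces exactly the parabolic correction $\sum_{i}\sum_{a}a\cdot\rank({}^{i}\Gr_{a}S_{*}')\,[D_{i}^{*}]$ on top of the smooth contribution $\ch_{1}(S)$. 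Equivalently, one must control the second-fundamental-form term of the induced metric and check that it contributes no spurious codimension-$1$ current beyond the one dictated by the induced weights; this is where the compatibility of the smooth graded decomposition of Proposition~\ref{globaldecomposition} with the subbundle $S$, after blowing up, is essential, since for a subbundle lying diagonally across several weight spaces the induced metric can a priori pick up the larger ambient weight.

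Finally I would transport the identity back to $X$. Since $\pi$ is an isomorphism outside the set $W$ of codimension at least $3$, and, exactly as in Lemma~\ref{cherncharacterpreserve}, the discrepancy between $\ch_{1}(S_{*}')$ and $\pi^{*}\ch_{1}(\pS)$ is represented by a divisor supported on $\mathscr{E}\cup(D-D^{*})$, pairing the curvature identity on $\widetilde{X}$ with $\pi^{*}\eta$ and pushing forward yields $\ch_{1}(\pS)=\tfrac{\sqrt{-1}}{2\pi}\tr(F_{\widehat{H}|_{\pS}})$ as currents on $X$ for every test class $[\eta]\in H^{n-1,n-1}(X,\mathbb{C})$. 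This is precisely adaptedness of $\widehat{H}$ to $\pS$ in codimension $1$, completing the argument.
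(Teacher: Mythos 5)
The paper does not actually write out a proof of this proposition: it says only that it ``can be proved by the blow-up technique used in Section~\ref{3}'' and omits the argument as tautological. Your plan --- blow up further until $\pS$ becomes a locally abelian parabolic bundle twisted by an exceptional divisor, invoke Proposition~\ref{adaptcod1bundle} for the model metric, compare it with the metric induced by $\widehat{H}$, and push forward using that $W$ has codimension at least $3$ --- is exactly that technique made explicit, so in outline you and the paper agree.

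The one step you leave unproved is the one you yourself flag as the main obstacle, and it is the only non-tautological content of the statement: that the metric induced by $\widehat{H}$ on the resolved subsheaf degenerates along each $D_i^{*}$ with logarithmic order given by the \emph{induced} weights rather than the ambient ones. This does close, and the reason deserves to be recorded rather than deferred. If $v$ is a local holomorphic section of $\Sb$ whose induced weight along $D_i$ is $a$ (i.e.\ $v|_{D_i}$ lies in ${}^{i}\F_{a}|_{D_i}$ but not in ${}^{i}\F_{>a}|_{D_i}$), then in the smooth decomposition $v=\sum_{\vb}v_{\vb}$ every component with $q_i(\vb)=b<a$ vanishes on $D_i$ and hence contributes $O(\tau_i^{\,b+1})$ to $|v|^{2}_{H_0}$, while the components with $b\geq a$ contribute $O(\tau_i^{\,b})$ with the $b=a$ term nonvanishing; since all weights lie in $[0,1[$ we have $b+1>a$ whenever $b<a$, so $|v|^{2}_{H_0}\sim\tau_i^{\,a}$. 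Applied to a frame of $\det\Sb$ this gives the correct order of $\det(\widehat{H}|_{\Sb})$, and your Poincar\'e--Lelong step then yields precisely the correction $\sum_a a\cdot\rank({}^{i}\Gr_a\pS)\,[D_i^{*}]$; no ``larger ambient weight'' can be picked up. One small imprecision to fix: boundedness of $\log(\det K/\det K_0)$ does not make its $\partial\overline{\partial}$ ``carry no current''; what it gives, and what you actually need, is that this $\partial\overline{\partial}$-exact current has vanishing Lelong number along $D^{*}$ and pairs to zero with closed $(n-1,n-1)$-forms, which suffices for the equality of the two $\ch_1$ pairings. With these two points supplied your argument is complete and consistent with the proof the paper declines to write.
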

Following from Lemma~\ref{curvatureboundbundle}. We also have the curvature bound for $\widehat{H}$.
\begin{Lemma}\label{curvature bound}
  \(|F_{\widehat{H}}|_{\widehat{H}}=O(|\sigma|^{-(1+\varepsilon)})\).
\end{Lemma}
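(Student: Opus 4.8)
The plan is to read the estimate off from Lemma~\ref{curvatureboundbundle} together with the tensor-product structure of $\widehat{H}$, the only real work being to check that over the regular part of $\pF$ the weight function $\tau$ reduces to $|\sigma|^2$. Recall that $\widehat{H}=H_0\otimes h_{P_0}$, where $H_0$ is the metric on the regular part of the locally abelian parabolic bundle $E_*'=E\otimes[-P_0]$ constructed above and $h_{P_0}$ is a fixed smooth Hermitian metric on the line bundle $\mathcal{O}_{\widetilde{X}}(P_0)$. First I would invoke the additivity of the Chern curvature under tensor products: since $\mathcal{O}_{\widetilde{X}}(P_0)$ has rank one, under the canonical identification $E_*'\otimes[P_0]=E$ we have
\[
F_{\widehat{H}}=F_{\nabla_{H_0}}+F_{h_{P_0}}\cdot\id_E,
\]
where $F_{h_{P_0}}$ is a scalar-valued $(1,1)$-form.

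Because $h_{P_0}$ is smooth on the compact manifold $\widetilde{X}$, its curvature $F_{h_{P_0}}$ is bounded, and for an endomorphism of the form $A\otimes\id_{[P_0]}$ the norm with respect to $\widehat{H}=H_0\otimes h_{P_0}$ equals the norm of $A$ with respect to $H_0$. Hence the triangle inequality yields, on $\widetilde{X}\setminus(D^*\cup\mathscr{E})$,
\[
|F_{\widehat{H}}|_{\widehat{H}}\le |F_{\nabla_{H_0}}|_{H_0}+|F_{h_{P_0}}\cdot\id_E|_{\widehat{H}}=O\big(\tau^{-\frac{1+\varepsilon}{2}}\big)+O(1),
\]
where the first term is exactly the content of Lemma~\ref{curvatureboundbundle}. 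As $\varepsilon>0$, the singular term dominates the bounded one near the divisor, so $|F_{\widehat{H}}|_{\widehat{H}}=O(\tau^{-\frac{1+\varepsilon}{2}})$.

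It remains to pass to the regular part $F_{|X^\circ\setminus D}$ of $\pF$ and to rewrite the bound in terms of $|\sigma|$, and this is where the only genuine care is needed. On $\widetilde{X}$ the function $\tau$ is the square length of the canonical section of the full simple normal crossing divisor $D^*\cup\mathscr{E}$, so a priori $\tau^{-\frac{1+\varepsilon}{2}}$ would also blow up along the exceptional locus $\mathscr{E}$. However $X^\circ=X-W$ with $\operatorname{codim}W\ge 3$, and $\pi$ is an isomorphism over $X^\circ$, so that $\mathscr{E}\subset\pi^{-1}(W)$ does not meet the preimage of $X^\circ$; consequently, over $X^\circ\setminus D$ only the components $D^*$ survive and $\tau=|\sigma|^2$, whence $\tau^{-\frac{1+\varepsilon}{2}}=|\sigma|^{-(1+\varepsilon)}$. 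Finally, $\widehat{H}$ induces the metric on $F_{|X^\circ\setminus D}$ through the holomorphic isomorphism $E\cong E\otimes[-P]\cong\F$ valid away from $\mathscr{E}$ (the bundle $[-P]$ being trivialized there by the canonical section of $P$); since a holomorphic bundle isomorphism conjugates Chern curvatures and therefore preserves their pointwise norms, the estimate $|F_{\widehat{H}}|_{\widehat{H}}=O(|\sigma|^{-(1+\varepsilon)})$ descends without change to the regular part of $\pF$. I expect this divisorial bookkeeping---separating the $D^*$ and $\mathscr{E}$ contributions over $X^\circ$---to be the main and essentially the only obstacle; the rest is a formal consequence of Lemma~\ref{curvatureboundbundle}.
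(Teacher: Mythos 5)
Your proposal is correct and follows the same route the paper intends: the paper offers no proof beyond the phrase ``Following from Lemma~\ref{curvatureboundbundle},'' and your argument is precisely the missing bookkeeping (curvature additivity for $\widehat{H}=H_0\otimes h_{P_0}$, boundedness of $F_{h_{P_0}}$, and the identification of $\tau$ with $|\sigma|^2$ away from $\mathscr{E}$). One small remark: since $E_*'$ is a parabolic bundle associated to $(\widetilde{X},D^*)$, the weight function $\tau$ in the construction only involves the components of $D^*$ in the first place, so the separation of $\mathscr{E}$-contributions you worry about is even less of an issue than you suggest; also the absorption $O(1)=O(\tau^{-\frac{1+\varepsilon}{2}})$ needs only $\frac{1+\varepsilon}{2}>0$, not $\varepsilon>0$.
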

\section{Parabolic stability and Analytic stability}\label{5}
Given a parabolic sheaf $\pF$ over a compact K\"ahler manifold $(X,\omega)$. We define the parabolic degree $\operatorname{deg}_{\omega}(\pF)$ of $\pF$ with respect to $\omega$ by 
\begin{equation*}
  \deg_{\omega}(\pF)\coloneqq \ch_{1}(\pF)\cdot \frac{[\omega]^{n-1}}{(n-1)!},
\end{equation*}
and the slope $\mu_{\omega}(\pF)$ of $\pF$ with respect to $\omega$ by 
\begin{equation*}
  \mu_{\omega}(\pF)\coloneqq \frac{\deg_{\omega}(\pF)}{\rank(\pF)}.
\end{equation*}
\begin{Definition}[Parabolic stability]
  If for any proper parabolic subsheaf $\mathcal{S}_{*}$ of $\pF$, $\mu_{\omega}(\mathcal{S}_{*})<\mu_{\omega}(\pF)$, we say that $\pF$ is parabolic stable with respect to $\omega$. 
\end{Definition}
Next, we recall Simpson's~\cite{Si1} definition of the analytic stability of a Hermitian holomorphic vector bundle $(E, \overline{\partial}, H)$ over a not necessarily compact K\"ahler manifold $(X^{\circ}, \omega)$. The analytic degree $\deg_{H,\omega}(\mathcal{S})$ of a torsion-free subsheaf $\mathcal{S}$ of $E$ is given by 
\begin{equation*}
  \deg_{H,\omega}(\mathcal{S})\coloneqq \frac{\sqrt{-1}}{2\pi}\int_{\mathcal{S}_{reg}}\tr(F_{H_{|\mathcal{S}}})\wedge \frac{\omega^{n-1}}{(n-1)!},
\end{equation*}
where $H_{|\mathcal{S}}$ is the restriction of $H$ to the locally free part of $\mathcal{S}$ and $\mathcal{S}_{reg}$ is the Zariski open subset of $X^{\circ}$ where $\mathcal{S}$ is locally free and the analytic slope $\mu_{H,\omega}(\mathcal{S})$ is defined as 
\begin{equation*}
  \mu_{H,\omega}(\mathcal{S})\coloneqq \frac{\deg_{H,\omega}(\mathcal{S})}{\rank(\mathcal{S})}.
\end{equation*}
Notice that by Chern-Weil's formula we have 
\begin{equation*}
  \deg_{H,\omega}(\mathcal{S})=\frac{\sqrt{-1}}{2\pi}\int_{\mathcal{S}_{reg}}\tr(\operatorname{p}_{\mathcal{S},\widehat{H}}\Lambda _{\omega}F_{H})-\frac{1}{2\pi}\int_{\mathcal{S}_{reg}}\left|\overline{\partial}\operatorname{p}_{\mathcal{S},\widehat{H}} \right|_{H}^{2}
\end{equation*}
where the operator $\Lambda_{\omega}$ stands for the contraction with respect to $\omega$ and $\operatorname{p}_{\mathcal{S}}$ is the orthogonal projection of $E$ onto $\mathcal{S}$. Hence if we want the above notions defined over the non-compact manifold $X^{\circ}$ to make sense, we need to require $F_{H}\in L^{1}(X^{\circ},\omega)$ and $\operatorname{p}_{\mathcal{S},\widehat{H}}\in L_{1}^{2}(S_{H})$ where $L_{1}^{2}(S_{H})$ is the Sobolev space of sections of $\operatorname{End}(E)$ that are self-adjoint with respect to $H$. 

\begin{Definition}[Analytic stability]
  If for any proper saturated subsheaf $\mathcal{S}$ of $E$ such that $\operatorname{p}_{\mathcal{S},\widehat{H}}\in L_{1}^{2}(S_{H})$, $\mu_{H,\omega}(\mathcal{S})<\mu_{H,\omega}(E)$, we say that $E$ is analytic stable with respect to $\omega$ and $H$. 
\end{Definition}
In the last section, we have constructed a metric $\widehat{H}$ that is adpated to any parabolic subsheaf $\mathcal{S}_{*}$ of $\pF$ in codimension $1$. And $\widehat{H}$ is defined on the vector bundle $F_{|X^{\circ}\setminus D}$ where $F$ is the locally free part of the underlying sheaf $\F$. We should have the following proposition. 
\begin{Proposition}
  The parabolic stability of $\pF$ with respect to $\omega$ is equivalent to the analytic stability of $F_{|X^{\circ}\setminus D}$ with respect to the restriction of $\omega$ and $\widehat{H}$.
  \begin{proof}
    Suppose $F$ is analytic stable. For any proper parabolic subsheaf $\mathcal{S}_{*}$ of $\pF$, $\Sb_{|_{X^{\circ}\setminus D}}$ is a torsion-free subsheaf of $F$. Then it follows immediately from Lemma~\ref{hatHadapt} that $\pF$ is parabolic stable.

    Conversely, suppose $\pF$ is parabolic stable, we need to compare the analytic slope of any proper saturated subsheaf $\Sb$ of $F_{|X^{\circ}\setminus D}$ such that $\operatorname{p}_{\mathcal{S},\widehat{H}}\in L_{1}^{2}(S_{\widehat{H}})$ with the analytic slope of $F_{|X^{\circ}\setminus D}$. Again by Lemma~\ref{hatHadapt}, it suffices to show that any saturated subsheaf $\Sb$ of $F$ with $\operatorname{p}_{\mathcal{S},\widehat{H}}\in L_{1}^{2}(S_{\widehat{H}})$ can be extended to a parabolic subsheaf of $\pF$. It was proved in \cite[Proposition 5.9]{Li} that $\Sb$ can be extended to $X^{\circ}$ as a coherent subsheaf of $F$. Since $X-X^{\circ}$ is an analytic subset of codimension $3$ and $\F$ is reflexive, it can be extended further to a coherent subsheaf $\mathcal{S}'$ of $\F$. Let $\mathcal{S}^{\prime\prime}$ be the saturation of $\mathcal{S}'$, then $\mathcal{S}^{\prime\prime}_{*}$ is a parabolic subsheaf of $\pF$ extending $\mathcal{S}$.
   \end{proof}
\end{Proposition}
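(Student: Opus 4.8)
The plan is to reduce the equivalence to a single bookkeeping identity between parabolic and analytic degrees, and then to handle the two implications asymmetrically: one is essentially formal, while the other requires extending a subsheaf across the divisor. The identity I would establish first is that for every parabolic subsheaf $\pS$ of $\pF$, writing $\Sb\coloneqq\pS_{|X^\circ\setminus D}$ for the induced torsion-free subsheaf of $F_{|X^\circ\setminus D}$, one has $\mu_\omega(\pS)=\mu_{\widehat{H},\omega}(\Sb)$. This is exactly the content of adaptedness in codimension $1$: Proposition~\ref{hatHadapt} gives $\ch_1(\pS)=\frac{\sqrt{-1}}{2\pi}\tr(F_{\widehat{H}_{|\Sb}})$ as currents on $X$, and pairing both sides with $\frac{[\omega]^{n-1}}{(n-1)!}$ turns the left-hand side into $\deg_\omega(\pS)$ and the right-hand side into $\deg_{\widehat{H},\omega}(\Sb)$; dividing by the common rank gives the slope identity. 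Applying the same reasoning to $\pF$ itself yields $\mu_\omega(\pF)=\mu_{\widehat{H},\omega}(F_{|X^\circ\setminus D})$.

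For the implication ``analytic stable $\Rightarrow$ parabolic stable'', I would take an arbitrary proper parabolic subsheaf $\pS$, restrict it to $\Sb$, and check that $\Sb$ is a proper saturated subsheaf of $F_{|X^\circ\setminus D}$ with $\operatorname{p}_{\Sb,\widehat{H}}\in L^2_1(S_{\widehat{H}})$. The membership in $L^2_1$ is what makes $\Sb$ an admissible competitor in the definition of analytic stability; I expect it to follow from the fact that $\widehat{H}$ is compatible with the parabolic filtration, so that the projection $\operatorname{p}_{\Sb,\widehat{H}}$ has only mild singularities along $D$, controlled by the curvature bound $|F_{\widehat{H}}|_{\widehat{H}}=O(|\sigma|^{-(1+\varepsilon)})$ of Lemma~\ref{curvature bound}. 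Granting this, analytic stability gives $\mu_{\widehat{H},\omega}(\Sb)<\mu_{\widehat{H},\omega}(F_{|X^\circ\setminus D})$, which by the degree identity is precisely $\mu_\omega(\pS)<\mu_\omega(\pF)$; as $\pS$ was arbitrary, $\pF$ is parabolic stable.

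For the converse I must manufacture, from a saturated subsheaf $\Sb\subset F_{|X^\circ\setminus D}$ with $\operatorname{p}_{\Sb,\widehat{H}}\in L^2_1(S_{\widehat{H}})$, a parabolic subsheaf of $\pF$ restricting to $\Sb$, so that parabolic stability may be brought to bear. The plan is a three-stage extension across the loci we have thrown away. First, the $L^2_1$-bound on the projection lets me invoke the extension theorem \cite[Proposition 5.9]{Li} to extend $\Sb$ from $X^\circ\setminus D$ across the divisor to a coherent subsheaf of $F$ over all of $X^\circ$. Second, since $X\setminus X^\circ=W$ is analytic of codimension at least $3$ and $\F$ is reflexive, hence normal, this subsheaf extends uniquely to a coherent subsheaf $\mathcal{S}'\subset\F$. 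Third, taking the saturation $\mathcal{S}''$ of $\mathcal{S}'$ and giving it the induced parabolic structure produces a parabolic subsheaf $\mathcal{S}''_*$ of $\pF$ with $\mathcal{S}''_{|X^\circ\setminus D}=\Sb$. The degree identity applied to $\mathcal{S}''_*$ together with parabolic stability then yields $\mu_{\widehat{H},\omega}(\Sb)=\mu_\omega(\mathcal{S}''_*)<\mu_\omega(\pF)=\mu_{\widehat{H},\omega}(F_{|X^\circ\setminus D})$, which is analytic stability.

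The principal obstacle is the first stage of this extension, \cite[Proposition 5.9]{Li}: one must upgrade an $L^2_1$-bounded projection onto a coherent subsheaf living only on the open, non-compact manifold $X^\circ\setminus D$ to an honest coherent subsheaf across the divisor $D$. This is the genuinely analytic step, resting on the near-$D$ behaviour of $\widehat{H}$ recorded in Lemma~\ref{curvature bound} and the attendant admissibility estimates; by contrast, the extension across the codimension-$3$ set $W$ and the passage to the saturation are formal consequences of reflexivity and normality, and the forward implication is little more than the degree identity. I would therefore concentrate the effort on verifying that the hypotheses of \cite[Proposition 5.9]{Li} are genuinely met by $\Sb$ and $\widehat{H}$, and on confirming that the restriction of $\mathcal{S}''_*$ to $X^\circ\setminus D$ recovers $\Sb$ so that the two slopes may be identified.
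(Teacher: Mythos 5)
Your proposal follows essentially the same route as the paper: both directions reduce to the degree identity coming from Proposition~\ref{hatHadapt} (adaptedness of $\widehat{H}$ to every parabolic subsheaf in codimension $1$), and the converse is handled by the same three-stage extension --- \cite[Proposition 5.9]{Li} across $D$, reflexivity of $\F$ across the codimension-$3$ set $W$, then saturation to obtain a parabolic subsheaf restricting to $\Sb$. The only difference is one of emphasis: you flag explicitly the need to check $\operatorname{p}_{\Sb,\widehat{H}}\in L^{2}_{1}$ for the restriction of a parabolic subsheaf in the forward direction (a point the paper passes over as ``immediate''), which is a reasonable refinement rather than a different argument.
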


For the convenience of the readers as well as the later use, we conclude the spirit of~\cite[Proposition 5.9]{Li} in the following proposition. As a preparation, we need the following concept concerning the singularities of a Hermitian metric on a divisor.
\begin{Definition}\label{polyg}
  Let $\Delta^{n}$ be the polydisk certered at origin and $\Delta^{*}$ be the punctured unit disk. Let $E$ be the trivial holomorphic vector bundle over $\Delta^{n}$ and $H$ be a possibly singular metric well defined on $E_{|\Delta^{n-k}\times (\Delta^{*})^{k}}$. We say that $H$ is of polynomial growth along the divisor $z_{k+1}\cdot z_{k+2}\cdots z_{n}=0$, if $|H|=O(|z'|^{a})$ for some $a\in \mathbb{R}$, where $z'\coloneqq (z_{k+1},\cdots,z_{n})$. If $a>0$, we also say that $H$ is of polynomial decay.
\end{Definition}
Then~\cite[Proposition 5.9]{Li} says:
\begin{Proposition}\label{sheafextend}
  If H is an Hermitian metric defined on $F_{X^{\circ}\setminus D}$ that is locally of polynomial growth along $D$, then for any proper saturated coherent subsheaf $\mathcal{S}$ of $F_{X^{\circ}\setminus D}$ such that $\operatorname{p}_{\mathcal{S},\widehat{H}}\in L_{1}^{2}(S_{{H}})$, $\mathcal{S}$ can be extended as a coherent subsheaf of $F$ over $X^{\circ}$. 
\end{Proposition}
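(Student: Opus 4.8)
The plan is to treat Proposition~\ref{sheafextend} as a purely local extension problem across the divisor $D$. Since $\mathcal{S}$ is already a coherent subsheaf of $F_{|X^{\circ}\setminus D}$, the only thing to be produced is an extension across $D$, and coherence is a local property. Because $\F$ is reflexive and $X^{\circ}$ is the locus where everything is well behaved in codimension $2$, it suffices to construct the extension on a neighbourhood of a generic point of each component of $D$ and then to fill in the remaining loci, all of which have codimension at least $2$ in $X^{\circ}$. Thus I would first reduce to a model situation: a polydisk with coordinates $z_{1},\dots,z_{n}$, a smooth point of $D=\{z_{n}=0\}$, and $F$ trivialised by a smooth frame on $\Delta^{n-1}\times\Delta$.

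First I would record the structural consequences of the hypotheses. As $\mathcal{S}$ is coherent and saturated on $X^{\circ}\setminus D$, it is a holomorphic subbundle of $F$ away from an analytic set $A$ of codimension at least $2$; on $(X^{\circ}\setminus D)\setminus A$ the $H$-orthogonal projection $\operatorname{p}_{\mathcal{S},H}$ is a genuine smooth idempotent, self-adjoint for $H$, satisfying the weak holomorphy relation $(\id-\operatorname{p}_{\mathcal{S},H})\overline{\partial}\operatorname{p}_{\mathcal{S},H}=0$. The key quantitative input is that, being $H$-self-adjoint and idempotent, $\operatorname{p}_{\mathcal{S},H}$ has $H$-operator norm $1$; combined with the hypothesis that $H$ is of polynomial growth $|H|=O(|z_{n}|^{a})$, this forces the matrix entries of $\operatorname{p}_{\mathcal{S},H}$ in the fixed smooth frame to be bounded by $C\,|z_{n}|^{-N}$ for some $N$. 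In other words, $\mathcal{S}$ has at worst \emph{polynomial}, rather than essential, singularities along $D$.

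The decisive step is to convert this polynomial bound into a meromorphic extension of $\mathcal{S}$. Passing to the Pl\"ucker picture, a rank-$k$ saturated subsheaf determines, on the locus where it is a subbundle, a holomorphic section of the projectivised bundle $\mathbb{P}(\Lambda^{k}F)$; the polynomial bound just obtained means that in the affine charts of $\mathbb{P}(\Lambda^{k}F)$ this section is given by holomorphic functions on $\Delta^{n-1}\times\Delta^{*}$ of polynomial growth along $\{z_{n}=0\}$. By the Riemann-type removable singularity theorem, such functions extend meromorphically across $D$, so $\det\mathcal{S}$ extends to a line subsheaf of $\Lambda^{k}F$; equivalently, writing $j$ for the inclusion of $X^{\circ}\setminus D$, the pushforward $j_{*}\mathcal{S}$ is coherent as a sheaf of meromorphic-along-$D$ modules, and $\overline{\mathcal{S}}\coloneqq j_{*}\mathcal{S}\cap F$ is then a coherent subsheaf of $F$ extending $\mathcal{S}$ across the smooth part of $D$. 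Finally, the union of $A$ with the singular locus of $D$ has codimension at least $2$ in $X^{\circ}$, so I would extend across it using the Hartogs-type extension theorem of Bando--Siu \cite{B-S} for subsheaves whose projection lies in $L^{2}_{1}$, which is exactly where the hypothesis $\operatorname{p}_{\mathcal{S},H}\in L_{1}^{2}(S_{H})$ is consumed.

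The main obstacle is precisely the codimension-one extension across $D$, where the Hartogs mechanism is unavailable: a coherent subsheaf on a punctured neighbourhood of $D$ can genuinely fail to extend if its ``direction'' develops an essential singularity, as for the line spanned by $(1,e^{1/z_{n}})$. The entire weight of the argument therefore rests on the polynomial growth hypothesis, which is what forbids this behaviour and reduces the problem to a removable-singularity statement. Making the passage from the operator-norm bound on $\operatorname{p}_{\mathcal{S},H}$ and the growth of $H$ to honest meromorphic control of the Pl\"ucker section fully rigorous — in particular, checking that the intersection $j_{*}\mathcal{S}\cap F$ is coherent and independent of the auxiliary choices — is the technical heart of the proof.
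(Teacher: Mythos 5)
The paper does not actually prove this proposition; it is stated as a quotation of~\cite[Proposition 5.9]{Li}, so your argument must be judged on its own. Your overall architecture (extend across the smooth part of $D$ first, then across the remaining codimension-$\geq 2$ locus) is reasonable, and the last step is fine, but the decisive codimension-one step contains a genuine error. You argue that the $H$-operator-norm bound $\|\operatorname{p}_{\mathcal{S},H}\|_{H}\leq 1$ together with polynomial growth of $H$ forces polynomial bounds on the matrix entries of $\operatorname{p}_{\mathcal{S},H}$ in a fixed smooth frame, and that this in turn gives polynomial growth of the Pl\"ucker section in affine charts, hence a meromorphic extension across $D$. The first half is correct (granted two-sided polynomial bounds on $H$), but the second half is false, and your own test case defeats it: for the line spanned by $(1,e^{1/z_{n}})$ and the \emph{standard} metric, which is certainly of polynomial growth, the orthogonal projection is
\[
\operatorname{p}=\frac{1}{1+|e^{1/z_{n}}|^{2}}\begin{pmatrix}1 & \overline{e^{1/z_{n}}}\\ e^{1/z_{n}} & |e^{1/z_{n}}|^{2}\end{pmatrix},
\]
whose entries are all bounded by $1$, yet the subsheaf has an essential singularity along $\{z_{n}=0\}$ and does not extend. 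The projection takes values in a compact subset of the space of matrices, so upper bounds on its entries carry essentially no information; the quantity that detects the essential singularity is the affine Pl\"ucker coordinate $e^{1/z_{n}}$, which is a \emph{ratio} of entries of $\operatorname{p}$ and is not controlled by upper bounds on them. Polynomial growth of the metric alone cannot exclude essential singularities of $\mathcal{S}$ along $D$.

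What does exclude this example is precisely the hypothesis $\operatorname{p}_{\mathcal{S},H}\in L^{2}_{1}$, which you spend only on the codimension-two locus: for the line above one has $|\overline{\partial}\operatorname{p}|\sim |f'|/(1+|f|^{2})$ with $f=e^{1/z_{n}}$, and this fails to be square-integrable near $z_{n}=0$ (the integral over the region where $|f|\asymp 1$ already diverges). The correct mechanism for the codimension-one extension is the Uhlenbeck--Yau regularity theorem for weakly holomorphic subbundles: an $L^{2}_{1}$ self-adjoint idempotent $\operatorname{p}$ with $(\id-\operatorname{p})\overline{\partial}\operatorname{p}=0$ represents a coherent subsheaf. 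The polynomial growth of $H$ (and the degeneration of the reference K\"ahler form) enters only in transferring the $L^{2}_{1}$ condition from the singular data to a smooth background metric on a full neighbourhood of $D$ in $X^{\circ}$, after which Uhlenbeck--Yau applies across $D$; this is the content of~\cite[Proposition 5.9]{Li}. As written, your proposal misallocates the two hypotheses, and the meromorphic-extension step does not go through.
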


\section{Kobayashi-Hitchin Correspondence}\label{6}
We use the Hermitian-Yang-Mills (H-Y-M) flow to deform $\widehat{H}$. We will show that if the parabolic sheaf is stable (resp. semistable), then there is a Hermitian-Einstein (resp. an approximate) strucutre on the bundle $F_{|X^{\circ}\setminus D}$ that is comtatible with the parabolic structure. Since $\widehat{H}$ is only defined on $X^\circ-D$ and we are not clear about the property of $\widehat{H}$ near the singular locus $W$, we investigate the behavior of the H-Y-M flow on $\widetilde{X}\setminus D$ first where $\widehat{H}$ is a well-defined metric on $E_{|\widetilde{X}\setminus D}$ with polynomial decay approaching $D$.
\subsection{Hermitian-Yang-Mills flow}
For any conical K\"ahler metric $\omega_{\epsilon\delta}$ that we have constructed in Section~\ref{4} on $\widetilde{X}\setminus D$, we consider the H-Y-M flow 
\begin{equation}\label{YMflow}
  H^{-1}\frac{\diff H}{\diff t}=-2\left(\sqrt{-1}\Lambda_{\omega_{\epsilon\delta}}F_{H}-\lambda_{\epsilon\delta}\cdot\id_{E'}\right)
\end{equation}
living on $E_{|\widetilde{X}\setminus D}$, where $$\lambda_{\epsilon\delta}\coloneqq\frac{\sqrt{-1}}{\operatorname{Vol}(\widetilde{X},\omega_{\epsilon\delta})}\cdot\int_{\widetilde{X}\setminus D^{*}}\tr(F_{\widehat{H}})\wedge \frac{\omega_{\epsilon\delta}^{n-1}}{(n-1)!}.$$
 The following proposition was obtained by Simpson~\cite{Si1}.
\begin{Proposition}\label{existflow}
  Let $(\widetilde{X}\setminus D,\omega_{\epsilon\delta})$ satisfies the three assumptions mentioned in Proposition~\ref{Simpsonassumption}. Suppose $\widehat{H}$ is a metric satisfies the assumption that $\left\lVert\Lambda_{\omega_{\epsilon\delta}}F_{\widehat{H}}\right\rVert_{L^{\infty}(\widehat{H})}\leq B$ where $B$ is a positive constant. Then there is a unique solution $H(t)$ to the H-Y-M flow with $\det(H)=\det(\widehat{H})$ such that $H(0)=\widehat{H}$, such that $\left\lVert H \right\rVert_{L^{\infty}(\widehat{H})}$ is bounded on each finite interval of time. For this solution, $\left\lVert\Lambda_{\omega_{\epsilon\delta}}F_{H}\right\rVert_{L^{\infty}(H)}\leq B$ for all $t$.
\end{Proposition}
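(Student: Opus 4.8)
The plan is to follow Simpson's treatment of Donaldson's heat flow on a non-compact K\"ahler manifold, in which the three hypotheses of Proposition~\ref{Simpsonassumption} take over the role played by compactness in Donaldson's original argument. Writing $h\coloneqq \widehat{H}^{-1}H$ for the $\widehat{H}$-self-adjoint positive endomorphism of $E$ relating $H$ to the fixed background metric, the flow~\eqref{YMflow} becomes a nonlinear parabolic equation $\partial_{t}h=-2h\big(\sqrt{-1}\Lambda_{\omega_{\epsilon\delta}}F_{H}-\lambda_{\epsilon\delta}\cdot\id_{E}\big)$ whose linearization is, up to a positive factor, the Laplacian $\Delta_{\epsilon\delta}$. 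The determinant part of the flow reduces to a scalar heat equation for the induced metric on $\det E$ that decouples from the rest, so the normalization $\det H=\det\widehat{H}$ may be imposed and one works effectively with the trace-free part. First I would establish short-time existence by exhausting $\widetilde{X}\setminus D$ with the relatively compact sublevel sets of the exhaustion function $\phi$ from assumption~(2), solving the associated Dirichlet initial--boundary-value problems (classical parabolic theory on compact manifolds with boundary), and passing to a limit; the uniform estimates needed to extract a convergent subsequence are furnished by the curvature bound below.

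The heart of the argument is the a priori bound on the curvature contraction. A computation along the flow (the parabolic analogue of Donaldson's Bochner identity) shows that $e\coloneqq \sqrt{-1}\Lambda_{\omega_{\epsilon\delta}}F_{H}-\lambda_{\epsilon\delta}\cdot\id_{E}$ satisfies
\[
\Big(\frac{\diff}{\diff t}-\Delta_{\epsilon\delta}\Big)|e|_{H}^{2}\leq 0,
\]
so that $|e|_{H}^{2}$ is a subsolution of the heat equation. On a compact base one would conclude directly from the maximum principle that $\sup_{x}|e|^{2}$ is non-increasing; here the finite volume~(1), the bounded exhaustion~(2) and especially the Moser-type estimate~(3) are precisely the substitute that makes the maximum principle valid on the non-compact $\widetilde{X}\setminus D$. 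This yields $\|\sqrt{-1}\Lambda_{\omega_{\epsilon\delta}}F_{H(t)}\|_{L^{\infty}(H)}\leq B$ for all $t$, which is the estimate asserted in the proposition.

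Long-time existence is then immediate. Once $\|\Lambda_{\omega_{\epsilon\delta}}F_{H}\|_{L^{\infty}}\leq B$, the right-hand side $-2\big(\sqrt{-1}\Lambda_{\omega_{\epsilon\delta}}F_{H}-\lambda_{\epsilon\delta}\cdot\id_{E}\big)$ of the flow is uniformly bounded, so $H^{-1}\,\diff H/\diff t$ is bounded and $\|H\|_{L^{\infty}(\widehat{H})}$ can grow at most exponentially; hence it stays bounded on every finite time interval and no finite-time blow-up can occur, so the solution extends to all $t\in[0,\infty)$. For uniqueness I would use the comparison quantity $\sigma\coloneqq \tr(H_{1}H_{2}^{-1})+\tr(H_{2}H_{1}^{-1})-2\rank(E)\geq 0$ attached to two solutions $H_{1},H_{2}$ with the same initial data: $\sigma$ is a nonnegative subsolution of the heat equation with $\sigma|_{t=0}=0$, so the same maximum principle forces $\sigma\equiv 0$, and thus $H_{1}=H_{2}$.

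The step I expect to be the main obstacle is making the maximum principle rigorous across the non-compactness of $\widetilde{X}\setminus D$. Since this manifold is neither compact nor obviously complete, one cannot simply invoke the classical parabolic maximum principle; instead one must verify that the subsolution $|e|_{H}^{2}$ is controlled well enough near $D$ for the integration by parts and the Moser iteration of assumption~(3) to apply, with constants uniform in $\epsilon$ and $\delta$. The finite-volume hypothesis~(1) and the bounded exhaustion~(2) are exactly what discard the contributions at infinity in these integrations by parts, and securing all these estimates uniformly as $\epsilon,\delta\to 0$ --- which is what the convergence analysis of the subsequent subsections will require --- is the delicate point.
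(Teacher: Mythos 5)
The paper does not prove this statement at all: it is quoted from Simpson \cite{Si1} (the existence result of Section~6 there), so the only meaningful comparison is with Simpson's argument, and your sketch is a faithful outline of it --- exhaustion by relatively compact pieces, initial--boundary-value problems solved by classical parabolic theory, the subsolution property of $\lvert\sqrt{-1}\Lambda_{\omega_{\epsilon\delta}}F_{H}-\lambda_{\epsilon\delta}\cdot\id\rvert$, and uniqueness via the comparison quantity $\sigma=\tr(H_{1}H_{2}^{-1})+\tr(H_{2}H_{1}^{-1})-2\rank E$. Two points deserve correction. First, your attribution of roles to the three hypotheses is off. The device that replaces the maximum principle on the non-compact $\widetilde{X}\setminus D$ is Simpson's lemma that a \emph{bounded} subsolution of the heat equation which is $\leq 0$ at $t=0$ stays $\leq 0$, and its proof uses only assumption~(2) --- the exhaustion function $\phi$ with $\sqrt{-1}\Lambda\partial\overline{\partial}\phi$ bounded serves as a barrier. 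The Moser-type estimate~(3) plays no role in the existence and uniqueness statement; it enters only later, in the convergence analysis under stability. This boundedness requirement is also exactly why the uniqueness clause is restricted to solutions with $\lVert H\rVert_{L^{\infty}(\widehat{H})}$ bounded on finite time intervals: without it $\sigma$ need not be bounded and the non-compact maximum principle cannot be invoked.

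Second, your long-time existence argument is too quick. Boundedness of $H^{-1}\diff H/\diff t$ only controls $\lVert H\rVert_{L^{\infty}}$; to continue the flow past a putative maximal time one must also rule out blow-up of derivatives of $H$, and this requires Donaldson's bootstrapping step in which the $C^{0}$ bounds on $h=\widehat{H}^{-1}H$ and on $\Lambda_{\omega_{\epsilon\delta}}F_{H}$ are upgraded to $L^{p}_{2}$ bounds on $h$ and then to full parabolic regularity. In Simpson's actual construction this issue is absorbed into the compact-with-boundary step: each boundary-value problem is solved for all $t\in[0,\infty)$ by Donaldson's argument, and the solution on $\widetilde{X}\setminus D$ is obtained as a locally uniform limit that therefore already exists for all time, so no separate continuation argument is needed. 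If you insist on the ``extend past the maximal time'' route, you must supply these higher-order a priori estimates explicitly.
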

It follows from Lemma~\ref{curvature bound} that, for sufficiently small $\delta$, we have $\left\lVert\Lambda_{\omega_{\epsilon\delta}}F_{\widehat{H}}\right\rVert_{L^{\infty}(\widehat{H})}\leq B$. Hence the above proposition can be applied and we obtain a family of solutions $H_{\epsilon\delta}$ with the same initial value $\widehat{H}$. We wish to show that $H_{\epsilon\delta}$ will converge to a solution of the H-Y-M flow \begin{equation}\label{Ymflow2}
  H^{-1}\frac{\diff H}{\diff t}=-2\left(\sqrt{-1}\Lambda_{\omega}F_{H}-\lambda\cdot\id_{F}\right)
\end{equation}
living on the bundle $F\coloneqq\F_{|X^{\circ}\setminus D}$ where $$\lambda\coloneqq \frac{2\pi\cdot \mu_{\omega}(\pF)}{\operatorname{Vol}(X,\omega)}.$$ Furthermore, under the stability assumption of the parabolic sheaf, we hope that the initial metric $\widehat{H}$ will deform into an Hermitian-Einstein metric with respect to $\omega$ along the flow. The idea of the proof basically comes from~\cite{B-S}. Firstly, we do some estimates.
\begin{Lemma}\label{L1bound}
  $\left\lVert\Lambda_{\epsilon\delta}F_{\widehat{H}}\right\rVert_{L^{1}(\widehat{H},\omega_{\epsilon\delta})}\leq C_{6}$ with $C_{6}$ independent of $\epsilon$ and $\delta$, i.e. $\Lambda_{\epsilon\delta}F_{\widehat{H}}$ is uniformly integrable.
  \begin{proof}
    It follows from Lemma~\ref{curvature bound} that we may fix an $\epsilon_{1}$ and a $\delta_{1}$ such that $\sqrt{-1}\tr(F_{\widehat{H}})\leq C_{7}\cdot\omega_{\epsilon_{1}\delta_{1}}$. Then it holds: 
    \begin{align*}
      \left\lvert\Lambda_{\epsilon\delta}F_{\widehat{H}}\right\rvert\cdot\omega_{\epsilon\delta}^{n}&\leq \left(\left\lvert\Lambda_{\epsilon\delta}(C_{7}\cdot\omega_{\epsilon_{1}\delta_{1}}\cdot I-\sqrt{-1}F_{\widehat{H}})\right\rvert+\left\lvert C_{7}\cdot\Lambda_{\epsilon\delta}\omega_{\epsilon_{1}\delta_{1}}\cdot I\right\rvert\right)\cdot\omega_{\epsilon\delta}^{n}\\
      &\leq n\cdot \tr\left(2\cdot C_{7}\cdot\omega_{\epsilon_{1}\delta_{1}}\cdot I-\sqrt{-1}F_{\widehat{H}}\right)\cdot \omega_{\epsilon\delta}^{n-1}.
    \end{align*}
    Integrating on both sides, we get \begin{align*}
      \int_{\widetilde{X}\setminus D}\left\lvert\Lambda_{\epsilon\delta}F_{\widehat{H}}\right\rvert\cdot\omega_{\epsilon\delta}^{n}&\leq n\cdot \int_{\widetilde{X}\setminus D}(2rC_{7}\cdot \omega_{\epsilon_{1}\delta_{1}}-\sqrt{-1}\tr(F_{\widehat{H}}))\wedge\omega_{\epsilon\delta}^{n-1}\\
      &\leq n\cdot \int_{\widetilde{X}\setminus D}(2rC_{7}\cdot \omega_{1\delta_{1}}-\sqrt{-1}\tr(F_{\widehat{H}}))\wedge\omega_{1\delta}^{n-1}\\
      &=n\cdot \int_{\widetilde{X}\setminus D}(2rC_{7}\cdot \omega_{1}-\sqrt{-1}\tr(F_{\widehat{H}}))\wedge\omega_{1}^{n-1}\\
      &=C_{6}.
    \end{align*}
  \end{proof}
\end{Lemma}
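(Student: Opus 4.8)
The plan is to prove the $L^1$ bound without any pointwise control of $\Lambda_{\epsilon\delta}F_{\widehat H}$ — which would be hopeless, since by Lemma~\ref{curvature bound} the curvature blows up like $|\sigma|^{-(1+\varepsilon)}$ along $D$ and the contraction operator itself degenerates with $\omega_{\epsilon\delta}$ — and instead to convert the integral into a cohomological pairing that is manifestly independent of $\epsilon$ and $\delta$. The mechanism is the elementary fact that for a semipositive $\mathrm{End}(E)$-valued $(1,1)$-form $A$ one has $|\Lambda_{\epsilon\delta}A|\,\omega_{\epsilon\delta}^{n}\lesssim \tr(A)\wedge\omega_{\epsilon\delta}^{n-1}$, i.e.\ the norm of the contraction of a positive form is controlled by its trace wedged with the top power. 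The whole strategy is therefore to exhibit $\sqrt{-1}F_{\widehat H}$, up to a controlled positive correction, as such a semipositive form.

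First I would use Lemma~\ref{curvature bound} to dominate the curvature by a single, \emph{fixed} conical metric. Since $|\sigma|<1$ and the model metric of Lemma~\ref{isometry} degenerates like $|z_i|^{-2+2\delta_1}$, choosing $\delta_1$ sufficiently small (roughly $\delta_1<(1-\varepsilon)/2$) makes that rate dominate the curvature rate $|\sigma|^{-(1+\varepsilon)}$, so that for a suitable constant $C_7$ the matrix of $(1,1)$-forms $C_7\,\omega_{\epsilon_1\delta_1}\cdot\id-\sqrt{-1}F_{\widehat H}$ is semipositive. Applying the contraction inequality to this semipositive form and to $C_7\,\omega_{\epsilon_1\delta_1}\cdot\id$ separately yields the pointwise estimate
\[
|\Lambda_{\epsilon\delta}F_{\widehat H}|\cdot\omega_{\epsilon\delta}^{n}\;\leq\; n\,\tr\!\big(2C_7\,\omega_{\epsilon_1\delta_1}\cdot\id-\sqrt{-1}F_{\widehat H}\big)\wedge\omega_{\epsilon\delta}^{n-1}.
\]

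Integrating over $\widetilde X\setminus D$, the right-hand side becomes $n\int\big(2rC_7\,\omega_{\epsilon_1\delta_1}-\sqrt{-1}\tr F_{\widehat H}\big)\wedge\omega_{\epsilon\delta}^{n-1}$, whose integrand is now a \emph{closed} positive $(1,1)$-form. Two steps then strip away the parameters. Since $\epsilon\leq 1$ we have $\omega_{\epsilon\delta}\leq\omega_{1\delta}$, and wedging a positive form against the larger power only enlarges the integral, so we may replace $\omega_{\epsilon\delta}^{n-1}$ by $\omega_{1\delta}^{n-1}$. Then, because $\omega_{1\delta}=\omega_\delta+\omega_{\widetilde X}$ lies in the $\delta$-independent class $[\omega]+[\omega_{\widetilde X}]$, while $\sqrt{-1}\tr F_{\widehat H}$ is a closed current in the fixed class $2\pi\,\ch_1$ (codimension-$1$ adaptedness, cf.\ Proposition~\ref{adaptcod1bundle}), the pairing is purely cohomological and equals $n\int_{\widetilde X}\big(2rC_7\,\omega_1-\sqrt{-1}\tr F_{\widehat H}\big)\wedge\omega_1^{n-1}$, which is the desired constant $C_6$, independent of $\epsilon$ and $\delta$.

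I expect the first step to be the genuine obstacle: producing one fixed conical metric $\omega_{\epsilon_1\delta_1}$ that dominates $\sqrt{-1}F_{\widehat H}$ as a \emph{matrix} of $(1,1)$-forms, which is stronger than the scalar bound of Lemma~\ref{curvature bound}. One must check, using the explicit local model of Lemma~\ref{isometry} together with the structure of $\widehat H$ near every stratum $D_J$, that the degeneration of $\omega_{\delta_1}$ along each component really beats the curvature blow-up, including along deep intersections where several $z_i$ vanish simultaneously; this is what forces $\delta_1$ to be small. A secondary point needing care is that the integrals over the open manifold $\widetilde X\setminus D$ genuinely compute the honest cohomological pairings on the compact $\widetilde X$ with no boundary contribution, which rests on the integrability guaranteed by the polynomial decay of $\widehat H$.
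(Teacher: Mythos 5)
Your proposal is correct and follows essentially the same route as the paper's proof: dominate $\sqrt{-1}F_{\widehat H}$ by a fixed conical metric $C_7\,\omega_{\epsilon_1\delta_1}\cdot\id$ using Lemma~\ref{curvature bound}, apply the contraction-versus-trace inequality for semipositive $(1,1)$-forms, then pass from $\omega_{\epsilon\delta}^{n-1}$ to $\omega_{1\delta}^{n-1}$ and finally to the $(\epsilon,\delta)$-independent cohomological pairing. Your added remark that the matrix inequality (not merely the trace bound the paper literally writes) is what the argument needs is a fair and accurate reading of the step the paper glosses over.
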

Unless otherwise specified, the constants appear in the estimates in the rest of the paper will always be uniform in $\epsilon$ and $\delta$. 

Along the heat flow~\eqref{YMflow}, we have the following inequalities (c.f.~\cite{Do2}):
\begin{align*}
  &\left(\Delta_{\epsilon\delta}+\frac{\partial}{\partial t}\right)|\Lambda_{\epsilon\delta}F_{H_{\epsilon\delta}}|_{H_{\epsilon\delta}}\leq 0\\
   &\left(\Delta_{\epsilon\delta}+\frac{\partial}{\partial t}\right)|\Lambda_{\epsilon\delta}F_{H_{\epsilon\delta}}|^2_{H_{\epsilon\delta}}\leq 0.
\end{align*}
And we put $f(t)\coloneqq |\Lambda_{\epsilon\delta}F_{H_{\epsilon\delta}}|_{H_{\epsilon\delta}}$.
\begin{Lemma}\label{decreasing}
  $\lVert f_{t} \rVert_{L^{1}}$ and $\lVert f_{t} \rVert_{L^{2}}$ is non-increasing with time.
  \begin{proof}
      Suppose to the contrary that there is $t_{2}>t_{1}$ such that $\lVert f(t_2) \rVert_{L^{1}}=\lVert f(t_1) \rVert_{L^{1}}+\delta$  with $\delta>0$. Since $\lVert f_{t} \rVert_{L^{\infty}}\leq B$ and $|dV_{\omega_{\epsilon\delta}}|< \infty$, we can take a relatively compact region $Z\subset X$ such that $B |dV_{\omega_{\epsilon\delta}}|(X-Z)<\frac{\delta}{8}$. Then we have $$
      \lVert f(t_2) \rVert_{L^{1}(Z)}\geq \lVert f(t_1) \rVert_{L^{1}(Z)}+\frac{3}{4}\delta.
      $$
      
      On the other hand, let $\Omega^{\varphi}$ be a family of nested compact regions with smooth boundaries whose limit exhausts $X^{k}-D^{*}$. Simpson~\cite[Section 6]{Si1} showed that the solution $H_{\epsilon\delta}$ can be obtained by taking the $C^{\infty}_{loc}$ limit of a sequence of metrics $H_{\epsilon\delta}^{\varphi}$ which are the solutions to the H-Y-M flow~\eqref{YMflow} over $\Omega^{\varphi}$ satisfying the Neumann boundary condition. We put $f^{\varphi}(t)\coloneqq |\Lambda_{\epsilon\delta}F_{H_{\epsilon\delta}^{\varphi}}|_{H_{\epsilon\delta}^{\varphi}}$. Then we have 
      $$\left(\Delta_{\epsilon\delta}+\frac{\partial}{\partial t}\right)f^{\varphi}\leq 0.$$
      Integrating by part on both side over $\Omega^{\varphi}$ and using the Neumann boundary condition, we obtain that $\frac{\partial}{\partial t}\lVert f^{\varphi} \rVert_{L^{1}(\Omega^{\varphi})}\leq 0$ for any $t>0$. 
      As $H_{\varphi}(t_{i})$ converges in $C^{\infty}(Z)$ to $H(t_{i})$ where $i=1,2$ as $\varphi\to \infty$,  hence for $\varphi$ sufficiently large, we have \[
      \lvert\lVert f^{\varphi}(t_{i})\rVert_{L^{1}(Z)}-\lVert f(t_{i})\rVert_{L^{1}(Z)} \rvert\leq \frac{\delta}{8}.
      \]
      But \[
      \lVert f(t_2)^{\varphi}\rVert_{L^{1}(X_{\varphi})}\leq \lVert f(t_1)^{\varphi}\rVert_{L^{1}(X_{\varphi})},
      \]
      hence 
      \[
      \lVert f(t_2)^{\varphi}\rVert_{L^{1}(Z)}\leq \lVert f(t_1)^{\varphi}\rVert_{L^{1}(Z)}+\frac{\delta}{4},
      \] 
      therefore 
      \[
      \lVert f(t_2) \rVert_{L^{1}(Z)}\leq \lVert f(t_1) \rVert_{L^{1}(Z)}+\frac{1}{2}\delta,
      \]
      thus a contradiction. The same reasoning applies to $\lVert f_{t} \rVert_{L^{2}}$.
  \end{proof}
\end{Lemma}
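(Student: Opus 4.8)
The plan is to deduce the monotonicity from the pointwise differential inequality $\left(\Delta_{\epsilon\delta}+\partial_{t}\right)f\leq 0$ recorded just above the statement. Were $\widetilde{X}\setminus D$ compact and closed, one would simply integrate this to get $\frac{d}{dt}\|f\|_{L^{1}}\leq -\int\Delta_{\epsilon\delta}f=0$, using that the integral of a Laplacian over a closed manifold vanishes. The entire difficulty is the open end at the divisor $D$: the divergence theorem now produces a boundary contribution ``at infinity'' that cannot be discarded outright, and moreover $f=|\Lambda_{\epsilon\delta}F_{H_{\epsilon\delta}}|_{H_{\epsilon\delta}}$ need not be smooth on its zero set (so the inequality for $f$ must be read in the weak sense, while the inequality for $f^{2}$ is genuinely smooth). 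I would circumvent both issues by working on a compact exhaustion and then passing to the limit.

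Concretely, I would first recall Simpson's approximation, already cited in the surrounding text: $H_{\epsilon\delta}$ is the $C^{\infty}_{loc}$-limit of solutions $H^{\varphi}_{\epsilon\delta}$ of the same flow on a nested exhaustion by relatively compact domains $\Omega^{\varphi}$ with smooth boundary, each satisfying the Neumann boundary condition and sharing the initial datum $\widehat{H}$. On each fixed $\Omega^{\varphi}$ the quantity $f^{\varphi}=|\Lambda_{\epsilon\delta}F_{H^{\varphi}_{\epsilon\delta}}|_{H^{\varphi}_{\epsilon\delta}}$ obeys the same subsolution inequality, integration by parts is now rigorous, and the Neumann condition makes the resulting boundary term have the favorable sign, yielding $\frac{d}{dt}\|f^{\varphi}\|_{L^{1}(\Omega^{\varphi})}\leq 0$. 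Since the initial datum is fixed, the maximum principle on $\Omega^{\varphi}$ supplies the uniform bound $f^{\varphi}\leq B$ with the same $B$ as for $f$, which is what ultimately tames the tail near $D$.

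The delicate step, and the one I expect to be the main obstacle, is upgrading this per-domain monotonicity to the full manifold, because the convergence is only $C^{\infty}_{loc}$ and gives no control near $D$. I would argue by contradiction: assume $\|f(t_{2})\|_{L^{1}}\geq\|f(t_{1})\|_{L^{1}}+\delta$ for some $t_{2}>t_{1}$ and $\delta>0$. Using $\|f\|_{L^{\infty}}\leq B$ from Proposition~\ref{existflow} together with the finite volume of $(\widetilde{X}\setminus D,\omega_{\epsilon\delta})$ from Proposition~\ref{Simpsonassumption}, I choose a relatively compact $Z$ with $B\cdot\operatorname{Vol}(X\setminus Z)<\delta/8$; then the hypothesized increase must already be visible on $Z$, giving $\|f(t_{2})\|_{L^{1}(Z)}\geq\|f(t_{1})\|_{L^{1}(Z)}+\tfrac{3}{4}\delta$. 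On the other hand, for $\varphi$ large the $C^{\infty}(Z)$-convergence $f^{\varphi}\to f$ lets me replace $f$ by $f^{\varphi}$ on $Z$ up to $\delta/8$, while the domain monotonicity combined with the uniform bound $f^{\varphi}\leq B$ and finite volume gives $\|f^{\varphi}(t_{2})\|_{L^{1}(Z)}\leq\|f^{\varphi}(t_{2})\|_{L^{1}(\Omega^{\varphi})}\leq\|f^{\varphi}(t_{1})\|_{L^{1}(\Omega^{\varphi})}\leq\|f^{\varphi}(t_{1})\|_{L^{1}(Z)}+\delta/8$. Chaining these estimates forces $\|f(t_{2})\|_{L^{1}(Z)}\leq\|f(t_{1})\|_{L^{1}(Z)}+\tfrac{1}{2}\delta$, contradicting the $\tfrac{3}{4}\delta$ increase on $Z$.

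Finally, the $L^{2}$ assertion follows by running the identical scheme on $f^{2}$: the second displayed inequality $\left(\Delta_{\epsilon\delta}+\partial_{t}\right)f^{2}\leq 0$ replaces the first, and $f^{2}\leq B^{2}$ is again integrable over the finite-volume manifold, so the same exhaustion-and-contradiction argument applies verbatim with $f$ replaced by $f^{2}$.
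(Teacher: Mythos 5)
Your proposal is correct and follows essentially the same route as the paper: the contradiction argument on a relatively compact $Z$ capturing all but $\delta/8$ of the mass, Simpson's Neumann exhaustion to get per-domain monotonicity of $\lVert f^{\varphi}\rVert_{L^{1}(\Omega^{\varphi})}$, and the $C^{\infty}_{loc}$-convergence to transfer the estimate back to $f$ on $Z$. Your version is in fact slightly more explicit than the paper's in flagging that the uniform bound $f^{\varphi}\leq B$ (from the maximum principle on each $\Omega^{\varphi}$) is what controls the tail $\Omega^{\varphi}\setminus Z$, a point the paper leaves implicit.
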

As a corollary, we have: 
\begin{Lemma}\label{timeL1bound}
  \begin{equation*}
    \|\Lambda_{\epsilon\delta}F_{H_{\epsilon\delta}}\|_{L^{1}(H_{\epsilon\delta})}\leq C_{6}
  \end{equation*}
  for $t\in [0,\infty[$. 
\end{Lemma}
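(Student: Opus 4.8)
The plan is to read off this bound directly as a corollary of the two preceding lemmas, with essentially no additional analytic work. First I would observe that the quantity to be controlled is exactly $\|f(t)\|_{L^{1}}$ in the notation introduced just before Lemma~\ref{decreasing}: we have $f(t)=|\Lambda_{\epsilon\delta}F_{H_{\epsilon\delta}}|_{H_{\epsilon\delta}}$, and the $L^{1}$-norm is taken against the volume form of $\omega_{\epsilon\delta}$ together with the fibre metric $H_{\epsilon\delta}$, so that $\|\Lambda_{\epsilon\delta}F_{H_{\epsilon\delta}}\|_{L^{1}(H_{\epsilon\delta})}=\|f(t)\|_{L^{1}}$.

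Next I would invoke the monotonicity established in Lemma~\ref{decreasing}, which gives that $\|f(t)\|_{L^{1}}$ is non-increasing in $t$; hence $\|f(t)\|_{L^{1}}\leq\|f(0)\|_{L^{1}}$ for every $t\in[0,\infty[$. Since the flow~\eqref{YMflow} is initialized at $H_{\epsilon\delta}(0)=\widehat{H}$, the value $f(0)$ is precisely $|\Lambda_{\epsilon\delta}F_{\widehat{H}}|_{\widehat{H}}$, whose $L^{1}$-norm is exactly the quantity bounded by $C_{6}$ in Lemma~\ref{L1bound}. Chaining the two inequalities then yields $\|f(t)\|_{L^{1}}\leq C_{6}$ for all $t\geq 0$, which is the assertion.

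The only point I would take care to emphasize is the uniformity of the constant in $\epsilon$ and $\delta$: both inputs are already uniform, the bound $C_{6}$ by its construction in Lemma~\ref{L1bound} and the monotonicity because the differential inequality $(\Delta_{\epsilon\delta}+\partial_{t})f\leq 0$ holds for each fixed pair $(\epsilon,\delta)$. Consequently the final bound inherits this uniformity. There is no genuine obstacle at this stage; all the substantive content has already been spent on the exhaustion-and-limit argument of Lemma~\ref{decreasing} and on the curvature estimate of Lemma~\ref{curvature bound} that feeds into Lemma~\ref{L1bound}, so the remaining step is purely formal.
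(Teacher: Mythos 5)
Your proposal is correct and is exactly the argument the paper intends: the lemma is stated ``as a corollary'' of Lemma~\ref{decreasing} (monotonicity of $\|f(t)\|_{L^{1}}$) combined with the initial bound $\|f(0)\|_{L^{1}}=\|\Lambda_{\epsilon\delta}F_{\widehat{H}}\|_{L^{1}}\leq C_{6}$ from Lemma~\ref{L1bound}. Your remark on the uniformity of $C_{6}$ in $\epsilon$ and $\delta$ is also consistent with the paper's conventions.
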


\begin{Lemma}\label{C0boundmeancurvature}
  For any $t>0$,  $\|\Lambda_{\epsilon\delta}F_{H_{\epsilon\delta}}\|_{L^{\infty}(H_{\epsilon\delta},\omega_{\epsilon\delta})} \leq C_{8}(\max(t^{-1}, 1))$.
  \begin{proof}
    Recall that we have derived a Sobolev inequality for smooth functions compactly supported on the measure space $(\widetilde{X}\setminus D^{*}, \omega_{\epsilon\delta})$ in Lemma~\ref{uniSob} and the Sobolev constant is independent of $\epsilon$ and $\delta$. We may consider the function \[f_{\varsigma}\coloneqq |\Lambda_{\epsilon\delta}F_{H_{\epsilon\delta}}|_{H_{\epsilon\delta}}+\varsigma(\log(|\sigma|^2)-At)\] where $A$ is set as a large number to make sure that $f_{\varsigma\nu}$ is a subsolution to the heat equation. Then $\varphi\coloneqq \eta_a(t)^{2}\cdot (f_{\varsigma\nu}^{+})^{2a-1}$ with $a\geq 1$ is a legitimate test function for the parabolic Moser's iteration technique. Here \(\eta_a(t)\) is an appropriate cutoff function. We obtain for any \(T> 0\),
    \[
    {\lVert f_\varsigma(t)\rVert}_{L^{\infty}((\widetilde{X}\setminus D)\times [1.5T,\,2T])}\leq C_8(T^{-1}){\lVert f_\varsigma(t)\rVert}_{L^1((\widetilde{X}\setminus D)\times [T,\,2T])}.
    \]
    Then the lemma follows by taking the limits on both sides as \(\varsigma\to 0\).
  \end{proof}
\end{Lemma}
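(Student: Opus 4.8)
The plan is to run a parabolic Moser iteration for the mean curvature, exploiting the three ingredients already in place: the differential inequality $\left(\Delta_{\epsilon\delta}+\frac{\partial}{\partial t}\right)f\leq 0$ for $f\coloneqq|\Lambda_{\epsilon\delta}F_{H_{\epsilon\delta}}|_{H_{\epsilon\delta}}$, the time-independent uniform $L^{1}$ bound of Lemma~\ref{timeL1bound}, and the uniform Sobolev inequality of Lemma~\ref{uniSob}. The first inequality says $f$ is a subsolution of the heat equation, so a De Giorgi--Nash--Moser mean-value inequality should bound $\sup f$ on a parabolic time window by the $L^{1}$ mass of $f$ on a slightly larger window, with a constant scaling like the reciprocal of the length of that window; the inverse power of $t$ in the statement is exactly this time-localisation factor.

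Since $\widetilde X\setminus D$ is noncompact and $f$ is not compactly supported, the first step is to localise in space by the standard weight trick. I would set
\[
f_{\varsigma}\coloneqq f+\varsigma\bigl(\log|\sigma|^{2}-A t\bigr),\qquad \varsigma>0,
\]
and choose $A$ large but independent of $\epsilon,\delta$. Because $\log|\sigma|^{2}\to-\infty$ along $D$, the positive part $f_{\varsigma}^{+}$ is supported away from $D$, hence compactly supported in $\widetilde X\setminus D$, so the uniform Sobolev inequality and all integrations by parts are legitimate on its support. The term $-\varsigma A t$ preserves the subsolution property: by assumption (2) of Proposition~\ref{Simpsonassumption} the quantity $\sqrt{-1}\Lambda_{\omega_{\epsilon\delta}}\partial\overline{\partial}\log|\sigma|^{2}$ is bounded, so for $A$ larger than this bound one still has $\left(\Delta_{\epsilon\delta}+\frac{\partial}{\partial t}\right)f_{\varsigma}\leq 0$ on $\{f_{\varsigma}>0\}$. (A further mollification is needed to smooth the Lipschitz function $f=|\cdot|$ near its zero set; I would introduce an auxiliary smoothing parameter and send it to zero at the end together with $\varsigma$.)

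With a subsolution in hand, I would iterate using test functions $\varphi\coloneqq\eta_a(t)^{2}\,(f_{\varsigma}^{+})^{2a-1}$, where $\eta_a$ is a cutoff in time supported on $[T,2T]$ and identically $1$ on $[1.5T,2T]$. Testing the subsolution inequality against $\varphi$, integrating by parts in space (valid by compact support) and in time (using the cutoff), and feeding the outcome into the uniform Sobolev inequality of Lemma~\ref{uniSob} produces a gain in the integrability exponent, while the time derivative $\eta_a'\sim T^{-1}$ generates the inverse power of $T$. Iterating over the exponents $a=q^{k}$, $k\to\infty$, with $q>1$ the Sobolev exponent, yields the local mean-value estimate
\[
\|f_{\varsigma}\|_{L^{\infty}((\widetilde X\setminus D)\times[1.5T,2T])}\leq C_{8}(T^{-1})\,\|f_{\varsigma}\|_{L^{1}((\widetilde X\setminus D)\times[T,2T])},
\]
with $C_{8}$ uniform in $\epsilon,\delta$ because it depends only on the uniform Sobolev constant $C_{4}$. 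Letting $\varsigma\to 0$ (and the smoothing parameter with it) recovers the same inequality for $f$; combining it with the uniform $L^{1}$ bound $\|f\|_{L^{1}}\leq C_{6}$ of Lemma~\ref{timeL1bound} over the slab gives $\|f(t)\|_{L^{\infty}}\leq C_{8}\,T^{-1}$ for a window of length $\sim T$ ending at $t$. Taking $T\sim t$ when $t$ is small and $T\sim 1$ when $t$ is large produces the stated bound $C_{8}\max(t^{-1},1)$.

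I expect the main obstacle to be bookkeeping the uniformity in $\epsilon$ and $\delta$ at every stage. The subsolution property forces a bound on $\Lambda_{\omega_{\epsilon\delta}}\partial\overline{\partial}\log|\sigma|^{2}$ that must itself be uniform in $\epsilon,\delta$, and the Moser constant must inherit uniformity solely through $C_{4}$ from Lemma~\ref{uniSob} and $C_{6}$ from Lemma~\ref{timeL1bound}; the iteration is otherwise routine. A secondary technical point is justifying the integrations by parts and the passage to the limits $\varsigma\to 0$ on the noncompact manifold, which the spatial weight and the compact support of $f_{\varsigma}^{+}$ are precisely designed to handle.
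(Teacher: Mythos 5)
Your proposal follows essentially the same route as the paper: the same spatial weight $f_{\varsigma}=f+\varsigma(\log|\sigma|^{2}-At)$ to localise away from $D$, the same test functions $\eta_a(t)^{2}(f_{\varsigma}^{+})^{2a-1}$ fed into the uniform Sobolev inequality of Lemma~\ref{uniSob} for the parabolic Moser iteration, and the same limit $\varsigma\to 0$ combined with the uniform $L^{1}$ bound. Your write-up is in fact slightly more explicit than the paper's (e.g.\ identifying assumption~(2) of Proposition~\ref{Simpsonassumption} as the source of the bound forcing the choice of $A$, and spelling out how $T\sim\min(t,1)$ yields the $\max(t^{-1},1)$ factor), but there is no difference in approach.
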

We put $B_{\omega_{1}}(d)\coloneqq \{x\in \widetilde{X}\,|\,d_{\omega_{1}}(x, \mathscr{E})< d\}$. Then it follows easily from Lemma~\ref{curvature bound} that $$|\Lambda_{\omega_{\epsilon\delta}}F_{\widehat{H}}|_{\widehat{H}}\leq C_{9}(d^{-1})|\sigma|^{-2}$$ over $B_{\omega_{1}}(d)^{c}$ with $C_{9}(d^{-1})$ independent of \(\epsilon\) and \(\delta\). And we have the following lemma. 
\begin{Lemma}
  For any $t\geq 0$, there exits a constant $C_{10}(d^{-1})$ such that:
  \begin{equation*}
    |\Lambda_{\omega_{\epsilon\delta}}F_{H_{\epsilon\delta}}|_{H_{\epsilon\delta}}\leq C_{10}(d^{-1})|\sigma|^{-2}
  \end{equation*}
  for all $(x,t)\in B_{\omega_{1}}(d)^{c}\times [0, \infty[$.
  \begin{proof}
    The crucial part of the proof is the uniform Gaussian upper bound of the heat kernel $K_{\epsilon\delta}$. But it is obtained in Proposition~\ref{HeatkernelUB}. The rest of the proof follows in the same way as in the proof in~\cite[Lemma 2.2]{Li-Zh-Zh}.
  \end{proof}
\end{Lemma}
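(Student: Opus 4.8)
The plan is to treat $f\coloneqq|\Lambda_{\omega_{\epsilon\delta}}F_{H_{\epsilon\delta}}|_{H_{\epsilon\delta}}$ as a nonnegative subsolution of the heat equation on $\widetilde{X}\setminus D^{*}$ and to dominate it by the heat-kernel convolution of its initial datum, letting all uniformity in $\epsilon$ and $\delta$ come from the Gaussian upper bound of Proposition~\ref{HeatkernelUB}. Along the flow the Donaldson inequality $(\Delta_{\epsilon\delta}+\partial_t)f\leq 0$ holds, so $f$ is a subsolution; approximating $H_{\epsilon\delta}$ by the Neumann solutions $H_{\epsilon\delta}^{\varphi}$ on the exhaustion $\Omega^{\varphi}$ exactly as in the proof of Lemma~\ref{decreasing} and passing to the $C^{\infty}_{loc}$-limit, I would obtain the pointwise domination
\[
f(x,t)\leq\int_{\widetilde{X}\setminus D^{*}}K_{\epsilon\delta}(x,y,t)\,f(y,0)\,dV_{\omega_{\epsilon\delta}}(y),
\]
where $f(\cdot,0)=|\Lambda_{\omega_{\epsilon\delta}}F_{\widehat{H}}|_{\widehat{H}}$.

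For $t\geq 1$ I would simply invoke Lemma~\ref{C0boundmeancurvature}, which already gives $f\leq C_{8}$; since $|\sigma|\leq 1$ this yields $f\leq C_{8}|\sigma|^{-2}$. The real work is thus for $t\in[0,1]$ and $x\in B_{\omega_{1}}(d)^{c}$, and here I would split the $y$-integral over the neighbourhood $B_{\omega_{1}}(d/2)$ of $\mathscr{E}$ and over its complement. On $B_{\omega_{1}}(d/2)$ the initial datum may be large, and its sup-norm need not be uniform in $\epsilon,\delta$ near $\mathscr{E}$; but every such $y$ is separated from $x$, so the Gaussian factor of Proposition~\ref{HeatkernelUB} is uniformly small there, and pairing it with the uniform bound $\|f(\cdot,0)\|_{L^{1}}\leq C_{6}$ of Lemma~\ref{L1bound}, rather than with a pointwise bound, makes this contribution uniformly small. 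On the complement I would insert the uniform pointwise estimate $f(\cdot,0)\leq C_{9}(d^{-1})|\sigma|^{-2}$ recorded just before the statement, reducing everything to the convolution bound
\[
\int_{\widetilde{X}\setminus D^{*}}K_{\epsilon\delta}(x,y,t)\,|\sigma(y)|^{-2}\,dV_{\omega_{\epsilon\delta}}(y)\leq C_{10}(d^{-1})\,|\sigma(x)|^{-2},
\]
required to hold uniformly for $t\in[0,1]$ and in $\epsilon,\delta$. This is precisely the computation of \cite[Lemma 2.2]{Li-Zh-Zh}: near the diagonal the kernel reproduces $|\sigma(x)|^{-2}$, while its off-diagonal decay, together with the conical volume growth of Lemma~\ref{isometry}, makes the remaining integral converge with a constant depending only on $d$.

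The hard part will be this last convolution estimate. The weight $|\sigma|^{-2}$ is singular along $D^{*}$ and is not integrable against a kernel carrying a constant floor, so a naive global domination fails; the resolution is never to use a pointwise bound on $f(\cdot,0)$ near $\mathscr{E}$, where it degenerates with $\epsilon$, but only the uniform $L^{1}$-bound of Lemma~\ref{L1bound}, and to rely on the uniform Gaussian bound of Proposition~\ref{HeatkernelUB} — reinforced by the uniform Sobolev and volume control of Proposition~\ref{Simpsonassumption} and Lemma~\ref{uniSob} — to confine the genuinely singular behaviour to a diagonal neighbourhood along $D^{*}$, where the conical geometry produces the clean multiple of $|\sigma(x)|^{-2}$. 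Keeping every constant independent of $\epsilon$ and $\delta$ throughout is the crux of the argument, and is exactly what the preceding uniform estimates are designed to guarantee.
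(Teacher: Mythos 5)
Your overall architecture --- the subsolution property $(\Delta_{\epsilon\delta}+\partial_t)f\leq 0$, domination of $f$ by the heat-kernel convolution of its initial datum via the Neumann approximations on $\Omega^{\varphi}$, and the splitting of the $y$-integral into a neighbourhood of $\mathscr{E}$ (handled by the uniform $L^{1}$ bound of Lemma~\ref{L1bound} paired with the kernel bound of Proposition~\ref{HeatkernelUB}, since $d_{\omega_{\epsilon\delta}}(x,y)$ is uniformly bounded below there) and its complement --- is exactly the Bando--Siu type argument that the paper invokes through \cite[Lemma 2.2]{Li-Zh-Zh}, and up to that point your reconstruction is sound.

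The gap is the final reduction. The inequality
\[
\int_{\widetilde{X}\setminus D^{*}}K_{\epsilon\delta}(x,y,t)\,|\sigma(y)|^{-2}\,dV_{\omega_{\epsilon\delta}}(y)\leq C_{10}(d^{-1})\,|\sigma(x)|^{-2}
\]
cannot hold: its left-hand side is infinite. By Lemma~\ref{isometry} the volume form near $D^{*}$ is comparable to $\delta^{2}|z_{1}|^{2\delta-2}\,dV_{\mathrm{eucl}}$, so $|\sigma|^{-2}\,dV_{\omega_{\epsilon\delta}}\sim\delta^{2}|z_{1}|^{2\delta-4}\,dV_{\mathrm{eucl}}$ is not locally integrable along $D^{*}$; and since $D^{*}$ lies at \emph{finite} $\omega_{\epsilon\delta}$-distance and the bound of Proposition~\ref{HeatkernelUB} carries the additive constant $1$, the kernel $K_{\epsilon\delta}(x,\cdot,t)$ has no decay whatsoever as $y\to D^{*}$, so the convolution against this weight diverges for every fixed $x$ and $t>0$. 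You do sense that ``a naive global domination fails,'' but you locate the obstruction near $\mathscr{E}$ and propose to confine it to a diagonal neighbourhood; the actual divergence lives along $D^{*}$, afflicts every $x\in B_{\omega_{1}}(d)^{c}$ no matter how far from $D^{*}$, and is not a diagonal phenomenon. (In \cite{Li-Zh-Zh} the singular weight is supported on the exceptional divisor, where the pulled-back metric \emph{degenerates} and the volume form gains vanishing, which is why the analogous convolution converges there; here the metric is conical along $D^{*}$ and the volume form is large, so the transplant is not automatic.) To close the argument you cannot feed the crude envelope $C_{9}(d^{-1})|\sigma|^{-2}$ into the convolution on all of $B_{\omega_{1}}(d/2)^{c}$: near $D^{*}$ you must again replace the pointwise bound by integrated information --- e.g.\ the uniform $L^{1}$ control of Lemma~\ref{L1bound} applied in a neighbourhood of $D^{*}$ as well, or the sharper behaviour from Lemma~\ref{curvature bound} combined with the gain produced by contracting with $\omega_{\epsilon\delta}$ in the directions normal to $D^{*}$ --- before the splitting can deliver $C_{10}(d^{-1})|\sigma(x)|^{-2}$. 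As written, your proposal reduces the lemma to a false statement.
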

We put $h_{\epsilon\delta}\coloneqq H_{\epsilon\delta}\widehat{H}^{-1}$. We know that $|H_{\epsilon\delta}|_{\widehat{H}}$ and $|H_{\epsilon\delta}^{-1}|_{\widehat{H}}$ is comparable to the positive quantity $\varPhi_{\epsilon\delta}=\log(h_{\epsilon\delta})+\log(h_{\epsilon\delta}^{-1})-2\rank(E)$. And we have 
\begin{equation}\label{eqmetricbound}
  \frac{\partial}{\partial t}\varPhi_{\epsilon\delta}\leq 2(|\Lambda_{\omega_{\epsilon\delta}}F_{H_{\epsilon\delta}}|_{H_{\epsilon\delta}}+|\Lambda_{\omega_{\epsilon\delta}}F_{\widehat{H}}|_{\widehat{H}})
\end{equation}
Hence we obtain:
\begin{Lemma}\label{metricbound}
  For $(x, t)\in  B_{\omega_{1}}(d)^{c}\times[0, T]$,
  \begin{align*}
    |H_{\epsilon\delta}|_{\widehat{H}}&\leq TC_{11}(d^{-1})|\sigma|^{-2}\\
    |H_{\epsilon\delta}^{-1}|_{\widehat{H}}&\leq TC_{11}(d^{-1})|\sigma|^{-2}
  \end{align*}
\end{Lemma}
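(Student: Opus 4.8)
The plan is to read Lemma~\ref{metricbound} as a direct consequence of integrating the differential inequality \eqref{eqmetricbound} in the time variable. The key observation is that both mean-curvature terms on the right-hand side of \eqref{eqmetricbound} are already controlled by $|\sigma|^{-2}$ (up to $d$-dependent constants) uniformly in $t$ on the region $B_{\omega_{1}}(d)^{c}$, and that the flow starts at $\widehat{H}$, so that the comparison quantity $\varPhi_{\epsilon\delta}$ vanishes identically at $t=0$. Thus a bare integration in $s$ is all that is required, and the resulting bound grows linearly in the time horizon $T$.

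More precisely, I would proceed as follows. First, since $H_{\epsilon\delta}(0)=\widehat{H}$, we have $h_{\epsilon\delta}(0)=\id_{E}$ and hence $\varPhi_{\epsilon\delta}(\,\cdot\,,0)\equiv 0$. Next, fixing a point $x\in B_{\omega_{1}}(d)^{c}$ and integrating \eqref{eqmetricbound} in $s$ from $0$ to $t$, I would insert on the right-hand side the two available estimates, both of which are constant in the time variable: the bound $|\Lambda_{\omega_{\epsilon\delta}}F_{H_{\epsilon\delta}}|_{H_{\epsilon\delta}}\leq C_{10}(d^{-1})|\sigma|^{-2}$ on $B_{\omega_{1}}(d)^{c}\times[0,\infty[$ supplied by the preceding lemma, together with $|\Lambda_{\omega_{\epsilon\delta}}F_{\widehat{H}}|_{\widehat{H}}\leq C_{9}(d^{-1})|\sigma|^{-2}$ on $B_{\omega_{1}}(d)^{c}$ coming from Lemma~\ref{curvature bound}. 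Since neither bound depends on $s$, the integration only produces a factor of $t$, yielding
\[
\varPhi_{\epsilon\delta}(x,t)=\int_{0}^{t}\frac{\partial}{\partial s}\varPhi_{\epsilon\delta}(x,s)\,\diff s\leq 2t\bigl(C_{9}(d^{-1})+C_{10}(d^{-1})\bigr)|\sigma|^{-2}\leq 2T\bigl(C_{9}(d^{-1})+C_{10}(d^{-1})\bigr)|\sigma|^{-2}
\]
for all $(x,t)\in B_{\omega_{1}}(d)^{c}\times[0,T]$.

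Finally, I would translate this into the asserted bounds on $|H_{\epsilon\delta}|_{\widehat{H}}$ and $|H_{\epsilon\delta}^{-1}|_{\widehat{H}}$ using their comparability with $\varPhi_{\epsilon\delta}$: expressing $h_{\epsilon\delta}$ through its positive eigenvalues, each of $|H_{\epsilon\delta}|_{\widehat{H}}$ and $|H_{\epsilon\delta}^{-1}|_{\widehat{H}}$ is dominated by $\varPhi_{\epsilon\delta}+2\rank(E)$. The additive constant is harmless: since we have assumed $|\sigma|<1$, we have $|\sigma|^{-2}>1$, so $2\rank(E)\leq 2\rank(E)|\sigma|^{-2}$, and all the terms can be collected into a single coefficient $C_{11}(d^{-1})$ (enlarging it as needed to absorb the baseline constant). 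Because $C_{9}$ and $C_{10}$ are uniform in $\epsilon$ and $\delta$, so is $C_{11}$, and one obtains $|H_{\epsilon\delta}|_{\widehat{H}}\leq TC_{11}(d^{-1})|\sigma|^{-2}$ and likewise for $|H_{\epsilon\delta}^{-1}|_{\widehat{H}}$.

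The argument is essentially routine once \eqref{eqmetricbound} is available, so I do not expect a serious obstacle; the only delicate points are bookkeeping ones. One must check that the two curvature bounds are genuinely uniform in $t$ — which they are, since the estimate from the preceding lemma holds on all of $[0,\infty[$ and $\widehat{H}$ is time-independent — and that the uniformity of $C_{11}$ in $\epsilon,\delta$ is inherited from that of $C_{9}$ and $C_{10}$. The linear-in-$T$ growth of the bound, which simply reflects the absence of any spatial damping term in \eqref{eqmetricbound}, is exactly what a plain time integration produces and is all that the later convergence arguments will need.
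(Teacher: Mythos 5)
Your proposal is correct and coincides with the paper's (essentially unwritten) argument: the lemma is stated there as an immediate consequence of integrating \eqref{eqmetricbound} in time from the initial condition $H_{\epsilon\delta}(0)=\widehat{H}$, using the $t$-uniform bounds $C_{9}(d^{-1})|\sigma|^{-2}$ and $C_{10}(d^{-1})|\sigma|^{-2}$ on the two mean-curvature terms and the comparability of $|H_{\epsilon\delta}|_{\widehat{H}}$, $|H_{\epsilon\delta}^{-1}|_{\widehat{H}}$ with $\varPhi_{\epsilon\delta}$, exactly as you do. The only cosmetic point is that absorbing the additive baseline $2\rank(E)$ into a coefficient of the form $TC_{11}(d^{-1})$ tacitly requires $T$ bounded below (e.g.\ $T\geq 1$), which is harmless since the lemma is only used for large times.
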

To get the convergence of the H-Y-M flow, we need one more proposition of~\cite[Proposition 1]{B-S}.
\begin{Proposition}
  Let $H$ be an Hermitian matrix valued function defined on a K\"ahler manifold $(Y,\omega)$ which belongs to the Sobolev space $L_1^{2}$. Assume that $H$ and $H^{-1}$ is uniformly bounded and it satisfies the elliptic equation 
  \begin{equation*}
    \Lambda_{\omega}\overline{\partial}(\partial HH^{-1})=f
  \end{equation*} 
  in a weak sense with a uniformly bounded function $f$. Then $H$ belongs to $C^{1,\alpha}_{loc}$ for any $0<\alpha<1$ and admits an estimate depending only on $\|H\|_{L^{\infty}}$, $\|H^{-1}\|_{L^{\infty}}$, $\|f\|_{L^{\infty}}$ and the geometry of $Y$. 
\end{Proposition}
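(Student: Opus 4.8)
The plan is to localise the problem, read the hypothesis as a uniformly elliptic second-order system in the entries of $H$, isolate its critical nonlinearity, and then trade the bad scaling of that nonlinearity for the good sign supplied by the positivity of $H$. First I would work in a holomorphic chart $(U;z_1,\dots,z_n)$ with $\omega=\sqrt{-1}\,g_{i\bar j}\,dz^i\wedge d\overline{z}^j$, $g$ smooth and uniformly positive. Expanding $\overline{\partial}(\partial H\cdot H^{-1})=(\overline{\partial}\partial H)H^{-1}+\partial H\wedge H^{-1}(\overline{\partial}H)H^{-1}$, applying $\Lambda_{\omega}$, and multiplying on the right by $H$, the hypothesis becomes
\begin{equation*}
  g^{i\bar j}\partial_i\partial_{\bar j}H \;=\; fH \;-\; g^{i\bar j}(\partial_i H)\,H^{-1}\,(\partial_{\bar j}H),
\end{equation*}
a uniformly elliptic system with principal part $g^{i\bar j}\partial_i\partial_{\bar j}$ and right-hand side $fH-Q$, where the quadratic term $Q\coloneqq g^{i\bar j}(\partial_i H)H^{-1}(\partial_{\bar j}H)$ scales critically with $\nabla H$. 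The decisive structural point is that, because $H$ is Hermitian positive definite and $\partial_{\bar j}H=(\partial_j H)^{\dagger}$, one has $\tr Q\ge c\,|\nabla H|^2\ge 0$ with $c$ depending only on $\|H\|_{L^\infty}$ and the ellipticity of $g$; equivalently, $H$ is, modulo the bounded inhomogeneity $fH$, a harmonic map into the nonpositively curved symmetric space $GL(r,\mathbb{C})/U(r)$. As a free by-product, taking the trace of the original equation and using $\tr(\partial_i H\,H^{-1})=\partial_i\log\det H$ gives $\Delta\log\det H=\tr f\in L^\infty$, so $\log\det H\in W^{2,p}_{loc}$ already; the content of the proposition is the analogous statement for the full matrix $H$.

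The heart of the argument is to upgrade $\nabla H$ from $L^2$ to $L^p_{loc}$ for every finite $p$, and this is where the critical scaling of $Q$ is the genuine obstacle: a priori $Q$ lies only in $L^1$, so a naive bootstrap stalls immediately. I would resolve this using the definite sign of $\tr Q$. After regularising $H$ so that the following manipulations are legitimate (mollifying and solving the corresponding approximate systems while keeping the uniform bounds on $\|H\|_{L^\infty}$, $\|H^{-1}\|_{L^\infty}$ and the energy), the energy density $e\coloneqq|\partial H\,H^{-1}|^2$ obeys a Bochner-type differential inequality exhibiting it, up to bounded terms, as a weak subsolution of a uniformly elliptic equation; the nonpositive curvature of the target and the boundedness of $fH$ are precisely what control the curvature and inhomogeneity contributions. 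Since $H\in L^2_1$ gives $e\in L^1_{loc}$, quantified by the Caccioppoli estimate $\int\zeta^2|\nabla H|^2\le C(\int|\nabla\zeta|^2+1)$ obtained by testing the trace of the equation against a cutoff $\zeta$ and absorbing via $\tr Q\ge c|\nabla H|^2$, Moser iteration yields the interior bound $\|e\|_{L^\infty(B_{1/2})}\le C(\|e\|_{L^1(B_1)}+1)$, hence $\nabla H\in L^\infty_{loc}$, with the constant depending only on the listed quantities and the geometry. (Alternatively one may invoke the interior regularity of finite-energy weakly harmonic maps into nonpositively curved targets, or a Gehring reverse-Hölder step, to reach $\nabla H\in L^p_{loc}$ for all $p$.)

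With $\nabla H$ now in every $L^p_{loc}$, the nonlinearity is no longer critical: $Q\le C|\nabla H|^2\in L^p_{loc}$ and $fH\in L^\infty$, so $g^{i\bar j}\partial_i\partial_{\bar j}H\in L^p_{loc}$ for every $p$. The interior Calderón--Zygmund $L^p$ estimate for the linear operator $g^{i\bar j}\partial_i\partial_{\bar j}$ with smooth coefficients then places $H\in W^{2,p}_{loc}$ for all $p$, and the Sobolev embedding $W^{2,p}\hookrightarrow C^{1,\alpha}$ (with $\alpha=1-2n/p$ for $p$ large) gives $H\in C^{1,\alpha}_{loc}$ for every $\alpha\in{]0,1[}$. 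Tracking constants through the Caccioppoli/Moser step and the linear $L^p$ estimate shows the final bound depends only on $\|H\|_{L^\infty}$, $\|H^{-1}\|_{L^\infty}$, $\|f\|_{L^\infty}$ and the geometry of $Y$, as claimed. The main obstacle throughout is the critical quadratic gradient term $Q$; everything hinges on replacing its bad scaling by the good sign forced by the positivity of $H$, after which the conclusion follows from standard linear elliptic theory.
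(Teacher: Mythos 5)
First, a remark on the comparison itself: the paper does not prove this statement at all --- it is quoted verbatim as Proposition 1 of Bando--Siu \cite{B-S} --- so the only meaningful benchmark is their argument. Your reduction to the local system $g^{i\bar j}\partial_i\partial_{\bar j}H=fH-Q$ with $Q=g^{i\bar j}(\partial_i H)H^{-1}(\partial_{\bar j}H)\ge 0$, the Caccioppoli estimate obtained by tracing the system and absorbing via $\tr Q\ge c\,|\nabla H|^2$, the observation that the constants then depend only on $\|H\|_{L^\infty}$, $\|H^{-1}\|_{L^\infty}$, $\|f\|_{L^\infty}$ and the geometry, and the final bootstrap through Calder\'on--Zygmund and Sobolev embedding are all correct and are exactly the skeleton of the cited proof; reading the equation as a perturbed harmonic map into $GL(r,\mathbb{C})/U(r)$ is also the right way to see why the critical term should be harmless.

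The genuine gap is the middle step, where you pass from $\nabla H\in L^2_{loc}$ to $\nabla H\in L^\infty_{loc}$ via a Bochner inequality for $e=|\partial H\,H^{-1}|^2$ and Moser iteration. That is a legitimate \emph{a priori} estimate for smooth (or at least $W^{2,2}_{loc}$) solutions, but the hypothesis only gives $H\in L^2_1$, and for a merely weak solution the Bochner inequality cannot be derived. Your proposed fix --- ``mollifying and solving the corresponding approximate systems'' --- does not work as stated: mollifying $H$ destroys the equation, and solutions of nearby regularized systems need not converge to the given weak solution (weak solutions of critical quadratic-gradient systems are not unique, and bounded $W^{1,2}$ weak solutions of such systems can even be everywhere discontinuous, so the special structure must be exploited \emph{before} any pointwise gradient bound is available). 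The parenthetical alternatives do not rescue this: interior regularity of general finite-energy weakly harmonic maps into nonpositively curved targets is not available without minimality or stationarity, and a Gehring improvement to $L^{2+\epsilon}$ does not break criticality for $n\ge 2$, since the natural bootstrap $\frac{1}{p_{k+1}}=\frac{2}{p_k}-\frac{1}{2n}$ only improves above the threshold $p=2n$. What is missing is precisely the step Bando--Siu supply: a proof that the weak solution $H$ is continuous (e.g.\ by comparing $H$ on small balls with a smooth solution $K$ of a Dirichlet problem via the almost-subharmonic quantity $\tr(HK^{-1})+\tr(KH^{-1})-2r$, or by invoking the Hildebrandt--Widman type one-sided structure condition satisfied by $Q$), after which the quadratic term becomes perturbative and your final bootstrap applies. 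A secondary point: the Bochner identity for $e$ produces the term $\langle\nabla(fH),\nabla H\rangle$, and $f$ is only bounded; this can be handled by integrating by parts and reusing the equation, but it must be addressed. For the way the proposition is actually used in this paper --- as a locally uniform estimate applied to the smooth flow metrics $H_{\epsilon\delta}$ --- your a priori version would suffice, but it does not prove the proposition as stated for $L^2_1$ weak solutions.
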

Now we can apply the above proposition to do the interior estimates for $H_{\epsilon\delta}$ over $(\widetilde{X}^{\circ}-D,\,\omega_{\epsilon\delta})$. Notice that $\omega_{\epsilon\delta}$ is locally uniformly quasi-isometric to the fixed K\"ahler metric $\omega_{1}$. Then we simply set $\Lambda_{\omega_{\epsilon\delta}}F_{H_{\epsilon\delta}}$ as the $f$ in the above proposition. Hence $H_{\epsilon\delta}$ is uniformly bounded in $C^{1,\alpha}_{loc}$-topology for any $t\geq 0$. On the other hand, by the H-Y-M flow, we can see that $\frac{\diff H_{\epsilon\delta}}{\diff t}$ is uniformly bounded in  $C^{0}_{loc}$-topology for any $t > 0$. Hence $H_{\epsilon\delta}$ converges in $C^{1/0}_{loc}$-topology to an time flow of Hermitian metric $H$ living on $E_{|\widetilde{X}\setminus D^{*}-\pi^{-1}(W)}$, or equivalently on $F\coloneqq \F_{X\setminus (D\cup W)}$ with the initial value $\widehat{H}$. And we may apply the parabolic Schauder estimate to show that $H$ is indeed a smooth solution and $H_{\epsilon\delta}$ converges in $C^{\infty/\infty}_{loc}$-topology to $H$. Moreover, due to Lemma~\eqref{eqmetricbound}, for any $t>0$, we have that $H(t)$ is locally of polynomial decay approaching $D$. 

\subsection{Correspondence}
\paragraph{From stablity to H-E metric}
Based on the H-Y-M flow $H(t)$, we want to show that under the stability condition of the parabolic sheaf $\pF$, $H$ will converge to an H-E metric on $X^{\circ}\setminus D$ which is compatible with the parabolic structure. Parallelly, we also want to show that under the semistability condition, $H(t)$ will give us a family of approximate H-E metrics all of which are compatible with the parabolic structure. 

Indeed, the difficulty comes from the fact that $|\Lambda F_{\widehat{H}}|_{\widehat{H}}$ is not bounded on $X^{\circ}\setminus D$. Hence Simpson's~\cite{Si1} arguments cannot be directly applied. One may argue that one can consider the H-Y-M flow starting from a positive time point and then apply Simpson's results. Although from the above analyses, we know that \(|\Lambda F_{H(t)}|_{H(t)}\) is bounded for any $t>0$, but it seems difficult to show the analytic stability of \(F_{|X^{\circ}\setminus D}\) with respect to $H(t)$. But this dilemma was resolved by the second author in~\cite[Proposition 4.1]{Li-Zh-Zh}, which shows that under the semistability condition, we still have \[\lim_{t\to \infty}\lVert\varPhi(t)\rVert_{L^{2}(H(t))}=0\]
where \(\varPhi(t)\coloneqq\lVert \sqrt{-1}\cdot\Lambda F_{H(t)}-\lambda\cdot\id_{F}  \rVert_{L^{2}(H(t))}.\)
This implies the existence of approximate H-E metrics. Indeed, we have 
\[
\left(\frac{\partial}{\partial t}+\Delta_{\omega}\right)\cdot\varPhi(t)^2\leq 0.
\]
Then we choose a test function for Moser's iteration as in~Lemma~\ref{C0boundmeancurvature} to get
\[
\lVert \varPhi(t)  \rVert_{L^{\infty}(H(t))(X\setminus D\times [1.5T,\,2T])}\leq C_{12}\lVert \varPhi(t) \rVert_{L^{2}(H(t))(X\setminus D\times [T,\,2T])}.
\]

For the stable case, the same trick used in~\cite[Proposition 4.1]{Li-Zh-Zh} can be applied to show that Proposition 5.3 of~\cite{Si1} holds for $H(t)$ which is the crucial estimate calling for the stability condition. Then the arguments in Section 7 of~\cite{Si1} can be applied to show that $H(t)$ converges to an H-E metric $H(\infty)$.  

Hence we only need to show that the metrics (H-E or approximate H-E) are compatible with the parabolic structure and admissible in the sense of Definition~\ref{comatibleandadmissible}. It suffices to prove for the approximate H-E metrics as the H-E case follows directly from Fatou's lemma. 

We need a lemma first. 
\begin{Lemma}[Lemma 5.2 in~\cite{Si1}] \label{vanishinglemma}
 Suppose $Y$ is a noncompact K\"ahler manifold which has an exhaustion function $\phi$ with $\int_{Y}|\Delta \phi|< \infty$, and suppose $\eta$ is a $(2n-1)$-form with $\int_{Y}|\eta|^{2}<\infty$. Then if $\diff\eta$ is integrable, $\int_{Y}\diff \eta=0$.
\end{Lemma}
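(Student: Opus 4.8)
The plan is to test $\diff\eta$ against a sequence of compactly supported cutoffs manufactured from the exhaustion function $\phi$, and to show that the boundary contributions produced by differentiating the cutoffs vanish in the limit. The hypothesis $\int_{Y}|\Delta\phi|<\infty$ will be used precisely to bound the $L^{2}$-norm of $\diff\phi$ over the ``annuli'' on which the cutoffs transition, while $\int_{Y}|\eta|^{2}<\infty$ forces the $L^{2}$-tail of $\eta$ to decay.

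First I would fix a smooth $\rho\colon\mathbb{R}\to[0,1]$ with $\rho\equiv 1$ on $]-\infty,0]$, $\rho\equiv 0$ on $[1,\infty[$ and $|\rho'|\leq 2$, and set $\psi_{i}\coloneqq\rho(\phi-i)$. Since $\phi$ is an exhaustion function, its sublevel sets are relatively compact, so $\psi_{i}$ is smooth with compact support, equals $1$ on $\{\phi\leq i\}$, and $\diff\psi_{i}=\rho'(\phi-i)\,\diff\phi$ is supported in the annulus $A_{i}\coloneqq\{i\leq\phi\leq i+1\}$. As $\psi_{i}\eta$ is a compactly supported $(2n-1)$-form, Stokes' theorem gives $\int_{Y}\diff(\psi_{i}\eta)=0$, that is,
\[
\int_{Y}\psi_{i}\,\diff\eta=-\int_{Y}\diff\psi_{i}\wedge\eta.
\]
Because $0\leq\psi_{i}\leq 1$ and $\psi_{i}\to 1$ pointwise, the integrability of $\diff\eta$ together with dominated convergence yields $\int_{Y}\psi_{i}\,\diff\eta\to\int_{Y}\diff\eta$. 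It therefore suffices to show that the right-hand side tends to $0$.

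For the right-hand side, the bound $|\rho'|\leq 2$ and Cauchy--Schwarz give
\[
\left|\int_{Y}\diff\psi_{i}\wedge\eta\right|\leq 2\int_{A_{i}}|\diff\phi|\,|\eta|\leq 2\left(\int_{A_{i}}|\diff\phi|^{2}\right)^{1/2}\left(\int_{A_{i}}|\eta|^{2}\right)^{1/2}.
\]
The factor $\left(\int_{A_{i}}|\eta|^{2}\right)^{1/2}$ tends to $0$ as $i\to\infty$, since $\int_{Y}|\eta|^{2}<\infty$ and the annuli $A_{i}$ escape every sublevel set. The main obstacle is to control the first factor \emph{uniformly} in $i$, and this is exactly where $\int_{Y}|\Delta\phi|<\infty$ enters. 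By Sard's theorem almost every $t$ is a regular value of $\phi$, and for such $t$ the divergence theorem applied on $\{\phi\leq t\}$ identifies the nonnegative quantity $\int_{\{\phi=t\}}|\diff\phi|\,\diff S$ with $\int_{\{\phi\leq t\}}\Delta\phi$ up to the sign fixed by our normalization of $\Delta$. Combining this with the coarea formula gives
\[
\int_{A_{i}}|\diff\phi|^{2}=\int_{i}^{i+1}\left(\int_{\{\phi=t\}}|\diff\phi|\,\diff S\right)\diff t=\int_{i}^{i+1}\left|\int_{\{\phi\leq t\}}\Delta\phi\right|\diff t\leq\int_{Y}|\Delta\phi|\eqqcolon M
\]
for every $i$. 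Hence the first factor is bounded by $\sqrt{M}$, the product tends to $0$, and therefore $\int_{Y}\diff\eta=0$, completing the argument.
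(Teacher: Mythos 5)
Your proof is correct. Note that the paper does not prove this statement at all: it is quoted verbatim as Lemma~5.2 of Simpson's paper [Si1], so the only meaningful comparison is with Simpson's original argument, and your proof has exactly the same skeleton --- cutoffs $\psi_{i}=\rho(\phi-i)$ built from the exhaustion function, Stokes on $\psi_{i}\eta$, dominated convergence for $\int\psi_{i}\,\diff\eta$, and Cauchy--Schwarz reducing everything to a uniform bound on $\int_{A_{i}}|\diff\phi|^{2}$ in terms of $\int_{Y}|\Delta\phi|$. The one point where you deviate is how that uniform bound is obtained: Simpson gets it in one line by integrating by parts against a bounded primitive, writing $\int(f'(\phi))^{2}|\diff\phi|^{2}=-\int F(\phi)\,\Delta\phi$ with $F'=(f')^{2}$ and $F$ bounded, whereas you run the coarea formula together with the divergence theorem on the sublevel sets $\{\phi\leq t\}$ for regular values $t$. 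The two computations are equivalent (coarea plus the divergence theorem \emph{is} that integration by parts, sliced over $t$), so this is a cosmetic rather than structural difference; your version costs an appeal to Sard and to smoothness of $\phi$ on the level sets, which is harmless here. Your handling of the sign ambiguity in the paper's convention $\Delta=2\sqrt{-1}\Lambda\overline{\partial}\partial$ by taking absolute values is also fine.
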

Let us fix a K\"ahler metric $\omega_c$ on \((\widetilde{X}\setminus D)\) with cusp singularities along $D$. It follows that the density functions $\frac{\omega_{\epsilon\delta}^n}{\omega_c^n}$ are uniformly bounded in $\epsilon$ and $\delta$. 
\begin{Proposition}
  If $\pF$ is semistable, then for any $t>0$, $H(t)$ is compatible with the parabolic structure. 
\end{Proposition}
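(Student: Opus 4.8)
The plan is to verify directly the two requirements of Definition~\ref{comatibleandadmissible} for $H(t)$ with $t>0$, exploiting the normalization $\det H(t)=\det\widehat{H}$ carried along the flow, the adaptedness of the initial metric $\widehat{H}$, and the decay and boundedness estimates already established in this section.

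\emph{Condition 1.} Set $h(t)\coloneqq \widehat{H}^{-1}H(t)$, a positive $\widehat{H}$-self-adjoint endomorphism of $F$. The determinant constraint in Proposition~\ref{existflow} gives $\det h(t)=1$ on $X^{\circ}\setminus D$. Since the Chern curvatures of two metrics on the same holomorphic bundle are related by $F_{H(t)}=F_{\widehat{H}}+\overline{\partial}\bigl(h(t)^{-1}\partial_{\widehat{H}}h(t)\bigr)$, taking traces and using $\tr\bigl(h^{-1}\partial_{\widehat{H}}h\bigr)=\partial\log\det h$ yields
\begin{equation*}
  \tr(F_{H(t)})=\tr(F_{\widehat{H}})+\overline{\partial}\partial\log\det h(t)=\tr(F_{\widehat{H}})
\end{equation*}
as smooth forms on $X^{\circ}\setminus D$. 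Because $\tr(F_{\widehat{H}})$ is integrable by Lemma~\ref{curvature bound}, the two sides define the \emph{same} current on $X$, so $\tfrac{\sqrt{-1}}{2\pi}\tr(F_{H(t)})$ represents $\ch_{1}(\pF)$ by Proposition~\ref{adaptedcodim2sheaf}.

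\emph{Condition 2.} Fix a proper parabolic subsheaf $\pS$, with inclusion $\iota_{\pS}$ and $\widehat{H}$-orthogonal projection $p_{0}$ onto its regular part. Away from $D$, $W$, and the singular locus of $\pS$, the sheaf $\pS$ is a subbundle, and the two metrics induced on the line bundle $\det\pS$ by $H(t)$ and $\widehat{H}$ differ by the positive factor $\det\!\bigl(p_{0}\,h(t)\,\iota_{\pS}\bigr)$; hence on this open set
\begin{equation*}
  \tfrac{\sqrt{-1}}{2\pi}\tr(F_{H(t)_{|\pS}})-\tfrac{\sqrt{-1}}{2\pi}\tr(F_{\widehat{H}_{|\pS}})=-\tfrac{\sqrt{-1}}{2\pi}\partial\overline{\partial}\log\det\!\bigl(p_{0}\,h(t)\,\iota_{\pS}\bigr).
\end{equation*}
By Proposition~\ref{hatHadapt} the subtracted term represents $\ch_{1}(\pS)$, so it remains to show that the right-hand side, extended across $D\cup W\cup\operatorname{Sing}(\pS)$, carries the correct sign. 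First I would check that $\tfrac{\sqrt{-1}}{2\pi}\tr(F_{H(t)_{|\pS}})$ is a well-defined closed current: this follows from $\Lambda_{\omega}F_{H(t)}\in L^{\infty}$ (Lemma~\ref{C0boundmeancurvature}), the polynomial decay of $H(t)$ near $D$ recorded above, and the membership $p_{\pS,H(t)}\in L^{2}_{1}$.

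To pass from the pointwise identity to the current inequality I would argue in the spirit of Bando--Siu: the local potential $\log\det\!\bigl(p_{0}h(t)\iota_{\pS}\bigr)$ is controlled near $D$ by Lemma~\ref{metricbound}, so that after pairing with a test form and integrating by parts, the boundary contributions along $\{|\sigma|=\epsilon\}$ vanish in the limit by Simpson's vanishing Lemma~\ref{vanishinglemma}, while the residual mass concentrated on $\operatorname{Sing}(\pS)$ (of codimension at least two) and on $W$ is an effective cycle, hence nonnegative. This is exactly what upgrades the equality valid for parabolic bundles (the Remark after Definition~\ref{comatibleandadmissible}) to the inequality $\ch_{1}(\pS)\le \tfrac{\sqrt{-1}}{2\pi}\tr(F_{H(t)_{|\pS}})$ in the singular case; it suffices to treat the approximate Hermitian--Einstein metrics, and the Hermitian--Einstein limit then follows by Fatou's lemma from the uniform bounds. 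The main obstacle is precisely this current-level control across $D\cup W\cup\operatorname{Sing}(\pS)$: one must rule out negative mass of the $\partial\overline{\partial}$-potential along the divisor (where $H(t)$ and $\widehat{H}$ are comparable only up to the powers of $|\sigma|$ in Lemma~\ref{metricbound}) and show that the contribution on the singular set is genuinely effective. Since the difference is not sign-definite on the regular part, \emph{all} of the required positivity must be extracted from the singular contributions, and making the boundary integration by parts rigorous on the noncompact $X^{\circ}\setminus D$—for which the uniform decay estimates and Lemma~\ref{vanishinglemma} are the essential inputs—is the delicate step.
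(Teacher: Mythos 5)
Your treatment of condition 1 is fine and coincides with the paper's one-line observation that $\det H(t)=\det\widehat{H}$ forces $\tr(F_{H(t)})=\tr(F_{\widehat{H}})$. The gap is in condition 2, where your proposed source of positivity points in the wrong direction. You need $\ch_{1}(\pS)\le\frac{\sqrt{-1}}{2\pi}\tr(F_{H(t)_{|\pS}})$, i.e.\ a \emph{lower} bound on the analytic degree of $\pS$; but the Chern--Weil identity $\deg_{H,\omega}(\mathcal{S})=\frac{\sqrt{-1}}{2\pi}\int\tr(\operatorname{p}_{\mathcal{S}}\Lambda F_{H})-\frac{1}{2\pi}\int|\overline{\partial}\operatorname{p}_{\mathcal{S}}|^{2}$ naturally produces \emph{upper} bounds, and whatever mass concentrates on $\operatorname{Sing}(\pS)$ and $W$ comes from the second fundamental form and enters with a negative sign. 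So the assertion that "the residual mass on the singular sets is an effective cycle, hence nonnegative" is exactly backwards: the singular contribution to $\frac{\sqrt{-1}}{2\pi}\tr(F_{H(t)_{|\pS}})-\ch_{1}(\pS)$ is the thing one must \emph{rule out}, not the thing one can invoke. Along $D$ the situation is no better: the only control on your potential $\log\det(p_{0}h(t)\iota_{\pS})$ is the two-sided bound $|h|,|h^{-1}|\le C|\sigma|^{-2}$ of Lemma~\ref{metricbound}, which allows Lelong numbers of either sign along $D$, so no sign can be extracted there either.

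The paper's proof gets the inequality by a conservation-plus-limit argument that your outline skips. It works at the regularized level $(\widetilde{X}\setminus D^{*},\omega_{\epsilon\delta},H_{\epsilon\delta}(t))$, where Simpson's theory supplies mutual boundedness of $\widehat{H}$ and $H_{\epsilon\delta}(t)$ and $\overline{\partial}h_{\epsilon\delta}\in L^{2}(\widehat{H},\omega_{\epsilon\delta})$; these give $\overline{\partial}h_{\epsilon\delta|\mathcal{S}}\in L^{2}$ and hence, via Lemma~\ref{vanishinglemma}, the \emph{exact} identity $\int\tr(F_{\widehat{H}_{|\mathcal{S}}})\wedge\omega_{\epsilon\delta}^{n-1}=\int\tr(F_{H_{\epsilon\delta}(t)_{|\mathcal{S}}})\wedge\omega_{\epsilon\delta}^{n-1}$ for every fixed $\epsilon,\delta$ --- there is no singular contribution at all at this stage. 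The inequality only appears in the limit $\epsilon,\delta\to0$: the integrand $\sqrt{-1}\,\tr(F_{H_{\epsilon\delta}|\mathcal{S}})\wedge\omega_{\epsilon\delta}^{n-1}$ is bounded above by $\sqrt{-1}\,\tr(\operatorname{p}_{\mathcal{S}}\Lambda_{\epsilon\delta}F_{H_{\epsilon\delta}})\cdot\frac{\omega_{\epsilon\delta}^{n}}{\omega_{c}^{n}}\cdot\omega_{c}^{n}$, which is uniformly bounded, so the reverse Fatou lemma lets degree escape only in the favorable direction. Your plan to integrate by parts directly for $H(t)$ against $\omega$ on $X^{\circ}\setminus D$ lacks the finite-energy inputs (mutual boundedness of $H(t)$ and $\widehat{H}$, $L^{2}$ control of $\overline{\partial}h(t)$ with respect to $\omega$) needed to kill the boundary terms; you correctly flag this as "the delicate step," but that step is the entire content of the proposition, and the regularization by $\omega_{\epsilon\delta}$ and $H_{\epsilon\delta}$ is precisely the device the paper uses to carry it out.
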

\begin{proof}
    As \(\det(\widehat{H})=\det(H(t))\), \(H(t)\) is adapted to \(\pF\) in codimension \(1\). Hence it suffices to show that
    
     \[\sqrt{-1}\cdot\int_{X^{\circ}\setminus D}\tr(F_{\widehat{H}_{|\mathcal{S}}})\wedge\omega^{n-1}\leq\sqrt{-1}\cdot\int_{X^{\circ}\setminus D}\tr(F_{H(t)_{|\mathcal{S}}})\wedge\omega^{n-1}\] 
     for any proper parabolic subsheaf \(\pS\).
     
     By the blow-up procedure in Section~\ref{3}, it can be shown that 
    \begin{equation*}
      \int_{\widetilde{X}^{\circ}\setminus D^{*}}\tr(F_{\widehat{H}_{|\mathcal{S}}})\wedge\omega^{n-1}=\int_{X^{\circ}\setminus D}\tr(F_{\widehat{H}_{|\mathcal{S}}})\wedge\omega^{n-1}.
    \end{equation*}

    An adaptation of Lemma~\ref{L1bound} yields the following bound:
    \begin{equation*}
      \left\lvert\Lambda_{\epsilon\delta}F_{\widehat{H}_{|\mathcal{S}}}\right\rvert\cdot\omega_{\epsilon\delta}^{n}\leq n\cdot \tr\left(2\cdot C_{7}\cdot\omega_{1\delta_{1}}\cdot I-\sqrt{-1}F_{\widehat{H}_{|\mathcal{S}}}\right)\cdot \omega_{1\delta}^{n-1}.
    \end{equation*}
    Since the integral of the right-hand side is finite and independent of $\delta$, then by generalized dominated convergence theorem, we obtain 
    \begin{equation*}
      \int_{X^{\circ}\setminus D}\tr(F_{\widehat{H}_{|\mathcal{S}}})\wedge\omega^{n-1}=\lim_{\substack{\epsilon\to 0 \\\delta\to 0}}\int_{\widetilde{X}^{\circ}\setminus D^{*}}\tr(F_{\widehat{H}_{|\mathcal{S}}})\wedge\omega_{\epsilon\delta}^{n-1}.
    \end{equation*}
    
     For fixed \(\epsilon\), \(\delta\) and \(t\), we put $h_{\epsilon\delta|\mathcal{S}}\coloneqq H_{\epsilon\delta}(t)_{|\mathcal{S}}\cdot\widehat{H}_{|\mathcal{S}}^{-1}$ and $h_{\epsilon\delta}\coloneqq H_{\epsilon\delta}(t)\cdot\widehat{H}^{-1}$. Simpson~\cite{Si1} showed that \(\widehat{H}\) and \(H_{\epsilon\delta}(t)\) are mutually bounded and \(\overline{\partial}h_{\epsilon\delta}\in L^2(\widehat{H},\omega_{\epsilon\delta})\). Hence \(h_{\epsilon\delta}\) is bounded with respect to either \(\widehat{H}\) or \(H_{\epsilon\delta}(t)\).

    Since we have \begin{align}
      {h_{\epsilon\delta}}_{|\mathcal{S}}&=\operatorname{p}_{\mathcal{S}, \widehat{H}}\cdot h_{\epsilon\delta}\cdot \iota \label{eq:first}\\
      \operatorname{p}_{\mathcal{S},H_{\epsilon\delta}(t)}&={h_{\epsilon\delta}}_{|\mathcal{S}}^{-1}\cdot \operatorname{p}_{\mathcal{S},\widehat{H}}\cdot h_{\epsilon\delta}\label{eq:second}
    \end{align}
    \[\] where $\iota$ is the injection of $\mathcal{S}$ into $E$. Hence, \[\overline{\partial}{h_{\epsilon\delta}}_{|\mathcal{S}}=\overline{\partial}\operatorname{p}_{\mathcal{S}, \widehat{H}}\cdot h_{\epsilon\delta}\cdot \iota+\operatorname{p}_{\mathcal{S}, \widehat{H}}\cdot \overline{\partial}h_{\epsilon\delta}\cdot \iota\] which implies that $\overline{\partial}h_{\epsilon\delta|\mathcal{S}}\in L^{2}(\widehat{H},\omega_{\epsilon\delta})$. 
    Then by Chern-Weil's formula and \eqref{eq:second}, 
    \[\tr(F_{{H_{\epsilon\delta}}_{|\mathcal{S}}})\wedge \omega_{\epsilon\delta}^{n-1}\]
    is integrable. 
    
    On the other hand, \[\tr(F_{{H_{\epsilon\delta}}_{|\mathcal{S}}})-\tr(F_{\widehat{H}_{|\mathcal{S}}})=\tr(\overline{\partial}(\partial_{\widehat{H}_{|\mathcal{S}}}{h_{\epsilon\delta}}_{|\mathcal{S}}\cdot {h_{\epsilon\delta}}_{|\mathcal{S}}^{-1})).\]
    But the above reasoning shows that
    \[
    \tr(\partial_{\widehat{H}_{|\mathcal{S}}}{h_{\epsilon\delta}}_{|\mathcal{S}}\cdot {h_{\epsilon\delta}}_{|\mathcal{S}}^{-1})\wedge \omega_{\epsilon\delta}^{n-1}\in L^2(\widehat{H},\omega_{\epsilon\delta}).
    \]
    Then it follows from Lemma~\ref{vanishinglemma} that 
    \begin{equation*}
      \int_{\widetilde{X}^{\circ}\setminus D^{*}}\tr(F_{\widehat{H}_{|\mathcal{S}}})\wedge\omega_{\epsilon\delta}^{n-1}=\int_{\widetilde{X}^{\circ}\setminus D^{*}}\tr(F_{{H_{\epsilon\delta}}_{|\mathcal{S}}})\wedge\omega_{\epsilon\delta}^{n-1}.
    \end{equation*}

    By Chern-Weil's formula again, we have
    \begin{equation*}
      \sqrt{-1}\cdot\tr(F_{{H_{\epsilon\delta}}_{|\mathcal{S}}})\wedge\omega_{\epsilon\delta}^{n-1}\leq \sqrt{-1}\tr(\operatorname{p}_{\mathcal{S},H_{\epsilon\delta}}\Lambda_{\epsilon\delta}F_{{H_{\epsilon\delta}}})\cdot \frac{\omega_{\epsilon\delta}^{n}}{\omega_c^n}\cdot \omega_c^n.
    \end{equation*}
    But the term on the right-hand side is uniformly bounded as both of $|\Lambda_{\epsilon\delta}F_{{H_{\epsilon\delta}}}|_{H_{\epsilon\delta}}$ and $\frac{\omega_{\epsilon\delta}^n}{\omega_c^n}$ are uniformly bounded. Hence if we take the limit on both sides and apply Fatou's lemma, we obtain the desired inequality: 
    \begin{equation*}
      \sqrt{-1}\cdot\int_{X^{\circ}\setminus D}\tr(F_{\widehat{H}_{|\mathcal{S}}})\wedge\omega^{n-1}\leq\sqrt{-1}\cdot\int_{X^{\circ}\setminus D}\tr(F_{H(t)_{|\mathcal{S}}})\wedge\omega^{n-1}.
    \end{equation*} 
  \end{proof}
  \begin{Proposition}
    For any \(t>0\), \(H(t)\) is admissible. 
  \end{Proposition}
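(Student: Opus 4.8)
The plan is to verify the two requirements of Definition~\ref{comatibleandadmissible} for $H(t)$ separately: the $L^\infty$-bound on the mean curvature $\Lambda_\omega F_{H(t)}$, which is essentially a passage to the limit in estimates we already have, and the $L^2$-bound on the full curvature tensor $F_{H(t)}$, which is the substantive part.

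For the mean-curvature bound I would simply take the limit in Lemma~\ref{C0boundmeancurvature}. That lemma gives $|\Lambda_{\omega_{\epsilon\delta}}F_{H_{\epsilon\delta}}|_{H_{\epsilon\delta}}\leq C_{8}(\max(t^{-1},1))$ with $C_{8}$ independent of $\epsilon,\delta$. Since $H_{\epsilon\delta}(t)\to H(t)$ in $C^{\infty/\infty}_{loc}$ and $\omega_{\epsilon\delta}\to\omega$ in $C^{\infty}_{loc}$ on $X^{\circ}\setminus D$ (the latter because $\omega_{\delta}\to\omega$ as $\delta\to0$, together with the convergence noted at the end of the previous subsection), the contractions and the fibre norms converge pointwise, whence $|\Lambda_{\omega}F_{H(t)}|_{H(t)}\leq C_{8}(\max(t^{-1},1))$ a.e. on $X^{\circ}\setminus D$. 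As $X$ is compact this is precisely $\Lambda_{\omega}F_{H(t)}\in L^{\infty}(H(t))$.

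For the curvature bound I would start from the standard pointwise Kähler identity for the Chern curvature of a Hermitian holomorphic bundle,
\[
|F_{H}|^{2}\,\frac{\omega^{n}}{n!}
=|\Lambda_{\omega}F_{H}|^{2}\,\frac{\omega^{n}}{n!}
\pm\tr(F_{H}\wedge F_{H})\wedge\frac{\omega^{n-2}}{(n-2)!},
\]
the exact sign of the last term being immaterial below. Applying this on $(\widetilde{X}\setminus D^{*},\omega_{\epsilon\delta})$ to the approximants $H_{\epsilon\delta}(t)$ reduces the matter to a bound on $\int_{\widetilde{X}\setminus D^{*}}|F_{H_{\epsilon\delta}}|^{2}\,\omega_{\epsilon\delta}^{n}$ that is \emph{uniform} in $\epsilon,\delta$: once such a bound is in hand, the $C^{\infty}_{loc}$-convergence of $H_{\epsilon\delta}(t)$ and $\omega_{\epsilon\delta}$ together with Fatou's lemma over a compact exhaustion of $X^{\circ}\setminus D$ yields $\int_{X^{\circ}\setminus D}|F_{H(t)}|^{2}\,\omega^{n}\leq\liminf_{\epsilon,\delta\to0}\int_{\widetilde{X}\setminus D^{*}}|F_{H_{\epsilon\delta}}|^{2}\,\omega_{\epsilon\delta}^{n}<\infty$, i.e. $F_{H(t)}\in L^{2}(H(t),\omega)$. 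In the identity the first term is controlled by the previous paragraph together with the uniform volume bound of Proposition~\ref{Simpsonassumption}(1), so $\int|\Lambda_{\omega_{\epsilon\delta}}F_{H_{\epsilon\delta}}|^{2}\,\omega_{\epsilon\delta}^{n}\leq (C_{8})^{2}\cdot\mathrm{Vol}$ uniformly; everything therefore comes down to a uniform bound on the second term, the second-Chern-character integral $\int\tr(F_{H_{\epsilon\delta}}\wedge F_{H_{\epsilon\delta}})\wedge\omega_{\epsilon\delta}^{n-2}$. The plan for this is to use the polynomial decay of $H_{\epsilon\delta}(t)$ approaching $D^{*}$ (Lemma~\ref{metricbound}) together with the bounded mean curvature to run the codimension-$2$ adaptedness computation of §\ref{4} for $H_{\epsilon\delta}(t)$ itself, showing that this integral converges, uniformly in $\epsilon,\delta$, to the finite parabolic Chern number $\propto\ch_{2}(\pF)\cdot[\omega]^{n-2}$ by Proposition~\ref{adaptedcodim2bundle} and Lemma~\ref{cherncharacterpreserve}, the excess near the divisor being handled by the vanishing Lemma~\ref{vanishinglemma} applied to the transgression term, exactly as in the degree estimate of the preceding proposition.

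The hard part will be precisely this uniform control of the second-Chern-character integral for $t>0$. Unlike the mean-curvature term it is not sign-definite, and it is only conditionally (not absolutely) convergent for the naive comparison metric $\widehat{H}$, whose own such integral in fact diverges along the conical approximation (consistently with $F_{\widehat{H}}\notin L^{2}(\omega)$, as one sees from the curvature bound $|F_{\widehat{H}}|=O(|\sigma|^{-(1+\varepsilon)})$). Thus the bound genuinely cannot be transferred from $\widehat{H}$ by a blind transgression argument; one must exploit the improved near-divisor decay that the flow produces after positive time, i.e. use that for $t>0$ the mean curvature is bounded and $H(t)$ decays polynomially, to show that the second-Chern density of $H(t)$ is integrable and integrates to the correct topological value. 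Establishing this near-$D$ regularity for the flow solution—and checking that the transgression boundary contributions vanish in the limit via Lemma~\ref{vanishinglemma} under the uniform-in-$(\epsilon,\delta)$ decay—is the technical heart of the proof, and is exactly where the positivity of time and the smoothing of the Hermitian–Yang–Mills flow are indispensable.
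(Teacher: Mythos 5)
Your first paragraph (passing to the limit in Lemma~\ref{C0boundmeancurvature} to get $\Lambda_{\omega}F_{H(t)}\in L^{\infty}$) and your overall scheme for the $L^{2}$ bound (the pointwise K\"ahler identity reducing everything to a uniform-in-$(\epsilon,\delta)$ bound on $\int_{\widetilde{X}}\tr(F_{H_{\epsilon\delta}(t)}\wedge F_{H_{\epsilon\delta}(t)})\wedge\omega_{\epsilon\delta}^{n-2}$, followed by Fatou's lemma with respect to the cusp metric $\omega_{c}$) coincide with the paper. The gap is in how you propose to obtain that uniform bound. You want to rerun the codimension-$2$ adaptedness computation of Section~\ref{4} for $H_{\epsilon\delta}(t)$ itself and show that its second-Chern integral converges to the parabolic Chern number. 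That computation rests entirely on the explicit model structure of $H_{0}$ --- the smooth decomposition $\bigoplus{}^{I}\!G_{\va}$, the automorphism $s$, and the off-diagonal estimates of Lemma~\ref{Psiestimat} --- none of which is available for the flow solution, which is only known to be mutually bounded with $\widehat{H}$, to decay polynomially near $D$, and to have bounded mean curvature. Moreover, the equality you aim for is stronger than what is needed (and than what is proved anywhere in the paper): only a one-sided bound is required, and in this noncompact setting the transgression terms for the second Chern character do not obviously vanish.

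The mechanism the paper actually uses is precisely the one you declare impossible, namely a transfer from $\widehat{H}$: by \cite[Lemma 7.1]{Li} the functional $\int\tr(F_{H}\wedge F_{H})\wedge\omega_{\epsilon\delta}^{n-2}$ is non-increasing along the Hermitian--Yang--Mills flow, so $\int_{\widetilde{X}}\tr(F_{H_{\epsilon\delta}(t)}\wedge F_{H_{\epsilon\delta}(t)})\wedge\omega_{\epsilon\delta}^{n-2}\leq\int_{\widetilde{X}}\tr(F_{\widehat{H}}\wedge F_{\widehat{H}})\wedge\omega_{\epsilon\delta}^{n-2}$, and the right-hand side converges, as $\epsilon,\delta\to 0$, to $-8\pi^{2}\int_{X}\ch_{2}(\pF)\wedge\frac{\omega^{n-2}}{(n-2)!}$ by the codimension-$2$ adaptedness of $\widehat{H}$ (Proposition~\ref{adaptedcodim2sheaf}). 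Your objection that $F_{\widehat{H}}\notin L^{2}(\omega)$ is beside the point: what is transferred is not an $L^{2}$ bound with respect to $\omega$ but the value of the signed Chern--Weil integral with respect to the conical metrics $\omega_{\epsilon\delta}$, which is finite for each $(\epsilon,\delta)$ because $\omega_{\epsilon\delta}$ tames the singularity of $F_{\widehat{H}}$ along $D$, and which is uniformly bounded in the limit. Combined with the uniform $L^{\infty}$ bound on the mean curvature and the volume bound, this gives the uniform upper bound on $\int\lvert F_{H_{\epsilon\delta}(t)}\rvert^{2}_{\omega_{\epsilon\delta}}\,\omega_{\epsilon\delta}^{n}$, after which your Fatou step closes the argument. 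Without this monotonicity input your proof does not close.
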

\begin{proof}
    By Lemma~\ref{C0boundmeancurvature}, $|\Lambda F_{H}|_{H}$ is bounded on $X\setminus D$ for any $t>0$. 
    And we have the following:
    \begin{equation}\label{Fatou}
      \begin{aligned}
        &\quad -8\pi^2 \int_X \ch_2(\pF) \wedge \frac{\omega^{n-2}}{(n-2)!} \\
        &= \lim_{\substack{\epsilon \to 0\\ \delta\to 0}} \int_{\widetilde{X}} \operatorname{tr} \left( F_{\widehat{H}} \wedge F_{\widehat{H}} \right) \wedge \frac{\omega_{\epsilon\delta}^{n-2}}{(n-2)!} \\   
        &\geq \lim_{\substack{\epsilon \to 0\\ \delta\to 0}} \int_{\widetilde{X}} \operatorname{tr} \left( F_{  H_{\epsilon\delta}(t)} \wedge F_{  H_{\epsilon\delta}(t)} \right) \wedge \frac{\omega_{\epsilon\delta}^{n-2}}{(n-2)!} \\  
        &=\lim_{\substack{\epsilon \to 0\\ \delta\to 0}} \int_{\widetilde{X}} \left( \left| F_{H_{\epsilon\delta}(t)} \right|^2_{H_{\epsilon\delta}(t),\omega_{\epsilon\delta}}
        - \left| \Lambda_{\epsilon\delta} F_{H_{\epsilon\delta}(t)} \right|^2_{H_{\epsilon\delta}(t)} \right) \cdot\frac{\omega_{\epsilon\delta}^n}{\omega_c^n} \cdot \frac{\omega_c^n}{n!}\\ 
        &\geq \int_{X} \left( \left| F_{H(t)} \right|^2_{H(t),\omega}
        - \left| \sqrt{-1} \Lambda_\omega F_{H(t)} \right|^2_{H(t)} \right) \cdot\frac{\omega^n}{n!}
        \end{aligned}
    \end{equation} 
  where the first inequality follows from \cite[Lemma 7.1]{Li} and the second inequality follows by applying Fatou's lemma with respect to the measure induced by $\omega_c$. Hence for any \(t>0\), \(F_{H(t)}\) is square integrable.
  \end{proof}

\paragraph{From H-E metric to stability}
So far, we have done one direction of the Kobayashi-Hitchin correspondence. 

To prove the converse part, suppose $H(t)$ is a family of approximate H-E metrics compatible with the parabolic structure, then by definition and Chern-Weil's formula, for any proper subsheaf \(\pS\), we have 
\[
\begin{aligned}
  \mu_{\omega}(\pS)\leq &\liminf_{t\to \infty}\frac{\sqrt{-1}}{2\pi\rank(\mathcal{S})}\int_{X\setminus D} \tr(F_{H_{|\mathcal{S}}(t)})\wedge \omega^{n-1}\\
  &\leq \lim_{t\to\infty}\frac{\sqrt{-1}}{2\pi\rank(\mathcal{\F})}\int_{X\setminus D} \tr(F_{H(t)})\wedge \omega^{n-1}\\
  &=\mu_{\omega}(\pF).
\end{aligned}
\]
hence we are done with the semistable case.

To prove the converse part for the polystable case, we need a proposition first, which is a slight generalization of~\cite[Theorem 2 b)]{B-S}. 
\begin{Proposition}\label{regularitynearsingular}
  Let \((\Delta^{n},\,z_1,\ldots,z_n)\) be a polydisk and $D$ be a divisor defined by $z_{1}\cdot z_{2}\cdots z_{m}=0$. Let $S$ be a closed subset with locally finite Hausdorff measure of real codimension $4$. Let $\pF$ be a saturated reflexive parabolic sheaf on $\Delta^{n}$ whose regular part \(F\) is defined on \(\Delta^n\setminus (D\cap S)\). Suppose \(\pF\) admits an admissible H-E metric \(H\) on \(F\) which is compatible with the parabolic structure. Then for any local section $s$ of $\mathcal{F}$, $H(s,s)$ is locally bounded and it belongs to \(L_{1,\,loc}^2\).   
\end{Proposition}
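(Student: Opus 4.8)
The plan is to transplant the potential-theoretic argument of Bando and Siu \cite{B-S} to the parabolic situation, isolating the two distinct sources of singularity of the admissible Hermitian--Einstein metric $H$: the divisor $D$, along which $H$ has polynomial decay and is therefore tame, and the analytic set $S$ of real codimension $4$, which will be removable. Fix a local holomorphic section $s$ of $\mathcal{F}$; on the regular locus $\Delta^{n}\setminus(D\cup S)$ it is a genuine holomorphic section of the bundle $F$, and I set $u\coloneqq\log H(s,s)$.

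First I would record the basic subharmonicity. Since $s$ is holomorphic, the Chern--Weil/Bochner identity for $(F,H)$ gives, away from the zero locus of $s$ and away from $D\cup S$,
\[
\sqrt{-1}\Lambda_{\omega}\partial\overline{\partial}\,u\geq-\frac{\langle\sqrt{-1}\Lambda_{\omega}F_{H}\,s,\,s\rangle_{H}}{H(s,s)}\geq-C,
\]
where the last bound uses admissibility, $\sqrt{-1}\Lambda_{\omega}F_{H}\in L^{\infty}(H)$ (and the right-hand side equals the constant $-\lambda$ in the genuinely Hermitian--Einstein case). Thus $u$ is weakly subharmonic modulo a bounded term. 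The behaviour along $D$ is then harmless: by the construction of the adapted metric in Section~\ref{4}, Definition~\ref{polyg} and Lemma~\ref{curvature bound}, $H$ is of polynomial decay approaching $D$ (as already noted for the flow metrics $H(t)$), so $H(s,s)=O(|\sigma|^{a})$ for some $a>0$ near $D$; this is bounded and, together with the matching bound on its gradient, lies in $L^{2}_{1,\mathrm{loc}}$ up to $D$.

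The content is therefore the removal of $S$. Because $S$ has locally finite Hausdorff measure of real codimension $4$, we have $\mathcal{H}^{2n-2}(S)=0$ and hence $S$ has vanishing $2$-capacity: there exist cut-off functions $\chi_{\varepsilon}$ equal to $1$ on a neighbourhood of $S$, supported in shrinking neighbourhoods, with $\int|\nabla\chi_{\varepsilon}|_{\omega}^{2}\to0$ as $\varepsilon\to0$. Feeding $(1-\chi_{\varepsilon})\phi$ into the weak formulation of the differential inequality and passing to the limit shows that $S$ produces no boundary contribution, so $u$ extends across $S$ as a subsolution; a Caccioppoli estimate then gives $u\in L^{2}_{1,\mathrm{loc}}$ and the local (sub-)mean-value inequality of De Giorgi--Nash--Moser yields the local upper bound for $u$, hence the local boundedness of $H(s,s)$, across $S$.

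The hard part will be the simultaneous degeneration along $D\cap S$, where both mechanisms are active at once. Here I would pass to the complete cusp metric $\omega_{c}$ on $\Delta^{n}\setminus D$, which sends $D$ to infinity and recasts the estimate near $D$ as a statement at the ends; the vanishing Lemma~\ref{vanishinglemma} then disposes of the boundary terms coming from $D$, provided the relevant $(2n-1)$-forms are square-integrable, which follows from $F_{H}\in L^{2}(H,\omega)$ together with the polynomial decay. Simultaneously the capacity cut-offs $\chi_{\varepsilon}$ excise $S$. The delicate point, and where I expect the real work to lie, is to verify that these two cut-off procedures are mutually compatible and that every integral appearing in the integration by parts is finite with both degenerations present; once this bookkeeping is controlled, the Bando--Siu scheme runs through and delivers that $H(s,s)$ is locally bounded and belongs to $L^{2}_{1,\mathrm{loc}}$.
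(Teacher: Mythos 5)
Your overall frame (Bando--Siu with the two singular strata $D$ and $S$ treated separately) is the right one, but there are two places where the argument as written does not go through, and they are precisely where the paper has to do real work.

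First, the treatment of $D$. You justify boundedness of $H(s,s)$ near $D$ by appealing to polynomial decay of $H$, ``as already noted for the flow metrics $H(t)$''. But the metric $H$ in this proposition is an \emph{arbitrary} admissible Hermitian--Einstein metric compatible with the parabolic structure: the proposition is invoked in the converse direction of the correspondence, where $H$ is given and is not known to arise from the flow or to be mutually bounded with $\widehat{H}$. Compatibility in the sense of Definition~\ref{comatibleandadmissible} is a current-level condition on $\tr(F_H)$ and gives no pointwise decay of $H$ along $D$. The paper closes this gap by slicing down to complex dimension $2$ (where $\pF$ restricts to a parabolic \emph{bundle}), invoking a gauge-theoretic regularity theorem ($F_{H_t}\in L^2$ implies $L^p$ for some $p>2$ in real dimension $4$ with codimension-$2$ singularities) and then Biquard's extension theorem to extend $(F,H_t)$ across $D_t$ as a parabolic bundle in a suitable complex gauge; only the \emph{uniqueness} of that extension identifies it with ${\pF}_{|U_{x_0}}$ and yields boundedness of $|s|_{H_t}$ for sections $s$ of the given sheaf rather than of some other extension. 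None of this is replaced by anything in your proposal.

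Second, the removal of $S$ by a capacity argument is circular as stated. To discard the error term $\int \phi\,\langle\nabla u,\nabla\chi_{\varepsilon}\rangle$ using $\|\nabla\chi_{\varepsilon}\|_{L^2}\to 0$ you need to already know $\nabla u\in L^2$ near $S$ --- but $u\in L^{2}_{1,loc}$ is part of the conclusion, not a hypothesis, and no a priori integrability of $u$ or $\nabla u$ near $S$ is available. This is exactly why Bando--Siu, and the paper following them, project along a generic direction onto $\Delta^{n-2}$: on a generic $2$-dimensional slice $S_t$ is a discrete set, Bando's removable-singularity argument gives $u_t\in L^{2}_{1}$ together with the weak inequality $\Delta_t u_t\le 4|F_t|$ there, and Fubini plus genericity of the direction recover $u\in L^{2}_{1,loc}$ on $\Delta^{n}$, after which Moser iteration with $\Delta u\le 2|\Lambda F_H|$ gives the local bound. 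Your last paragraph correctly identifies the interaction of the two degenerations as the hard part, but the mechanism you propose (cusp metric plus Lemma~\ref{vanishinglemma} plus capacity cut-offs) is not what resolves it; the resolution is the dimension reduction combined with the gauge-theoretic and Biquard inputs on the slices.
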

\begin{proof}
    The proof basically follows from~\cite[Section 1]{B-S}. 
    
    For a projection $p$ from $\Delta^{n}$ to $\Delta^{n-2}$ along a generic direction, the set $S\cap p^{-1}(0)$ consists of a countable number of points which may accumulate only at $0$. And there is a compact subset $K$ of $\Delta^{2}$ such that $S$ is contained in $K\times \Delta^{n-2}$. 
    
    We put $X_{t}\coloneqq p^{-1}(t)$. Then except for $t$ in a subset of measure zero of \(\Delta^{n-2}\), we have
    \begin{itemize}
      \item \(S_{t}\coloneqq X_{t}\cap S\) contains only finite points.
      \item \(D_{t}\coloneqq D\cap p^{-1}(t)\) is a simple normal crossing divisor. 
    \end{itemize}
    
    We put $u\coloneqq \log^{+}(H(s,s))$ and $u_{t}\coloneqq u_{|X_{t}}$. We analyse \(u_t\) within each slice \(X_t\). 
    
    Suppose \(x_0\) is an isolated point of \(S_t\) away from \(D_t\). Then it is proved in~\cite[Section 3]{Ba} that \(u_t\) belongs to \(L_1^2\) near \(x_0\) and satisfies the following inequality weakly
    \[
    \Delta_{t}u_{t}\leq 4|F_{t}|.
    \]
    
    If \(x_0\) is contained in \(D_t\), then without loss of generality, we may choose a local coordinate neighborhood \((U_{x_0},\,z_1,z_2)\) in \(X_t\) centered at \(x_0\) such that \(D_t\) is defined by \(z_1\cdot z_2=0\). Due to the fact that \(\pF\) is a saturated reflexive parabolic sheaf and the slice \(U_{x_0}\) is of complex dimension \(2\), we know that \({\pF}_{|U_{x_0}}\) is a parabolic bundle. 
    
    On the other hand, \(F_{|U_{x_0}\setminus D_t}\) admits an admissible H-E metric \(H_t\). A regularity theorem in real dimension \(4\) with singularities in real codimension \(2\) (cf. \cite{Si-Si}) from gauge theory implies that if the curvature tensor \(F_{H_t}\) of a H-E metric belongs to \(L^2\), then it belongs to \(L^p\) for some \(p>2\). Then O.Biquard~\cite[Theorem 2.1]{Bi2} proved that \(F_{|U_{x_0}\setminus D_t}\) can be uniquely extended as a parabolic bundle \(F_*\) to \(U_{x_0}\) by choosing an appropriate complex gauge. In particular, for any holomorphic section \(s\) of \(F_*\), \(|s|_{H_t}\) is bounded. By the uniqueness of the extension, we know that \(F_*\cong{\pF}_{|U_{x_0}}\). Hence, in this case, we also have the weak inequality
    \[
    \Delta_{t}u_{t}\leq 4|F_{t}|.
    \]
  \begin{Remark}
    Biquard was dealing with the extension problem in the case of a smooth divisor. The choice of a complex gauge involves solving a \(\overline{\partial}\)-problem on a punctured polydisk \(\Delta^*\times \Delta\). But the proof could be easily adapted to the case of a simple normal crossing divisor, i.e., solving the same \(\overline{\partial}\)-problem on \(\Delta^*\times \Delta^*\). 
  \end{Remark}
  The remainder of the proof proceeds verbatim as in~\cite[Section 1]{B-S}. Indeed, for any compact region $K^{\prime}\subset \Delta^{n-2}$ containing $0$, we can use the above inequality to show that $\nabla_{t}u$ is square integrable over $K\times K^{\prime}$ where $\nabla_{t}$ is the gradient in the direction of the projection. As the direction of projection is generic, we have $u\in L^{2}_{1,loc}(\Delta^n)$. Once this is known, it is easy to see that $u\in L^{\infty}_{loc}$ as $\Lambda F_{H}\in L^{\infty}_{loc}$ in view of 
    \begin{equation*}
      \Delta u\leq 2|\Lambda F_{H}|.
    \end{equation*}
  \end{proof}
Now suppose $\pF$ is a saturated reflexive parabolic sheaf that admits an admissible H-E metric $H$ compatible with the parabolic structure. Then it follows easily from the definition of \(H\) and Chern-Weil's formula that for any proper parabolic subsheaf $\pS$, we have \(
\mu_{\omega}(\pS)\leq \mu_{\omega}(\pF)
\).

Suppose the equality holds. We put $\mathcal{G}_{*}\coloneqq\wedge^{k}\pF\otimes\det(\mathcal{S}_{*})^{-1}$. As $\det(\mathcal{S}_{*})^{-1}$ is a parabolic line bundle, there exists an admissible H-E metric $H^{\prime}$ compatible with its parabolic structure. Hence the regular part of $\mathcal{G}$ inherits an admissible H-E metric $H^{\prime\prime}$ from $H$ and $H^{\prime}$ 
\begin{equation*}
  \lambda(\mathcal{G})=\frac{2k\pi}{\operatorname{Vol}(X,\omega)}(\mu_{\omega}(\pF)-\mu_{\omega}(\mathcal{S}_{*}))=0.
\end{equation*}

On the other hand, by the above proposition, we know that \(|s|_H^{\prime\prime}\) is bounded on \(X\) and it belongs to \(L^2_{1}(X)\). 
Therefore, we have 
\begin{equation*}
  \Delta_{\omega}|s|_{H^{\prime\prime}}^{2}\leq 2\lambda(\mathcal{G})|s|_{H^{\prime\prime}}^{2}-2|\overline{\partial}s|_{H^{\prime\prime}}^{2}\leq 0
\end{equation*}
which implies $s\equiv C\neq 0$. Hence we have the holomorphic splitting: 
\[{\F}_{|X^{\circ}}={\mathcal{S}}_{|X^{\circ}}\oplus \mathcal{Q}_{|X^{\circ}}\]
where \(\mathcal{Q}\) is subsheaf of \(\F\). 

Since \(W\) has codimension at least \(3\), we have \[\operatorname{Ext}_1(\mathcal{S},\mathcal{Q})\cong \operatorname{Ext}_1(\mathcal{S}_{|X^{\circ}},\mathcal{Q}_{|X^{\circ}}).\] 
Hence we have \[\pF=\pS\oplus \mathcal{Q}_*.\] 

As a consequence, we have established the Kobayashi-Hitchin correspondence:
\begin{Theorem}\label{Stable}
  A saturated reflexive parabolic sheaf $\pF$ over $(X,\omega,D)$ is $\mu_{\omega}$-polystable if and only if there exists an admissible Hermitian-Einstein metric with respect to $\omega$ on ${F_{*}}_{|X^{\circ}\setminus D}$ which is compatible with $\pF$. 
\end{Theorem}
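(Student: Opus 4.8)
The plan is to assemble the two implications from the machinery developed throughout Section~\ref{6}, handling the polystable case by reducing to stable summands. Recall that $\pF$ is $\mu_{\omega}$-polystable precisely when it splits as a finite direct sum $\pF=\bigoplus_{j}\pF^{(j)}$ of $\mu_{\omega}$-stable parabolic sheaves all sharing the common slope $\mu_{\omega}(\pF)$. Since the parabolic Chern character is additive on short exact sequences and both the construction of the adapted metric $\widehat{H}$ and the Hermitian-Yang-Mills flow respect direct sums, an admissible H-E metric and a polystable decomposition can be built (resp. detected) summand by summand. I would therefore first establish the equivalence for a \emph{stable} $\pF$ and then glue, noting that the Einstein factor $\lambda$ is common to all summands because they have equal slope.

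For the direction polystable $\Rightarrow$ admissible H-E metric, I would argue on a stable summand. By the equivalence of parabolic and analytic stability established in Section~\ref{5}, the Hermitian holomorphic bundle $(F_{|X^{\circ}\setminus D},\widehat{H})$ is analytically stable with respect to $\omega$. Lifting to $\widetilde{X}$ and running the flow for the regularized conical metrics $\omega_{\epsilon\delta}$, Proposition~\ref{existflow} provides long-time solutions $H_{\epsilon\delta}$ (its hypothesis being supplied by the curvature bound of Lemma~\ref{curvature bound}), and the interior estimates of Section~\ref{6}---the curvature and metric bounds off a neighbourhood of $\mathscr{E}$ together with the uniform heat-kernel bound---yield a limiting flow $H(t)$ on $F$. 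Analytic stability then lets one transplant Simpson's convergence argument via the trick of~\cite{Li-Zh-Zh}, verifying Proposition~5.3 of~\cite{Si1} for $H(t)$ and hence that $H(t)\to H(\infty)$, an H-E metric. Finally the compatibility and admissibility of $H(\infty)$ follow from the corresponding statements already proved for $H(t)$ with $t>0$ by applying Fatou's lemma, and taking the orthogonal direct sum over $j$ produces the desired metric on $\pF$.

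For the converse, admissible H-E metric $\Rightarrow$ polystable, I would start from an admissible H-E metric $H$ compatible with $\pF$. Chern-Weil together with condition $(2)$ of Definition~\ref{comatibleandadmissible} immediately gives $\mu_{\omega}(\pS)\leq\mu_{\omega}(\pF)$ for every proper parabolic subsheaf $\pS$, i.e. semistability. To upgrade to polystability I would test equality: if $\mu_{\omega}(\pS)=\mu_{\omega}(\pF)$ for some $\pS$ of rank $k$, form $\mathcal{G}_{*}=\wedge^{k}\pF\otimes\det(\mathcal{S}_{*})^{-1}$ with its induced admissible H-E metric $H''$ of vanishing Einstein factor, and apply Proposition~\ref{regularitynearsingular} to conclude that the distinguished section $s$ has $|s|_{H''}$ locally bounded and in $L^{2}_{1,\mathrm{loc}}$. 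The subharmonicity $\Delta_{\omega}|s|_{H''}^{2}\leq 0$ then forces $s$ to be a nonzero constant, producing a holomorphic splitting on $X^{\circ}$; since $W$ has codimension at least $3$ the relevant $\operatorname{Ext}^{1}$ groups agree, so the splitting extends to a parabolic splitting $\pF=\pS\oplus\mathcal{Q}_{*}$, and iterating yields the polystable decomposition.

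The principal obstacle, and the reason all of Section~\ref{6} is required, is that $|\Lambda_{\omega}F_{\widehat{H}}|_{\widehat{H}}$ fails to be bounded on $X^{\circ}\setminus D$, so Simpson's noncompact theory does not apply directly on $(X,\omega)$. The crux is to obtain estimates for the flow that are \emph{uniform} in $(\epsilon,\delta)$ on the approximating conical manifolds $(\widetilde{X}\setminus D,\omega_{\epsilon\delta})$---most delicately the uniform Sobolev and heat-kernel bounds of Proposition~\ref{HeatkernelUB} feeding into Moser iteration---so that the limit flow $H(t)$ exists, is admissible, and still converges under stability. On the converse side the analytic subtlety is precisely Proposition~\ref{regularitynearsingular} near the intersection of $D$ with the singular locus, where one combines the codimension-$4$ removable-singularity theory of~\cite{B-S} with Biquard's parabolic-bundle extension across the divisor~\cite{Bi2}.
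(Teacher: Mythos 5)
Your proposal is correct and follows essentially the same route as the paper: the forward direction via analytic stability, the Hermitian--Yang--Mills flow on the conical approximations $(\widetilde{X}\setminus D,\omega_{\epsilon\delta})$ with uniform estimates, the convergence trick of~\cite{Li-Zh-Zh}, and Fatou's lemma for compatibility and admissibility of the limit; the converse via Chern--Weil, the section of $\wedge^{k}\pF\otimes\det(\mathcal{S}_{*})^{-1}$, Proposition~\ref{regularitynearsingular}, and the extension of the splitting across $W$ using the codimension bound. Your explicit reduction of the polystable case to stable summands with a common Einstein factor is the natural completion of what the paper leaves implicit.
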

\begin{Theorem}\label{Semistable}
  A saturated reflexive parabolic sheaf $\pF$ over $(X,\omega,D)$ is $\mu_{\omega}$-semistable if and only if there exists a family of approximate Hermitian-Einstein metrics with respect to $\omega$ on ${F_{*}}_{|X^{\circ}\setminus D}$ all of which are compatible with $\pF$. 
\end{Theorem}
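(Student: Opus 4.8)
The plan is to assemble the two implications from the machinery already developed, treating the forward direction (semistability $\Rightarrow$ approximate Hermitian-Einstein) as the substantial one via the Hermitian-Yang-Mills flow, and reserving an elementary Chern-Weil computation for the converse. I would state and prove the easy converse first: suppose we are given a family $H(t)$ of approximate Hermitian-Einstein metrics on $F_{|X^{\circ}\setminus D}$, all compatible with $\pF$ in the sense of Definition~\ref{comatibleandadmissible}. Fixing an arbitrary proper parabolic subsheaf $\pS$, I would combine the second compatibility condition $\ch_{1}(\pS)\leq\frac{\sqrt{-1}}{2\pi}\tr(F_{H(t)_{|\pS}})$ with the Chern-Weil formula and the defining property $\lVert\sqrt{-1}\Lambda_{\omega}F_{H(t)}-\lambda\cdot\id_{F}\rVert_{L^{\infty}}\to 0$ to obtain
\begin{align*}
  \mu_{\omega}(\pS) &\leq \liminf_{t\to\infty}\frac{\sqrt{-1}}{2\pi\rank(\Sb)}\int_{X\setminus D}\tr(F_{H(t)_{|\pS}})\wedge\omega^{n-1}\\
  &\leq \lim_{t\to\infty}\frac{\sqrt{-1}}{2\pi\rank(\F)}\int_{X\setminus D}\tr(F_{H(t)})\wedge\omega^{n-1}=\mu_{\omega}(\pF),
\end{align*}
which is exactly semistability. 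This direction is routine once the compatibility inequality is in hand.

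For the forward direction, I would begin from the adapted metric $\widehat{H}$ of Proposition~\ref{adaptedcodim2sheaf}, whose curvature obeys the decay bound of Lemma~\ref{curvature bound}. Since $|\Lambda F_{\widehat{H}}|$ is not bounded on $X^{\circ}\setminus D$, Simpson's theory cannot be applied directly; instead I would run the flow~\eqref{YMflow} on $\widetilde{X}\setminus D$ against each regularized conical metric $\omega_{\epsilon\delta}$, invoking Proposition~\ref{existflow} for existence and then the uniform estimates of Lemmas~\ref{L1bound}, \ref{timeL1bound}, \ref{C0boundmeancurvature} and \ref{metricbound}, all independent of $\epsilon,\delta$ thanks to the uniform Sobolev inequality (Lemma~\ref{uniSob}) and the heat-kernel bound (Proposition~\ref{HeatkernelUB}). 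Passing to the $C^{\infty}_{loc}$ limit as $\epsilon,\delta\to 0$ produces a flow $H(t)$ on $F_{|X^{\circ}\setminus D}$ with $H(0)=\widehat{H}$ and locally polynomial decay along $D$.

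Next I would invoke \cite[Proposition 4.1]{Li-Zh-Zh} under the semistability hypothesis to conclude $\lVert\varPhi(t)\rVert_{L^{2}(H(t))}\to 0$, and then run the same parabolic Moser iteration as in Lemma~\ref{C0boundmeancurvature} (using the uniform Sobolev inequality) to upgrade this to $\lVert\varPhi(t)\rVert_{L^{\infty}}\to 0$, so that $\{H(t)\}_{t>0}$ is a family of approximate Hermitian-Einstein metrics. To finish I would appeal to the two propositions already established, that each $H(t)$ is compatible with the parabolic structure and admissible, which shows the constructed family has all the required properties.

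The hard part is everything packaged into the forward direction: the unbounded mean curvature of $\widehat{H}$ near $D$ forces the regularization scheme, and the crux is that every analytic estimate along the flow must be made uniform in the regularization parameters $\epsilon$ and $\delta$. The uniform Sobolev inequality and the uniform Gaussian heat-kernel bound are precisely what legitimize the limit $\epsilon,\delta\to 0$; verifying the Nash-Yau entropy condition feeding into them, and establishing compatibility and admissibility of the limiting metrics through the vanishing Lemma~\ref{vanishinglemma} and Fatou's lemma, is where I expect the real difficulty to lie.
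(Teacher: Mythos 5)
Your proposal is correct and follows essentially the same route as the paper: the converse via the compatibility inequality plus Chern--Weil, and the forward direction via the regularized Hermitian--Yang--Mills flow with $\epsilon,\delta$-uniform estimates, the $L^2\to L^\infty$ upgrade of $\varPhi(t)$ through \cite[Proposition 4.1]{Li-Zh-Zh} and Moser iteration, and the separate compatibility and admissibility propositions. No substantive differences to report.
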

\begin{Corollary}\label{BGnormal}
  If $\pF$ is $\mu_{\omega}$-semistable with respcet to a $\omega$, then $$\Delta(\pF)\cdot [\omega]^{n-2}\geq 0.$$ Moreover, if $\pF$ is polystable, then the equality holds if and only if ${\F}_{|X\setminus D}$ is a vector bundle with a projectively flat Hermitian-Einstein connection compatible with the parabolic structure of $\pF$.
\end{Corollary}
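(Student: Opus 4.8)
The plan is to turn the Bogomolov--Gieseker discriminant into a manifestly signed curvature integral and then feed in the (approximate) Hermitian--Einstein condition supplied by Theorems~\ref{Stable} and~\ref{Semistable}. Write $r=\rank(\pF)$ and, for a Hermitian metric $H$ on $F_{|X^{\circ}\setminus D}$, let $F_{H}^{0}=F_{H}-\tfrac{1}{r}\tr(F_{H})\cdot\id_{F}$ be the trace-free part of the Chern curvature. Since $\tr(F_{H}^{0})=0$, the cross terms cancel and the expressions for $\ch_{1},\ch_{2}$ in terms of $F_{H}$ give the identity of currents
\[
\Delta(\pF)=\frac{\ch_{1}(\pF)^{2}}{2r}-\ch_{2}(\pF)=-\frac{1}{2}\left(\frac{\sqrt{-1}}{2\pi}\right)^{2}\tr\!\left(F_{H}^{0}\wedge F_{H}^{0}\right),
\]
valid whenever $H$ is adapted to $\pF$ in codimension $2$. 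Pairing with $\tfrac{[\omega]^{n-2}}{(n-2)!}$ and using the pointwise K\"ahler identity $\tr(G\wedge G)\wedge\tfrac{\omega^{n-2}}{(n-2)!}=(|G|^{2}-|\Lambda_{\omega}G|^{2})\tfrac{\omega^{n}}{n!}$ already invoked in~\eqref{Fatou}, this reads $\Delta(\pF)\cdot\tfrac{[\omega]^{n-2}}{(n-2)!}=\tfrac{1}{8\pi^{2}}\int(|F_{H}^{0}|^{2}-|\Lambda_{\omega}F_{H}^{0}|^{2})\tfrac{\omega^{n}}{n!}$.

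For the inequality I would run this with the approximate Hermitian--Einstein metrics $H(t)$ from Theorem~\ref{Semistable}. These are only adapted in codimension $1$, so I split $2r\,\Delta=\ch_{1}^{2}-2r\,\ch_{2}$: because $\det H(t)=\det\widehat{H}$ the induced metric on $\det F$ is fixed, hence $\tr F_{H(t)}=\tr F_{\widehat{H}}$ and the $\ch_{1}(\pF)^{2}$ term is computed \emph{exactly} by $H(t)$ through the codimension-$2$ adaptedness of $\widehat{H}$ (Proposition~\ref{adaptedcodim2sheaf}), while the $\ch_{2}(\pF)$ term is bounded in the correct direction by~\eqref{Fatou}. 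Reassembling the trace-free part yields
\[
\Delta(\pF)\cdot\frac{[\omega]^{n-2}}{(n-2)!}\geq \frac{1}{8\pi^{2}}\int\left(|F_{H(t)}^{0}|^{2}-|\Lambda_{\omega}F_{H(t)}^{0}|^{2}\right)\frac{\omega^{n}}{n!}\geq -\frac{1}{8\pi^{2}}\int|\Lambda_{\omega}F_{H(t)}^{0}|^{2}\frac{\omega^{n}}{n!}.
\]
The trace-free part of $\sqrt{-1}\Lambda_{\omega}F_{H(t)}-\lambda\cdot\id_{F}$ is exactly $\sqrt{-1}\Lambda_{\omega}F_{H(t)}^{0}$, so $\|\Lambda_{\omega}F_{H(t)}^{0}\|_{L^{2}(H(t))}\leq\varPhi(t)\to 0$ by the approximate Hermitian--Einstein property; letting $t\to\infty$ gives $\Delta(\pF)\cdot[\omega]^{n-2}\geq 0$. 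Lemma~\ref{cherncharacterpreserve} lets me transport these integrals between $X$ and $\widetilde{X}$ where needed.

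For the equality assertion assume $\pF$ polystable, so Theorem~\ref{Stable} furnishes an admissible Hermitian--Einstein metric $H$, for which $\Lambda_{\omega}F_{H}^{0}=0$ identically; the chain above then collapses to $0\leq\tfrac{1}{8\pi^{2}}\int|F_{H}^{0}|^{2}\tfrac{\omega^{n}}{n!}\leq\Delta(\pF)\cdot\tfrac{[\omega]^{n-2}}{(n-2)!}$. Hence $\Delta(\pF)\cdot[\omega]^{n-2}=0$ forces $F_{H}^{0}=0$ almost everywhere, i.e. the connection on $F_{|X^{\circ}\setminus D}$ is projectively flat; being also admissible, it extends across the codimension-$\geq 3$ locus $W$ and along $D$ to a projectively flat Hermitian--Einstein connection on an honest vector bundle $\F_{|X\setminus D}$ by a Bando--Siu type removable-singularity argument (as in~\cite{B-S}, with Proposition~\ref{regularitynearsingular} controlling the behaviour near $D$), and it remains compatible with the parabolic structure. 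Conversely, if $\F_{|X\setminus D}$ is a vector bundle carrying a projectively flat Hermitian--Einstein connection compatible with the parabolic structure, that connection is adapted to $\pF$ in codimension $2$, so the first displayed identity holds as an equality and $F^{0}=0$ gives $\Delta(\pF)\cdot[\omega]^{n-2}=0$.

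I expect the main obstacle to lie in the equality case rather than the inequality: the inequality is essentially bookkeeping once~\eqref{Fatou} is granted, but converting the pointwise vanishing $F_{H}^{0}=0$ into the statement that $\F_{|X\setminus D}$ is genuinely locally free with a projectively flat connection requires the removable-singularity analysis simultaneously across $W$ (real codimension $\geq 6$) and along the divisor $D$, where the metric has only controlled polynomial growth. The delicate point is to check that the extension produced by the Bando--Siu/Biquard machinery respects the prescribed parabolic structure, so that the two directions of the equivalence match up cleanly.
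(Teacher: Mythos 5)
Your proof is essentially the paper's own argument: the paper disposes of this corollary in one sentence (``an obvious modification to the calculations in~\eqref{Fatou} yields the proof''), and your write-up --- passing to the trace-free curvature $F_H^0$, using $\det H(t)=\det\widehat{H}$ so that the $\ch_1(\pF)^2$ term is computed exactly, sending $\varPhi(t)\to 0$ for the inequality, and extracting $F_H^0=0$ plus a Bando--Siu removable-singularity step in the equality case --- is exactly that modification carried out in detail. The one step you assert beyond what either proof justifies is, in the ``if'' direction of the equality statement, that a projectively flat Hermitian--Einstein connection compatible with the parabolic structure is automatically adapted in codimension $2$ (so that its Chern--Weil integral computes $\ch_2(\pF)$ exactly rather than merely bounding it in one direction); the paper's one-line proof leaves the same point implicit.
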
 
\begin{proof}
  An obvious modification to the calculations in~\eqref{Fatou} yields the proof.
\end{proof}
\section{Bogomolov-Gieseker inequality for nef and big class}\label{7}
In this section, we prove a more general Bogomolov-Gieseker type inequality, i.e. the following theorem.
\begin{Theorem}\label{nefbigBG}
  Let $\pF$ be a saturated reflexive parabolic sheaf over a compact K\"ahler manifold $(X, \omega)$ which is semistable with respect to a nef and big class $[\eta]$. Then the Bogomolov-Gieseker inequality with respect to $\eta$ holds, i.e.
  \begin{equation*}
    \Delta(\pF)\cdot [\eta]^{n-2}\geq 0.
  \end{equation*}
\end{Theorem}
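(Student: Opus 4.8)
The plan is to bootstrap from the Bogomolov-Gieseker inequality already proved for genuine K\"ahler polarizations (Corollary~\ref{BGnormal}), using two extra ingredients: a Hodge-index inequality valid for the nef and big class $[\eta]$, and a Jordan-H\"older reduction to the stable case. The first thing I would record is the Hodge-index inequality: \emph{for every $\alpha\in H^{1,1}(X,\mathbb{R})$ with $\alpha\cdot[\eta]^{n-1}=0$ one has $\alpha^2\cdot[\eta]^{n-2}\leq 0$.} Since $[\eta]$ is nef, $[\eta_\epsilon]\coloneqq[\eta]+\epsilon[\omega]$ is K\"ahler for every $\epsilon>0$, and the classical Hodge--Riemann relations applied to the $[\eta_\epsilon]$-primitive projection of $\alpha$ give $\alpha^2\cdot[\eta_\epsilon]^{n-2}\leq(\alpha\cdot[\eta_\epsilon]^{n-1})^2/[\eta_\epsilon]^n$. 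Letting $\epsilon\to 0$ and using that $[\eta]$ is \emph{big}, so that $[\eta]^n>0$, the right-hand side tends to $(\alpha\cdot[\eta]^{n-1})^2/[\eta]^n=0$. This is the one step where bigness, not merely nefness, is essential.

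Next I would reduce to the stable case. Take a Jordan-H\"older filtration of $\pF$ with respect to $[\eta]$, with $\eta$-stable parabolic quotients $\G_1,\dots,\G_\ell$ of common slope $\mu_{\omega}$ replaced by $\mu_\eta(\G_i)=\mu_\eta(\pF)$. Writing $c_i=\ch_1(\G_i)$, $r_i=\rank(\G_i)$, $r=\rank(\pF)$, and $v_i=c_i-\tfrac{r_i}{r}\ch_1(\pF)$, additivity of the parabolic Chern character gives the cohomological identity
\[\Delta(\pF)\cdot[\eta]^{n-2}=\sum_i\Delta(\G_i)\cdot[\eta]^{n-2}-\tfrac{1}{2}\sum_i\tfrac{1}{r_i}\,v_i^2\cdot[\eta]^{n-2}.\]
The equal-slope condition forces $v_i\cdot[\eta]^{n-1}=0$, so the Hodge-index inequality makes each $v_i^2\cdot[\eta]^{n-2}\leq 0$, hence the last sum contributes nonnegatively. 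It therefore suffices to prove $\Delta(\G_i)\cdot[\eta]^{n-2}\geq 0$ for each $\eta$-stable quotient. Since $\Delta(\G_i)$ and the discriminant of the reflexive saturation of $\G_i$ differ by a nonnegative codimension-two cycle (the computation in Lemma~\ref{BGreflexivication}), whose pairing with the nef power $[\eta]^{n-2}$ is nonnegative, and since the saturation stays $\eta$-stable by Lemma~\ref{stableofreflexivacation}, I may assume each $\G_i$ is a saturated reflexive $\eta$-stable parabolic sheaf.

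For such a saturated reflexive $\eta$-stable $\G$ I would run the approximation $[\eta_\epsilon]=[\eta]+\epsilon[\omega]$. For each $\epsilon>0$ take the Harder--Narasimhan filtration of $\G$ with respect to the K\"ahler class $[\eta_\epsilon]$, with $\eta_\epsilon$-semistable quotients $\G_j^\epsilon$; Corollary~\ref{BGnormal} (again after passing to reflexive saturations) gives $\Delta(\G_j^\epsilon)\cdot[\eta_\epsilon]^{n-2}\geq 0$. The same identity as above, now combined with the K\"ahler Hodge-index estimate $(v_j^\epsilon)^2\cdot[\eta_\epsilon]^{n-2}\leq(v_j^\epsilon\cdot[\eta_\epsilon]^{n-1})^2/[\eta_\epsilon]^n$, yields
\[\Delta(\G)\cdot[\eta_\epsilon]^{n-2}\geq-\frac{1}{2\,[\eta_\epsilon]^n}\sum_j\frac{\bigl(v_j^\epsilon\cdot[\eta_\epsilon]^{n-1}\bigr)^2}{r_j},\qquad v_j^\epsilon\cdot[\eta_\epsilon]^{n-1}=(n-1)!\,r_j\bigl(\mu_{\eta_\epsilon}(\G_j^\epsilon)-\mu_{\eta_\epsilon}(\G)\bigr).\]
Because $\Delta(\G)\cdot[\eta_\epsilon]^{n-2}$ is a polynomial in $\epsilon$ converging to $\Delta(\G)\cdot[\eta]^{n-2}$, the desired inequality will follow once the right-hand side tends to $0$ as $\epsilon\to 0$.

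The main obstacle is precisely this last point: I must show that the Harder--Narasimhan polygon of $\G$ relative to $[\eta_\epsilon]$ collapses, i.e. $\mu_{\max,\eta_\epsilon}(\G)-\mu_{\min,\eta_\epsilon}(\G)\to 0$. For a \emph{fixed} subsheaf $\pS$ one has $\mu_\eta(\pS)\leq\mu_\eta(\G)$ by $\eta$-semistability and $\mu_{\eta_\epsilon}(\pS)=\mu_\eta(\pS)+\epsilon\,\bigl(\ch_1(\pS)\cdot[\eta]^{n-2}\cdot[\omega]\bigr)/\rank(\pS)+O(\epsilon^2)$; the trouble is that the $\eta_\epsilon$-maximal destabilizer $\pS=\pF_1^\epsilon$ moves with $\epsilon$, so controlling the linear term forces a uniform bound on $\ch_1(\pF_1^\epsilon)$. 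I would therefore need a boundedness theorem for maximal destabilizing subsheaves as the polarization ranges over $\{[\eta_\epsilon]:0<\epsilon\leq\epsilon_0\}$ — the parabolic analogue of Grothendieck--Maruyama boundedness in the saturated reflexive setting. Granting it, $\mu_{\max,\eta_\epsilon}(\G)-\mu_{\eta_\epsilon}(\G)=O(\epsilon)$, and since the HN slopes are squeezed around the mean the whole polygon collapses at rate $O(\epsilon)$, closing the argument. Establishing this boundedness for parabolic sheaves is where I expect the genuine difficulty to lie.
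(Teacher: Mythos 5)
Your overall strategy coincides with the paper's: Jordan--H\"older reduction to $\eta$-stable graded pieces, the discriminant decomposition $\Delta(\pF)\cdot[\eta]^{n-2}=\sum_i\Delta(\G_i)\cdot[\eta]^{n-2}-\tfrac12\sum_i r_i^{-1}v_i^2\cdot[\eta]^{n-2}$ together with the Hodge-index control of the correction terms (this is exactly the ``standard fact'' the paper invokes from \cite[Lemma 3.7]{Cl}), and the perturbation $[\eta_\epsilon]=[\eta]+\epsilon[\omega]$ to reach a genuine K\"ahler polarization. Your first two paragraphs, including the use of bigness to make $[\eta]^n>0$ in the Hodge-index limit and the reduction to saturated reflexive $\eta$-stable factors via Lemmas~\ref{BGreflexivication} and~\ref{stableofreflexivacation}, are correct and consistent with the paper.

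The gap is the one you name yourself at the end: to handle an $\eta$-stable factor $\G$ you pass to its Harder--Narasimhan filtration with respect to $[\eta_\epsilon]$ and need the HN polygon to collapse as $\epsilon\to 0$, which forces a uniform bound on the maximal destabilizers as the polarization varies --- a parabolic Grothendieck--Maruyama-type boundedness statement that you do not prove. The paper avoids this entirely by proving \emph{openness of stability in the polarization} (Lemma~\ref{openstable}): each $\eta$-stable Jordan--H\"older factor remains $[\eta_\epsilon]$-stable for all sufficiently small $\epsilon$, so Corollary~\ref{BGnormal} applies to the factor itself and one simply lets $\epsilon\to 0$; no HN filtration of the factor is ever taken. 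The two inputs to Lemma~\ref{openstable} are precisely the substitutes for the boundedness you are missing: (i) a \emph{one-sided} uniform bound on $\ch_{1}(\pS)\cdot[\gamma_\epsilon]$ over \emph{all} subsheaves $\pS$ simultaneously, obtained from the Gauss--Codazzi/Chern--Weil formula for an adapted metric, where the second fundamental form term contributes with a definite sign; and (ii) Lemma~\ref{uniformgap}, which shows via the rationality decomposition of $[\eta]^{n-1}$ and the integrality of Chern classes that $\mu_\eta(\pS)$ takes only finitely many values above any threshold, so the supremum over proper subsheaves is attained and there is a uniform slope gap $\mu_\eta(\G)-\mu_\eta(\pS)\geq C>0$. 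Together these give $\mu_{\eta_\epsilon}(\pS)<\mu_{\eta_\epsilon}(\G)$ for all $\pS$ at once, with no need to identify or bound which subsheaf destabilizes. To complete your argument as written you must either establish the boundedness theorem you postulate, or replace your third and fourth paragraphs by this openness-of-stability argument.
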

Let us briefly recall the definitions of nefness and bigness. 
\begin{Definition}\label{nef}
  A class $[\eta]\in H^{k,k}(X,\mathbb{R})$ is called nef if for any $\epsilon>0$, there exists a representative $\eta_{\epsilon}\in [\eta]$ such that $\eta_{\epsilon}\geq -\epsilon\omega^{k}$. It is clear that these classes form a closed cone in $H^{k,k}(X,\mathbb{R})$ which we denote as $\mathcal{N}^{k}$.
\end{Definition}
\begin{Definition}\label{big}
  A class $[\eta]\in H^{k,k}(X,\mathbb{R})$ is called big if there exists a constant $\epsilon$ such that $\eta\geq \epsilon [\omega^{k}]$ in the sense of current. It is clear that these classes form an open cone in $H^{k,k}(X,\mathbb{R})$ which we denote as $\mathcal{B}^{k}$.
\end{Definition}
We put $[\eta_{\epsilon}]\coloneqq[\eta+\epsilon\omega]$ where \(\omega\) is a K\"ahler class. We have the following lemma concerning the openness of stablity of $\pF$ with respect to $[\eta_{\epsilon}]$.
\begin{Lemma}\label{openstable}
  Suppose a parabolic sheaf $\pF$ (not necessarily reflexive) is stable with respect to a class $[\eta]\in\mathcal{B}^{1}$. Then $\pF$ is stable with respect to $[\eta_{\epsilon}]$ for any sufficiently small $\epsilon$.
 
\end{Lemma}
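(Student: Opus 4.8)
The plan is to recast the statement as the persistence of a single polynomial inequality and then to control the infinitely many subsheaves by a boundedness argument. Write $r=\rank(\pF)$ and, for a proper parabolic subsheaf $\pS$ of rank $p=\rank(\pS)$, set
\[
\xi_{\pS}\coloneqq r\cdot\ch_1(\pS)-p\cdot\ch_1(\pF)\in H^{1,1}(X,\mathbb{R}),\qquad P_{\pS}(\epsilon)\coloneqq \xi_{\pS}\cdot[\eta+\epsilon\omega]^{n-1}.
\]
Since $rp>0$ and the degree carries the positive factor $1/(n-1)!$, the condition $\mu_{\eta_\epsilon}(\pS)<\mu_{\eta_\epsilon}(\pF)$ is equivalent to $P_{\pS}(\epsilon)<0$, and $\eta$-stability is exactly the hypothesis $P_{\pS}(0)<0$ for every proper $\pS$. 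Each $P_{\pS}$ is a polynomial in $\epsilon$ of degree at most $n-1$, so for one fixed numerical type $(p,\ch_1(\pS))$ the passage from $\epsilon=0$ to small $\epsilon>0$ is immediate; the entire difficulty is the \emph{uniformity} of the threshold $\epsilon$ over the infinitely many subsheaves. I would also note at the outset that it suffices to test saturated $\pS$: since $[\eta_\epsilon]$ is big, hence interior to the pseudo-effective cone, replacing $\pS$ by its saturation adds an effective codimension-$1$ cycle to $\ch_1$ and so does not decrease $\deg_{\eta_\epsilon}$.

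First I would reduce the supremum over all $\pS$ to a single subsheaf. For each fixed $\epsilon$ the Harder--Narasimhan filtration with respect to $[\eta_\epsilon]$ exists (Section~\ref{2}), so $\sup_{\pS}\mu_{\eta_\epsilon}(\pS)$ over proper subsheaves is attained by the (saturated) maximal destabilizer. At $\epsilon=0$, strict $\eta$-stability then gives a genuine positive gap $\gamma_0\coloneqq\mu_{\eta}(\pF)-\mu_{\eta}(\pS^{(0)})>0$, where $\pS^{(0)}$ is the maximal destabilizer. Thus it suffices to rule out a sequence $\epsilon_i\downarrow 0$ together with proper saturated subsheaves $\pS_i$ satisfying $\mu_{\eta_{\epsilon_i}}(\pS_i)\ge\mu_{\eta_{\epsilon_i}}(\pF)$, i.e. $P_{\pS_i}(\epsilon_i)\ge 0$, even though $P_{\pS_i}(0)<0$.

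The heart of the argument, and the step I expect to be the main obstacle, is to show that such $\pS_i$ sweep out only finitely many numerical types. The $\pS_i$ are saturated of rank $\le r-1$, and their $\eta_{\epsilon_i}$-slopes are squeezed between $\mu_{\eta_{\epsilon_i}}(\pF)$ from below and the maximal slope of $\pF$ from above, uniformly as $\epsilon_i$ ranges in the compact interval $[0,\epsilon_1]$ over which the big classes $[\eta+\epsilon\omega]$ vary continuously. To turn this into boundedness I would reduce the parabolic problem to an ordinary one through the construction of Section~\ref{3}: the locally abelian parabolic bundle $E_{*}'$ on $\widetilde{X}$ replaces $\pF$ by honest torsion-free sheaves (with only finitely many fixed weight contributions to $\ch_1$), where a Grothendieck-type boundedness of saturated subsheaves of bounded rank and bounded slope applies; the discriminant control needed to make this uniform over the family $[\eta_\epsilon]$ is supplied by the \emph{already established} K\"ahler Bogomolov--Gieseker inequality (Corollary~\ref{BGnormal}), so that no circularity with Theorem~\ref{nefbigBG} arises. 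Granting boundedness, I would pass to a subsequence along which $\rank(\pS_i)=p$ and $\ch_1(\pS_i)=c$ are constant; then $P_{\pS_i}(\epsilon_i)\to \xi\cdot[\eta]^{n-1}$ with $\xi=r\cdot c-p\cdot\ch_1(\pF)$, and the inequality $P_{\pS_i}(\epsilon_i)\ge 0$ passes to the limit to give a numerical type with $\xi\cdot[\eta]^{n-1}\ge 0$ realized by an actual subsheaf (the bounded family being closed), contradicting the strict inequality $P(0)<0$ guaranteed by $\eta$-stability. Once finiteness of the relevant types is in hand, the conclusion is routine: $\sup_{\pS}\mu_{\eta_\epsilon}(\pS)$ is the maximum of finitely many polynomials in $\epsilon$, each strictly below $\mu_{\eta_\epsilon}(\pF)$ at $\epsilon=0$, hence below it for all $0<\epsilon<\epsilon_0$ with a uniform $\epsilon_0$, which is precisely stability with respect to $[\eta_\epsilon]$.
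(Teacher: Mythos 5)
Your skeleton matches the paper's: the lemma reduces to (i) a uniform positive gap $\mu_{\eta}(\pF)-\mu_{\eta}(\pS)\geq C>0$ over all proper parabolic subsheaves, and (ii) uniform control, over all $\pS$ and all small $\epsilon$, of the perturbation of the slope inequality caused by replacing $[\eta]$ with $[\eta+\epsilon\omega]$. The problem is that you discharge both through a single appeal to ``a Grothendieck-type boundedness of saturated subsheaves of bounded rank and bounded slope,'' and that is exactly the step that is not available off the shelf in this setting. First, even to set up that boundedness you assert that the $\eta_{\epsilon_i}$-slopes of the $\pS_i$ are bounded above by the maximal slope of $\pF$, uniformly for $\epsilon_i\in[0,\epsilon_1]$; a uniform (in both $\pS$ and $\epsilon$) upper bound on $\ch_1(\mathcal{S})\cdot[\eta+\epsilon\omega]^{n-1}$ is not automatic, and it is precisely the first of the two bounds in the paper's proof, established there via the Gauss--Codazzi identity for the adapted metric: the second-fundamental-form term has a sign, so $\ch_1(\mathcal{S})\cdot[\gamma_{\epsilon}]$ is bounded above by a curvature integral independent of $\mathcal{S}$. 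Second, Grothendieck's lemma and its refinements are statements about projective varieties polarized by ample classes; here $X$ is only compact K\"ahler, $[\eta]$ is merely big (not assumed nef in the lemma), and the objects are parabolic sheaves, so both the boundedness and the closedness of the family that you use to realize the limiting numerical type would themselves require proof. Citing Corollary~\ref{BGnormal} for ``discriminant control'' does not fill this in: the $\pS_i$ are not semistable, and the Bogomolov--Gieseker inequality bounds $\Delta$ of a single semistable sheaf from below rather than bounding a family of destabilizers.

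The paper obtains the finiteness you are after much more cheaply in Lemma~\ref{uniformgap}: it writes $[\eta]^{n-1}$ as a nonnegative rational combination of classes $e_i\in H^{2(n-1)}(X,\mathbb{Q})$ representable by strictly positive currents, observes that $\ch_1(\pS)$ ranges over the integral lattice translated by finitely many parabolic weight contributions, and concludes that $\ch_1(\pS)\cdot[\eta]^{n-1}$ takes only finitely many values in any bounded window; hence the supremum of $\mu_{\eta}(\pS)$ is attained and the gap in (i) follows from strict stability. Note also that the existence of the Harder--Narasimhan filtration with respect to the big class $[\eta]$, which you invoke to produce the maximal destabilizer at $\epsilon=0$, presupposes exactly this attainment, so using it there is mildly circular. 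Your polynomial-in-$\epsilon$ framing and the passage to a subsequence of constant numerical type are fine once finiteness is known, but as written the proposal has a genuine gap at its central step.
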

 \begin{proof}
    Let $\pS$ be a proper parabolic subsheaf of $\pF$. We have 
    \begin{equation*}
      \mu_{\eta_{\epsilon}}(\pF)-\mu_{\eta_{\epsilon}}(\pS)=\mu_{\eta}(\pF)-\mu_{\eta}(\pS)+\epsilon\cdot\phi(\epsilon).
    \end{equation*} 

    If we can show that 
    \begin{enumerate}
      \item $|\phi(\epsilon)|\leq C_{12}$,
      \item $\mu_{\eta}(\pF)-\mu_{\eta}(\pS)\geq C_{13}$
    \end{enumerate}
     with $C_{12}$, $C_{13}$ independent of the choices of $\pS$ and \(\epsilon\), we are done. 
    
    We show the uniform bound of $\phi(\epsilon)$ first. As $\ch_{1}(\pS)$ is a linear combination of $\ch_{1}(\mathcal{S})$ and $D_{i}$'s, hence $\phi(\epsilon)$ is a linear combination of the intersection numbers of $\ch_{1}(\F)$, $\ch_{1}(\mathcal{S})$ and $D_{i}$'s with some bounded big classes parametrized by $\epsilon$. Thus it suffices to show the following assertion:
    
      Given a family of bounded big classes $\gamma_{\epsilon}$, $\ch_{1}(\mathcal{S})\cdot [\gamma_{\epsilon}]$ has an uniform upper bound if we vary $\mathcal{S}$ and $\epsilon$. 
      
      Following the spirit of Section~\ref{4}, we can contructed a metric $H$ on the regular part of a torsion-free sheaf $\F$ which is adapted to any torsion free subsheaf $\mathcal{S}$ in codimension $1$ (although we were dealing with a parabolic sheaf there, the idea of course could be transplanted to the ordinary sheaf case). Then by Gauss-Codazzi formula, we have \[\ch_{1}(\mathcal{S})=\frac{\sqrt{-1}}{2\pi}\tr(\operatorname{p}_{\mathcal{S},H_0}\cdot F_{H_{0}})-\tr((\overline{\partial}\operatorname{p}_{\mathcal{S},H_0})^{\dagger}\wedge\overline{\partial}\operatorname{p}_{\mathcal{S},H_0})\] in the sense of current. Then it is not difficult to see that $\ch_{1}(\mathcal{S})\cdot [\gamma_{\epsilon}]$ is uniformly bounded. 

      The proof will be completed by the following lemma which implies \(2\).
  \end{proof}
\begin{Lemma}\label{uniformgap}
  Let $\pF$ be a parabolic sheaf and $[\eta]\in \mathcal{B}^{1}$. We put $\mu_{\eta}^{\star}\coloneqq\sup\{\mu_{\eta}(\pS)| \pS \subsetneq\pF \}$. Then $\mu_{\eta}^{\star}$ can be achieved by some proper subsheaf $\pS^{\star}$.
  \begin{proof}
    It follows from~\cite[Lemma 2.2]{Ca} that we can express $\eta^{n-1}=\sum_{i=1}^{s}\lambda_{i}\cdot e_{i}$ with $\lambda_{i}\geq0$, $e_{i}\in H^{2(n-1)}(X,\mathbb{Q})$ and each $e_{i}$ can be represented by a strictly positive current (not necessarily an (n-1,n-1)-current). Then we have 
    \begin{equation*}
      \ch_{1}(\pS)\cdot[\eta^{n-1}]=\sum_{i=1}^{s}\lambda_{i}\cdot \ch_{1}(\pS)\cdot[e_{i}].
    \end{equation*}
    Since we want to achieve the maximal slope, we may restrict ourselves to the set of subsheaves with $-C_{14}<\deg_{\eta}(\pS)$ with $C_{14}$ a positive constant. By the arguments in the above lemma, we may choose the $C_{14}$ large enough such that $\ch_{1}(\pS)\cdot[\eta^{n-1}]<C_{14}$ for all subsheaves. We put $\mathcal{A}\coloneqq\{\pS\subsetneq\pF,\pS\neq0\,|\,-C_{14}<\ch_{1}(\pS)\cdot[\eta^{n-1}]\}$. Then for any $\pS\in \mathcal{A}$ and index $i$ with $\lambda_{i}\neq 0$ we have $-C_{15}<\ch_{1}(\pS)\cdot[e_{i}]<C_{15}$.  
    Recall that 
    \begin{equation*}
      \ch_1(\pS)=\ch_{1}(\mathcal{S})+\sum_{i\in I}\sum_{a\in [0, 1[}a\cdot\rank_{D_{i}}({}^{i}\Gr_{a}\pS)\cdot[D_{i}].
    \end{equation*}
    Since $\mathcal{A}$ is a collection of  parabolic subsheaves, as $\pS$ varies in $\mathcal{A}$, the nontrivial parabolic weights $a$ in the above expression take values from a finite set of real numbers.
    As $\ch_{1}(\mathcal{S})$ and $[D_{i}]$ belong to $H^{2}(X,\mathbb{Z})$ and $e_{i}\in H^{2(n-1)}(X,\mathbb{Q})$, we see that $\ch_{1}(\pS)\cdot e_{i}$ can only achieve finite number of values as we vary $\pS\in\mathcal{A}$ and the index $i$ whence so does the values of $\ch_{1}(\pS)\cdot[\eta^{n-1}]$. The proof is completed. 
  \end{proof}
\end{Lemma}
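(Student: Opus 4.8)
The plan is to show that as $\pS$ ranges over proper parabolic subsheaves of slope close to the supremum, the degree $\deg_{\eta}(\pS)$ can take only finitely many values, so that the supremum is necessarily attained. First I would reduce to a finite set of discrete invariants. The rank $\rank(\pS)$ lies in $\{1,\dots,\rank(\pF)\}$, so it is enough to fix the rank and show that $\sup\deg_{\eta}(\pS)$ is achieved among subsheaves of that rank; taking the maximum over the finitely many admissible ranks then gives the full statement. Since we are chasing the supremum of the slope, there is no harm in discarding subsheaves of very negative degree, so I may restrict attention to the collection $\mathcal{A}$ of proper subsheaves satisfying a uniform lower bound $\deg_{\eta}(\pS)>-C$.

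The next step is an a priori upper bound on the degree. Here I would reuse the adapted-metric construction of Section~\ref{4} together with the Gauss--Codazzi formula, exactly as in the proof of Lemma~\ref{openstable}: on the regular part of $\F$ there is a metric $H_0$ adapted in codimension $1$ to every torsion-free subsheaf, and Gauss--Codazzi writes $\ch_1(\mathcal{S})$ as the curvature term of $\operatorname{p}_{\mathcal{S},H_0}$ minus a manifestly nonnegative second-fundamental-form contribution. Pairing against a fixed positive class then yields a uniform upper bound for $\ch_1(\mathcal{S})\cdot[\eta^{n-1}]$, and hence for $\deg_{\eta}(\pS)$, independent of $\pS$. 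Together with the lower bound defining $\mathcal{A}$, this confines the degree to a bounded interval.

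The heart of the matter is to upgrade these two-sided bounds into a genuine finiteness statement, and this is where I expect the main obstacle. I would invoke a decomposition $[\eta]^{n-1}=\sum_{i=1}^{s}\lambda_i\,e_i$ with $\lambda_i\ge 0$ and each $e_i\in H^{2(n-1)}(X,\mathbb{Q})$ represented by a strictly positive current, following~\cite[Lemma 2.2]{Ca}. Positivity is the crucial input: combined with the upper bound for $\ch_1(\mathcal{S})\cdot e_i$, it forces each individual pairing $\ch_1(\pS)\cdot e_i$ into a bounded interval, since a bound on the nonnegative sum $\sum\lambda_i\,\ch_1(\pS)\cdot e_i$ controls every summand. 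Without the positive-current decomposition one cannot descend from a bound on the total degree to bounds on the separate pieces, so securing this decomposition and its positivity is the delicate step.

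Finally I would extract discreteness from the explicit formula~\eqref{chernform0},
\[
\ch_1(\pS)=\ch_1(\mathcal{S})+\sum_{i\in I}\sum_{a}a\cdot\rank_{D_i}({}^{i}\Gr_{a}\pS)\cdot[D_i].
\]
The class $\ch_1(\mathcal{S})$ and the divisor classes $[D_i]$ lie in $H^2(X,\mathbb{Z})$, the weights $a$ are drawn from the finite weight set of $\pF$, and the multiplicities $\rank_{D_i}({}^{i}\Gr_{a}\pS)$ are bounded integers. Pairing with $e_i\in H^{2(n-1)}(X,\mathbb{Q})$ therefore lands $\ch_1(\pS)\cdot e_i$ in a set of the shape $\tfrac1N\mathbb{Z}+(\text{finite set})$, which is discrete; a discrete set meeting a bounded interval is finite. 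Hence $\ch_1(\pS)\cdot e_i$—and with it $\deg_{\eta}(\pS)=\tfrac{1}{(n-1)!}\sum_i\lambda_i\,\ch_1(\pS)\cdot e_i$—takes only finitely many values on $\mathcal{A}$, so the supremum of the degree (for each fixed rank) is attained by some $\pS^{\star}$, completing the argument.
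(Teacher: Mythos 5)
Your proposal is correct and follows essentially the same route as the paper's proof: the positive-current decomposition of $[\eta]^{n-1}$ from \cite{Ca}, the adapted-metric/Gauss--Codazzi upper bound on $\ch_{1}(\pS)\cdot[\eta^{n-1}]$ carried over from Lemma~\ref{openstable}, and discreteness of the pairings $\ch_{1}(\pS)\cdot e_{i}$ extracted from the explicit formula for $\ch_{1}(\pS)$ together with integrality and the finiteness of the weight set. The only differences are presentational: you make explicit the reduction over the finitely many possible ranks and the ``a discrete set meeting a bounded interval is finite'' step (including the boundedness of the multiplicities $\rank_{D_i}({}^{i}\Gr_{a}\pS)$), both of which the paper leaves implicit.
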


Let $\pF$ be a saturated reflexive parabolic sheaf which is semistable with respect to a big class $[\eta]\in \mathcal{B}^{1}$. It can be proved as in the ordinary sheaf case that there exists a Jordan-Hölder filtration 
\[
0 = {\mathcal{F}_0}_{*} \subset {\mathcal{F}_1}_{*} \subset {\mathcal{F}_2}_{*} \subset \cdots \subset {\mathcal{F}_n}_{*} = \pF
\]
where for each $0\leq i<n$, ${\mathcal{F}_i}_*$ is a saturated reflexive parabolic sheaf, $\Gr_{i}\pF\coloneqq{\mathcal{F}_{i+1}}_{*}/{\mathcal{F}_i}_{*}$ is a $\eta$-stable parabolic sheaf with $\mu_{\eta}(\Gr_{i}\pF)=\mu_{\eta}(\pF)$.

Then it follows directly from Lemma~\ref{openstable} that there exists an $\epsilon_{0}$ such that for any $0<\epsilon<\epsilon_{0}$ the quotient ${\mathcal{F}_{i+1}}_{*}/{\mathcal{F}_i}_{*}$ is $\eta_{\epsilon}$-stable. 
\begin{proof}[Proof of Theorem~\ref{nefbigBG}]
   ${\mathcal{F}_0}_*$ is a saturated reflexive parabolic sheaf that is stable with respect to $\eta_{\epsilon}$. Under the assumption that $\eta$ is nef and big, we know that $\eta_{\epsilon}$ is a K\"ahler metric. Then we have $\Delta({\mathcal{F}_0}_*)\cdot [\eta_{\epsilon}^{n-1}]\geq 0$ whence $\Delta({\mathcal{F}_0}_*)\cdot [\eta^{n-1}]\geq 0$ by taking limit. On the other hand, Lemma~\ref{stableofreflexivacation} implies that the reflexive saturation $\Gr_{i}\pF'$ of $\Gr_{i}\pF$ is $\eta_{\epsilon}$-stable. Then it follows from Lemma~\ref{BGreflexivication} that $$\Delta(\Gr_{i}\pF)\cdot [\eta^{n-1}]\geq \Delta(\Gr_{i}\pF')\cdot [\eta^{n-1}]\geq 0.$$ Hence it suffices to show that if we have the short exact sequence of parabolic sheaves 
\[
\begin{tikzcd}
  0 \arrow[r] & {\mathcal{F}_i}_{*} \arrow[r] & {\mathcal{F}_{i+1}}_{*} \arrow[r] & \Gr_{i}\pF \arrow[r] & 0
\end{tikzcd},
\]
with $\Delta({\mathcal{F}_i}_{*})\cdot [\eta^{n-1}]\geq 0$ and $\Delta(\Gr_{i}\pF)\cdot [\eta^{n-1}]\geq 0$, we can obtain that $\Delta({\mathcal{F}_{i+1}}_{*})\cdot [\eta^{n-1}]\geq 0$. But this is a standard fact, cf. \cite[Lemma 3.7]{Cl}.
\end{proof}

\printbibliography
\end{document}